\documentclass[10pt, reqno]{amsart}
\usepackage{amsmath, amssymb}
\usepackage{dsfont}
\usepackage{amscd}
\usepackage[mathscr]{euscript} % for the power-set P
\usepackage[all]{xy}
\usepackage{url}
\usepackage{stmaryrd}
\usepackage{comment}
\usepackage[retainorgcmds]{IEEEtrantools}

\usepackage[colorlinks=true]{hyperref}

\usepackage{graphicx}
\usepackage{mathtools}
\usepackage[dvipsnames]{xcolor}
\usepackage{centernot}
\usepackage{marvosym}
\usepackage{tikz}
\usepackage{tikz-cd}
\usetikzlibrary{calc, arrows.meta, positioning}
\usepackage{xspace}
\usepackage{bbm}
\usepackage[title]{appendix}

\usepackage{tkz-fct}
\usepackage{soul}
\usepackage{lineno}
\usepackage[T1]{fontenc}
\usepackage[utf8]{inputenc}

\usepackage{xcolor}
\definecolor{green}{RGB}{0,127,0}
\definecolor{redd}{RGB}{191,0,0}
\definecolor{red}{RGB}{105,89,205}
\usepackage[colorlinks=true]{hyperref}

\usepackage[english]{babel}
\usepackage[autostyle]{csquotes}
\usepackage[shortlabels]{enumitem}

\usepackage[backend=bibtex,  sorting=nyt, style=alphabetic]{biblatex}
\allowdisplaybreaks %Fix horrible vertical spacing issues.

\title[Tame dynamics and GNC]
{On tame amenable flows and generalized Newelski's conjecture}

\subjclass[2020]{37B02, 37A15, 53H15, 03C45}
\keywords{Tame flow, amenable flow, Ellis semigroup, Ellis group, Bohr compactification, maximal equicontinuous factor, model theory, NIP}

\usepackage{orcidlink}

\author[D. M. HOFFMANN]{Daniel Max Hoffmann$^{\dagger}$\
\orcidlink{0000-0002-4514-269X}}
\thanks{$^{\dagger}$SDG. The first author is supported
by the National Science Centre (Narodowe Centrum Nauki, Poland) 
grant no. 2021/43/B/ST1/00405,
and by the UW IDUB PSP no. 501-D110-20-0001381.}
\address{$^{\dagger}$
Instytut Matematyki\\
Uniwersytet Warszawski\\
Warszawa\\
Poland}
\email{daniel.max.hoffmann@gmail.com}
\urladdr{{https://sites.google.com/site/danielmaxhoffmann/}}

\author[K. Krupi\'{n}ski]{Krzysztof Krupi\'{n}ski$^{\ddagger}$\ \orcidlink{0000-0002-2243-4411}}
\address{$^{\ddagger}$Instytut Matematyczny, Uniwersytet Wroc{\l}awski, pl. Grunwaldzki 2, 50-384 Wroc{\l}aw, Poland}
\email{Krzysztof.Krupinski@math.uni.wroc.pl}
\urladdr{{https://www.math.uni.wroc.pl/~kkrup/}}

\date{\today}

 \DeclareMathOperator{\aut}{Aut} \DeclareMathOperator{\id}{id}
\DeclareMathOperator{\cl}{cl}

\DeclareMathOperator{\im}{im}

 \DeclareMathOperator{\gal}{Gal}

\DeclareMathOperator{\tp}{tp}

\DeclareMathOperator{\ev}{ev}

\newcommand{\Aut}{\operatorname{Aut}}

\DeclareMathOperator{\Prob}{Prob}

\newtheorem{theorem}{Theorem}[section]
\newtheorem{proposition}[theorem]{Proposition}
\newtheorem{lemma}[theorem]{Lemma}
\newtheorem{cor}[theorem]{Corollary}
\newtheorem{fact}[theorem]{Fact}
\newtheorem{problem}[theorem]{Problem}
\theoremstyle{definition}
\newtheorem{definition}[theorem]{Definition}
\newtheorem{example}[theorem]{Example}
\newtheorem{remark}[theorem]{Remark}
\newtheorem{question}[theorem]{Question}
\newtheorem{conj}[theorem]{Conjecture}

\theoremstyle{remark}
\newtheorem*{theorem*}{Theorem}
\newtheorem*{cor*}{Corollary}

\theoremstyle{definition}
\theoremstyle{definition}

\theoremstyle{definition}

\theoremstyle{remark}

\newtheorem{clm}{Claim}
\newtheorem*{clm*}{Claim}

\AtEndEnvironment{proof}{\setcounter{clm}{0}}
\newenvironment{clmproof}[1][\proofname]{\proof[#1]}{\endproof}

\makeatletter
\providecommand*{\cupdot}{%
  \mathbin{%
    \mathpalette\@cupdot{}%
  }%
}
\newcommand*{\@cupdot}[2]{%
  \ooalign{%
    $\m@th#1\cup$\cr
    \sbox0{$#1\cup$}%
    \dimen@=\ht0 %
    \sbox0{$\m@th#1\cdot$}%
    \advance\dimen@ by -\ht0 %
    \dimen@=.5\dimen@
    \hidewidth\raise\dimen@\box0\hidewidth
  }%
}

\providecommand*{\bigcupdot}{%
  \mathop{%
    \vphantom{\bigcup}%
    \mathpalette\@bigcupdot{}%
  }%
}
\newcommand*{\@bigcupdot}[2]{%
  \ooalign{%
    $\m@th#1\bigcup$\cr
    \sbox0{$#1\bigcup$}%
    \dimen@=\ht0 %
    \advance\dimen@ by -\dp0 %
    \sbox0{\scalebox{2}{$\m@th#1\cdot$}}%
    \advance\dimen@ by -\ht0 %
    \dimen@=.5\dimen@
    \hidewidth\raise\dimen@\box0\hidewidth
  }%
}
\makeatother

\def\Ind#1#2{#1\setbox0=\hbox{$#1x$}\kern\wd0\hbox to 0pt{\hss$#1\mid$\hss}
\lower.9\ht0\hbox to 0pt{\hss$#1\smile$\hss}\kern\wd0}

\def\notind#1#2{#1\setbox0=\hbox{$#1x$}\kern\wd0
\hbox to 0pt{\mathchardef\nn=12854\hss$#1\nn$\kern1.4\wd0\hss}
\hbox to 0pt{\hss$#1\mid$\hss}\lower.9\ht0 \hbox to 0pt{\hss$#1\smile$\hss}\kern\wd0}

\newcommand{\FC}{\mathfrak{C}}

\newcommand{\CM}{{\mathcal M}}

\newcommand{\SL}{{\mathrm{SL}}}

\newcommand{\ff}{\mathfrak{f}}
\newcommand{\fb}{\mathfrak{b}}

\newcommand{\supp}{\mathrm{supp}}

\newcommand{\autf}{\mathrm{Autf}}
\newcommand{\KP}{KP}

\newcommand{\homeo}{\mathrm{Homeo}}

\begin{document}

\begin{abstract}
We are developing some  topological dynamics ``below'' a given ambit. We define and study Bohr compactifications below ambits and other related objects. We describe a natural epimorphism from the Ellis group of an ambit to the Bohr compactification below its Ellis semigroup ambit. The first main result says that, assuming tameness and hereditary amenability, this natural map is an isomorphism of topological groups. This is an abstract dynamical generalization of Newelski's conjecture, or rather a theorem of Chernikov and Simon that a natural map from the Ellis group of the flow $(S_G(M),G)$ to the definable Bohr compactification of $G$ is an isomorphism when $G$ is a definably amenable group definable in a NIP structure $M$. We give two proofs of our main result: a shorter dynamical proof and an analytic proof. In the course of the dynamical proof, we prove some results on the universal compactification flow and on the generalized Bohr compactification below a given ambit which admits a left continuous semigroup operation extending the action of $G$. The considerations around the analytic proof yield a description of the measured structure of amenable tame minimal flows, which is the second main result of this paper.  Our results do not require any metrizability assumption. 

We also study specializations of the above abstract topological dynamics context to model theory contexts to recover some known and to obtain some new notions and results. The main new results in the model theory contexts are: (i) a description of the Bohr compactification of the group of automorphisms of a sufficiently saturated model $\FC$ of a first order theory $T$ below a certain fundamental ambit associated with $\FC$; (ii) transfer of amenability of a NIP theory $T$ (i.e., amenability of the aforementioned ambit) to amenability of the Ellis semigroup ambit; (iii) a version of Newelski's conjecture for spaces of local types with a description of Bohr compactifications below such spaces. 
\end{abstract}

\maketitle
\section{Introduction}

Recall that a {\em $G$-flow} is a pair $(X,G)$ consisting of a compact (Hausdorff) space $X$ and a topological group $G$ together with a continuous action of $G$ on $X$. In this paper, the topology on $G$ is irrelevant, so $G$ is assumed to be a discrete group. A {\em $G$-ambit} is a flow $(X,x_0,G)$ with a distinguished point $x_0$ with dense orbit. The {\em Ellis semigroup} of a flow $(X,G)$ is the closure of $\{g_X: g\in G\}$ in the space $X^X$ equipped with the pointwise convergence topology, where $g_X(x):=gx$, with composition of functions as the semigroup operation. This is also naturally a $G$-ambit (which we call the {\em Ellis flow} or {\em Ellis ambit} of $(X,G)$) with $\id_X$ as the distinguished element with dense orbit.

\subsection{Motivation}
The motivation for the research in this paper comes from several directions. 
One of them is model theory: the goal of the project is to generalize to the abstract context of topological dynamics the various important notions, results, and phenomena known in topological dynamics in model theory (i.e., results about the flows of the form $(S_G(M),G)$ where $G$ is a group definable in a structure $M$ and $S_G(M)$ is the space of complete types over $M$ concentrated on $G$, and also some other natural flows considered  in model theory). 
This was successfully realized in \cite{HoffKru1} under the tameness assumption (which corresponds to the model-theoretic NIP) for results concerning the transfer of amenability of a flow to its Ellis flow, unique ergodicity, and a description of ergodic measures. In this paper, the main point is to generalize the so-called Newelski's conjecture to the abstract context and prove it for hereditarily amenable tame flows. Another, strongly related goal and motivation is to try to develop topological dynamics ``below'' a given ambit. For a given group $G$ the usual topological dynamics automatically takes place below the universal $G$-ambit. A motivation is that if we replace the universal $G$-ambit by other $G$-ambits, we can impose interesting dynamical properties (e.g., tameness) which are almost never satisfied by the universal $G$-ambit. The third motivation is to obtain results (and to develop techniques) without any metrizability assumption on the underlying flow which was an essential assumption in many topological dynamics papers (e.g. in \cite{Glasner2} describing the structure of metrizable tame minimal flows). For example, originally we had some results from \cite{HoffKru1} only in the metrizable case, as we were using \cite{Glasner2} which requires this assumption, but then we developed techniques which allowed us to drop the metrizability assumption. Similarly in the case of the so-called Revised Newelski Conjecture from \cite{Krupinski2023a}, which was first deduced in \cite{CGK} from \cite{Glasner2} in a metrizable context, and then generalized to arbitrary tame minimal flows in \cite{BaZu} by using set-theoretic methods. In this paper, we observe that the Generalized Newelski's Conjecture (which we state in this paper) follows from \cite{Glasner2} for metrizable minimal flows, and then we give two different proofs which work in general (i.e., without the metrizability and minimality assumption). We also provide a description of the measured structure of amenable tame minimal flows, which is a step towards understanding of the structure of such flows without the metrizability assumption.

\subsection{Main results in abstract topological dynamics}

For precise definitions and more references see further sections.

Recall that the {\em Bohr compactification} of a discrete group $G$ is a unique (up to isomorphism) universal group homomorphism from $G$ to a compact group with dense image. We define the Bohr compactification of $G$ below a given $G$-ambit $(X,x_0,G)$ as a unique (up to isomorphism) universal  homomorphism as above which additionally factors through the ambit $(X,x_0,G)$ (see Definition \ref{definition: Bohr compactification below ambit A}).  In Subsections \ref{subsection: Bohr comp} and \ref{subsection: MEFs}, we study such Bohr compactifications and compare them to the maximal equicontinuous factors (MEF's). We observe that in general the MEF of $(X,x_0,G)$ may be strictly bigger than the Bohr compactification of $G$ below  
$(X,x_0,G)$, but these notions coincide when there is a left continuous semigroup operation on $X$ extending the action of $G$ (equivalently, when $(X,x_0,G)$ is isomorphic to an Ellis flow $(E(Y,G),\id_Y,G)$ for some $G$-flow $(Y,G)$).

To state the Generalized Newelski's Conjecture, we need to recall a fundamental theorem of Ellis theory. As already mentioned, with a flow $(X,G)$ one associates its Ellis semigroup $E(X,G)$. This is a left-topological semigroup (i.e., the semigroup operation is continuous in the left coordinate). Ellis theorem tells us that there exists a minimal left ideal $\CM$ in $E(X,G)$ and each such ideal $\CM$ can be written as the disjoint union of all sets $u\CM$, where $u$ ranges over the idempotents in $\CM$. Moreover, each $u\CM$ is a group, and all these groups are isomorphic for varying minimal left ideal $\CM$ and idempotent $u \in \CM$. Each group $u\mathcal{M}$ is equipped with the so-called $\tau$-topology which makes it a quasi-compact semitopological group (i.e., the group operation is separately continuous).  The groups $u\CM$ remain isomorphic also as semitopological groups. The isomorphism type of these groups (or any of these group separately) will be called the {\em Ellis group} of $(X,G)$.

Let us fix a flow $(X, G)$. Set $E:=E(X,G)$, choose a minimal left ideal $\CM$ in $E$ and an idempotent $u \in \CM$. Let $\mathfrak{b} \colon G \to B$ be the Bohr compactification of $G$ below $(E,\id_X,G)$. There is a unique left continuous action of $E$ on $B$  extending the natural action of $G$. We define an abstract counterpart of Newelski's map (whose definition is recalled in Subsection \ref{subsection: definable groups}): 
$$\widehat{\theta} \colon u\CM \to B, \;\;\; \widehat{\theta}(\eta):=\eta e_{B},$$
where $e_{B}$ denotes the neutral element of $B$. In Subsection \ref{subs:category.below}, we explain that $\widehat{\theta}$ is a group epimorphism and a quotient topological map which factors through the largest Hausdorff quotient of $u\CM$ (even through a certain Galois group).

\begin{conj}[Generalized Newelski's Conjecture (GNC)]
Under suitable assumptions on $(X,G)$, the map $\widehat{\theta}$ is a  topological group isomorphism.
\end{conj}

The flow $(X,G)$ is said to be {\em hereditarily amenable} if every subflow admits an invariant Borel probability measure.
Below is our first main result (see Section 2 for the definition of tameness). This theorem is an abstract generalization of  Theorem 1.5 in \cite{ArtemPierre} being one of the main results of that paper. 

\begin{theorem}\label{theorem: main theorem in into}\label{thm:1.2}
The GNC holds when $(X,G)$ is tame and hereditarily amenable. 
\end{theorem}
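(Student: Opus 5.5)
Since $\widehat{\theta}$ is already known (Subsection \ref{subs:category.below}) to be a continuous group epimorphism and a quotient map which factors through the largest Hausdorff quotient $u\CM/H(u\CM)$, it suffices to prove two things. First, that $u\CM$ is Hausdorff, i.e.\ $H(u\CM)$ is trivial. Second, that the induced map $u\CM \to B$ is injective. Once $u\CM$ is Hausdorff, tameness implies it is a compact Hausdorff topological group. This uses the known fact that for tame flows the $\tau$-topology on $u\CM$ is Hausdorff whenever the relevant subgroup is trivial, by the Revised Newelski Conjecture for tame minimal flows from \cite{BaZu}, together with separate continuity plus compactness and Ellis' joint continuity theorem. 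A continuous bijective homomorphism between compact Hausdorff groups is then a topological isomorphism. So the plan is to reduce to showing that $\ker\widehat{\theta}$ is trivial. Here $\ker\widehat{\theta}$ consists of those $\eta\in u\CM$ acting trivially on $e_B$.

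The key idea is to use amenability to convert group elements of $u\CM$ into measure-theoretic data on which the Bohr compactification acts faithfully. I would proceed as follows.
\begin{enumerate}
\item Pass to the Ellis flow. By the transfer result of \cite{HoffKru1}, tameness plus hereditary amenability of $(X,G)$ yields that $E=E(X,G)$ is tame and that its minimal subflow $\CM$ is amenable.
\item Since $\CM$ is minimal, tame and amenable, analyse its invariant measures. Show that for each $G$-invariant Borel probability measure $\mu$ on $\CM$, the stabilizer of $\mu$ under the right action of $u\CM$ (via $\mu\mapsto \mu\cdot\eta$, using left continuity) is all of $u\CM$. More usefully, show that the measure-theoretic structure of $\CM$ is a group extension over the MEF. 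Concretely, the factor map $\CM\to B$ (recall that below Ellis ambits the MEF and the Bohr compactification coincide, Subsection \ref{subsection: MEFs}) should be a measure isomorphism on a full-measure set. This is the "measured structure of amenable tame minimal flows" result.
\item Deduce injectivity. Suppose $\eta\in\ker\widehat{\theta}$. Then right multiplication by $\eta$ on $\CM$ commutes with $G$ and with the factor map to $B$. By the previous step, this right multiplication is, modulo a null set, the identity. Tameness gives Baire class 1 (Rosenthal) behaviour for the functions $p\mapsto p\eta$ restricted appropriately. I would then upgrade "identity $\mu$-a.e." to "identity on a dense $G$-invariant set". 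Minimality of $\CM$ and a support argument would then give $u\eta=u$, i.e.\ $\eta=u$.
\end{enumerate}

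The main obstacle is step 2 without metrizability. In the metrizable minimal case it follows from Glasner's structure theorem \cite{Glasner2}: a tame metrizable minimal flow is almost automorphic and uniquely ergodic, with the factor to the MEF being a measure isomorphism. In general I would try to replace metrizability by working with countably many continuous functions at a time. Each countable family generates a metrizable tame factor, to which Glasner applies. Then I would take an inverse limit, checking that measure-isomorphism with the MEF and the null exceptional sets behave well under the directed system. Alternatively, a dynamical route would avoid measures entirely: show directly that the generalized Bohr compactification below the ambit $\CM$, with its left continuous semigroup operation, equals $u\CM/H(u\CM)$ via the universal compactification flow, and use tameness only to kill $H(u\CM)$.
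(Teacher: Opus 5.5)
Your outline (pass to $\CM$, identify $L^2(\CM,\mu)$ with $L^2(B_2,\mathfrak h_{B_2})$ via $w_2|_{\CM}$, then show that any $\eta\in\ker\widehat\theta$ acts trivially through $R_\eta\colon\xi\mapsto\xi\eta$) has the same skeleton as the paper's analytic proof (Lemma~\ref{lemma: L2 Bohr} and Theorem~\ref{thm: GNC for amenable minimal ideal}). However, your step~2, which carries all the weight, is not established by what you propose.

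\textbf{The gap in step~2.} Reducing to metrizable factors and applying Glasner's theorem can at best handle countable $G$. For uncountable $G$, the $G$-invariant C$^*$-subalgebra of $C(\CM)$ generated by countably many functions is in general non-separable. So metrizable $G$-factors need not exhaust $C(\CM)$, and the inverse limit recovers only part of the flow. For instance, take the double-arrow circle with the rotation action of $\mathbb R$ viewed as a discrete group. It is minimal, tame and amenable, and it is isomorphic to the minimal ideal of its own Ellis semigroup. Yet every metrizable factor of it factors through the circle, because a continuous function with a jump has a non-separable orbit. So Glasner's theorem applied to metrizable factors says nothing about the non-metrizable part. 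Since the theorem is stated for arbitrary discrete $G$, and the intended applications involve groups like $\aut(\FC)$, this is a genuine gap.

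\textbf{The missing idea.} The paper proves the $L^2$-identification directly, by operator theory:
\begin{enumerate}
\item Tameness makes every orbit $\{U_g\xi:g\in G\}$ relatively norm-compact in $L^2(\CM,\mu)$. This uses sequential pointwise compactness of translates of tame functions plus dominated convergence.
\item Hence the SOT-closure $K$ of $\{U_g:g\in G\}$ is a compact group.
\item Fact~\ref{fact:tame-injective}, i.e.\ $E(\Prob(\CM),G)\cong E(\CM,G)$, yields a continuous semigroup epimorphism $E\to K$. By universality this gives a continuous unitary representation $\rho\colon B_2\to K$.
\item By Peter--Weyl, $L^2(\CM,\mu)$ is the closed span of finite-dimensional $\rho(B_2)$-invariant subspaces.
\item Ergodicity of the unique invariant measure forces each such subspace to consist of pullbacks under $w_2|_{\CM}$ of continuous functions on $B_2$.
\end{enumerate}

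\textbf{Step~3.} No Baire class~1 argument is needed. $R_\eta$ is continuous, since the maps $p\mapsto pq$ are continuous on $E$. It preserves $\mu$ by unique ergodicity (Fact~\ref{lemma : min tame unique ergodicity}), and $w_2\circ R_\eta=w_2$ on $\CM$. Hence $(R_\eta)^*$ is the identity on $L^2$, i.e.\ $F\circ R_\eta=F$ holds $\mu$-a.e.\ for each $F\in C(\CM)$. By full support this holds everywhere, so $R_\eta=\id_{\CM}$ and $\eta=R_\eta(u)=u$. You should state ``identity modulo a null set'' in this function-by-function form, because pointwise a.e.\ statements are unreliable without metrizability.

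\textbf{Your opening reduction.} The appeal to GRNC and Ellis' joint continuity theorem is superfluous. $\widehat\theta$ is a quotient map (Remark~\ref{remark: w2=theta}), so injectivity alone makes it a homeomorphism.

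\textbf{Your alternative dynamical route.} This is essentially the paper's other proof, but as sketched it never uses amenability. It cannot succeed without it: there are tame non-amenable counterexamples, e.g.\ the one from \cite{GismatullinPenazziPillay2015Some}. What is needed is $\overline{u\CM}=\CM$ (Fact~\ref{fact: amenability closure}(2), from \cite[Lemma 5.7]{HoffKru1}). This density does two things. First, it makes the relation $p\sim q\iff(up)^{-1}q\in H(u\CM)$ closed, so the generalized Bohr compactification is $u\CM/H(u\CM)$, which Fact~\ref{fact:RNC} then shows is just $u\CM$. Second, it rules out nontrivial minimal proximal factors, which is exactly what lets you identify the generalized Bohr compactification with $B_2$ (Proposition~\ref{proposition: Glasner below A}). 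Note also that the ambit carrying a semigroup operation extending the $G$-action is $E$, not $\CM$.
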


For an equivalent form of the above conjecture and theorem and for other comments see Subsection \ref{subsection: Formulation of GNC}.

We provide two different proofs of the above theorem: a dynamical proof in Subsection \ref{subsection: dynamical proof of GNC}
 and an analytic one in Subsection \ref{subsection: analytic proof of GNC}. 

A deep theorem which is behind both proofs is a weaker version of GNC which one could call the Generalized Revised Newelski's Conjecture (GRNC); see the discussion below on the model-theoretic origins. This result was proved in \cite[Corollary 5.14]{CGK} in the metrizable context and in \cite[Theorem 11.7]{BaZu} in full generality. For a short deduction of the statement below from \cite[Theorem 11.7]{BaZu} see \cite[Lemma 3.3]{HoffKru1}.

\begin{fact}[Generalized Revised Newelski's Conjecture (GRNC)]\label{fact:RNC}
If $(X,G)$ is tame, then $u\CM$ is Hausdorff in the $\tau$-topology, and so a compact topological group.
\end{fact}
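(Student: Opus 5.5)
The statement is a Fact, attributed to \cite[Theorem 11.7]{BaZu} with the deduction in \cite[Lemma 3.3]{HoffKru1}. So the plan is to reduce to the Baire-class/tame-minimal-flow result there rather than prove Hausdorffness from scratch.

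\emph{Step 1: reduce to a minimal subflow.} Tameness of $(X,G)$ means every element of $E=E(X,G)$ is a Baire class 1 (or universally measurable) function. This property passes to subflows and factors. Moreover, $E$ itself is tame as a $G$-flow, since its Ellis semigroup is again $E$. Hence the minimal left ideal $\CM$, viewed as a minimal $G$-flow, is tame. The Ellis group of $(X,G)$ is $u\CM$ with its $\tau$-topology. Standard Ellis theory identifies this with the Ellis group of the minimal flow $(\CM,G)$, including the $\tau$-topology. Indeed, $E(\CM,G)\cong E$ via right translations, because $\CM$ contains a copy of the $E$-action: $\eta\mapsto \eta u$ restricted appropriately.

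\emph{Step 2: invoke the minimal case.} \cite[Theorem 11.7]{BaZu} says that for a tame minimal flow the Ellis group is Hausdorff in the $\tau$-topology. They prove it by analyzing the $\tau$-closure operator through Rosenthal-type dichotomies and independence, without any metrizability. Applying it to $(\CM,G)$ gives that $u\CM$ is Hausdorff.

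\emph{Step 3: upgrade to a compact topological group.} Recall that $u\CM$ with the $\tau$-topology is a quasi-compact semitopological group; I would cite the standard Ellis-theory facts for this. A compact Hausdorff semitopological group is a topological group by Ellis's joint continuity theorem. This finishes the proof.

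The main obstacle is Step 1. One must check carefully that the $\tau$-topology on $u\CM$ computed inside $E(X,G)$ agrees with the one computed inside $E(\CM,G)$, and that tameness transfers to $\CM$. I would handle the transfer by noting that tameness of $X$ is equivalent to $E$ having cardinality at most $2^{\aleph_0}$-type/Rosenthal compactness, which is inherited by closed subsets of $E$. For the comparison of $\tau$-topologies, I would use that the $\tau$-closure $u\circ A$ is defined via nets in $G$ converging in $E$, and this convergence is the same in both settings under the identification.
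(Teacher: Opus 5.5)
Your three-step route is the one the paper relies on. The paper gives no argument of its own: it cites \cite[Theorem 11.7]{BaZu}, which gives Hausdorffness for tame \emph{minimal} flows, and \cite[Lemma 3.3]{HoffKru1} for the reduction. (BaZu proves 11.7 by set-theoretic/forcing methods from the metrizable case of \cite{CGK}, not by Rosenthal-type dichotomies, but you only cite it, so this does no harm.) Your Step 3, via Ellis's joint continuity theorem, is fine. However, both justifications you give for the reduction in Step 1 are wrong as written.

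\textbf{First gap: $E(\CM,G)\cong E$ is false in general.} Take $\mathbb{Z}$ acting by translation on its two-point compactification $\mathbb{Z}\cup\{\pm\infty\}$, which is a tame flow. Then $E$ is infinite, but $\CM=\{p_+\}$ is a singleton, where $p_+$ is the limit of $n_X$ as $n\to+\infty$. So $E(\CM,G)$ is trivial, and your comparison of $\tau$-topologies ``under the identification'' has no basis. What is true, and suffices, is the following. The map $\Phi\colon E\to E(\CM,G)$, $\Phi(p)(m):=pm$, is a continuous surjective $G$-equivariant semigroup homomorphism. It is injective on $\CM$, because $p=pu$ for every $p\in\CM=Eu$. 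By Fact~\ref{fact:ellis-group-functoriality}, $\Phi|_{u\CM}$ is then a bijective continuous quotient map for the $\tau$-topologies onto the Ellis group of $(\CM,G)$, hence an isomorphism of semitopological groups.

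\textbf{Second gap: tameness of $(\CM,G)$ is not justified.} The identification $E(E,G)\cong E$ tells you nothing about whether functions in $C(E,\mathbb{R})$ are tame. Equivalently, it does not control the maps $q\mapsto pq$ on the (typically non-metrizable) space $E$, whatever you know about the maps $p\colon X\to X$. Your alternative criterion, $|E|\le 2^{\aleph_0}$ or Rosenthal compactness, has two problems:
\begin{itemize}
\item it characterizes tameness only for metrizable flows;
\item it is a condition on the enveloping semigroup, not on the phase space, so ``inherited by closed subsets of $E$'' does not give tameness of the flow $(\CM,G)$.
\end{itemize}
The correct reason is that $(E,G)$ is a subflow of the product $X^X$ with the diagonal action, and tameness is preserved under products and subflows (\cite[Lemma 5.4]{GlasnerMegrelishvili2012}, \cite[Lemma 2.2(3)]{HoffKru1}). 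Then $(\CM,G)$ is tame as a subflow of $(E,G)$.

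With these two repairs, your Steps 1--3 prove the statement.
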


Another non-trivial ingredient of the dynamical proof is the following result contained in \cite[Lemma 5.7]{HoffKru1} (using also \cite[Corollary 5.6]{HoffKru1} which says that if $(X,G)$ is tame and hereditarily amenable, then $E(X,G)$ is hereditarily amenable, and so $\CM$ is amenable.)

\begin{fact} 
If $(X,G)$ is tame and hereditarily amenable, then $u\CM$ is dense in $\CM$.
\end{fact}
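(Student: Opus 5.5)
The plan is to show that $u$ pushes an invariant measure on $\CM$ to itself, and then use that the support of such a measure is all of $\CM$.

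By \cite[Corollary 5.6]{HoffKru1}, $E:=E(X,G)$ is hereditarily amenable, so the minimal subflow $\CM\subseteq E$ carries a $G$-invariant Radon probability measure $\mu$. The support $\supp(\mu)$ is a nonempty closed $G$-invariant subset of the minimal flow $\CM$, hence $\supp(\mu)=\CM$. The map $\CM\to\CM$, $x\mapsto ux$ (left multiplication by $u$) has image exactly $u\CM$. So it suffices to prove the following.

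\textbf{Key claim.} $u$ is $\mu$-measurable as a map on $\CM$, and $u_*\mu=\mu$.

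Granting the claim, the density follows at once. Let $U\subseteq E$ be open with $U\cap u\CM=\emptyset$. Then $\{x\in\CM: ux\in U\}=\emptyset$, so
$\mu(U\cap \CM)=(u_*\mu)(U)=\mu(\{x\in\CM:ux\in U\})=0$.
Since $\supp(\mu)=\CM$, this forces $U\cap\CM=\emptyset$. Hence $u\CM$ is dense in $\CM$.

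\textbf{Proving the key claim.} I will use tameness. The Ellis flow of $(E,G)$ is again $E$, and $E$ is tame because $X$ is. Tameness of $(E,G)$ means that for every continuous $f\colon E\to\mathbb{R}$ the family $\{f\circ g: g\in G\}$ has no independent sequence. By the Bourgain--Fremlin--Talagrand dichotomy, together with the Rosenthal-type consequences used in \cite{HoffKru1}, its pointwise closure $\{f\circ p: p\in E\}$ then consists of universally measurable functions. This gives measurability of $u$: each $f\circ u$ is $\mu$-measurable, and the relevant $f$ generate the topology. For invariance, the goal is
$$\int f\circ p\,d\mu=\int f\,d\mu\qquad\text{for all } p\in E,$$
and in particular for $p=u$. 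Choose a net $g_i\to p$ with $g_i\in G$; each $\int f\circ g_i\,d\mu$ equals $\int f\,d\mu$ by invariance.

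The main obstacle is passing to the limit without metrizability, since dominated convergence is only available for sequences. I would try two routes.
\begin{enumerate}
\item Use Fremlin's theorem that on a uniformly bounded, pointwise relatively compact family of measurable functions with no independent sequence (a ``stable'' or NIP family for $\mu$), pointwise convergence of nets implies convergence in $\mu$-measure. Then $\int f\circ g_i\,d\mu\to\int f\circ p\,d\mu$ directly.
\item Alternatively, use that this closure is a Rosenthal-like compactum, so $f\circ p$ is a pointwise limit of a \emph{sequence} $f\circ g_n$ on $\CM$ (or at least $\mu$-almost everywhere). Then dominated convergence applies.
\end{enumerate}
Either route yields $u_*\mu=\mu$, which completes the argument.
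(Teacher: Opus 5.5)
The paper gives no argument of its own here. It obtains amenability of $\CM$ from \cite[Corollary~5.6]{HoffKru1}, exactly as you do, and then quotes \cite[Lemma~5.7]{HoffKru1} for the density. Your strategy is to prove $\int_{\CM} f\circ u\,d\mu=\int_{\CM} f\,d\mu$ for continuous $f$ and combine it with $\supp(\mu)=\CM$. That does give a self-contained proof of that lemma, but only through your first route, and it needs two repairs.

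First, the limit passage. Route~1 works once stated precisely. Tameness of $(E,G)$ together with the Bourgain--Fremlin--Talagrand theorem shows that $\{f\circ p: p\in E\}$ consists of universally Radon-measurable functions. This set is pointwise compact: it is the image of $E$ under $p\mapsto f\circ p$, which is continuous because right multiplication is. Fremlin's theorem on pointwise compact sets of measurable functions, for perfect (e.g.\ Radon) measures, says that on such a set pointwise convergence of nets implies convergence in $\mu$-measure. Uniform boundedness then gives $\int f\circ g_i\,d\mu\to\int f\circ u\,d\mu$. No Talagrand-stability hypothesis is needed, so your parenthetical can go. Route~2 should be dropped. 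The Fr\'echet--Urysohn property of Rosenthal compacta is a theorem about Polish domains and fails for general compact ones. For example, on $[0,\omega_1]$ the clopen indicators $\chi_{[0,\alpha]}$, $\alpha<\omega_1$, form a chain and so have no independent sequence. Yet $\chi_{[0,\omega_1)}$ lies in their pointwise closure without being the limit of any sequence from them. The ``almost everywhere'' variant you mention is available only as a consequence of Route~1.

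Second, you have not shown that $u$ is Borel measurable. Measurability of every $f\circ u$ with $f$ continuous only controls preimages of Baire sets. In a non-metrizable $E$ these need not include all open sets, and a Borel measure agreeing with $\mu$ on $C(E)$ need not agree with it on open sets. So the identity $\mu(U\cap\CM)=(u_*\mu)(U)$ is unjustified. It is also unnecessary. Suppose $U$ is open, $x_0\in U\cap\CM$ and $U\cap u\CM=\emptyset$. Pick a continuous $f\colon E\to[0,1]$ with $f(x_0)=1$ and $f=0$ off $U$. Then $f\circ u$ vanishes on $\CM$, so $\int_{\CM} f\,d\mu=\int_{\CM}f\circ u\,d\mu=0$, contradicting $\supp(\mu)=\CM$. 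With these two changes the argument is complete.
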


Having the above two results and \cite[Corollary 5.6]{HoffKru1}, the dynamical proof is short and uses arguments as in \cite{Glasner1976}. We take the opportunity and also study a bit topological dynamics below a given ambit, yielding some information on the universal compactification flow and on the generalized Bohr compactification in this context (see Subsections \ref{subs:category.below} and \ref{subsection: dynamical proof of GNC}).

The main engine of the analytic proof is Lemma \ref{lemma: L2 Bohr}, which for a tame hereditarily amenable flow $(X,G)$ identifies the $L^2$-spaces $L^2(B,\mathfrak{h}_{B})$ and $L^2(\CM,\mu)$, where $\mathfrak{h}_{B}$ is the unique normalized Haar measure on $B$ and $\mu$ is the unique $G$-invariant probability measure on $\CM$. A final deduction of GNC is then contained in the proof of Theorem \ref{thm: GNC for amenable minimal ideal} and afterwards, which also uses the uniqueness of a $G$-invariant probability measure on $\CM$. This last fact is the non-trivial Theorem 4.2 in \cite{HoffKru1} (which relies on Fact \ref{fact:RNC}).

In passing from the study of metrizable minimal tame amenable flows (e.g. as in \cite{Glasner2}) to the context of general
minimal tame amenable flows, we use measure-theoretic substitutes
for the algebra $C(X)$ and for techniques based on tameness.
The spaces $L^2(X,\mu)$ and the von Neumann algebras
$L^\infty(X,\mu)$ provide such a framework, allowing us to study
the measured structure without metrizability assumptions.
In Subsection \ref{subsection: structure of amenable tame flows}, Lemma \ref{lemma: L2 Bohr} is used to prove the second main result of this paper, Theorem \ref{thm: measured vNeumann}, i.e. 
a characterization of the measured structure of the minimal, tame and amenable flows. We establish there

\begin{theorem}[Theorem \ref{thm: measured vNeumann}]
If $(X,G)$ is an amenable tame minimal flow, then 
$$L^\infty(X,\mu) \cong_G L^\infty(B/\gal_B,\mathfrak h_{B/\gal_B}),$$
where $\mu$ and $\mathfrak h_{B/\gal_B}$ are the unique
$G$-invariant Borel probability measures on $X$ and $B/\gal_B$,
respectively (for the definition of $\gal_B$, see
Subsection~\ref{subsection: structure of amenable tame flows}),
and $\cong_G$ denotes a normal trace-preserving $G$-equivariant
$\ast$-isomorphism of von Neumann algebras.
\end{theorem}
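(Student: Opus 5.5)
The plan is to pass from $X$ to the minimal ideal $\CM$ of $E:=E(X,G)$, where the $L^2$-identification of Lemma \ref{lemma: L2 Bohr} is available, and then descend to $X$.

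\textbf{Step 1: reduce to an $\CM$-orbit.} Since $(X,G)$ is minimal, for any $x_0\in X$ the map $\pi\colon \CM\to X$, $\pi(p)=px_0$, is a surjective $G$-factor map. Tameness of $X$ gives tameness of $E$, and amenability of a minimal flow is the same as hereditary amenability. So \cite[Corollary 5.6]{HoffKru1} and \cite[Theorem 4.2]{HoffKru1} show that $\CM$ carries a unique $G$-invariant probability measure $\mu_\CM$. By uniqueness on $X$, $\pi_*\mu_\CM=\mu$. Hence $L^\infty(X,\mu)$ embeds $G$-equivariantly, normally and trace-preservingly, as the subalgebra of $L^\infty(\CM,\mu_\CM)$ consisting of functions factoring through $\pi$.

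\textbf{Step 2: identify the measured structure of $\CM$.} By Lemma \ref{lemma: L2 Bohr}, there is a $G$-equivariant unitary identification $L^2(\CM,\mu_\CM)\cong L^2(B,\mathfrak h_B)$. I expect it to be induced by a measurable, measure-preserving map $\CM\to B$ extending the Newelski map $\widehat\theta$, namely $p\mapsto pe_B$. I would check that this identification restricts to a $\ast$-isomorphism $L^\infty(\CM,\mu_\CM)\cong L^\infty(B,\mathfrak h_B)$, because composition with a measure-preserving map intertwining the two $L^2$ spaces respects products of bounded functions. This gives a $G$-isomorphism of measured algebras.

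\textbf{Step 3: descend to $X$ via $\gal_B$.} I would take $\gal_B$ to be the image in $B$, under the identification $u\CM\cong B$ from Theorem \ref{thm:1.2}, of the stabilizer-type group $\{\eta\in u\CM: \eta ux_0=ux_0\}$ (suitably closed). The claim is then that functions on $\CM$ factoring through $\pi$ correspond exactly to $\gal_B$-invariant functions on $B$, that is, to $L^\infty(B/\gal_B)$.

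One inclusion is straightforward. A function $f\circ\pi$ is invariant under right multiplication by elements of $u\CM$ fixing $ux_0$, and this right action corresponds to the right translation action of $\gal_B$ on $B$.

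The converse is the main obstacle. I must show that every $\gal_B$-invariant $L^\infty$ function on $\CM$ is $\mu$-a.e. measurable with respect to the $\sigma$-algebra pulled back from $X$; in other words, that $\pi$ is measure-theoretically the quotient by $\gal_B$. I would first try to use the density of $u\CM$ in $\CM$ (the Fact from \cite[Lemma 5.7]{HoffKru1}) together with Hausdorffness of $u\CM$ (Fact \ref{fact:RNC}), to show that $X$ is measure-isomorphic to the homogeneous space $u\CM/\mathrm{Stab}(ux_0)$. If that fails, the fallback is a Mackey-type argument: $L^\infty(X,\mu)$ is a $G$-invariant von Neumann subalgebra of $L^\infty(B,\mathfrak h_B)$ with $G$ dense in $B$, so it consists of the invariants of a closed subgroup $H\le B$. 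One then identifies $H=\gal_B$ by computing which elements of $B$ act trivially on the continuous functions pulled back from $X$.

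Finally, I would compose the isomorphisms and check that the traces match: both sides have traces given by the unique invariant measures, which are preserved by construction.
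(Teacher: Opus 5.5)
Your Steps 1 and 2 and the \emph{easy} inclusion match parts (A), (B) and the first half of (C) of the paper's proof, with three small remarks:
\begin{enumerate}
\item[(a)] The map in Step 2 is exactly the continuous semigroup epimorphism $w_2|_{\CM}$, $\xi\mapsto\xi e_B$.
\item[(b)] For the easy inclusion you also need $(R_\eta)_*\mu_{\CM}=\mu_{\CM}$ for $\eta\in u\CM$. This holds by unique ergodicity, since $R_\eta$ is a $G$-automorphism of $\CM$.
\item[(c)] Theorem \ref{thm:1.2} is not needed to define $\gal_B:=w_2[\gal_X]$.
\end{enumerate}

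The genuine gap is the reverse inclusion $L^\infty(B,\mathfrak h_B)^{\gal_B}\subseteq\Phi_x[L^\infty(X,\mu)]$. You correctly flag it as the crux but do not prove it. Your first route gives no argument. That $X$ is measure-isomorphic to $u\CM/\mathrm{Stab}(ux_0)$ is essentially a restatement of the theorem, and density of $u\CM$ plus Hausdorffness do not by themselves produce a map.

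The missing idea is that $B/\gal_B$ is a \emph{topological} factor of $X$. With $x:=ux_0$, set $\rho_x(\xi x):=w_2(\xi)\gal_B$ for $\xi\in\CM$ (Remark \ref{remark: rho X map}).
\begin{enumerate}
\item[(1)] It is well defined: $\xi x=\zeta x$ implies $(u\zeta)^{-1}(u\xi)\in\gal_X$, and $w_2(u)=e_B$.
\item[(2)] It is continuous: $\ev_x\colon\CM\to X$ is a quotient map and $\rho_x\circ\ev_x=q\circ w_2|_{\CM}$, where $q\colon B\to B/\gal_B$ is the quotient map.
\item[(3)] Pushing $\mu_{\CM}$ forward along both sides gives $(\rho_x)_*\mu=\mathfrak h_{B/\gal_B}$. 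Pulling back gives $q^*=\Phi_x\circ\rho_x^*$.
\end{enumerate}
Hence every element $q^*F$ of $L^\infty(B,\mathfrak h_B)^{\gal_B}=q^*[L^\infty(B/\gal_B,\mathfrak h_{B/\gal_B})]$ equals $\Phi_x(\rho_x^*F)$. The measurability problem you anticipate never arises, and only the easy parts of Fact \ref{fact: Takesaki-Tatsuuma Galois} are used.

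Your fallback can be completed, but the sentence \emph{one then identifies $H=\gal_B$} hides all the work, and two ingredients are missing.

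First, $\Phi_x[L^\infty(X,\mu)]$ is a priori invariant only under left translations by $\fb_2[G]$. The hard direction (i)$\Rightarrow$(ii) of Takesaki--Tatsuuma needs invariance under all of $B$. You get this because $\Phi_x[L^\infty(X,\mu)]$ is weak-$*$ closed (it is the range of a normal $*$-monomorphism) and $b\mapsto L_bF$ is weak-$*$ continuous.

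Second, you need $H\subseteq\gal_B$; the inclusion $\gal_B\subseteq H$ is your easy inclusion.
\begin{enumerate}
\item[(1)] Given $h\in H$, use surjectivity of $w_2|_{u\CM}$ (Remark \ref{remark: w2=theta}) to write $h=w_2(\eta)$ with $\eta\in u\CM$.
\item[(2)] Since $(w_2|_{\CM})^*\circ R_h=(R_\eta)^*\circ(w_2|_{\CM})^*$, for every $f\in C(X)$ you get $f(\xi\eta x)=f(\xi x)$ for $\mu_{\CM}$-a.e.\ $\xi\in\CM$.
\item[(3)] By continuity and $\supp(\mu_{\CM})=\CM$, this holds for all $\xi$.
\item[(4)] Taking $\xi=u$ gives $\eta x=x$, i.e.\ $\eta\in\gal_X$.
\end{enumerate}
With these additions the fallback is a valid alternative. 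It is heavier than the paper's factor map $\rho_x$, because it relies on the nontrivial direction of Takesaki--Tatsuuma.
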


This can be expressed also as follows.
For a probability space $(Y,\nu)$, let $\operatorname{MAlg}(Y,\nu)$ denote its complete Boolean algebra of measurable sets modulo null sets.

\begin{cor}[Corollary \ref{rem: homogeneous MAlg}]
If $(X,G)$ is an amenable tame minimal flow, then 
$$\operatorname{MAlg}(X,\mu)\cong_G\operatorname{MAlg}(B/\gal_B,\mathfrak h_{B/\gal_B}).$$
Here $\cong_G$ denotes a measure-preserving $G$-equivariant
isomorphism of complete Boolean algebras.
\end{cor}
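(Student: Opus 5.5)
The plan is to deduce the Corollary directly from Theorem \ref{thm: measured vNeumann} via the standard duality between complete measure algebras and commutative von Neumann algebras with a faithful normal trace. Fix the normal, trace-preserving, $G$-equivariant $\ast$-isomorphism
$$\Phi\colon L^\infty(X,\mu)\to L^\infty(B/\gal_B,\mathfrak h_{B/\gal_B})$$
given by that theorem. For any probability space $(Y,\nu)$, the elements of $\operatorname{MAlg}(Y,\nu)$ correspond bijectively to the projections of $L^\infty(Y,\nu)$, via $[A]\mapsto \mathbbm{1}_A$. The order and lattice operations on $\operatorname{MAlg}$ match the projection order and the lattice operations on projections, with complement corresponding to $1-p$. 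The measure of $[A]$ is the trace $\int \mathbbm{1}_A\,d\nu$.

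So I define $\Psi([A])$ to be the class in $\operatorname{MAlg}(B/\gal_B,\mathfrak h_{B/\gal_B})$ corresponding to the projection $\Phi(\mathbbm{1}_A)$. The verification goes in four steps.
\begin{enumerate}[(i)]
\item Any $\ast$-isomorphism maps projections bijectively onto projections and preserves the order $p\le q \iff pq=p$. Hence $\Psi$ is an order isomorphism of Boolean algebras, preserving complements since $\Phi(1)=1$.
\item Completeness is preserved. Suprema of arbitrary families of projections are strong/weak$^*$ limits of finite suprema along a net, and $\Phi$ is normal. Alternatively, any order isomorphism between complete lattices automatically preserves arbitrary joins and meets, so normality is not even needed for this step.
\item $\Psi$ is measure-preserving because $\Phi$ is trace-preserving: $\mathfrak h_{B/\gal_B}(\Psi[A])=\tau(\Phi(\mathbbm{1}_A))=\tau(\mathbbm{1}_A)=\mu(A)$.
\item $\Psi$ is $G$-equivariant because the $G$-action on $L^\infty$ is $(g f)(y)=f(g^{-1}y)$. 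This action sends $\mathbbm{1}_A$ to $\mathbbm{1}_{gA}$, and $\Phi$ intertwines these actions.
\end{enumerate}

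One should also check that $\mu$ and the Haar-type measure are Borel probability measures on possibly non-metrizable compact spaces. This is harmless, since $\operatorname{MAlg}$ and $L^\infty$ are defined with respect to the completed (or Borel) $\sigma$-algebra in both cases, and the correspondence between projections and measurable sets modulo null sets holds for any probability space.

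There is no real obstacle. The only point requiring care is to make sure the identification of projections of $L^\infty(Y,\nu)$ with $\operatorname{MAlg}(Y,\nu)$ is used for the same $\sigma$-algebra conventions as in Theorem \ref{thm: measured vNeumann}. Every projection is an idempotent self-adjoint essentially bounded function, hence a.e.\ equal to an indicator function of a measurable set.
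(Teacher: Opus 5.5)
Your proposal is correct and takes the same route as the paper. The paper gives no separate argument: it states the corollary as an immediate reformulation of Theorem \ref{thm: measured vNeumann}(C), via the standard identification of $\operatorname{MAlg}(Y,\nu)$ with the projection lattice of $L^\infty(Y,\nu)$. You have simply written out that dictionary explicitly: projections and their order, completeness, the trace as the measure, and the $G$-action on indicators.
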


In Subsection \ref{subsec: X-MEF 2}, we use Theorem \ref{thm: measured vNeumann} to get a similar identification of $\operatorname{MAlg}(X,\mu)$ and $\operatorname{MAlg}(Z,\lambda_Z)$\footnote{Alternatively, the corresponding von Neumann algebras.}, where $\pi \colon X \to Z$ is the MEF of $(X,G)$ and $\lambda_Z$ is the unique $G$-invariant Borel probability measure on $Z$. 
Using GNC (i.e., Theorem \ref{thm:1.2}), we also show
that  $\pi$ is a proximal map, 
and this is already used in an ongoing project of the first author with Andy Zucker on extending the main results of \cite{Glasner2} to non-metrizable flows.

All these results form a contribution to understanding of the structure of amenable tame minimal flows without the metrizability assumption.

\subsection{Original Newelski's Conjecture and new results in model-theoretic contexts}

First, we recall some history related to Newelski's conjecture and its variants. For more details (including definitions) see Subsection \ref{subsection: definable groups}.

Let $G=G(M)$ be a group definable in a structure $M$. We consider the ambit $(S_{G,\textrm{ext}}(M),\tp_{\textrm{ext}}(e/M),G)$ of all complete external types over $M$ concentrated on $G$ (i.e., the space of ultrafilters in the Boolean algebra of externally definable subsets of $G$). It turns out that there is a left continuous semigroup operation $*$ on  $S_{G,\textrm{ext}}(M)$ extending the action of $G$, and so $S_{G,\textrm{ext}}(M)$ can be identified with its Ellis semigroup (as a semigroup and $G$-flow). Let $\CM$ be a minimal left ideal in $S_{G,\textrm{ext}}(M)$ and $u \in \CM$ an idempotent. Let $\bar{G}^{00}_M$ be the smallest $M$-type-definable subgroup of bounded index in $\bar G$, where $\bar{G}$ is the interpretation of $G$ in a sufficiently saturated model (so-called monster model). Then $\bar{G}/\bar{G}^{00}_M$ equipped with the {\em logic topology} (a subset is closed if its preimage in $\bar G$ is type-definable) is a compact (Hausdorff) topological group. The component $\bar{G}^{00}_M$ and the quotient $\bar{G}/\bar{G}^{00}_M$ play a very important role in model theory (especially in theories with NIP).

In \cite{newelski09}, Newelski introduced a natural epimorphism $\rho \colon u\CM \to  \bar{G}/\bar{G}^{00}_M$ (see Definition \ref{definition: Newelski's map}). {\em Newelski's conjecture} predicted that it is an isomorphism under suitable model-theoretic assumptions. This was an interesting connection between the  topological dynamics object $u\CM$ and the model-theoretic object $\bar{G}/\bar{G}^{00}_M$. Later, the various refinements and variants of Newelski's map \cite{Krupinski2017}, \cite{KruPiRze}, \cite{KrRz}, \cite{Krupinski2023a} led to important contributions in model theory (e.g., to understand the complexity of strong types) and to additive combinatorics (to construct generalized locally compact models for arbitrary approximate subgroups in a simpler and ``more classical'' way than originally done by Hrushovski).

Newelski's conjecture was refuted in \cite{GismatullinPenazziPillay2015Some} for groups definable in NIP theories, concretely for $G:=\SL_2(\mathbb{R})$ treated as a group definable in $M:=(\mathbb{R},+,\cdot)$. The main positive result is the following, where definable amenability of $G$ means that the flow $(S_G(M),G)$ is amenable (i.e., $S_G(M)$ admits an invariant Borel probability measure).

\begin{fact}[Theorem 1.5 in \cite{ArtemPierre}]\label{fact: Theorem 1.5 from CS in intro}
Newelski's conjecture holds for definably amenable groups definable in NIP theories.
\end{fact}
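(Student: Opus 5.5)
We would deduce this from Theorem \ref{thm:1.2} by specializing to the flow $X:=S_G(M)$. The plan is to check the two hypotheses of the theorem for this flow, and then to identify both sides of $\widehat{\theta}$ with the two sides of Newelski's map $\rho$.

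\textbf{Step 1: tameness and identification of the Ellis semigroup.} Under NIP, the flow $(S_G(M),G)$ is tame. This is the standard translation of NIP into the Bourgain--Fremlin--Talagrand condition: for each formula, the family of functions $p\mapsto \mathbbm{1}_{\varphi(x\cdot g)\in p}$ contains no independent sequence. We also use the known identification of the Ellis semigroup $E(S_G(M),G)$ with $(S_{G,\textrm{ext}}(M),*)$ as a semigroup and as a $G$-ambit. Hence a minimal left ideal $\CM$ and an idempotent $u$ may be taken inside $S_{G,\textrm{ext}}(M)$. In particular, the Ellis groups on the two sides coincide.

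\textbf{Step 2: hereditary amenability.} Every subflow of $S_G(M)$ contains a minimal subflow, and an invariant measure on that minimal subflow is also an invariant measure on the larger subflow. So it suffices to show that every minimal subflow is amenable. For definably amenable NIP groups, the minimal subflows consist of $f$-generic types. For an $f$-generic $p$, its stabilizer contains $\bar G^{00}_M$. Pushing forward the Haar measure of $\bar G/\bar G^{00}_M$ along $g\bar G^{00}_M\mapsto gp$ (a Keisler-measure construction) gives a $G$-invariant measure supported on $\overline{Gp}$. We would quote this from the NIP literature rather than reprove it. Theorem \ref{thm:1.2} then tells us that $\widehat{\theta}\colon u\CM\to B$ is a topological isomorphism, where $B$ is the Bohr compactification below $(S_{G,\textrm{ext}}(M),\tp_{\textrm{ext}}(e/M),G)$.

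\textbf{Step 3 (main obstacle): identifying $B$ with $\bar G/\bar G^{00}_M$.} A homomorphism $G\to K$ into a compact group that factors continuously through $S_{G,\textrm{ext}}(M)$ is exactly a homomorphism whose fibers of closed sets are externally definable, i.e. type-definable in the Shelah expansion $M^{\textrm{ext}}$. By the definability results for such ``definable compactifications'', $B\cong \bar G/\bar G^{00}_{M^{\textrm{ext}}}$, computed in the monster model of $\mathrm{Th}(M^{\textrm{ext}})$. We then need two further facts.
\begin{itemize}
\item Under NIP, passing to $M^{\textrm{ext}}$ does not change the component: the quotient $\bar G/\bar G^{00}$ is the same compact group. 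We expect to derive this from externally definable sets being traces of definable sets over a larger model, combined with the fact that $\bar G^{00}$ is invariant under adding external parameters in NIP theories.
\item Under this identification, $\widehat{\theta}(\eta)=\eta e_B$ agrees with Newelski's $\rho$. This is a routine unwinding of definitions: for $\eta=\tp(a/M)$ in the appropriate sense, both maps send $\eta$ to $a\bar G^{00}_M$.
\end{itemize}
Combining Steps 1--3, $\rho$ is an isomorphism, which is Newelski's conjecture in this setting. The invariance of $\bar G^{00}$ under the Shelah expansion is the point where we expect the real difficulty to lie.
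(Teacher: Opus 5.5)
\textbf{The gap is in Step~1.} Your overall plan is the paper's plan: specialize the abstract GNC, then use NIP facts about the Shelah expansion to identify the Bohr compactification with $\bar G/\bar G^{00}_M$. The break comes where you connect the two Ellis groups.

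Newelski's map $\rho$ lives on the Ellis group $u\CM$ of $S_{G,\mathrm{ext}}(M)\cong S_{G,M}(N)$, which is its own Ellis semigroup. Theorem~\ref{thm:1.2} applied to $X=S_G(M)$ only controls the Ellis group of $E(S_G(M),G)$. You bridge the two with a ``known identification'' $E(S_G(M),G)\cong S_{G,\mathrm{ext}}(M)$. You give no argument for it, and the paper does not use it.

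What is available is only the canonical semigroup and ambit epimorphism $S_{G,\mathrm{ext}}(M)\to E(S_G(M),G)$. The Ellis semigroup of $S_G(M)$ is the Stone space of the $d$-closure of the algebra of definable sets. That $d$-closure is generated by the sets $\{g\in G:\ \models\varphi(gb)\}$ with $b\in\bar G$, and is a priori only a subalgebra of the externally definable sets. This is why the paper keeps three compactifications $B_3\to B_2\to B_1$ separate.

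Without injectivity of this epimorphism on Ellis groups, your argument only shows the following: $\rho$ factors as the group epimorphism $u\CM\to v\mathcal N$ onto the Ellis group of $E(S_G(M),G)$ (Fact~\ref{fact:ellis-group-functoriality}), followed by isomorphisms. So injectivity of $\rho$ is not obtained.

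The repair is the paper's route: apply Theorem~\ref{theorem: GNC} directly to the ambit $S_{G,M}(N)$ with its operation $*$. This needs two inputs.
\begin{itemize}
\item Tameness of $S_{G,\mathrm{ext}}(M)$. Without the identification this does not follow from tameness of $S_G(M)$; it comes from Shelah's theorem that $M^{\mathrm{Sh}}$ is NIP.
\item Amenability of $S_{G,\mathrm{ext}}(M)$, which is \cite[Theorem 3.17]{CPS14}.
\end{itemize}

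\textbf{Steps 2 and 3 become correct once the right facts are cited.}

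In Step~2, the map $g\bar G^{00}_M\mapsto gp$ is not defined for $p\in S_G(M)$ and $g\in\bar G$. A stabilizer in $G(M)$ cannot contain $\bar G^{00}_M$; that statement is about global types. Even after passing to a global $f$-generic extension, it is not clear that the resulting measure is supported on the given minimal subflow. You do not need this anyway. Amenability of $S_{G,\mathrm{ext}}(M)$ is what Theorem~\ref{theorem: GNC} requires. It also gives hereditary amenability of $S_G(M)$ at once, by pushing forward along $\ev_y$ from the factor $E(S_G(M),G)$.

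In Step~3, your first bullet is \cite[Theorem 4.15]{CPS14}. It gives that $\mathfrak f\circ\mathfrak g\colon B_3\to B_1$ is an isomorphism, and $B_1=\bar G/\bar G^{00}_M$ by \cite[Proposition 3.4]{GPPb}. Your suggested derivation from the invariance of $\bar G^{00}$ under adding parameters does not reach this. The task is to compare the component computed in a monster model of $\mathrm{Th}(M^{\mathrm{Sh}})$ with the one computed for $T$, and these are different theories.

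Your second bullet is Lemma~\ref{lemma: Newelski's map and theta for definable groups}, and it is not pure unwinding. One must check that $p\cdot(b/\bar G^{00}_M):=ab/\bar G^{00}_M$ is well defined and left continuous, where $\tp(a/N,b)$ is finitely satisfiable in $M$. Well-definedness uses that cosets of $\bar G^{00}_M$ are unions of Lascar strong types over $M$. Once checked, this is the unique left-continuous action used to define $\widehat\theta(\eta)=\eta e_B$.
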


A very important refinement of Newelski's map was obtained in \cite{Krupinski2017}:
$$u\CM \to u\CM/H(u\CM) \to \bar{G}/\bar{G}^{000}_M \to \bar{G}/\bar{G}^{00}_M$$
with all the maps being epimorphisms of groups and topological quotient maps, where $u\CM/H(u\CM)$ is the largest Hausdorff quotient of $u\CM$ and 
$\bar{G}^{000}_M$ is the smallest $M$-invariant bounded index subgroup of $\bar{G}$. This yields many counter-examples to Newelski's conjecture. The variant of $u\CM/H(u\CM)$ for definable approximate subgroups in place of definable groups led to the aforementioned dynamical construction of generalized locally compact models for arbitrary approximate subgroups in \cite{Krupinski2023a}. The above refinement also led to a weakening of Newelski's conjecture stated as \cite[Conjecture 5.3]{Krupinski2023a} and called {\em Revised Newelski's conjecture} (RNC):  If $G$ is definable in a NIP structure $M$, then $u\CM$ is Hausdorff in the $\tau$-topology (equivalently, $H(u\CM) = \{u\}$). It was first confirmed in \cite{CGK} for countable structures. The additional countability assumption was needed, as the proof relied on a structure theorem by Glasner from \cite{Glasner2} stated for metric tame minimal flows. Then \cite{BaZu} used forcing techniques to remove this countability assumption. In fact, Theorem 11.7 from \cite{BaZu} yields a generalization of RNC to the abstract context of tame flows which we stated in  Fact \ref{fact:RNC} and called GRNC. The GRNC was fundamental for the results in \cite{HoffKru1} and is also fundamental in this paper. 
RNC also played a fundamental role in the recent beautiful paper \cite{Stonestrom2026} by A. Stonestrom to describe the structure of Ellis groups and of quotients of $\bar{G}$ by certain local components in the NIP context (under some additional assumption).

\medskip

There is also a variant of the above theory for first order theories in place of definable groups (see Subsection \ref{subsection: first order theories} for definitions, more details, and references). Let $T$ be a first order theory and $\FC \models T$ a sufficiently saturated model. Let $\bar c$ be an enumeration of $\FC$ and $S_{\bar c}(\FC)$ the space of complete types over $\FC$ which contain $\tp(\bar c/\emptyset)$. Then $(S_{\bar c}(\mathfrak{C}),\tp(\bar c/\mathfrak{C}), \aut(\mathfrak{C}))$ is an ambit with respect to the obvious action. This ambit plays a central role in the topological dynamics of first order theories, as it captures a lot of information about the underlying theory $T$ (e.g., see \cite{KruPiRze,Krupinski2019,Hrushovski2019,KrRz,HruKruPi,GHK}).

Let $E:=E(S_{\bar c}(\FC),\aut(\FC))$, $\CM$ be a minimal left ideal in $E$, and $u \in \CM$ an idempotent. A counterpart of Newelski's map was introduced in \cite{KruPiRze}; it is an epimorphism $\rho \colon u\CM \to \gal_{KP}(T)$ from the Ellis group to the {\em Kim-Pillay Galois group} of $T$. An analogous refinement of $\rho$ to the one described above for definable groups was also obtained in the context of theories, which led in \cite{KruPiRze,KrRz} to important applications to the study of complexity of strong types which were not accessible by the previously known methods.

We then have an obvious counterpart of Newelski's Conjecture, and Theorem 0.7 in \cite{Krupinski2019} is a counterpart of Fact \ref{fact: Theorem 1.5 from CS in intro}.

\begin{fact}[Theorem 0.7 in \cite{Krupinski2019}]\label{fact: NC for theories in intro}
If $T$ is NIP and amenable (meaning that the flow $(S_{\bar c}(\mathfrak{C}), \aut(\mathfrak{C}))$ is amenable), then $\rho$ is an isomorphism.
\end{fact}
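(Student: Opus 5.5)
The plan is to derive the statement from the abstract GNC (Theorem \ref{thm:1.2}) applied to the flow $(X,G):=(S_{\bar c}(\FC),\aut(\FC))$ with $G:=\aut(\FC)$ treated as a discrete group. The first step is the standard dictionary between NIP and tameness: $T$ is NIP if and only if $(S_{\bar c}(\FC),\aut(\FC))$ is tame. I would prove this by showing that a witness to non-tameness, namely an independent sequence of continuous functions, can be taken to come from formulas $\varphi(\bar x,\bar b)$ and their $\aut(\FC)$-translates; an infinite independent family of such translates then yields a formula with the independence property. By Rosenthal-type dichotomies this reduces to clopen sets. The second step is hereditary amenability. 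Amenability of $T$ gives an invariant measure on $S_{\bar c}(\FC)$ itself, but Theorem \ref{thm:1.2} asks for invariant measures on every subflow. For this I would use the transfer-of-amenability result (ii) announced in the abstract, together with \cite[Corollary 5.6]{HoffKru1}. Alternatively, I would inspect the proofs and note that they only use amenability of $\CM\subseteq E$, which the paper obtains from tameness and amenability of $X$ for Ellis ambits.

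The third step identifies the target. Theorem \ref{thm:1.2} gives that $\widehat\theta\colon u\CM\to B$ is a topological isomorphism, where $\fb\colon G\to B$ is the Bohr compactification of $\aut(\FC)$ below $(E,\id,G)$. It remains to show that $B\cong \gal_{KP}(T)$ compatibly, so that $\rho$ corresponds to $\widehat\theta$. In one direction, the natural map $\aut(\FC)\to\gal_{KP}(T)$ is a homomorphism with dense image into a compact group. It factors through $S_{\bar c}(\FC)$ via $\tp(\sigma(\bar c)/\FC)\mapsto [\sigma]$, which is well defined and continuous because $E_{KP}$ is type-definable and bounded. Composing with $E\to S_{\bar c}(\FC)$, $\eta\mapsto\eta(\tp(\bar c/\FC))$, it therefore factors through $E$. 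By universality this gives an epimorphism $B\to\gal_{KP}(T)$, and checking on $G$ shows that $\rho$ equals this map composed with $\widehat\theta$.

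The main obstacle is injectivity of $B\to\gal_{KP}(T)$, i.e.\ showing that $\gal_{KP}(T)$ is already the largest compact group quotient of $\aut(\FC)$ below $E$. I would attack it as follows. Any continuous factor map $E\to K$ onto a compact group $K$, equivariant with respect to $\fb$, induces a bounded invariant equivalence relation on realizations of $\tp(\bar c)$: two tuples are related when their types over $\FC$, or rather suitable elements of $E$, have the same image in $K$. Continuity of the factor map should make this relation type-definable. Since $E_{KP}$ is the finest bounded type-definable relation, the kernel of $\aut(\FC)\to K$ would then contain $\autf_{KP}(\FC)$. The delicate points are, first, passing from continuity on $E$ rather than on $S_{\bar c}(\FC)$ to type-definability, for which I would use the GNC-type fact that $E$ and $S_{\bar c}(\FC)$ have the same Bohr compactification-below, via the left continuous structure; and second, the usual issue of working with a larger monster model to evaluate boundedness. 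If this identification is established, then $\rho$ is an isomorphism.
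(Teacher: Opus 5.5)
\textbf{There is a genuine gap at step three.}

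Your first two steps and the factorization are fine and match the paper:
\begin{itemize}
\item tameness is \cite[Corollary 5.8]{KrRz};
\item hereditary amenability follows from Proposition \ref{proposition: transfer of amenability for theories} together with Fact \ref{Fact: 5.6}. Your alternative, using only amenability of $S_{\bar c}(\FC)$, would not suffice, since $S_{\bar c}(\FC)$ is not an Ellis ambit;
\item $\rho=\mathfrak{f}\circ\widehat{\theta}$ with $\mathfrak{f}\colon B_2\to B_1=\gal_{KP}(T)$ is Lemma \ref{lemma: Newelski's map and theta for theories} together with Proposition \ref{proposition: Bohr below S(C) agrees with GAL_KP}.
\end{itemize}
But this only gives that $\widehat{\theta}$ is an isomorphism. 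So $\rho$ is an isomorphism if and only if $\mathfrak{f}$ is injective, i.e.\ if and only if $\mathfrak{b}_1$ and $\mathfrak{b}_2$ coincide. That is exactly Question \ref{question: b_1=b_2 under NIP}, which the paper leaves open. The paper says explicitly that GNC specializes here to a strictly weaker statement than this Fact. It also says that the coincidence of $\mathfrak{b}_1$ and $\mathfrak{b}_2$ under NIP and amenability is currently known only a posteriori, from \cite[Theorem 0.7]{Krupinski2019} itself. Accordingly, the paper does not prove this Fact but cites it; the proof in \cite{Krupinski2019} is model-theoretic, not via Bohr compactifications below $E$.

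Your argument for injectivity does not close this gap.
\begin{itemize}
\item A compactification $c\colon\aut(\FC)\to K$ below $E$ induces a relation on realizations of $\tp(\bar c)$ only if its ambit map $E\to K$ factors through $\eta\mapsto\eta(\tp(\bar c/\FC))$. But then $c$ is, by definition, a compactification below $S_{\bar c}(\FC)$, so you are assuming what you want to prove.
\item Example \ref{example: B_1 not equal  to B_2} shows that this factorization can fail for tame, even minimal equicontinuous, flows. So tameness and hereditary amenability alone cannot supply it.
\item Appealing to ``$E$ and $S_{\bar c}(\FC)$ have the same Bohr compactification below'' is circular. Corollary \ref{cor: all the same} does not apply either, because $S_{\bar c}(\FC)$ is not known to carry a left continuous semigroup operation extending the action.
\end{itemize}
If instead you work with elements of $E$, continuity of $w_2$ only makes preimages of closed sets closed in the pointwise convergence topology on $\aut(\FC)$. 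This is because basic closed subsets of $E$ constrain $\eta(p)$ for arbitrary $p\in S_{\bar c}(\FC)$, so you get closedness, not relative type-definability. That yields $\autf_{\textrm{L}}(\FC)\leq\ker(\mathfrak{b}_2)$ (Lemma \ref{lemma: comparison of kernels}) and, with G-compactness, $\ker(\mathfrak{b}_2)=\autf_{\textrm{KP}}(\FC)$ (Corollary \ref{corollary: G-compactness implies equality of kernels}).

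Even this kernel equality is not enough. You would also need the topology on $\aut(\FC)/\ker(\mathfrak{b}_2)$ inherited from $B_2$ to coincide with the logic topology; this is the analogue of Lemma \ref{lemma: induced top weaker than logic top} for $\mathfrak{b}_2$. Only then is $\mathfrak{b}_2$ surjective. Otherwise $B_2$ can be strictly larger than the dense copy of $\gal_{KP}(T)$ inside it, and $\mathfrak{f}$ can have a nontrivial kernel. This topological comparison is the missing idea, and it needs genuinely model-theoretic input of the kind used in \cite{Krupinski2019}.
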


\medskip

Coming back to definable groups, in Subsection \ref{subsection: definable groups} we explain that Fact \ref{fact: Theorem 1.5 from CS in intro} follows from Theorem \ref{theorem: main theorem in into}. It is not completely direct, as the quotient map $G \to \bar{G}/\bar{G}^{00}_M$ is the Bohr compactification of $G$ below the ambit $(S_G(M),\tp(e/M),G)$ (see \cite{GPPb})  which is a factor of the Bohr compactification of $G$ below the ambit $(S_{G,\textrm{ext}}(M),\tp_{\textrm{ext}}(e/M),G)$, and we observe that the map $\rho$ is the composition of the map $\widehat{\theta} \colon u\CM \to B$ with this factor map. Moreover, in Theorem \ref{theorem: main theorem in into}, we assume hereditary amenability (equivalently amenability) of $(S_{G,\textrm{ext}}(M),G)$ while in  Fact \ref{fact: Theorem 1.5 from CS in intro} only amenability of $(S_G(M),G)$ is assumed. However, by the results from \cite{CPS14}, assuming NIP, the aforementioned factor map is an isomorphism and amenability transfers from $S_G(M)$ to $S_{G,\textrm{ext}}(M)$, so all the obstacles vanish.

In the context of first order theories, we prove a counterpart of the aforementioned result from \cite{GPPb}: the natural map $\Aut(\FC) \to \gal_{KP}(T)$ is the Bohr compactification of $\aut(\FC)$ below the ambit $(S_{\bar c}(\mathfrak{C}),\tp(\bar c/\mathfrak{C}), \aut(\mathfrak{C}))$. We also prove that under NIP amenability of $T$ implies amenability of the Ellis flow $E:=E(S_{\bar c}(\FC),\aut(\FC))$, which is a counterpart of the aforementioned result from \cite{CPS14}. We conjecture that NIP implies that the Bohr compactifications below  the ambits $(S_{\bar c}(\mathfrak{C}),\tp(\bar c/\mathfrak{C}), \aut(\mathfrak{C}))$ and $(E,\id_{S_{\bar c}(\FC)},\aut(\FC))$ coincide (as for definable groups). Because we have not proved this last conjecture, Theorem \ref{theorem: main theorem in into} specializes to a slightly weaker statement than Fact \ref{fact: NC for theories in intro}: the map $\rho$ is the composition of $\widehat{\theta}$ with an epimorphism from the Bohr compactification below $(E,\id_{S_{\bar c}(\FC)},\aut(\FC))$ to the one below  $(S_{\bar c}(\mathfrak{C}),\tp(\bar c/\mathfrak{C}), \aut(\mathfrak{C}))$ (which is $\gal_{KP}(T))$, so the fact that $\widehat{\theta}$ is an isomorphism is weaker than the fact that $\rho$ is an isomorphism.

Example \ref{example: B_1 not equal  to B_2} shows that in the general context one cannot improve Theorem \ref{theorem: main theorem in into} by saying that the composition of $\widehat{\theta}$ with the canonical epimorphism from the Bohr compactification below $(E(X,G),\id_X,G)$ to the one below $(X,x_0,G)$ is an isomorphism. This leads to an interesting general problem on extracting a dynamical reason why something like that is true in the above two model-theoretic contexts. This and another problem in this spirit is stated in Subsection \ref{subsection: two problems}.

Besides recovering Fact \ref{fact: Theorem 1.5 from CS in intro} and a weaker form of Fact \ref{fact: NC for theories in intro}, Theorem \ref{theorem: main theorem in into} may lead to variants of Newelski's conjecture for some more specific model-theoretic ambits (e.g, the ones considered in \cite{GHK}). In Subsection \ref{subsection: local NC}, we obtain a local variant of Newelski's conjecture for definable groups, where the ambit in question is a hereditarily  amenable ambit of the form $(S_{G,\Delta}(M),\tp_{\Delta}(e/M),G)$ for a definable group $G$ and a set of formulas $\Delta$ each of which has NIP and the set of the instances of the formulas from $\Delta$ is closed under the left translations (where $S_{G,\Delta}(M)$ denotes the space of complete $\Delta$-types over $M$ concentrated on $G$). Under the additional assumption that the set of the instances of the formulas from $\Delta$ is closed under group inversion, we give a description of the Bohr compactification of $G$ below  $(S_{G,\Delta}(M),\tp_{\Delta}(e/M),G)$.

\subsection{Acknowledgements} 
The first author is thankful to Alessandro Codenotti, Eli Glasner 
and Andy Zucker for sharing their insights on related topics.

\subsection{No AI declaration} AI was not used for any part of this paper, neither in the mathematical
content nor in the writing.

\section{Preliminaries}
\label{sec:preliminaries}
Throughout, $G$ denotes a discrete group. All compact spaces are assumed to be Hausdorff, all probability measures are regular Borel measures.
We are following the notation and conventions from our previous work, \cite{HoffKru1}, some of them shortly recalled below.

\subsection{Flows and invariant measures}
\label{subsec:flows-and-invariant-measures}

 A \emph{subflow} of a flow is a nonempty closed $G$-invariant subset, and a \emph{factor map} is a continuous surjective $G$-map between $G$-flows.

For a compact space $X$, let $C(X):=C(X,\mathbb C)$, endowed with the
uniform norm and the involution $f^*(x):=\overline{f(x)}$.

We write $\Prob(X)$ for the compact convex space of regular Borel
probability measures on $X$, equipped with the weak-$*$ topology inherited
from $C(X)^*$. If $\pi:X\to Y$ is Borel and $\mu\in\Prob(X)$, then
$\pi_*\mu$ denotes the pushforward of $\mu$. For $g\in G$, we write
$g_*\mu$ for the pushforward under $x\mapsto gx$, and put
$$\Prob_G(X):=\{\mu\in\Prob(X):g_*\mu=\mu\text{ for every }g\in G\}.$$
An invariant measure is \emph{ergodic} if it is an extreme point of
$\Prob_G(X)$.
The flow $(X,G)$ is \emph{amenable} if $\Prob_G(X)\neq\varnothing$, and it
is \emph{uniquely ergodic} if $\Prob_G(X)$ is a singleton. It is
\emph{hereditarily amenable} if every subflow of $X$ is amenable. The action
of $G$ on $C(X)$ is given by $(g\cdot f)(x):=f(g^{-1}x)$.

\begin{fact}[Theorem 4.2 in \cite{HoffKru1}]\label{lemma : min tame unique ergodicity}
Every minimal tame amenable flow is uniquely ergodic.
\end{fact}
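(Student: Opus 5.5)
The plan is to pin down $\int f\,d\mu$, for every $\mu\in\Prob_G(X)$ and $f\in C(X)$, as a number depending only on $f$, by averaging over the Ellis group. The ingredients are the Ellis group $u\CM$, which is a compact topological group by Fact~\ref{fact:RNC}, its Haar measure, and the fact that, by tameness, every invariant measure is also invariant under the whole Ellis semigroup. Amenability is used only to guarantee $\Prob_G(X)\neq\varnothing$. Minimality is used to make $u\CM$ act transitively on $uX$.

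\emph{Setup.} Let $(X,G)$ be minimal, tame and amenable, and put $E:=E(X,G)$. Choose a minimal left ideal $\CM\subseteq E$ and an idempotent $u\in\CM$, and fix $x_0\in uX$, so that $ux_0=x_0$. By minimality $\CM x_0=X$, and hence $uX=u\CM x_0$. Thus $u\CM$ acts transitively on $uX$, and the stabilizer of $x_0$ in $u\CM$ is a subgroup $H$. By Fact~\ref{fact:RNC}, $(u\CM,\tau)$ is a compact Hausdorff group; let $h$ be its normalized Haar measure. For $f\in C(X)$ define
$$F(y):=\int_{u\CM} f(\eta y)\,dh(\eta),\qquad y\in uX .$$
Left invariance of $h$ together with transitivity shows that $F$ is constant on $uX$, say $F\equiv c_f$. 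For this step I will need that $\eta\mapsto f(\eta x_0)$ is Haar-measurable, indeed continuous, on $(u\CM,\tau)$. I expect this to follow from the standard fact that the map $u\CM\to u\CM x_0$ is continuous for the $\tau$-topology and the appropriate quotient topology, but it must be checked carefully.

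\emph{$E$-invariance (main obstacle).} The key claim is that every $\mu\in\Prob_G(X)$ satisfies $\int f\circ p\,d\mu=\int f\,d\mu$ for all $p\in E$ and $f\in C(X)$. This is where tameness enters. In the metrizable case, $f\circ p$ is a pointwise limit of a sequence $f\circ g_n$, by Bourgain--Fremlin--Talagrand / Köhler, and dominated convergence gives the claim. In general I will use the tame characterization that the family $\{f\circ g: g\in G\}$ has no independent subsequence. By Rosenthal--Bourgain--Fremlin--Talagrand theory, its pointwise closure consists of functions that are universally measurable, and on which the integral against a fixed Radon measure is continuous. Hence $p\mapsto\int f\circ p\,d\mu$ is continuous on $E$. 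Since it is constant on the dense set $\{g_X:g\in G\}$, it is constant on all of $E$. I expect this to be the hardest point, because without metrizability one has no sequences and must invoke the measurability and compactness results for tame families.

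\emph{Averaging.} Let $\mu\in\Prob_G(X)$ and $f\in C(X)$. By $E$-invariance applied to $u$ and to each $\eta\in u\CM$, and then Fubini,
$$\int f\,d\mu=\int_{u\CM}\int_X f(\eta u x)\,d\mu(x)\,dh(\eta)=\int_X F(ux)\,d\mu(x)=c_f .$$
Using Fubini requires joint measurability of $(\eta,x)\mapsto f(\eta u x)$. I would obtain this from the same tameness and universal-measurability considerations, or avoid it altogether by approximating $h$ by finitely supported averages and passing to the limit via the continuity established in the previous step. Since $c_f$ does not depend on $\mu$, all invariant measures agree on $C(X)$. Therefore $\Prob_G(X)$ is a singleton, and $(X,G)$ is uniquely ergodic.
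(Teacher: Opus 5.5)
Your $E$-invariance step is not where the difficulty lies. At the level of measures it is just Fact~\ref{fact:tame-injective}: a $G$-invariant $\mu$ is fixed by every element of $E(\Prob(X),G)\cong E$. The genuine gap is in the averaging step, where you expect $\eta\mapsto f(\eta x_0)$ to be continuous on $(u\CM,\tau)$. That is false in general. If $\eta\mapsto\eta x_0$ were continuous from the compact space $(u\CM,\tau)$ into $X$, then $uX=u\CM x_0$ would be compact, hence closed in $X$. But $uX$ is typically a dense proper subset. Take a Sturmian subshift, which is minimal, tame and uniquely ergodic. There $uX$ contains every point whose fibre over the maximal equicontinuous factor (a circle rotation) is a singleton, so $uX$ is dense. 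On the other hand, $u$ sends both points of each double fibre to the same point, so $uX\neq X$. Whatever topology on $uX$ makes the orbit map $(u\CM,\tau)\to uX$ continuous, it is not the subspace topology from $X$. So nothing follows for $f\in C(X)$.

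Without continuity you have no argument that $\eta\mapsto f(\eta y)$ is Haar-measurable. You have even less for the $h\otimes\mu$-measurability of $(\eta,x)\mapsto f(\eta ux)$ that Fubini requires. These functions are continuous for the topology $u\CM$ inherits from $E$, but Haar measure lives on the $\tau$-Borel sets, a much smaller $\sigma$-algebra. Moreover, $x\mapsto\eta x$ is not continuous.

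Your fallback does not help. By $E$-invariance, each finite average $\frac1n\sum_i\int_X f(\eta_i ux)\,d\mu(x)$ already equals $\int f\,d\mu$, so continuity of $p\mapsto\int f\circ p\,d\mu$ on $E$ adds nothing. What you actually need is $\frac1n\sum_i f(\eta_i ux)\to c_f$ in $L^1(\mu)$. That is an equidistribution statement for the discontinuous functions $\eta\mapsto f(\eta\kappa x_0)$, uniform enough in $\kappa$, so it is the same missing regularity.

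The missing ingredient is a link between Haar measure on $(u\CM,\tau)$ and the compact topology of $\CM$. In the paper this role is played by $w_2$:
\begin{itemize}
\item it is continuous on $\CM$;
\item by GNC it is a $\tau$-isomorphism on $u\CM$;
\item $w_2(u\xi)=w_2(\xi)$, so $\xi\mapsto u\xi$ equals $(w_2|_{u\CM})^{-1}\circ w_2|_{\CM}$.
\end{itemize}
However, the analytic route to this goes through Lemma~\ref{lemma: L2 Bohr}, which invokes the very statement you are proving. So you would have to build such a link independently.

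Your heuristic picture is correct in the end. Theorem~\ref{thm: measured vNeumann} identifies $\mu$ with Haar measure on $B_2/\gal_B\cong u\CM/\gal_X$. But the measure-theoretic glue is exactly the non-trivial content of the non-metrizable case.
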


Recall also the definition of a group extension of a flow.

\begin{definition}
Let $\chi\colon (Y,G) \to (X,G)$ be a homomorphism of minimal flows. We say that $\chi$ is a {\em group extension} if there is a compact Hausdorff group $K$ acting faithfully and continuously on the right on $Y$ in such a way that:
\begin{enumerate} 
\item  for every $y \in Y$, $\chi^{-1}(\chi(y)) = y\cdot K$, and
\item for all $y \in Y$, $g \in G$ and $k \in K$, $(gy)\cdot k=g(y\cdot k)$.
\end{enumerate}
In the above situation, we also say that $(Y,G,K)$ is a {\em group extension} of $(X,G)$. 
\end{definition}

\subsection{Ellis semigroups and tameness}
The \emph{enveloping semigroup} of a flow $(X,G)$ is
$$E(X,G):=\overline{\{g_X:g\in G\}}^{\,X^X},\qquad g_X(x):=gx,$$
where $X^X$ carries the product topology. Multiplication is composition: $(pq)(x):=p(q(x))$.
For every fixed $q\in E(X,G)$, the map $p\mapsto pq$ is continuous. For
$x\in X$, the evaluation at $x$ is the continuous map
$$\ev_x \colon E(X,G)\longrightarrow X,
\qquad
\ev_x(p):=p(x),$$
and $\ev_x[E(X,G)]=\overline{Gx}$.
A factor map $\pi \colon X\to Y$ induces a continuous surjective
$G$-equivariant semigroup homomorphism
$$\pi_E \colon E(X,G)\longrightarrow E(Y,G)$$
satisfying $\pi\circ p=\pi_E(p)\circ\pi$ for every $p\in E(X,G)$.

Put $E:=E(X,G)$. We write $\CM\unlhd_m E$ for a minimal left ideal.
If $u^2=u\in\CM$, then
$K:=u\CM$ is the corresponding \emph{Ellis group}.
For $p\in E$ and $A\subseteq E$, define the Ellis circle operation by
\begin{IEEEeqnarray*}{rCl}
    p\circ A &:=& \bigl\{q\in E:\text{there are nets }(g_i)_i\subseteq G\text{ and }(a_i)_i\subseteq A \\
 & & \text{ such that }g_i\to p\text{ and }g_i a_i\to q\bigr\}.
\end{IEEEeqnarray*}
For $A\subseteq K$, put
$$\cl_\tau(A):=K\cap(u\circ A)=u(u\circ A).$$
The operator $\cl_\tau$ is a closure operator on $K$. The topology
determined by this closure operator is called the
\emph{$\tau$-topology}.
The $\tau$-topology is coarser than the topology on $K$ inherited
from $E$. Moreover, $(K,\tau)$ is always a quasi-compact
$T_1$ semitopological group, that is, multiplication is separately
continuous. No Hausdorffness is asserted here. See
\cite{Glasner1976} for the classical construction in the
$\beta G$ setting, and
\cite[Appendix A]{rzepecki2018}
for the general formulation used here.

Whenever $(K,\tau)$ is a compact Hausdorff topological group,
$\mathfrak h_K$ denotes its normalized Haar measure.

The next fact follows from Facts 2.3 and 2.5 in \cite{KLM}.

\begin{fact}
\label{fact:ellis-group-functoriality}
Let $(X,G)$ and $(Y,G)$ be flows, and let $\Phi\colon E(X,G)\longrightarrow E(Y,G)$ be a continuous surjective $G$-equivariant semigroup homomorphism.
Let $\CM\unlhd_m E(X,G)$, $u^2=u\in\CM$,
and put $\mathcal N:=\Phi[\CM]$, $v:=\Phi(u)$, $K:=u\CM$, $L:=v\mathcal N=\Phi[K]$.
Then $\mathcal N\unlhd_m E(Y,G)$, $v^2=v\in\mathcal N$,
and the restriction
$$
\Phi|_K\colon (K,\tau)\longrightarrow(L,\tau)
$$
is a continuous group epimorphism and a quotient map for the
$\tau$-topologies.
(In particular, the conclusion applies when $\Phi=\pi_E$ is induced
by a factor map $\pi \colon X\to Y$, and when $Y\subseteq X$ is a subflow
and $\Phi \colon E(X,G)\to E(Y,G)$ is the restriction map.)
\end{fact}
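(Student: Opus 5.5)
The plan is to establish the four claims in turn: $\mathcal N$ is a minimal left ideal, $v$ is an idempotent with $L=\Phi[K]$, the restriction $\Phi|_K$ is $\tau$-continuous, and $\Phi|_K$ is a $\tau$-quotient map. The last step is the only one that is not formal.

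\textbf{Algebraic part.} First, $\Phi(\id_X)=\id_Y$: the element $\Phi(\id_X)$ is a two-sided identity of $\Phi[E(X,G)]=E(Y,G)$, hence equals $\id_Y$. By $G$-equivariance, $\Phi(g_X)=g\Phi(\id_X)=g_Y$. Next, $\mathcal N$ is a left ideal, since $E(Y,G)\mathcal N=\Phi[E(X,G)\CM]\subseteq\mathcal N$. For minimality, let $\mathcal N'\subseteq\mathcal N$ be a left ideal and put $\CM':=\CM\cap\Phi^{-1}[\mathcal N']$. This set is nonempty because $\mathcal N'\subseteq\Phi[\CM]$. It is a left ideal, because $\Phi(pm)=\Phi(p)\Phi(m)\in\mathcal N'$ whenever $\Phi(m)\in\mathcal N'$. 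Minimality of $\CM$ gives $\CM'=\CM$, hence $\mathcal N'=\mathcal N$. Finally, $v^2=v$ is clear, $\Phi[u\CM]=v\mathcal N$, and $\Phi|_K$ is a group homomorphism onto $L$.

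\textbf{Continuity.} I will show $\Phi[p\circ A]\subseteq\Phi(p)\circ\Phi[A]$. Suppose $g_i\to p$ and $g_ia_i\to q$ with $a_i\in A$. By continuity of $\Phi$ and $\Phi(g_X)=g_Y$, we get $g_i\to\Phi(p)$ and $g_i\Phi(a_i)=\Phi(g_ia_i)\to\Phi(q)$, so $\Phi(q)\in\Phi(p)\circ\Phi[A]$. Applying this with $p=u$ and multiplying on the left by $v=\Phi(u)$ gives, for $A\subseteq K$,
\[\Phi[\cl_\tau(A)]\subseteq \cl_\tau(\Phi[A]).\]
This is exactly $\tau$-continuity of $\Phi|_K$.

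\textbf{Quotient property (main obstacle).} It suffices to prove the reverse inclusion $\cl_\tau(\Phi[A])\subseteq\Phi[\cl_\tau(A)]$ for all $A\subseteq K$, in particular for saturated $A$. Then the image of a saturated $\tau$-closed set is $\tau$-closed, which gives the quotient property.

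Take $q=vq'$ where $q'\in v\circ\Phi[A]$, witnessed by nets $g_i\to v$ and $g_i\Phi(a_i)\to q'$ with $a_i\in A$. By compactness of $E(X,G)$, pass to a subnet so that $g_i\to p$ and $g_ia_i\to r$. Continuity then gives $\Phi(p)=v$, $\Phi(r)=q'$, and $r\in p\circ A$. The difficulty is that $p$ need not be $u$, so I will use the standard circle-operation rules: $qA\subseteq q\circ A$ and $p\circ(q\circ A)\subseteq(pq)\circ A$.

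Since $A=uA\subseteq u\circ A$, these rules give $r\in p\circ(u\circ A)\subseteq(pu)\circ A$. Put $s:=upu\in K$ and let $s^{-1}$ be its inverse in $K$. Then
\[s^{-1}r\in s^{-1}\circ((pu)\circ A)\subseteq (s^{-1}pu)\circ A=(s^{-1}s)\circ A=u\circ A,\]
using $s^{-1}=s^{-1}u$. Hence $t:=us^{-1}r\in\cl_\tau(A)$. Moreover $\Phi(s)=vvv=v$, so $\Phi(s^{-1})=v$, and therefore $\Phi(t)=v\,v\,q'=vq'=q$. This gives $q\in\Phi[\cl_\tau(A)]$, as required.

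The special cases in the final sentence are immediate. For a factor map, $\pi_E$ has all the stated properties. For a subflow $Y\subseteq X$, the restriction $E(X,G)\to E(Y,G)$ is a continuous, $G$-equivariant, surjective (by compactness) semigroup homomorphism.
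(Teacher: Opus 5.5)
Your proof is correct. It differs from the paper mainly in that the paper gives no argument of its own: it imports the statement from Facts 2.3 and 2.5 of \cite{KLM}.

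Your version is self-contained.
\begin{itemize}
\item The algebraic part (minimality of $\mathcal N$, the idempotent $v$, and $L=\Phi[K]$) is routine and correctly handled.
\item Continuity comes from $\Phi[p\circ A]\subseteq \Phi(p)\circ\Phi[A]$.
\item The quotient property comes from lifting the witnessing nets to $E(X,G)$ by compactness. The lifted point $p$ need not be $u$, so you correct it using $s^{-1}$, where $s=upu\in K$. Since $\Phi(s)=v$, the element $us^{-1}r$ lies in $\cl_\tau(A)$ and is sent to $q$.
\end{itemize}

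The two circle rules you invoke, $qA\subseteq q\circ A$ and $p\circ(q\circ A)\subseteq (pq)\circ A$, do hold in an arbitrary Ellis semigroup, not only in $\beta G$. The second follows from the usual diagonal-net argument, using continuity of $\eta\mapsto g\eta$ for $g\in G$ and of $\eta\mapsto \eta q$. The paper itself notes that Glasner's IX.1.2--1.12 transfer to this setting via \cite[Appendix A]{rzepecki2018}. You should say this explicitly rather than just calling the rules standard.

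Your route also gives a slightly stronger conclusion than the one stated. You actually prove $\Phi[\cl_\tau(A)]=\cl_\tau(\Phi[A])$ for every $A\subseteq K$. So $\Phi|_K$ is a closed continuous surjection, and restricting to saturated sets in the last step is unnecessary. The paper's citation buys brevity and keeps the statement in the generality used in \cite{KLM}. Your argument makes the fact independent of that reference and exposes the closedness.
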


There are many equivalent definitions of tame flows (see Theorems 2.4, 3.2 and Definition 3.1 in \cite{GM23}). We give the one which immediately points to a strong connection with the NIP property in model theory. 

A sequence $(f_n)_{n<\omega}$  of real valued functions on a set $X$ is said to be {\em independent}
if there exist real numbers $a < b$ such that
$$\bigcap_{n\in P} f_n^{-1}[(-\infty,a)] \cap \bigcap_{n \in Q} f_n^{-1}[(b,\infty)] \ne \emptyset$$
for all finite disjoint subsets $P, Q$ of $\omega$.

\begin{definition}\label{def: tame}
%A flow $(G,X)$ is {\em tame} if for every $f \in C(X)$ (i.e. a continuous real valued function on $X$) the family of translates $\{gf: g \in G\}$ does not contain an independent sequence.
Let $(X,G)$ be a flow.
A function $f \in C(X,\mathbb{R})$ (i.e.~a continuous real valued function on $X$) is {\em tame} if the  family of translates $\{gf: g \in G\}$ does not contain an infinite independent sequence. The flow $(X,G)$ is {\em tame} if all functions in $C(X,\mathbb{R})$ are tame.
\end{definition}

Let $(X,G)$ be a flow.  
Put $E:=E(X,G)$, choose $\CM\unlhd_m E$ and $u^2=u\in\CM$.
Choose $x\in uX$, so $ux=x$. We set
$$\gal(X,x):=\{\eta\in u\CM\;:\;\eta(x)=x \}.\footnote{We will be also using the same notation for Galois groups computed in Ellis semigroups of some ambits which project onto $X$.}$$

\begin{remark}\label{rem:Gal.closed.0}
    $\gal(X,x)$ is closed in the $\tau$-topology. This is standard,
    for example, we can re-use the short argument of the first paragraph of \cite[Chapter IX.2]{Glasner1976}.
\end{remark}

\begin{fact}[Theorem~8.2 in \cite{GlasnerMegrelishvili2013}]\label{fact:tame-injective}
If $(Y,G)$ is tame, then the canonical restriction homomorphism
$$\operatorname{res}\colon E(\operatorname{Prob}(Y),G)\longrightarrow E(Y,G)$$
is a topological semigroup isomorphism.
\end{fact}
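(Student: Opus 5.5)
The plan is to show that $\operatorname{res}$ is a continuous surjective semigroup homomorphism which is also injective. Compactness of $E(\operatorname{Prob}(Y),G)$ and Hausdorffness of $E(Y,G)$ then make it a homeomorphism, hence a topological semigroup isomorphism. The general part is formal; the whole content of tameness will go into injectivity.

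\emph{Step 1 (the map and its basic properties).} Embed $Y$ into $\operatorname{Prob}(Y)$ via $y\mapsto\delta_y$. This is a $G$-equivariant homeomorphism onto a closed $G$-invariant subset, i.e., onto a subflow of $(\operatorname{Prob}(Y),G)$. Every $p\in E(\operatorname{Prob}(Y),G)$ is a pointwise limit of a net $(g_i)_i$ in $G$, so $p$ preserves this closed invariant copy of $Y$. The restriction $\operatorname{res}(p):=p|_Y$ is then exactly the restriction map to a subflow from Fact~\ref{fact:ellis-group-functoriality}. It is therefore a continuous surjective $G$-equivariant semigroup homomorphism; concretely, if $g_i\to p$ in $E(\operatorname{Prob}(Y),G)$, then $g_i\to p|_Y$ in $E(Y,G)$.

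\emph{Step 2 (injectivity via an integral formula).} Fix $p\in E(\operatorname{Prob}(Y),G)$, a net $g_i\to p$, and put $q:=p|_Y$, so $g_iy\to qy$ for every $y\in Y$. Take $\mu\in\operatorname{Prob}(Y)$ and $f\in C(Y,\mathbb R)$. Then
$$\int f\,d(p\mu)=\lim_i\int f\,d(g_i\mu)=\lim_i\int f\circ g_i\,d\mu,$$
while $f\circ g_i\to f\circ q$ pointwise on $Y$. I aim to prove that $f\circ q$ is $\mu$-measurable and that
$$\int f\,d(p\mu)=\int f\circ q\,d\mu .$$
The right-hand side depends only on $q$ and not on $p$ or the net. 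So two elements of $E(\operatorname{Prob}(Y),G)$ with the same restriction agree on every $\mu$, which gives injectivity.

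\emph{Step 3 (the main obstacle: passing to the limit under the integral for nets).} Dominated convergence fails for nets, so tameness must be used here. By Definition~\ref{def: tame}, the uniformly bounded family $\mathcal F:=\{f\circ g:g\in G\}$ contains no infinite independent sequence. I would invoke the Bourgain--Fremlin--Talagrand theory (the Rosenthal dichotomy), which gives two things.
\begin{enumerate}
\item The pointwise closure $\overline{\mathcal F}$ consists of universally measurable functions; in fact $\mathcal F$ is $\mu$-stable in Talagrand's sense for every Radon $\mu$.
\item On $\overline{\mathcal F}$, the topology of pointwise convergence is finer than the topology of convergence in $\mu$-measure. Since the family is uniformly bounded, this means the map $h\mapsto\int h\,d\mu$ is continuous on $\overline{\mathcal F}$.
\end{enumerate}
Applying (2) to the net $f\circ g_i\to f\circ q$ yields exactly the integral formula of Step 2, completing the proof. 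I expect the delicate point to be verifying that independence-freeness of $\mathcal F$ supplies the hypothesis of this measure-theoretic result for every (possibly non-metrizable) $\mu$. For that I would reduce to the separable closed subalgebra generated by $G$-translates of countably many functions, or cite Talagrand's characterization of stable sets directly.
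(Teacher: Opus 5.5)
Your proof is correct in outline, and it takes a genuinely different route from the paper. The paper does not prove this statement at all; it imports it from Glasner--Megrelishvili.

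Steps 1 and 2 are fine as written. In effect you run the classical metrizable argument (where sequences and dominated convergence suffice) with nets. All of tameness is then spent on one input: for $f\in C(Y,\mathbb R)$ and Radon $\mu$, the functional $h\mapsto\int h\,d\mu$ is defined and continuous for the pointwise topology on the closure $\overline{\mathcal F}$ of $\mathcal F=\{f\circ g:g\in G\}$.

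What your route buys is the inverse of $\operatorname{res}$ in explicit form: $\operatorname{res}^{-1}(q)(\mu)=q_*\mu$, with $f\circ q$ universally measurable for every $f\in C(Y)$. This is exactly the information used implicitly in the second claim of the proof of Lemma~\ref{lemma: L2 Bohr}, where $\widehat\eta(\nu_F)$ is computed. It also shows precisely where tameness enters. The citation keeps the paper short but hides this mechanism.

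On the step you flag as delicate, the hypothesis is easier to verify than you expect, but your proposed reduction would not work.
\begin{itemize}
\item By Rosenthal's dichotomy, absence of independent sequences in $\mathcal F$ means every sequence in $\mathcal F$ has a pointwise convergent subsequence. Its limit is Baire measurable, hence $\mu$-measurable.
\item So $\mathcal F$ is relatively countably compact in $\mathcal L^0(\mu)$ for every Radon (hence perfect) $\mu$, with no metrizability involved.
\item The Fremlin/Bourgain--Fremlin--Talagrand theorem on such subsets of $\mathcal L^0$ of a perfect probability space then gives both $\overline{\mathcal F}\subseteq\mathcal L^0(\mu)$ and that pointwise convergence on $\overline{\mathcal F}$ implies convergence in measure.
\item Uniform boundedness then yields your integral formula. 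You therefore do not need the stronger, harder-to-justify claim that $\mathcal F$ is $\mu$-stable.
\end{itemize}

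By contrast, reducing to the separable subalgebra generated by $G$-translates of countably many functions is not available in general. For uncountable $G$, the orbit of a single tame function need not be norm-separable. Take the permutation group of an uncountable set $\kappa$ acting on the one-point compactification of $\kappa$; this flow is WAP, hence tame. For $f$ the indicator of a point, the translates are pairwise at sup-distance $1$. So cite the measure-theoretic theorem directly rather than reducing.
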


\begin{fact}[{\cite[Corollary 5.6]{HoffKru1}}]\label{Fact: 5.6}
For a tame flow $(X,G)$, hereditary amenability of $(X,G)$ is equivalent to amenability (and also to hereditary amenability) of $(E(X,G),G)$.
\end{fact}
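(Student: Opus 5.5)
The nontrivial direction is ``(X,G) hereditarily amenable $\Rightarrow$ (E(X,G),G) hereditarily amenable''; the other implications are formal. If $(E,G)$ is amenable with invariant $\lambda$ and $Y\subseteq X$ is a subflow, pick $y\in Y$. Then $\ev_y\colon E\to \overline{Gy}\subseteq Y$ is a factor map, so $(\ev_y)_*\lambda$ is an invariant measure on $Y$. Hence amenability of $E$ gives hereditary amenability of $X$. Hereditary amenability of $E$ trivially gives amenability of $E$. Conversely, every subflow $F\subseteq E$ contains a minimal left ideal $\CM$ (a minimal subflow of $E$). An invariant measure on $\CM$, viewed on $F$, is invariant on $F$. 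So it suffices to prove that some (equivalently every) minimal left ideal $\CM\unlhd_m E$ admits a $G$-invariant probability measure.

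To do this I would use the embedding $E\subseteq X^X$ and write $E$ as an inverse limit of the flows $E_F:=\overline{G\cdot(x_1,\dots,x_n)}\subseteq X^n$ for finite tuples $F=(x_1,\dots,x_n)$, with the coordinate projections as factor maps. Minimal left ideals of $E$ project onto minimal subflows of the $E_F$. Since $\Prob$ commutes with inverse limits of compact spaces, a compatible system of invariant measures on these images yields an invariant measure on $\CM$. Compatibility will be arranged by compactness of $\Prob_G$ together with a standard finite-intersection argument. This reduces the problem to showing that for every finite tuple, the minimal subflow $\overline{G\cdot(ux_1,\dots,ux_n)}$ of $X^n$ is amenable, where $u\in\CM$ is an idempotent.

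This last step is the main obstacle. Hereditary amenability of $X$ does not formally pass to joinings in $X^n$, so tameness must enter here. My first attempt is as follows. Fix invariant measures $\mu_i$ on the minimal subflows $\overline{G ux_i}$. Use Fact~\ref{fact:tame-injective}: each $p\in E$ acts on $\Prob(X)$ through the restriction isomorphism, and (by tameness, via Bourgain--Fremlin--Talagrand) each $p\in E$ is a universally measurable map $X\to X$. Then use the GRNC (Fact~\ref{fact:RNC}): $u\CM$ is a compact Hausdorff group with Haar measure $\mathfrak h_{u\CM}$. I would define a candidate measure on $\CM$ by averaging over $u\CM$: for $f\in C(\CM)$ and $p\in\CM$, integrate $\eta\mapsto f(p\eta)$ against $\mathfrak h_{u\CM}$. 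Then average over $p$ by transporting one of the invariant measures $\mu_i$ along the universally measurable evaluation maps, since $\CM=\bigcup_v v\CM$ fibres over $uX$-type data. Finally I would check $G$-invariance using left invariance of the construction and invariance of $\mu_i$. I expect the verification of measurability and $G$-invariance of this averaged functional to be where the tameness input (Baire class~1 behaviour of elements of $E$) is genuinely needed. If this fails, the fallback is to prove amenability of $\overline{G\cdot(ux_1,\dots,ux_n)}$ directly by induction on $n$, using a relatively invariant disintegration over the projection to the first $n-1$ coordinates.
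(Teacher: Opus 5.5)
Your formal reductions are correct. Pushing an invariant measure on $E$ forward along $\ev_y$ gives hereditary amenability of $X$. Every subflow of $E$ contains a minimal left ideal. Since $\CM=\varprojlim_{\bar x}\CM\bar x$ with $\CM\bar x=\overline{G u\bar x}$, amenability of $\CM$ is equivalent to amenability of all minimal subflows of all finite powers $X^n$. However, this last equivalence only restates the theorem. The substantive content of \cite[Corollary~5.6]{HoffKru1}, which the paper imports rather than reproves, is that for tame $X$ amenability of minimal subflows passes from $X$ to $X^n$, i.e.\ to minimal joinings of tame minimal amenable flows. Your proposal does not prove this. It only announces a construction, and as stated the construction does not work.

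\emph{The averaging step is not well defined.} Multiplication $\eta\mapsto p\eta$ by a fixed left factor is in general not continuous on $E$; only $q\mapsto qp$ is. Moreover, the $\tau$-topology on $u\CM$ is coarser than the subspace topology. So nothing makes $\eta\mapsto f(p\eta)$ $\tau$-Borel, let alone $\mathfrak h_{u\CM}$-integrable.

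\emph{The second averaging is circular or unsupported.} Even granting $\Lambda_p(f):=\int f(p\eta)\,d\mathfrak h_{u\CM}(\eta)$, you must still average over $p\in\CM$. Doing so against a $G$-invariant law on $\CM$ presupposes exactly what you are trying to build. Transporting the $\mu_i$ from $X$ to $\CM$ instead needs a measurable equivariant map $X\to\CM$. The evaluation maps go in the other direction, and the phrase about $\CM=\bigcup_v v\CM$ ``fibring over $uX$-type data'' does not define such a map.

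\emph{The fallback has the same defect.} A relatively invariant disintegration of a measure on $\overline{Gu\bar x}$ over its projection to $X^{n-1}$ amounts to lifting invariant measures through a factor map. That fails in general for non-amenable $G$, and here it is essentially equivalent to what you want to prove. You give no indication of how tameness would supply it.

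So the missing ingredient is an actual mechanism by which tameness forces joinings of tame minimal amenable flows to be amenable, for instance combined with unique ergodicity of minimal tame amenable flows (Fact~\ref{lemma : min tame unique ergodicity}) and the GRNC. Without it, the only non-formal implication, hereditary amenability of $X$ $\Rightarrow$ amenability of $E(X,G)$, remains unproved.
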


\begin{fact}\label{fact: amenability closure}
Let $(X,G)$ be a tame hereditary amenable flow.  
Put $E:=E(X,G)$, choose $\CM\unlhd_m E$ and $u^2=u\in\CM$. Then the following statements hold:
\begin{enumerate}
    \item 
    $(\CM,G)$ is amenable.

    \item 
    The Ellis group $u\CM$ is dense in $\CM$.
\end{enumerate}
\end{fact}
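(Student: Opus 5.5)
Both items will be obtained by combining Fact \ref{Fact: 5.6} with results quoted in the introduction. The argument is essentially a chain of citations.

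For (1), $(X,G)$ is tame and hereditarily amenable, so Fact \ref{Fact: 5.6} shows that the flow $(E,G)$ is hereditarily amenable. Here the $G$-action on $E$ is left multiplication by $g_X$, which is continuous since $E$ is a left-topological semigroup. It remains to check that $\CM$ is a subflow of $E$. Any minimal left ideal is closed, because $\CM=E p$ for every $p\in\CM$, and $E p$ is the image of the compact space $E$ under the continuous map $q\mapsto qp$. Moreover $\CM$ is $G$-invariant, since $g_X\CM\subseteq E\CM\subseteq\CM$. Hence $\CM$ is a subflow, and hereditary amenability of $E$ gives $\Prob_G(\CM)\neq\varnothing$.

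For (2), I would invoke \cite[Lemma 5.7]{HoffKru1}, which is the second fact quoted in the introduction. That lemma states exactly that, for tame $(X,G)$ with $\CM$ amenable, $u\CM$ is dense in $\CM$. Its amenability hypothesis is supplied by (1).

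If one wanted a self-contained argument for (2), I would try the following route.
\begin{itemize}
\item Let $\mu\in\Prob_G(\CM)$.
\item Show that the support of $\mu$ is all of $\CM$. This should follow from minimality of $(\CM,G)$: the support is a nonempty closed $G$-invariant set, so it equals $\CM$.
\item Use tameness of $E$, via Fact \ref{fact:tame-injective} and the GRNC (Fact \ref{fact:RNC}), to see that $\mu$-almost every point lies in the set of idempotent-fixed points.
\item Conclude that the union $\bigcup_v v\CM$ is carried by $\mu$-almost all points in a way that forces $u\CM$ to be dense.
\end{itemize}
This third step is the main obstacle, which is why I would rely on the cited lemma rather than reprove it. The only nontrivial checking left is the subflow verification in (1).
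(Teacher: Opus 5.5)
Your proposal follows the paper's proof: (1) comes from Fact \ref{Fact: 5.6} (hereditary amenability of $(E,G)$, applied to the subflow $\CM$), and (2) from citing \cite[Lemma~5.7]{HoffKru1}; your speculative self-contained sketch for (2) is not needed. One small correction to your subflow check: continuity of $\eta\mapsto g_X\eta$ does not follow from $E$ being left-topological in the paper's sense, which gives continuity of $p\mapsto pq$. It holds because $g_X$ is a continuous self-map of $X$, so post-composition with it is continuous in the product topology on $X^X$.
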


\begin{proof}
By Fact \ref{Fact: 5.6}, the Ellis flow $(E,G)$ is
hereditarily amenable, and therefore $\CM$ is amenable.
The second point follows from \cite[Lemma~5.7]{HoffKru1}.
\end{proof}

Recall that a flow $(X,G)$ is {\em proximal} if for every $x_1,x_2 \in X$ there exists $\eta \in E(X,G)$ such that $\eta(x_1)=\eta(x_2)$.

The following easy fact is folklore.
\begin{fact}\label{fact: existence of semigroup operation}
Let $(\mathcal{A},a_0,G)$ be an ambit. Then the following conditions are equivalent.
\begin{enumerate}
\item There exists a left continuous semigroup operation $*$ extending the action of $G$ (i.e., $(ga_0) * q=gq$).
\item There exists a left continuous semigroup operation $*$ on $\mathcal{A}$ such that the function $\Phi \colon \mathcal{A}\to E(\mathcal{A},G)$ defined by $\Phi(p):= l_p$ (where $l_p \colon \mathcal{A} \to \mathcal{A}$ is given by $l_p(q):=p*q$) is a semigroup epimorphism such that $\Phi(ga_0)= g_{\mathcal{A}}$ (where $g_{\mathcal{A}}(a):=ga$); then $\Phi$ is also an isomorphism of $G$-flows.
\item  $(\mathcal{A},a_0,G)$ is isomorphic to $(E(X,G),\id, G)$ for some flow $(X,G)$.
\end{enumerate}
\end{fact}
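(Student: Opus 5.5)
We prove the cycle of implications (1) $\Rightarrow$ (2) $\Rightarrow$ (3) $\Rightarrow$ (1).

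\emph{(1) $\Rightarrow$ (2).} Given a left continuous $*$ with $(ga_0)*q=gq$, define $\Phi(p):=l_p$.
\begin{itemize}
\item Since $*$ is associative, $l_{p*p'}=l_p\circ l_{p'}$, so $\Phi$ is a semigroup homomorphism.
\item By assumption $l_{ga_0}=g_{\mathcal A}$, so $\Phi(ga_0)=g_{\mathcal A}$.
\item Left continuity of $*$ says exactly that $p\mapsto p*q$ is continuous for each $q$. Hence $\Phi$ is continuous into $\mathcal A^{\mathcal A}$ with the product topology.
\item $\Phi$ maps the dense orbit $Ga_0$ onto $\{g_{\mathcal A}\}$. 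By continuity and compactness, $\Phi[\mathcal A]$ is a compact set containing $\{g_{\mathcal A}\}$ and contained in its closure, so $\Phi[\mathcal A]=E(\mathcal A,G)$.
\item $\Phi$ is $G$-equivariant: $l_{gp}=l_{(ga_0)*p}=g_{\mathcal A}\circ l_p$.
\item $\Phi$ is injective, which is the key point. This needs $a_0$ to be a right identity, $p*a_0=p$. That holds on $Ga_0$, since $(ga_0)*a_0=ga_0$, and it extends to all of $\mathcal A$ by left continuity of $p\mapsto p*a_0$ and density of $Ga_0$. Thus $l_p(a_0)=p$, so $l_p$ determines $p$.
\end{itemize}
A continuous bijection of compact Hausdorff spaces is a homeomorphism, so $\Phi$ is an isomorphism of $G$-flows. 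It sends $a_0$ to $l_{a_0}$, which is $\id$ by the same density argument applied to $a_0*q=q$.

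\emph{(2) $\Rightarrow$ (3).} Take $X:=\mathcal A$. The map $\Phi$ from (2) is an isomorphism of $G$-flows $\mathcal A\to E(\mathcal A,G)$, and we have just checked that it carries $a_0$ to $\id$. Hence $(\mathcal A,a_0,G)\cong(E(\mathcal A,G),\id,G)$.

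\emph{(3) $\Rightarrow$ (1).} Transport the composition on $E(X,G)$ along the isomorphism. Composition is left continuous, as recalled in the preliminaries. It also extends the action: $g_X\circ q=gq$ is the action of $g$ on $E(X,G)$, and $g_X=g\,\id$ is the image of $ga_0$.

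The only non-formal step is the injectivity of $\Phi$ in (1) $\Rightarrow$ (2). It rests on $a_0$ being a right identity, and it is this fact that yields the isomorphism of flows.
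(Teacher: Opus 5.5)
Your proof is correct and is the standard argument; the paper states this as folklore and gives no proof. One justification needs fixing: $l_{a_0}=\id$ cannot be obtained by density, since extending $a_0*q=q$ in $q$ would need continuity in the right variable, which is not assumed, but it is immediate as $l_{a_0}=l_{ea_0}=e_{\mathcal A}=\id$ with $e$ the neutral element of $G$; also, in (2)$\Rightarrow$(3) you should say explicitly that the condition $\Phi(ga_0)=g_{\mathcal A}$ in (2) is literally $(ga_0)*q=gq$, so your (1)$\Rightarrow$(2) argument applies to the operation supplied by (2).
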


For further background on tame flows and Ellis semigroups, see
\cite{Auslander,Glasner1976,Glasner1,GM23,CodHoff23}.

\subsection{Operator theory}
Let $(Y,G)$ be a flow and let
$\mu\in\operatorname{Prob}_G(Y)$.  For $1\leqslant p\leqslant\infty$, we write
$L^p(Y,\mu)=L^p(Y,\mu;\mathbb C)$.
The inner product on $L^2(Y,\mu)$ is
$$\langle F_1,F_2\rangle_{L^2(Y,\mu)}=\int_Y F_1(y)\,\overline{F_2(y)}\,d\mu(y),$$
 where $F_1,F_2\in L^2(Y,\mu)$.
For $g\in G$ and $F=[f]\in L^2(Y,\mu)$, define
$U_gF=[f\circ g^{-1}]$.
This is well-defined because $\mu$ is $G$-invariant.  As usual, after
choosing representatives, we write $(U_gF)(y)=F(g^{-1}y)$
with equality understood $\mu$-almost everywhere.

The $G$-invariance of $\mu$ implies that
$$\langle U_gF_1,U_gF_2\rangle_{L^2(Y,\mu)}=\langle F_1,F_2\rangle_{L^2(Y,\mu)}$$
for all $F_1,F_2\in L^2(Y,\mu)$.  Since
$U_g^{-1}=U_{g^{-1}}$, each $U_g$ is unitary.  Moreover,
$U_gU_h=U_{gh}$ for every $g,h\in G$,
so $g\mapsto U_g$ is a unitary representation of $G$ on
$L^2(Y,\mu)$.

For a surjective continuous map $\varphi:Y\to Z$, we denote by $\varphi^*:C(Z)\longrightarrow C(Y)$, $\varphi^*(F):=F\circ\varphi$,
the corresponding pullback map.  If $\mu\in\operatorname{Prob}_G(Y)$
and $\nu=\varphi_*\mu$, then
$$\|\varphi^*F\|_{L^2(Y,\mu)}^2=\int_Y|F\circ\varphi|^2\,d\mu=\int_Z|F|^2\,d\nu=\|F\|_{L^2(Z,\nu)}^2$$
for every $F\in C(Z)$.  Since $C(Z)$ is dense in $L^2(Z,\nu)$,
the pullback extends uniquely to a linear isometry
$$\varphi^*:L^2(Z,\nu)\longrightarrow L^2(Y,\mu).$$
Its range satisfies $\operatorname{ran}(\varphi^*)=\overline{\varphi^*[C(Z)]}$. 
Indeed, this follows from the density of $C(Z)$ in $L^2(Z,\nu)$
and the fact that the range of an isometry is closed.
Therefore, the isometry $\varphi^*$ is a unitary isomorphism if and only if
$\overline{\varphi^*[C(Z)]}= L^2(Y,\mu)$.

Let $\mathcal H$ be a complex Hilbert space and let $B(\mathcal H)$
denote the algebra of bounded linear operators on $\mathcal H$.  The
\emph{strong operator topology}\/ (SOT) on $B(\mathcal H)$ is the
topology generated by the seminorms
$$p_\xi(T):=\lVert T\xi\rVert_{\mathcal H}\qquad(\xi\in\mathcal H).$$
Equivalently, a net $(T_i)_i$ in $B(\mathcal H)$ converges to
$T\in B(\mathcal H)$ in SOT if and only if
$$\lVert T_i\xi-T\xi\rVert_{\mathcal H}\longrightarrow0
 \qquad\text{for every }\xi\in\mathcal H.$$
We equip the unitary group $U(\mathcal H)$ and each of its subgroups
with the corresponding subspace topology.

\begin{definition}[See Section~3.1 in \cite{FollandAHA}]
Let $B$ be a topological group and let $\mathcal H$ be a complex
Hilbert space.  A \emph{unitary representation} of $B$ on
$\mathcal H$ is a group homomorphism
$$\rho:B\longrightarrow U(\mathcal H)$$
which is continuous when $U(\mathcal H)$ is equipped with the strong
operator topology.  Equivalently, for every $\xi\in\mathcal H$, the
orbit map
$$B\longrightarrow\mathcal H,\qquad b\longmapsto\rho(b)\xi,$$
is norm-continuous.  
\end{definition}

In the proof of Lemma \ref{lemma: L2 Bohr}, we define a unitary representation in the following way:
let $B$ be a compact Hausdorff group, and let
$K\leqslant U(\mathcal H)$ carry the subspace topology induced by SOT.
If $\rho_0:B\longrightarrow K$
is a continuous group homomorphism, then the composition
$B\xrightarrow{\ \rho_0\ }K\lhook\joinrel\longrightarrow U(\mathcal H)$
is a unitary representation of $B$ on $\mathcal H$.

\begin{fact}[Theorem~5.2 in \cite{FollandAHA}]
\label{fact: peter-weyl for unitary}
Let $B$ be a compact Hausdorff group.
\begin{enumerate}
\item Every irreducible unitary representation of $B$ is
finite-dimensional.

\item Every unitary representation of $B$ is a Hilbert direct sum of
irreducible unitary representations. More precisely, if $\mathcal H$ is
a complex Hilbert space and
$$\rho:B\longrightarrow U(\mathcal H)$$
is a continuous unitary representation, then there is a family
$(\mathcal H_i)_{i\in I}$ of pairwise orthogonal closed
$\rho(B)$-invariant subspaces such that
$$\mathcal H=\bigoplus_{i\in I}\mathcal H_i$$
and each restricted representation $\rho|_{\mathcal H_i}:B\longrightarrow U(\mathcal H_i)$ is irreducible.
\end{enumerate}
\end{fact}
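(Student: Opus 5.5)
The plan is the standard Haar-averaging argument: manufacture nonzero compact self-adjoint operators commuting with $\rho(B)$, and use their eigenspaces as finite-dimensional invariant subspaces. Let $\rho\colon B\to U(\mathcal H)$ be a unitary representation with $\mathcal H\neq 0$, and let $\mathfrak h_B$ be the normalized Haar measure. Fix a unit vector $\xi\in\mathcal H$ and let $P_\xi$ be the orthogonal projection onto $\mathbb C\xi$. I would define
$$T:=\int_B \rho(b)P_\xi\rho(b)^{-1}\,d\mathfrak h_B(b),$$
understood weakly: $\langle T\eta,\zeta\rangle:=\int_B\langle \rho(b)P_\xi\rho(b)^{-1}\eta,\zeta\rangle\,d\mathfrak h_B(b)$. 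The integrand is continuous in $b$, so $T$ is a well-defined bounded operator.

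The properties of $T$ follow in order:
\begin{enumerate}
\item $T$ is self-adjoint and positive.
\item $T$ commutes with every $\rho(c)$, by left invariance of $\mathfrak h_B$.
\item $T\neq 0$, since $\langle T\xi,\xi\rangle=\int_B|\langle\xi,\rho(b)\xi\rangle|^2\,d\mathfrak h_B>0$. The integrand is continuous, nonnegative, and equals $1$ at $b=e$, and Haar measure charges every nonempty open set.
\item $T$ is compact. This is the step I expect to be the main technical obstacle.
\end{enumerate}

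For compactness, note that $\rho(b)P_\xi\rho(b)^{-1}=P_{\rho(b)\xi}$ and $\|P_v-P_w\|\leqslant 2\|v-w\|$ for unit vectors $v,w$. Norm continuity of the orbit map $b\mapsto\rho(b)\xi$ therefore makes $b\mapsto P_{\rho(b)\xi}$ continuous into operator norm. By compactness of $B$, partition $B$ into finitely many Borel pieces on each of which this map varies by less than $\varepsilon$ in norm. Replacing the integrand by its value at a chosen point of each piece gives a finite-rank operator within $\varepsilon$ of $T$ in operator norm. Hence $T$ is a norm limit of finite-rank operators, so it is compact.

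By the spectral theorem for compact self-adjoint operators, $T$ has a nonzero eigenvalue $\lambda$ whose eigenspace $\mathcal H_\lambda$ is finite-dimensional. Since $T$ commutes with $\rho(B)$, the space $\mathcal H_\lambda$ is $\rho(B)$-invariant. Among the nonzero invariant subspaces of $\mathcal H_\lambda$, one of minimal dimension is irreducible. This yields the key lemma: every nonzero unitary representation of $B$ contains a nonzero finite-dimensional irreducible invariant subspace. Part (1) follows at once, because if $\rho$ is irreducible then $\mathcal H_\lambda=\mathcal H$.

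For part (2), apply Zorn's lemma to the families of pairwise orthogonal, closed, $\rho(B)$-invariant, irreducible subspaces, ordered by inclusion; chains have unions as upper bounds. Take a maximal family $(\mathcal H_i)_{i\in I}$ and put $\mathcal K:=\bigl(\bigoplus_i\mathcal H_i\bigr)^\perp$. By unitarity, the orthogonal complement of an invariant subspace is invariant, so $\mathcal K$ is closed and $\rho(B)$-invariant. The restriction of $\rho$ to $\mathcal K$ is again a unitary representation. If $\mathcal K\neq 0$, the key lemma gives an irreducible invariant subspace of $\mathcal K$, contradicting maximality. Hence $\mathcal K=0$ and $\mathcal H=\bigoplus_i\mathcal H_i$.
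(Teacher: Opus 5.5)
Your proposal is correct and matches the source the paper relies on: the paper gives no proof of its own but cites Theorem~5.2 of \cite{FollandAHA}, and that proof is exactly your argument. It averages a rank-one projection, $T=\int_B\rho(b)P_\xi\rho(b)^{-1}\,d\mathfrak h_B(b)$, shows $T$ is a nonzero compact positive operator commuting with $\rho(B)$, takes a finite-dimensional eigenspace and a minimal-dimension invariant subspace inside it, and finishes with Zorn's lemma.
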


For a compact group $B$, we write $\mathfrak h_B$ for normalized Haar
probability.  If $H\leqslant B$ is closed, then
$\mathfrak h_{B/H}$ is the pushforward of $\mathfrak h_B$ under the quotient
map $B\to B/H$.  
In the fact below, 
the left-translation condition in (i) is the function-algebra form of the condition called right invariance in the terminology of \cite{TakesakiTatsuuma}.

\begin{fact}[Theorem~2 in \cite{TakesakiTatsuuma}]\label{fact: Takesaki-Tatsuuma Galois}
Let $B$ be a compact Hausdorff group, and let $\mathcal N$ be a weak-$*$ closed unital $*$-subalgebra of $L^\infty(B,\mathfrak h_B)$.  For $b,h\in B$, define
$$(L_bF)(c):=F(b^{-1}c),\qquad (R_hF)(c):=F(ch).$$
Then the following are equivalent:
\begin{enumerate}[(i)]
\item $\mathcal N$ is invariant under every left translation, that is,
$$L_b[\mathcal N]=\mathcal N\qquad\text{for every }b\in B;$$
\item there is a unique closed subgroup $H\leqslant B$ such that
$$\mathcal N=L^\infty(B,\mathfrak{h}_B)^H:=\{F\in L^\infty(B,\mathfrak h_B):R_hF=F\text{ for every }h\in H\}.$$
\end{enumerate}
In this correspondence, the subgroup $H$ is recovered from $\mathcal N$ by
$$H=\{h\in B:R_hF=F\text{ for every }F\in\mathcal N\}.$$
Equivalently, if $p_H:B\to B/H$ is the quotient map, then
$$\mathcal N=p_H^*\bigl[L^\infty(B/H,\mathfrak h_{B/H})\bigr].$$
\end{fact}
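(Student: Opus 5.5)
The implication (ii)$\Rightarrow$(i) is immediate, and for (i)$\Rightarrow$(ii) I would reduce to continuous functions, classify the left-invariant unital C$^*$-subalgebras of $C(B)$ by Gelfand duality, and return to $L^\infty$ by weak-$*$ density.

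\textbf{(ii)$\Rightarrow$(i).} Left and right translations commute: $L_bR_h=R_hL_b$. Hence $L_b$ maps $L^\infty(B,\mathfrak h_B)^H$ into itself, and applying this to $b^{-1}$ gives equality. The identity $L^\infty(B,\mathfrak h_B)^H=p_H^*[L^\infty(B/H,\mathfrak h_{B/H})]$ would be proved with the conditional expectation
$$(P_HF)(c):=\int_H F(ch)\,d\mathfrak h_H(h).$$
This map is the identity on $R_H$-fixed functions. Its range consists of functions that are constant on cosets $cH$, and such functions are exactly the pullbacks of $L^\infty$ functions on $B/H$. The main care is that a function fixed a.e.\ by each $R_h$ separately agrees a.e.\ with $P_HF$; I would handle this by Fubini.

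\textbf{(i)$\Rightarrow$(ii), step 1: regularization.} Let $\mathcal N$ be left-invariant and put $A:=\mathcal N\cap C(B)$. For $\varphi\in L^1(B,\mathfrak h_B)$ and $F\in\mathcal N$, the left convolution
$$\varphi\ast F=\int_B\varphi(b)\,L_bF\,d\mathfrak h_B(b)$$
is a weak-$*$ limit of convex combinations of elements $L_bF\in\mathcal N$. Since $\mathcal N$ is weak-$*$ closed and convex, $\varphi\ast F\in\mathcal N$. Moreover $\varphi\ast F$ is continuous, because $b\mapsto L_b\varphi$ is norm-continuous in $L^1$. Letting $\varphi$ run through an approximate identity gives $\varphi\ast F\to F$ weak-$*$, so $A$ is weak-$*$ dense in $\mathcal N$. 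Clearly $A$ is a unital, norm-closed, left-invariant $*$-subalgebra of $C(B)$.

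\textbf{Step 2: classification of $A$.} By Gelfand duality, $A=q^*[C(Y)]$, where $q\colon B\to Y$ is the quotient by the relation $c\sim c'$ iff $f(c)=f(c')$ for all $f\in A$. Left invariance of $A$ makes $\sim$ invariant under left multiplication. Hence $B$ acts continuously and transitively on $Y$, and $Y\cong B/H'$, where $H'$ is the stabilizer of $q(e)$, a closed subgroup. Then $A=p_{H'}^*C(B/H')$, that is, $A$ consists of exactly the continuous functions fixed by $R_h$ for all $h\in H'$.

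\textbf{Step 3: passing back to $\mathcal N$.} Taking weak-$*$ closures gives
$$\mathcal N=\overline{p_{H'}^*C(B/H')}^{\,w*}=p_{H'}^*L^\infty(B/H',\mathfrak h_{B/H'})=L^\infty(B,\mathfrak h_B)^{H'},$$
using the previous paragraph and the density of $C$ in $L^\infty$ for the weak-$*$ topology. It remains to show $H'$ equals $H=\{h: R_hF=F\ \forall F\in\mathcal N\}$.
\begin{itemize}
\item $H'\subseteq H$ follows from the displayed identity.
\item $H\subseteq H'$: if $h\in H$, then $R_hf=f$ in $L^\infty$ for every $f\in A$. For continuous $f$ this holds pointwise, so $h$ fixes every $f\in A$, which forces $h\in H'$.
\end{itemize}
Uniqueness of $H$ follows from the same recovery: if $\mathcal N=L^\infty(B,\mathfrak h_B)^{H_1}$, then the $R_{H_1}$-fixed continuous functions separate the cosets of $H_1$, so $H_1=H$.

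I expect the main obstacle to be the measure-theoretic bookkeeping. This consists of the weak-$*$ density of $A$ in Step 1, and the identification of a.e.-fixed $L^\infty$ functions with pullbacks from $B/H$, where "fixed by each $R_h$ a.e." must be upgraded to genuine invariance via $P_H$. The algebraic core in Step 2 is standard Gelfand duality.
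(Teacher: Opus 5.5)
Your proof is correct. It is not the paper's route, because the paper proves nothing here: it imports Theorem~2 of Takesaki--Tatsuuma, which is established for arbitrary locally compact groups within their duality theory.

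You instead give a self-contained argument for the compact case, in three steps:
\begin{itemize}
\item left convolution produces the weak-$*$ dense, left-invariant unital C$^*$-subalgebra $A=\mathcal N\cap C(B)$;
\item Gelfand duality together with transitivity of the induced $B$-action on the spectrum identifies $A$ with $p_{H'}^*[C(B/H')]$;
\item full support of $\mathfrak h_B$ lets you recover $H'$ as the common fixer of $\mathcal N$.
\end{itemize}
Every analytic ingredient holds without metrizability: continuity of translation in $L^1$, the duality $L^1$--$L^\infty$ for a probability measure, and weak-$*$ continuity of $p_{H'}^*$. This matters, since the paper applies the fact to $B_2$, which is typically non-metrizable.

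The citation buys generality. For non-compact $G$ your Step~2 has no direct analogue: for $\mathcal N=L^\infty(G)$, the spectrum of $\mathcal N\cap C_b(G)$ is $\beta G$, on which $G$ does not act transitively. Your route buys transparency, and it shows concretely why $H$ is recovered as $\{h: R_hF=F \text{ for all } F\in\mathcal N\}$.

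You can also bypass the Fubini bookkeeping with $P_H$ that you flag as the main obstacle, using your own Step~1. Left convolution commutes with right translations, and $B$ is unimodular. Hence, if $R_hF=F$ a.e.\ for each $h\in H$, then $\varphi\ast F$ is continuous and satisfies $(\varphi\ast F)(ch)=(\varphi\ast F)(c)$ for \emph{every} $c\in B$. So $L^\infty(B,\mathfrak h_B)^H$ is the weak-$*$ closure of $p_H^*[C(B/H)]$. This closure equals $p_H^*[L^\infty(B/H,\mathfrak h_{B/H})]$, because the range of the normal injective $*$-homomorphism $p_H^*$ is weak-$*$ closed. This avoids product measures on $B\times H$, which in the non-metrizable setting would have to be Radon (rather than ordinary) product measures.
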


\section{Bohr compactifications, equicontinuous factors and category of ambits}

Let $(X,x_0,G)$ be an ambit.

\subsection{Bohr compactifications below ambits}\label{subsection: Bohr comp}

The following definition is a natural extension of the notion of the Bohr compactification of the group $G$. It recovers the usual definition when applied to the universal $G$-ambit.

\begin{definition}[Definition (A)]\label{definition: Bohr compactification below ambit A}
By a {\em compactification of $G$ below $(X,x_0,G)$} 
we mean a group homomorphism $c \colon G \to K$
with dense image for some compact group $K$ 
such that $c=f \circ \ev_{x_0}$, where $\ev_{x_0} \colon G \to X$ is given by $\ev_{x_0}(g):=gx_0$ 
and $f \colon X \to K$ is an epimorphism of $G$-ambits 
(mapping $x_0$ to the neutral element $e_K$ of $K$) with the $G$-action on $K$ given by $g\cdot k:=c(g)k$. 

A {\em Bohr compactification below $(X,x_0,G)$} is a universal compactification $\mathfrak{b}$ of $G$ below $(X,x_0,G)$ in the sense that for any compactification $c \colon G \to K$ of $G$ below $(X,x_0,G)$ there is a (unique) continuous homomorphism $h \colon B \to K$ such that $c = h \circ \mathfrak{b}$.
Abusing terminology, the target group $B$ of $\mathfrak{b}$ may be occasionally also called the Bohr compactification below $(X,x_0,G)$.
\end{definition}

\begin{remark}\label{remark: existence of Bohr compactifications}
For any ambit $(X,x_0,G)$ a Bohr compactification of $G$ below $(X,x_0,G)$ exists and is unique up to isomorphism, so will be called {\em the} Bohr compactification of $G$  below $(X,x_0,G)$.
\end{remark}

\begin{proof}
Uniqueness follows from the density of the image. For the existence, take a set $\{c_i\colon G \to K_i\}_{i \in I}$ of all (up to isomorphism) compactifications of $G$ below $(X,x_0,G)$. Put $K:=\overline{\{(c_i(g))_{i \in I}: g \in G\}} \subseteq \prod_{i \in I} K_i$. It is clear that the map $\mathfrak{b} \colon G \to K$ given by $\mathfrak{b}(g):=(c_i(g))_{i \in I}$ is a compactification below $(X,x_0,G)$ which is universal.
\end{proof}

Notice that the epimorphism $f$ in Definition \ref{definition: Bohr compactification below ambit A} is unique (by the density of $Gx_0$ in $X$ and continuity of $f$). 
So with each compactification $c$ of $G$ below $(X,x_0,G)$ we have a uniquely associated triple $(K,c,f)$. This leads to the following equivalent definition of the Bohr compactification of $G$ below $(X,x_0,G)$ which will be particularly useful in this paper.

\begin{definition}[Definition (B)]\label{definition: Bohr compactification below ambit B}
    We say that a triple $(K,c,f)$ is a \emph{compactification of $G$ below $(X,x_0,G)$} if the following diagram commutes
    $$\xymatrix{G \ar[rr]^{c} \ar[dr]_-{\ev_{x_0}:g\mapsto g(x_0)}& & K  \\ & X \ar[ur]_{f} &},$$
where $K$ is a compact group, $c$ is a group homomorphism, and $f$  is an epimorphism of $G$-ambits (mapping $x_0$ to $e_K$) where the $G$-action on $K$ is given by $g\cdot k:=c(g)k$. 

The \emph{Bohr compactification of $G$ below $(X,x_0,G)$} is a triple $(B,\mathfrak{b},w)$ as above which is universal in the sense that for any other triple $(B',\mathfrak{b'},w')$ as above there exists a (unique) continuous group epimorphism $h\colon B\to B'$ such that the following diagram commutes:
        $$\xymatrix{X \ar[rr]^{w} \ar[dr]_{w'} & & B \ar[dl]^{h} \\ & B' &}$$
\end{definition}

Bearing in mind the paragraph preceding Definition \ref{definition: Bohr compactification below ambit B}, we leave as an easy exercise to check the equivalence of both definitions. Note also that the map $h$ in the above definition is an epimorphism of $G$-ambits.

Moreover, we may describe the Bohr compactification of $G$ below $(X,x_0,G)$ by mimicking some techniques from model theory (cf. \cite{GPPb}).
Let us sketch this approach shortly.

\begin{definition}
\begin{enumerate}
    \item 
    Let $Y$ be a compact Hausdorff space. We call a map 
    $f\colon G\to Y$ {\em $X$-definable} if for every disjoint compact sets $C_0,C_1\subseteq Y$ there is a continuous function $\varphi\colon X\to [0,1]$ such that
    $$f^{-1}[C_0]\subseteq \ev_{x_0}^{-1}[\varphi^{-1}[\{0\}]],
    \qquad
    f^{-1}[C_1]\subseteq \ev_{x_0}^{-1}[\varphi^{-1}[\{1\}]].$$
    \item An $X$-definable homomorphism $f \colon G\to H$,
    where $H$ is a compact Hausdorff group, with $f[G]$ being a dense subset of $H$, is called an {\em $X$-definable compactification of $G$}.
\end{enumerate}
\end{definition}

The universal $X$-definable compactification of $G$ is precisely the Bohr compactification of $G$ below $(X,x_0,G)$. This might be proved by adapting arguments from \cite{GPPb}, but as we do not refer to this approach in what follows, we skip the proof.

Working with a $G$-ambit $(X,x_0,G)$ and $E:=E(X,G)$, we have the Bohr compactification of $G$ below $(X,x_0,G)$:
$$\xymatrix{G \ar[rr]^{\fb_1} \ar[dr]_-{g\mapsto g(x_0)}& & B_1  \\ & X \ar[ur]_{w_1} &}$$
which will be called the \emph{$X$-Bohr compactification} of $G$.
Then we also have the Bohr compactification of $G$ below $(E,\id_X,G)$:
$$\xymatrix{G \ar[rr]^{\fb_2} \ar[dr]_-{g\mapsto g\id_X}& & B_2  \\ & E \ar[ur]_{w_2} &}$$
which will be called the \emph{$E$-Bohr compactification} of $G$.
We keep the above notation ($\fb_1$, $\fb_2$, $w_1$, $w_2$, $B_1$ and $B_2$) for the rest of the paper.

Both $(B_1,e_{B_1},G)$ and $(B_2,e_{B_2},G)$ are $G$-ambits with the $G$-action induced by $\fb_1$ and $\fb_2$, respectively. These flows are minimal equicontinuous (also called almost periodic) and satisfy $E(B_1,G)\cong B_1$ and $E(B_2,G)\cong B_2$.
%Therefore both $(B_1,G)$ and $(B_2,G)$ are almost periodic flows, and in consequence WAP. 
In particular, they are WAP.

\begin{remark}\label{rem: easy}
$w_2 \colon E\to B_2$ is a continuous semigroup epimorphism.
\end{remark}

\begin{proof}
Consider $\eta,\xi\in E$ and a net $(g_i)_{i\in I}\subseteq G$ with $\lim_{i\in I}g_i=\eta$ (which formally means that $\lim_{i \in I} g_i \id_X = \eta$) and simply compute, using left-continuity of the product in $E$:
    \begin{IEEEeqnarray*}{rCl}
    w_2(\eta\xi) &=& w_2\big( (\lim_i g_i)\xi\big)=w_2\big(\lim_i(g_i\xi)\big) \\
    &=& \lim_i w_2(g_i\xi) = \lim_i \fb_2(g_i)w_2(\xi) \\
    &=& \big(\lim_i\fb_2(g_i)\big)w_2(\xi) = \big(\lim_iw_2(g_i \id_X)\big)w_2(\xi) \\
    &=& w_2\big(\lim_i(g_i \id_X)\big)w_2(\xi) = w_2(\eta)w_2(\xi)
    \end{IEEEeqnarray*}
\end{proof}

\begin{remark}\label{remark: B2 to B1}
    Let $\pi_0\colon E\to X$ be given by $\pi_0(\eta):=\eta(x_0)$.
    Then, by universality of $B_2$, there is a unique continuous group epimorphism $\ff \colon B_2\to B_1$ completing the following diagram:
    $$\xymatrix{ & & G \ar[dl] \ar[dr] \ar@/_2pc/[ddll]_-{\fb_2} \ar@/^2pc/[ddrr]^-{\fb_1} & & \\
    & E \ar[rr]^{\pi_0} \ar[dl]^-{w_2}& & X \ar[dr]_-{w_1} & \\
    B_2 \ar@{-->}[rrrr]_{\ff}&&&& B_1}$$
\end{remark}

\begin{remark}
    If $G$ is a topological group and $\mathrm{Sam}(G)$ is its Samuel compactification (\emph{greatest $G$-ambit} in terminology of \cite{Uspenskij}), then the Bohr compactification $B$ of $G$ is the maximal compact Hausdorff group factor of $\mathrm{Sam}(G)$.
    Note also, that $B$ 
    is in fact
    the $\mathrm{Sam}(G)$-Bohr compactification of $G$ and
    also the $E(\mathrm{Sam}(G),G)$-Bohr compactification of $G$.
    %the first and the second Bohr compactifications of $(\mathrm{Sam}(G),G)$, and 
    Moreover, 
    for any ambit $(X,x_0,G)$ we have the following commuting diagram of epimorphisms of $G$-flows (and [semi]groups where it makes sense):
    $$\xymatrix{\mathrm{Sam}(G) \ar@{->>}[r] \ar@{->>}[d]& E \ar@{->>}[r] \ar@{->>}[d]& X\ar@{->>}[d] \\
    B \ar@{->>}[r] & B_2 \ar@{->>}[r] & B_1}$$
\end{remark}

\subsection{Maximal Equicontinuous Factors}\label{subsection: MEFs}

Recall that $(X,x_0,G)$ is a $G$-ambit. Let 
$$(X,G)\xrightarrow[]{\pi}(Z,G)$$
be the Maximal Equicontinuous Factor (MEF) of $(X,G)$, which will be called
the \emph{$X$-MEF} of $(X,G)$ \cite[Chapter 9]{Auslander}. 
Set $K_1:=E(Z,G)\leqslant\homeo(Z)$, which is a compact group \cite[Theorem I.3.3(4)]{Glasner1976}.
Note that $\pi$ lifts up to the Ellis semigroups as
$$\pi_*\colon E\to K_1$$
We need a slight generalization of a classical theorem. The original theorem is stated for minimal flows, but being an ambit is sufficient to repeat the same proof:

\begin{fact}[Theorem I.3.3(5) in \cite{Glasner1976}]\label{fact: equicontinuous ambits}
    Let $(Y,y_0,G)$ be an equicontinuous ambit with $K:=E(Y,G)$.
    Then 
    $$(Y,G)\cong (K/F,G)$$
    for $F:=\{k\in K\;:\;ky_0=y_0\}\leqslant K$.
\end{fact}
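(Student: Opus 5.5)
The statement to prove is that an equicontinuous ambit $(Y,y_0,G)$ with $K:=E(Y,G)$ is isomorphic to $(K/F,G)$, where $F$ is the stabilizer of $y_0$. I would run the classical argument from \cite{Glasner1976}, replacing minimality by density of the orbit $Gy_0$ wherever minimality was used.

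\textbf{Step 1: $K$ is a compact group of homeomorphisms.} Equicontinuity of $(Y,G)$, through the Arzel\`a--Ascoli theorem for compact Hausdorff uniform spaces, gives the following facts about $K=\overline{\{g_Y:g\in G\}}$.
\begin{itemize}
\item The pointwise and uniform topologies coincide on $K$.
\item Every element of $K$ is continuous.
\item Composition $K\times K\to K$ is jointly continuous.
\end{itemize}
Since $K$ is compact and contains a dense subgroup $\{g_Y\}$, it is a compact Hausdorff semitopological, indeed topological, semigroup. It is a group: its unique minimal ideal is a group containing the identity $\id_Y$. This is exactly the part of Theorem I.3.3(4) in \cite{Glasner1976} cited above for $K_1$. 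That part does not use minimality, so I would quote it. In particular every $k\in K$ is a homeomorphism of $Y$ and inversion is continuous.

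\textbf{Step 2: the orbit map of $y_0$ is onto.} Consider $\ev_{y_0}\colon K\to Y$, $k\mapsto k(y_0)$. It is continuous and $G$-equivariant, where $G$ acts on $K$ by left multiplication via $g\mapsto g_Y$. Its image is compact and contains $Gy_0$. Since $Gy_0$ is dense, $\ev_{y_0}[K]=\overline{Gy_0}=Y$. This replaces the minimality argument, which only served to show that every point lies in the closure of the orbit of $y_0$.

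\textbf{Step 3: pass to the quotient and conclude.} The fibres of $\ev_{y_0}$ are the left cosets: $k(y_0)=k'(y_0)$ if and only if $k^{-1}k'\in F$. Here $F$ is a closed subgroup, being the preimage of a point under a continuous map and closed under the group operations. Hence $\ev_{y_0}$ induces a continuous bijection $K/F\to Y$, $kF\mapsto k(y_0)$. It is $G$-equivariant and sends $F$ to $y_0$. It is a homeomorphism because $K/F$ is compact and $Y$ is Hausdorff. This gives the isomorphism of ambits $(K/F,F,G)\cong(Y,y_0,G)$.

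\textbf{Main obstacle.} The only potential difficulty is Step 1: making sure the proof that $E(Y,G)$ is a group of homeomorphisms uses only equicontinuity, not minimality. The point to check is the existence of inverses. I would get them as follows. $K$ is a compact Hausdorff topological semigroup in which left multiplication by each element is injective. Injectivity holds because each $k\in K$ is an isometry-like map for the unique uniformity, so $k(y)\neq k(y')$ whenever $y\neq y'$. Then for a fixed $k$, the closed subsemigroup $\overline{\{k^n:n\geq 1\}}$ contains an idempotent. The only injective idempotent is $\id_Y$, so $\id_Y$ lies in this closure, and from that one obtains an inverse of $k$ in $K$.
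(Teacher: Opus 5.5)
Your proposal is correct and follows the same route as the paper, which gives no separate argument and only remarks that Glasner's proof of Theorem I.3.3(5) goes through once minimality is replaced by density of $Gy_0$. That replacement is exactly your Step~2, with Step~1 quoted from I.3.3(4), which the paper also uses without minimality for $K_1$. Your direct argument for inverses via injectivity of elements of $K$ and the idempotent in $\overline{\{k^n:n\geq 1\}}$ is a valid way to fill in that part.
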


By definition, $Z=X/Q^*$, where $Q^*$ denotes the smallest closed $G$-invariant equivalence relation on $X$ such that  the flow $(X/Q^*,G)$ is equicontinuous. 
Recall that $Q^*$ contains the relation $Q$ of regional proximality (cf. Theorem 3 in Chapter 9 of \cite{Auslander}).
By the above fact, $(Z,G)\cong (K_1/F,G)$ for the closed subgroup $F:=\{k\in K_1\;:\;kz_0=z_0\}$, where $z_0:=\pi(x_0)$. 

\begin{remark}\label{rem: KB1}
    By universality of the $X$-MEF, there is a unique epimorphisms of flows $(X/Q^*,G)\to (B_1,G)$, which lifts to the Ellis semigroups as a continuous epimorphism of groups $\mathfrak{t}\colon K_1\to B_1$\footnote{We could try to reverse the last arrow by universality of the Bohr compactification $\mathfrak{b}_1$, but for that we would need a map $X\to K_1$, whereas we only have a map $X\to K_1/F$.}.
\end{remark}

\begin{example}\label{example: MEF1notB1}
[$X$-MEF $\neq$ $X$-Bohr comp.]
 %   This seems to be rather standard, but let us include it anyway.
    Let $X$ be the $2$-dimensional sphere $S^2\cong \mathrm{SO}(3)/
    \mathrm{SO}(2)$. 
    Set $G=\mathrm{SO}(3)$.
    Let $x_0\in X$ be the point $(0,0,1)$.
    We have that $(X,x_0,G)$ is equicontinuous, because $G$ is collection of isometries. 
    Hence, $(X,x_0,G)$ is already its own $X$-MEF.
    Note that $E(X,G)=\mathrm{SO}(3)$, and
    if we apply Fact \ref{fact: equicontinuous ambits}, we indeed obtain that 
    $S^2=X=\mathrm{SO}(3)/\mathrm{SO}(2)$ as $G$-flows.
    Now, let $B_1$ be the Bohr compactification of $G$ below $X$.
    We have that $(B_1,G)$ is an equicontinuous flow and $w_1\colon X\to B_1$
    is an epimorphism of $G$-flows. Therefore, we obtain the following diagram of $G$-epimorphisms:
    $$\xymatrix{\mathrm{SO}(3) \ar[dr]^{f:=w_1\circ\iota} \ar[d]_-{\iota}& \\ \mathrm{SO}(3)/\mathrm{SO}(2)\ar[r]_-{w_1} & B_1}$$
    Note that $f$ is a group epimorphism and $\mathrm{SO}(2)\leqslant\ker f$.
    Because $\mathrm{SO}(3)$ is simple, we have $\ker f=\mathrm{SO}(3)$, and so $B_1=\{e_{B_1}\}$.
\end{example}

By $Q_E^*$ we denote the smallest closed $G$-invariant equivalence relation on $E=E(X,G)$ 
such that the flow $(E/Q^*_E,G)$ is equicontinuous.
(it contains the relation $Q_E$ of regional proximity on $E$).
Then $(E/Q^*_E,G)$ is the MEF of $(E,G)$ which will be called the \emph{$E$-MEF} of $(X,G)$. Set $K_2:=E(E/Q^*_E,G)\leqslant\homeo(E/Q^*_E)$, which is a compact Hausdorff group. 
We fix $u^2=u\in\CM\unlhd_m E$ (an idempotent in a left minimal ideal of $E$). 

\begin{proposition}\label{prop: first iso}
    $(E/Q^*_E,G)\cong(K_2,G)$,
    there exists a unique topological group structure on $E/Q^*_E$ extending the (left) action of $G$ on $E/Q^*_E$,
    and there exists an epimorphism $u\CM/H(u\CM) \twoheadrightarrow K_2$ of compact topological groups.
\end{proposition}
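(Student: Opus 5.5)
We first apply Fact \ref{fact: equicontinuous ambits} to the equicontinuous ambit $(E/Q^*_E,[\id_X],G)$, where $[\id_X]$ is the class of $\id_X$ (its orbit is dense because $G\id_X$ is dense in $E$). This gives $(E/Q^*_E,G)\cong(K_2/F,G)$ with $F:=\{k\in K_2: k[\id_X]=[\id_X]\}$. The first claim then reduces to showing $F$ is trivial. Let $\pi_E\colon E\to E/Q^*_E$ be the quotient map. For $k\in K_2$, choose a net $g_i\to k$ in $K_2$ and pass to a subnet with $g_i\to\eta$ in $E$. Continuity of the action of $K_2$ on $E/Q^*_E$ gives $k[\id_X]=\lim g_i[\id_X]=\pi_E(\eta)$.

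The key step is to show that $k\xi$ is determined by $k[\id_X]$, that is, that $\pi_E(\eta\xi)$ depends only on $\pi_E(\eta)$. Concretely, for every $\xi\in E$,
$$k\,\pi_E(\xi)=\lim_i g_i\pi_E(\xi)=\lim_i \pi_E(g_i\xi)=\pi_E(\eta\xi),$$
using left continuity of multiplication in $E$. So if $k\in F$, then $\pi_E(\eta)=\pi_E(\id_X)$. Now right multiplication $r_\xi\colon E\to E$, $\zeta\mapsto \zeta\xi$, is continuous and $G$-equivariant. Hence $\pi_E\circ r_\xi$ is a factor map from $E$ onto the equicontinuous flow $\overline{G\pi_E(\xi)}$, which here equals $E/Q^*_E$ since $E/Q^*_E$ is minimal (equicontinuous and transitive). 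Therefore $\pi_E\circ r_\xi$ factors through $Q^*_E$: the relation $\{(\zeta,\zeta'):\pi_E(\zeta\xi)=\pi_E(\zeta'\xi)\}$ is closed, $G$-invariant and has equicontinuous quotient, so it contains $Q^*_E$. Applying this to $\eta$ and $\id_X$ gives $\pi_E(\eta\xi)=\pi_E(\xi)$ for all $\xi$. Thus $k$ acts as the identity, so $F=\{e\}$ and $K_2\cong E/Q^*_E$ via $k\mapsto k[\id_X]$. I expect this factorization argument to be the main obstacle, in particular checking that the induced quotient is equicontinuous.

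Transporting the group structure of $K_2$ along this bijection gives a compact topological group structure on $E/Q^*_E$ whose left translation by $g$ is the $G$-action. For uniqueness, any topological group structure extending the $G$-action is determined on the dense set $G[\id_X]$ by $(g[\id_X])\cdot y=gy$. Separate continuity and density then force it to agree everywhere, with the neutral element forced to be $[\id_X]$.

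For the epimorphism, the factor map $E/Q^*_E\to E$-level gives $\Phi\colon E\to E(K_2,G)\cong K_2$, which is $\pi_E$ itself under the identification. By the computation above $\Phi$ is a continuous surjective $G$-equivariant semigroup homomorphism. Fact \ref{fact:ellis-group-functoriality} then yields a continuous quotient epimorphism $(u\CM,\tau)\to v\mathcal N$, where $v=\Phi(u)$. Since $K_2$ is a group, $v=e$ and $\mathcal N=K_2$, since the only minimal left ideal of a compact group is the group itself. A continuous homomorphism into a Hausdorff group kills $H(u\CM)$, the closure-type subgroup defining the largest Hausdorff quotient. So it factors through a continuous epimorphism $u\CM/H(u\CM)\twoheadrightarrow K_2$ of compact groups.
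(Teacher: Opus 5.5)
Your argument is correct and is essentially the paper's proof: you reduce via Fact \ref{fact: equicontinuous ambits} to triviality of the stabilizer, obtain that triviality from the minimality of $Q^*_E$, and then get the epimorphism from Fact \ref{fact:ellis-group-functoriality} together with the largest-Hausdorff-quotient property of $u\CM/H(u\CM)$. The only variation is where maximality of the MEF is applied: the paper applies it once, to the equicontinuous factor $\sigma_E\colon E\to K_2$ (your $\Phi$), while you apply it to each factor $\pi_E\circ r_\xi$ separately; since $\sigma_E(\eta)(\pi_E(\xi))=\pi_E(\eta\xi)$, your maps are exactly the coordinates of $\sigma_E$, so the two arguments amount to the same thing.
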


\begin{proof}
    We have $\sigma \colon (E,G)\to (E/Q^*_E,G)$, which lifts up to
    $$\sigma_* \colon E(E,G)\to K_2.$$
    On the other hand, $E\cong E(E,G)$, and the composition of these two maps is a semigroup and $G$-flow epimorphism $\sigma_E\colon E\to K_2$ such that $\sigma_E|_{u\CM}\colon u\CM\to K_2$ is a group epimorphism and topological quotient map with respect to the $\tau$-topologies (using Fact \ref{fact:ellis-group-functoriality}).

    By Fact \ref{fact: equicontinuous ambits}, we have an isomorphism $f\colon (E/Q^*_E,G) \to (K_2/F_E,G)$ given by 
$$f(\xi(\id/Q_E^*)):= \xi/F_E,$$ 
for some closed subgroup $F_E\leqslant K_2$. 
    So $(E,G)\to (K_2/F_E,G)$ is the maximal equicontinuous factor. Note that $E(K_2,G)\cong K_2$ and so $\sigma_E\colon (E,G)\to(K_2,G)$ is also an equicontinuous factor. 
    The following diagram of ambits commutes:
    $$\xymatrix{(E,G) \ar[r]^-{\sigma} \ar[rd]^-{\sigma_E} \ar[d]_-{\eta(\id_X)\mapsto \eta}^-{\cong}& (E/Q_E^*,G) \ar[rrr]^{f,\, \cong} & & & (K_2/F_E,G)\\ E(E,G)\ar[r]_-{\sigma_*}  & (E(E/Q_E^*,G),G)=(K_2,G) \ar[u]_-{\xi\mapsto\xi\big(\sigma(\id_X) \big)} \ar[urrr]_{\xi \mapsto \xi/F_E} }$$
    Hence, by maximality of $(E/Q^*_E,G)\cong(K_2/F_E,G)$, the quotient map $K_2 \to K_2/F_E$ is an isomorphism, so $F_E$ is trivial and $(E/Q^*_E,G)\cong(K_2,G)$. 
    
    So aposteriori there is a unique topological group structure on $E/Q^*_E$ extending the left action of $G$ on $E/Q^*_E$.

    By Corollary A.15 from \cite{rzepecki2018}, we have the following diagram of continuous group epimorphisms:
    $$\xymatrix{u\CM \ar[rr]^{\sigma_E|_{u\CM}} \ar[dr] & & K_2\\ & u\CM/H(u\CM)\ar[ur] &}$$
\end{proof}

\begin{proposition}\label{prop: second iso}
There exists a semigroup and $G$-flow isomorphism  $B_2\cong K_2$.
\end{proposition}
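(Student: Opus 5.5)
We will construct maps in both directions between $B_2$ and $K_2$ using the two universal properties: the universality of the Bohr compactification $(B_2,\fb_2,w_2)$ below $(E,\id_X,G)$, and the maximality of the MEF $(E/Q^*_E,G)$. By Proposition \ref{prop: first iso}, $(E/Q^*_E,G)\cong (K_2,G)$. Under this identification the MEF map is the map $\sigma_E\colon E\to K_2$ from the proof of that proposition. This map is a continuous semigroup epimorphism and a $G$-map sending $\id_X$ to $e_{K_2}$.

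First, we produce a map $B_2\to K_2$. Let $c\colon G\to K_2$ be given by $c(g):=\sigma_E(g\id_X)$. Since $\sigma_E$ is a semigroup homomorphism, $c$ is a group homomorphism. Its image is dense because $G\id_X$ is dense in $E$ and $\sigma_E$ is continuous and surjective. The $G$-action on $K_2$ is the action of $G$ on its own Ellis group, so it is $g\cdot k=c(g)k$. Hence $\sigma_E\circ\ev_{\id_X}=c$, and $(K_2,c,\sigma_E)$ is a compactification of $G$ below $(E,\id_X,G)$ in the sense of Definition (B). By universality there is a continuous group epimorphism $h\colon B_2\to K_2$ with $h\circ w_2=\sigma_E$.

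Second, we produce a map in the other direction. The flow $(B_2,G)$, with action $g\cdot b=\fb_2(g)b$, is equicontinuous, since $G$ acts by left translations of a compact group, which is uniformly equicontinuous. The map $w_2\colon E\to B_2$ is a factor map. By maximality of the $E$-MEF, $w_2$ factors through $\sigma$. Transporting along the isomorphism of Proposition \ref{prop: first iso}, we get a $G$-flow epimorphism $k\colon K_2\to B_2$ with $k\circ\sigma_E=w_2$.

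Finally, we check that $h$ and $k$ are mutually inverse. Both $h\circ k$ and $k\circ h$ are continuous $G$-maps fixing the respective neutral elements. We have $h(k(\sigma_E(\eta)))=h(w_2(\eta))=\sigma_E(\eta)$ and $k(h(w_2(\eta)))=k(\sigma_E(\eta))=w_2(\eta)$, and since $\sigma_E$ and $w_2$ are surjective, both compositions are identities. Thus $h$ is a homeomorphism, a group isomorphism, and a $G$-flow isomorphism. It is also a semigroup isomorphism, using Remark \ref{rem: easy} and the semigroup property of $\sigma_E$. The main point needing care is identifying the group structure that Proposition \ref{prop: first iso} puts on $E/Q^*_E$ with the $G$-action, i.e.\ checking $g\cdot k=c(g)k$ in $K_2$. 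This holds because $K_2=E(E/Q^*_E,G)$ and $g$ acts by composition with $g_{E/Q^*_E}=\sigma_E(g\id_X)$.
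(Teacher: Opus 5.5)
Your proposal is correct and follows essentially the same route as the paper: you get $B_2\to K_2$ from the universality of $(B_2,\fb_2,w_2)$ applied to the triple $(K_2,\sigma_E\circ\iota,\sigma_E)$, you get $K_2\to B_2$ from the maximality of the $E$-MEF applied to the equicontinuous factor $w_2$, and then you show that both composites are identities. The only difference is presentational: you check the composites directly using surjectivity of $\sigma_E$ and $w_2$, whereas the paper phrases this through uniqueness in the two universal properties; you also verify explicitly that the $G$-action on $K_2$ is $g\cdot k=c(g)k$, which the paper leaves implicit.
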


\begin{proof}
    We work with the continuous semigroup epimorphism $\sigma_E$ from the proof of Proposition \ref{prop: first iso}, which is also a $G$-ambit epimorphism.

    The $G$-ambit epimorphism $w_2\colon (E,G)\to (B_2,G)$ is in fact an equicontinuous factor. 
By the proof of Proposition \ref{prop: first iso}, $\sigma_E$ is the MEF of $(E,G)$.
 Thus,   there is a unique epimorphism of $G$-ambits $\ff_1 \colon (K_2,G)\to(B_2,G)$ such that the following diagram commutes:
    $$\xymatrix{(E,G) \ar[d]_-{\sigma_E} \ar[dr]^-{w_2} & \\ (K_2,G)
    \ar@{-->}[r]_-{\ff_1} & (B_2,G).}$$
Since $\sigma_E$ and $w_2$ are semigroup epimorphisms (by Remark \ref{rem: easy}), so is $\ff_1$. 
  
    On the other hand, the existence of $\sigma_E$ allows us to use the universality of the Bohr compactification below $E$ and find a unique continuous epimorphism $\ff_2\colon B_2\to K_2$ making the following diagram commutative (where $\iota \colon G \to E$ is the usual map given by $\iota(g):=g \id_{X}$):
        $$\xymatrix{ & G \ar[d]^-{\iota}  \ar@/_1pc/[ddl]_-{\fb_2} \ar@/^1pc/[ddr]^-{\sigma_E\circ\iota}& \\
        & E \ar[dl]^-{w_2} \ar[dr]_-{\sigma_E}& \\
        B_2 \ar@{-->}[rr]_-{\ff_2} & & K_2}$$
    Note that $\ff_2$ is also an epimorphism of $G$-ambits, $(B_2,G)\to(K_2,G)$.

Combining the universality properties of the $E$-MEF and the Bohr compactification below $E$, gives us
\begin{figure}[h]
\centering
\begin{minipage}{.5\textwidth}
  \centering
  $$\xymatrix{ & (E,G) \ar[dl]_-{\sigma_E} \ar[d]^-{w_2} \ar[dr]^-{\sigma_E} & \\ (K_2,G)\ar[r]^-{\exists!}_-{\ff_1} \ar@/_2pc/[rr]^-{\exists!}_-{\id_{K_2}} & (B_2,G) \ar[r]_-{\ff_2} & (K_2,G)}$$
  \label{fig:test1}
\end{minipage}%
\begin{minipage}{.5\textwidth}
  \centering
  $$\xymatrix{& E \ar[dl]_-{w_2} \ar[d]^-{\sigma_E} \ar[dr]^-{w_2} & \\ 
  B_2 \ar[r]^-{\exists!}_-{\ff_2}  \ar@/_2pc/[rr]^-{\exists!}_-{\id_{B_2}} & K_2 \ar[r]_-{\ff_1} & B_2}$$
  \label{fig:test2}
\end{minipage}
\end{figure}

\noindent
Therefore $\ff_2\ff_1=\id_{K_2}$ and $\ff_1\ff_2=\id_{B_2}$.
\end{proof}

Let us comment and provide some intuition.
On the level of $(X,G)$, by Remark \ref{rem: KB1}, we have the epimorphism $\mathfrak{t}\colon K_1 \to B_1$, but 
we lack a flow epimorphism from $(X,G)$ to $(K_1,G)$ to make $\mathfrak{t}$ an isomorphism, and Example \ref{example: MEF1notB1} shows that $\mathfrak{t}$ need not be an isomorphism. 
However, on the level of $(E,G)$, we actually pass over this issue and obtain an isomorphism $K_2\cong B_2$. 
Let $\ff \colon B_2\to B_1$ be the continuous group epimorphism from Remark \ref{remark: B2 to B1}.
Recall the Ellis semigroup and $G$-flow epimorphism $\pi_* \colon E \to K_1$ from the beginning of Subsection \ref{subsection: MEFs}. By universality of $(B_2,\mathfrak{b}_2,w_2)$, there is a unique continuous group epimorphism $h \colon B_2 \to K_1$ such that $\pi_*=h \circ w_2$. 
The following diagram of continuous group epimorphism commutes:
$$\xymatrix{ B_1 & B_2 \ar[dl]_-{h} \ar[l]_-{\ff} & \\
K_1 \ar[u]^-{\mathfrak{t}} & K_2 \ar[u]_-{\ff_1,\,\cong}& u\CM/H(u\CM) \ar[l]}$$
Note that there is no need to introduce higher Bohr compactifications or higher MEFs of $(X,G)$, simply because $E\cong E(E,G)$.

\begin{cor}\label{cor: all the same}
    If there exists a left-continuous semigroup operation on $X$
    extending the action of $G$ on $X$, then $B_2\cong B_1\cong K_1\cong K_2$.
    Thus, if $K_1\not\cong B_1$, then such a semigroup operation does not exist.
\end{cor}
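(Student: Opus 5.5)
The plan is to use the semigroup operation to identify the ambit $(X,x_0,G)$ with its own Ellis ambit. Then the two Bohr compactifications coincide, and the remaining isomorphisms come from the diagram preceding the corollary.

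By Fact \ref{fact: existence of semigroup operation}, the existence of a left-continuous semigroup operation $*$ on $X$ extending the $G$-action yields the map $\Phi\colon X\to E=E(X,G)$, $\Phi(p)=l_p$. This map is simultaneously a semigroup isomorphism and an isomorphism of $G$-flows, with $\Phi(gx_0)=g_X$. In particular $\Phi(x_0)=\id_X$, so $\Phi$ is an isomorphism of ambits $(X,x_0,G)\cong(E,\id_X,G)$.

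The first step is to check that $\pi_0\circ\Phi=\id_X$, where $\pi_0\colon E\to X$, $\eta\mapsto\eta(x_0)$, is the map of Remark \ref{remark: B2 to B1}. We have $\pi_0(\Phi(p))=p*x_0$. On the dense orbit this equals $(gx_0)*x_0=gx_0$, and both sides are continuous in $p$ by left continuity. Hence $\pi_0=\Phi^{-1}$ is itself an ambit isomorphism.

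The second step uses the fact that the Bohr compactification below an ambit depends only on the isomorphism type of the ambit. Composing with $\pi_0$ transports compactifications below $X$ to compactifications below $E$ and back. Consequently the epimorphism $\ff\colon B_2\to B_1$ induced by $\pi_0$ is an isomorphism, since both $w_1\circ\pi_0$ and $w_2$ are universal.

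The third step is the same argument for maximal equicontinuous factors. The $X$-MEF transported by $\pi_0$ is the $E$-MEF, so the $X$-MEF is $(K_2,G)$ up to isomorphism. Hence $K_1=E(Z,G)\cong E(K_2,G)\cong K_2$, using $Z\cong K_2$ from Proposition \ref{prop: first iso}.

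For compatibility, I would use the commuting diagram before the corollary, $\ff=\mathfrak t\circ h$ with $h\colon B_2\to K_1$. Since $\ff$ is injective, $h$ is injective, and $h$ is onto, so $h$ is an isomorphism. Then $\mathfrak t=\ff\circ h^{-1}$ is an isomorphism as well. Combined with $\ff_1\colon K_2\cong B_2$ from Proposition \ref{prop: second iso}, this gives $B_2\cong B_1\cong K_1\cong K_2$. The final sentence of the corollary is the contrapositive.

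The main obstacle is the bookkeeping in the second step. One must ensure that the identification via $\pi_0$ matches the specific maps $\ff$ and $h$, and not merely produce an abstract isomorphism. I would handle this through the uniqueness clauses of the universal properties: the universal factorization of $w_2$ through $w_1\circ\pi_0$ is an isomorphism, and by uniqueness it equals $\ff$.
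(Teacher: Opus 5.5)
Your proof is correct and follows the paper's route. You identify $(X,x_0,G)$ with $(E,\id_X,G)$ via Fact~\ref{fact: existence of semigroup operation}, transport this to get $B_1\cong B_2$ and $K_1\cong K_2$, and invoke Proposition~\ref{prop: second iso} for $B_2\cong K_2$. Your extra bookkeeping (checking $\pi_0=\Phi^{-1}$ so that the canonical maps $\ff$, $h$ and $\mathfrak t$ are themselves isomorphisms) is valid but stronger than the abstract isomorphisms the statement needs, and it makes your separate MEF-transport step redundant.
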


\begin{proof}
By assumption and Fact \ref{fact: existence of semigroup operation}, the ambits $(X,x_0,G)$ and $(E(X,G),\id_X,G)$ are isomorphic, so  $B_2\cong B_1$ and $K_2 \cong K_1$. The fact that $B_2 \cong K_2$ follows from Proposition \ref{prop: second iso}.
\end{proof}

\subsection{Category of ambits below an Ellis flow}\label{subs:category.below}

Our goal is to develop some topological dynamics ``below'' any given $G$-ambit  $(\mathcal{A},a_0,G)$ on which there is a left continuous semigroup operation extending the action of $G$. This generalizes the classical topological dynamics which takes place below the universal $G$-ambit. Some basic issues are similar, but there are also essential new obstacles.  An important motivation is that one can impose interesting dynamical conditions (e.g., tameness) on $(\mathcal{A},a_0,G)$ which are almost never satisfied by the universal $G$-ambit.

Let $(\mathcal{A},a_0,G)$ be an ambit on which there exists a left continuous semigroup operation $*$ extending the action of $G$ (i.e. $(ga_0) * q=gq$). Recall that, by Fact \ref{fact: existence of semigroup operation}, the existence of a left continuous semigroup operation $*$ on $\mathcal{A}$ extending the action of $G$ is equivalent to saying that $(\mathcal{A},a_0,G)$ is isomorphic to $(E(X,G),\id, G)$ for some flow $(X,G)$.

We will be interested in the category $\mathcal{C}$ of ambits which are $G$-factors of $(\mathcal{A},a_0,G)$. 
Note that the semigroup $(\mathcal{A},*)$ acts naturally on the left on every ambit in $\mathcal{C}$. Indeed, take an ambit $(X,x_0,G)$ from $\mathcal{C}$. By Remark \ref{remark: the action of A on ambits from C}, for any $x \in X$ there exits a flow homomorphism $\pi_x \colon \mathcal{A} \to X$ with $\pi_x(a_0)=x$. We define the action of $(\mathcal{A},*)$ on $X$ by $a  \bullet x:=\pi_x(a)$ (associativity is left as an exercise). This action is left continuous and extends the action of $G$ (i.e, $(ga_0) \bullet x=gx$), because $\pi_x$ is a flow homomorphism. From now on, we will skip $\bullet$.

\begin{remark}\label{remark: the action of A on ambits from C}
If $(X,x_0,G)$ belongs to $\mathcal{C}$, then for every $x \in X$ the ambit $(\overline{Gx},x,G)$ also belongs to $\mathcal{C}$. 
\end{remark}

\begin{proof}
Take an epimorphism $f \colon (\mathcal{A},a_0) \to (X,x_0)$ of $G$-ambits. There is $a_x \in \mathcal{A}$ which is mapped to $x$. We can find a homomorphism (of pointed $G$-flows) $h \colon (\mathcal{A},a_0) \to (\mathcal{A},a_x)$ (just $z \mapsto z *a_x$). Then the composition $f\circ h \colon (\mathcal{A},a_0) \to (\overline{Gx},x)$ witnesses that $(\overline{Gx},x,G)$ belongs to $\mathcal{C}$. 
\end{proof}

By the above remark, abusing terminology, we can say that a minimal $G$-flow $M$ belongs to $\mathcal{C}$ if $(M,m,G)$ belongs to $\mathcal{C}$ for some (equivalently, every) $m \in M$. 
It is very easy to see that any minimal subflow of a product of minimal $G$-flows in the category $\mathcal{C}$ still belongs to $\mathcal{C}$. 
Thus, standard proofs (cf. \cite[Proposition II.4.2]{Glasner1976} and \cite[Remark 1.22]{Krupinski2017}) yield  the following fact (where uniqueness is up to isomorphism). Item (3) is a particular case of Remark \ref{remark: existence of Bohr compactifications}.

\begin{remark}
\begin{enumerate}
\item There exists a unique universal minimal proximal $G$-flow $(\pi_{\mathcal{A}},G)$ in $\mathcal{C}$.
\item For a given minimal $G$-flow $(X,G)$ in $\mathcal{C}$ there exists a unique universal group extension of $X$ in the category $\mathcal{C}$. Denote the universal group extension of  $(\pi_{\mathcal{A}},G)$ in $\mathcal{C}$ by $\psi_{\mathcal{A}} \colon (\pi^{\#}_{\mathcal{A}}, G, \Sigma(G)_{\mathcal{A}}) \to (\pi_{\mathcal{A}},G)$.
\item 
There exists a unique Bohr compactification $\mathfrak{b}_{\mathcal{A}} \colon G \to G^{\textrm{Bohr}}_{\mathcal{A}}$ of $G$ with the induced minimal flow $(G^{\textrm{Bohr}}_{\mathcal{A}},G)$ belonging to $\mathcal{C}$. 

\item
Let $(X,x_0,G)$ be a $G$-ambit and set $(\mathcal{A},a_0,G)=(E(X,G),\id_X,G)$.
We have that $\fb_{\mathcal{A}}\colon G\to G^{\textrm{Bohr}}_{\mathcal{A}}$ coincides with the Bohr compactification of $G$ below $(E(X,G),\id_X,G)$, i.e. with $\fb_2\colon G\to B_2$.
\end{enumerate}
\end{remark}
 
A {\em compactification flow} is a group extension $(X,G,K)$ of a minimal proximal flow. Then the compact group $K$ witnessing the group extension is precisely $\Aut(X,G)$ with the pointwise convergence topology and with the opposite composition. (See the paragraph after Def. 1.23 in \cite{Krupinski2017} and the references in there.)
The flow $(\pi^{\#}_{\mathcal{A}}, G)$ is the {\em universal compactification flow of $G$ in $\mathcal{C}$} (see Proposition \ref{proposition: univ. comp. flow}), and $\Sigma(G)_{\mathcal{A}}$ will be called the {\em generalized Bohr compactification of $G$ in $\mathcal{C}$} (it is uniquely determined as $\Aut(\pi^{\#}_{\mathcal{A}}, G)$ with the opposite composition).

Let us fix a minimal left ideal $\mathcal{M}$ in $\mathcal{A}$ and an idempotent $u \in \mathcal{M}$. Recall that $\mathcal{A}$ acts transitively and left-continuously on any minimal flow from $\mathcal{C}$ by an action which extends the action of $G$ (i.e., $(ga_0)x=gx$). Choose $b_0 \in \pi^{\#}_{\mathcal{A}}$ with $ub_0=b_0$ ($b_0:=ub$ for any choice of $b\in \pi^{\#}_{\mathcal{A}}$ has this property). Let 
$$\gal_{\mathcal{A}}=\gal(\pi^{\#}_{\mathcal{A}},b_0):=\{ \eta \in u\mathcal{M}: \eta b_0 = b_0\}.$$

In the classical case when $\mathcal{A}=\beta G$ (i.e.,  the universal $G$-ambit), Theorem IX.4.2  of \cite{Glasner1976} describes $\pi^{\#}_{\mathcal{A}}$ as a quotient of $\mathcal{M}$ by a certain closed $G$-invariant equivalence relation $\sim$, and $\Sigma(G)_{\mathcal{A}}$ as $u\mathcal{M}/H(u\mathcal{M})$. However, in general, this approach fails, as the relation $\sim$ need not be closed. It is an interesting general question how to describe  both $\pi^{\#}_{\mathcal{A}}$ and $\Sigma(G)_{\mathcal{A}}$. In a model-theoretic category of externally definable flows there was an attempt to do that in Theorem 2.5 of \cite{Krupinski2017}, but recently Andy Zucker found a gap in the proof (there is no reason for the relation $\mathcal{M}$ defined there to be an equivalence relation). Using the ideas around Theorem 2.5 of \cite{Krupinski2017}, the second author has found some description of the universal compactification flow of $G$ in $\mathcal{C}$, which shows that under some special assumptions the generalized Bohr compactification in $\mathcal{C}$ is $u\mathcal{M}/H(u\mathcal{M})$, but it does not prove it in general; we do not present in this paper those partial results. We expect that it may be false in general. However, in the situation in which we are interested in this paper (i.e. for a tame and amenable ambit $\mathcal{A}$), it will be true.

By the proof of \cite[Theorem IX.2.1]{Glasner1976} or just using this theorem and passing to the suitable quotients of Ellis groups (the difference is that in \cite[Theorem IX.2.1]{Glasner1976} we compute all the Ellis groups and Galois groups in the universal $G$-ambit $\beta G$, whereas here in the ambit $\mathcal{A}$ which is a quotient of $\beta G$), we get the following fact.

\begin{fact}\label{fact: identification of Sigma and uI/Gal}
There is a topological group isomorphism $\kappa \colon u \mathcal{M}/\!\gal_{\mathcal{A}} \to \Sigma(G)_{\mathcal{A}}$ determined by $(pb_0)\cdot \kappa(\eta/\!\gal_{\mathcal{A}})=(p\eta) b_0$ for $p \in  \mathcal{M}$. Moreover, $H(u\mathcal{M}) \leqslant \gal_{\mathcal{A}}$.
\end{fact}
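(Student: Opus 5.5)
The plan is to transfer Glasner's argument for $\beta G$ (Theorem IX.2.1 and its proof in \cite{Glasner1976}) to the ambit $\mathcal{A}$. I would work directly in $\mathcal{A}$ with its left action on every minimal flow of $\mathcal{C}$.

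First I would construct a map $\kappa_0 \colon u\mathcal{M} \to \Sigma(G)_{\mathcal{A}}$. Since $\pi^{\#}_{\mathcal{A}}$ is minimal, we have $\pi^{\#}_{\mathcal{A}} = \mathcal{M} b_0$. For $\eta\in u\mathcal{M}$ I define a map $\varphi_\eta$ on $\pi^{\#}_{\mathcal{A}}$ by $\varphi_\eta(pb_0) := p\eta b_0$. The key point is well-definedness. If $pb_0=qb_0$, then $\psi_{\mathcal{A}}(pb_0)=\psi_{\mathcal{A}}(qb_0)$. Because $\psi_{\mathcal{A}}$ is a group extension, there is $k\in\Sigma(G)_{\mathcal{A}}$ with $\eta b_0 = b_0\cdot k$. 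Here I use that $\psi(\eta b_0)=\eta\psi(b_0)$, and that in the proximal flow $\pi_{\mathcal{A}}$ the element $\eta\in u\mathcal{M}$ acts so that $\eta\psi(b_0)=\psi(b_0)$; the latter holds because $u$ fixes $\psi(b_0)$ and $u\mathcal{M}$ acts trivially on $u\pi_{\mathcal{A}}$, which is a singleton by proximality. Then $p\eta b_0 = p(b_0\cdot k) = (pb_0)\cdot k$, since the $\mathcal{A}$-action commutes with the right $K$-action; this commutation follows from $G$-equivariance together with left continuity. Hence $\varphi_\eta = (\cdot)\, k$ is well defined, and it is an automorphism. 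I then set $\kappa_0(\eta):=k$, determined by $\eta b_0=b_0\cdot k$.

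Next I would check the algebraic properties of $\kappa_0$. It is a homomorphism, since $\eta\xi b_0 = \eta(b_0 k_\xi) = b_0k_\eta k_\xi$; this is compatible with the opposite composition convention. It is surjective: for $k$, the point $b_0\cdot k$ lies in $u\pi^{\#}_{\mathcal{A}} = u\mathcal{M}b_0$, because $u(b_0k)=(ub_0)k=b_0k$. Its kernel is exactly $\gal_{\mathcal{A}}$, by freeness of the $K$-action. This yields the bijective homomorphism $\kappa$ with the stated formula $(pb_0)\kappa(\eta/\gal_{\mathcal{A}})=(p\eta)b_0$.

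For the topology, I would show that $\kappa_0$ is continuous from the $\tau$-topology to the pointwise topology on $\Aut$. The plan is to take $\eta\in\cl_\tau(A)$, with nets $g_i\to u$ and $g_i a_i\to\eta$ for $a_i\in A$. Passing to a subnet, $\kappa_0(a_i)\to k$, and then
\[b_0\kappa_0(\eta) = \eta b_0 = \lim g_ia_ib_0 = \lim g_i(b_0\kappa_0(a_i)).\]
Using joint continuity of the compact group action on $\pi^{\#}_{\mathcal{A}}\times K$ together with $g_ib_0\to ub_0=b_0$, this gives $b_0k$, so $\kappa_0(\eta)=k\in\overline{\kappa_0[A]}$. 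Thus $\kappa_0$ is continuous. Since $(u\mathcal{M},\tau)$ is quasi-compact and $K$ is Hausdorff, $\kappa_0$ is a closed map. It follows that $\kappa$ is a homeomorphism from the quotient topology, and in particular $\gal_{\mathcal{A}}$ is $\tau$-closed.

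Finally, $H(u\mathcal{M})\leqslant\gal_{\mathcal{A}}$ follows from the fact that $u\mathcal{M}/\gal_{\mathcal{A}}\cong K$ is a Hausdorff quotient, so the map factors through the largest Hausdorff quotient. The main obstacle I expect is the well-definedness and commutation step. That step needs the left-continuous $\mathcal{A}$-action on $\pi^{\#}_{\mathcal{A}}$ to commute with the automorphisms, which I would derive by extending the $G$-equivariance through limits $ga_0\to p$. Beyond that, the subnet argument in the $\tau$-closure is the delicate part.
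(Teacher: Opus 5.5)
Your proposal is correct and follows the paper's route. The paper gets this fact by transferring the proof of Glasner's Theorem IX.2.1 from $\beta G$ to the ambit $\mathcal{A}$ (or, alternatively, by applying that theorem in $\beta G$ and passing to quotients), and you carry out exactly this transfer explicitly: $u\mathcal{M}$ fixes $\psi_{\mathcal{A}}(b_0)$ by proximality, the $\mathcal{A}$-action commutes with the right $\Sigma(G)_{\mathcal{A}}$-action, and continuity comes from the $\tau$-closure argument plus quasi-compactness, with $H(u\mathcal{M})\leqslant\gal_{\mathcal{A}}$ then following because $u\mathcal{M}/\gal_{\mathcal{A}}$ is a Hausdorff continuous homomorphic image of $(u\mathcal{M},\tau)$.
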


For the next fact see \cite[Lemma VIII.1.1]{Glasner1976}.

\begin{fact}\label{fact: homomorphism theta}
Let $\chi \colon (Z,G,K) \to (Y,G)$ and $\phi \colon (X,G,L) \to (Y,G)$ be group extensions and assume that there is a homomorphism $\psi \colon Z \to X$ such that $\chi = \phi \circ \psi$. Then there is a topological group epimorphism $\theta \colon K \to L$ determined by $\psi(z) \cdot \theta(k)= \psi(z\cdot k)$. 
\end{fact}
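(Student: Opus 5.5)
The plan is to define $\theta$ pointwise on one fibre and then use minimality to show the definition does not depend on the chosen point.

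\textbf{Step 1: both actions are free.} Let $y\cdot k=y$ for some $y\in Y$ (here $Y$ is $Z$ or $X$). The set $\{y'\in Y: y'\cdot k=y'\}$ is closed, since the right action is continuous. It is $G$-invariant by condition (2) of the definition of a group extension. It is nonempty, so it is all of $Y$ by minimality, and faithfulness gives $k=e$. Hence, whenever two points lie in one fibre, the group element carrying one to the other is unique.

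\textbf{Step 2: definition of $\theta$.} Fix $z_0\in Z$ and let $k\in K$. Since $\chi(z_0\cdot k)=\chi(z_0)$ and $\chi=\phi\circ\psi$,
$$\phi(\psi(z_0\cdot k))=\phi(\psi(z_0)).$$
So $\psi(z_0\cdot k)\in\psi(z_0)\cdot L$, and by Step 1 there is a unique $l\in L$ with $\psi(z_0\cdot k)=\psi(z_0)\cdot l$. Put $\theta(k):=l$.

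\textbf{Step 3: independence of $z_0$.} This is the only non-formal point. For fixed $k$ and $l=\theta(k)$, consider
$$\{z\in Z:\psi(z\cdot k)=\psi(z)\cdot l\}.$$
It is closed by continuity of $\psi$ and of both right actions. It is $G$-invariant because $\psi$ is a $G$-map and the $G$- and right actions commute. It contains $z_0$, so by minimality of $Z$ it equals $Z$. This gives the defining identity $\psi(z)\cdot\theta(k)=\psi(z\cdot k)$ for all $z\in Z$. Uniqueness of $\theta$ follows from Step 1.

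\textbf{Step 4: $\theta$ is a homomorphism.} Using Step 3 at the points $z$ and $z\cdot k$,
$$\psi(z)\cdot\theta(kk')=\psi(z\cdot kk')=\psi(z\cdot k)\cdot\theta(k')=\psi(z)\cdot\theta(k)\theta(k').$$
Freeness then gives $\theta(kk')=\theta(k)\theta(k')$.

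\textbf{Step 5: $\theta$ is surjective.} The image $\psi[Z]$ is nonempty, closed and $G$-invariant in the minimal flow $X$, so $\psi$ is onto. Given $l\in L$, pick $z'$ with $\psi(z')=\psi(z_0)\cdot l$. Then
$$\chi(z')=\phi(\psi(z'))=\phi(\psi(z_0))=\chi(z_0),$$
so $z'=z_0\cdot k$ for some $k\in K$ by condition (1). Hence $\theta(k)=l$.

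\textbf{Step 6: continuity.} The graph of $\theta$ is
$$\{(k,l)\in K\times L:\psi(z_0\cdot k)=\psi(z_0)\cdot l\},$$
which is closed. A map into a compact Hausdorff space with closed graph is continuous. Being a continuous surjection from a compact space onto a Hausdorff one, $\theta$ is also closed, hence a quotient map and a topological group epimorphism.
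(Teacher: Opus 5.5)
Your proof is correct and complete. It establishes freeness of the actions from minimality and faithfulness, defines $\theta(k)$ on a single fibre, extends the identity $\psi(z\cdot k)=\psi(z)\cdot\theta(k)$ to all of $Z$ by minimality, derives the homomorphism property and surjectivity from freeness and condition (1), and obtains continuity from the closed graph into the compact Hausdorff group $L$. The paper gives no argument of its own for this fact and only cites \cite[Lemma VIII.1.1]{Glasner1976}; your argument is the standard direct proof of that lemma.
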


\begin{proposition}\label{proposition: univ. comp. flow}
\begin{enumerate}
\item 
$(\pi^{\#}_{\mathcal{A}},G)$ is the unique universal compactification $G$-flow in $\mathcal{C}$.

\item 
If $(X,G)$ is a compactification flow in $\mathcal{C}$, then there is a continuous epimorphism $\theta$ from  the compact group $\Sigma(G)_{\mathcal{A}}$ to the compact group $\Aut(X,G)$. 
More precisely, bearing in mind Fact \ref{fact: identification of Sigma and uI/Gal}, if we choose $x_0 \in X$ such that $ux_0=x_0$, then we can find the above $\theta$ determined by $(px_0)\cdot \theta(\eta/\!\gal_{\mathcal{A}}) = (p \eta) x_0$ for $p \in \mathcal{M}$.
\end{enumerate}
\end{proposition}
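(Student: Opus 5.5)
The plan is to follow the classical argument of \cite[Chapter IX]{Glasner1976}, relativized to the category $\mathcal{C}$, using Fact \ref{fact: homomorphism theta} and Fact \ref{fact: identification of Sigma and uI/Gal}. First we check that $(\pi^{\#}_{\mathcal{A}},G)$ is indeed a compactification flow in $\mathcal{C}$: by construction it is a group extension, via $\psi_{\mathcal{A}}$ with group $\Sigma(G)_{\mathcal{A}}$, of the minimal proximal flow $\pi_{\mathcal{A}}$, and it lies in $\mathcal{C}$.

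For (2), let $(X,G,L)$ be a compactification flow in $\mathcal{C}$, so $\phi\colon X\to Y$ is a group extension of a minimal proximal flow $Y$ in $\mathcal{C}$. By universality of $\pi_{\mathcal{A}}$ there is a factor map $Y'\colon \pi_{\mathcal{A}}\to Y$. Since $Y$ is proximal and $\pi_{\mathcal{A}}$ minimal proximal, and homomorphisms into proximal minimal flows are unique, the composite structure is rigid. The key step is to produce a homomorphism $\psi\colon \pi^{\#}_{\mathcal{A}}\to X$ with $\phi\circ\psi = Y'\circ\psi_{\mathcal{A}}$. For this we pull $X$ back along $Y'$: take a minimal subflow $W$ of the fibre product $\{(x,p)\in X\times \pi_{\mathcal{A}}: \phi(x)=Y'(p)\}$. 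The projection $W\to\pi_{\mathcal{A}}$ is a group extension with group a closed subgroup of $L$ (the stabilizer of $W$ under the right $L$-action on the first coordinate), and $W\in\mathcal{C}$ because minimal subflows of products of flows in $\mathcal{C}$ stay in $\mathcal{C}$. Universality of the group extension $\psi_{\mathcal{A}}$ in $\mathcal{C}$ gives a map $\pi^{\#}_{\mathcal{A}}\to W$ over $\pi_{\mathcal{A}}$. Composing with the projection to $X$ gives $\psi$. Fact \ref{fact: homomorphism theta}, applied over $Y$, then gives the epimorphism $\theta\colon\Sigma(G)_{\mathcal{A}}\to L=\Aut(X,G)$ with $\psi(z)\cdot\theta(k)=\psi(zk)$.

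For the explicit formula, choose $x_0$ with $ux_0=x_0$. Replacing $\psi$ by $\psi$ followed by an element of $L$, we may assume $\psi(b_0)=x_0$; this uses that $u$ acts as the identity on $uX$ and that $L$ acts transitively on fibres. Then for $p\in\mathcal{M}$ and $\eta\in u\mathcal{M}$, Fact \ref{fact: identification of Sigma and uI/Gal} gives
\[
(pb_0)\cdot\kappa(\eta/\gal_{\mathcal{A}})=(p\eta)b_0 .
\]
Applying the $\mathcal{A}$-equivariant map $\psi$ yields $(px_0)\cdot\theta(\kappa(\eta/\gal_{\mathcal{A}}))=(p\eta)x_0$. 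Here $\psi$ commutes with the action of $\mathcal{A}$, since it is a $G$-map, the action is left continuous, and $G a_0$ is dense.

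For (1), universality follows from the existence of $\psi$ just constructed. Uniqueness follows by the standard argument: two universal compactification flows map onto each other, and by coalescence of group extensions of a minimal proximal flow, as in \cite[Chapter IX]{Glasner1976}, the maps are isomorphisms.

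The main obstacle is the pullback step: verifying that $W\to\pi_{\mathcal{A}}$ is a genuine group extension, with group a closed subgroup of $L$ acting faithfully. This requires that $Y'$ be compatible with proximality, so that fibres of $W$ are single orbits of a subgroup. I would handle this via the standard lemma that a minimal subset of the pullback of a group extension is again a group extension by its setwise stabilizer.
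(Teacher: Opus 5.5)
There is a genuine gap in (2), at the step where you apply Fact~\ref{fact: homomorphism theta} ``over $Y$''. That fact needs both maps into $Y$ to be group extensions. However, $Y'\circ\psi_{\mathcal{A}}\colon\pi^{\#}_{\mathcal{A}}\to Y$ is not a group extension with group $\Sigma(G)_{\mathcal{A}}$. Its fibre over $y$ is the union of the $\Sigma(G)_{\mathcal{A}}$-orbits lying over every point of $Y'^{-1}(y)$, and $Y'\colon\pi_{\mathcal{A}}\to Y$ is in general far from injective (e.g.\ $Y$ trivial while $\pi_{\mathcal{A}}$ is not).

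The legitimate application is over $\pi_{\mathcal{A}}$, to $\psi_{\mathcal{A}}$ and to $W\to\pi_{\mathcal{A}}$. But by your own analysis, $W\to\pi_{\mathcal{A}}$ is a group extension only with the setwise stabilizer $L_W=\{l\in L: W\cdot l=W\}$. So what you actually obtain is an epimorphism $\Sigma(G)_{\mathcal{A}}\to L_W$, not onto $L=\Aut(X,G)$. The ``standard lemma'' you plan to use is true, but it is precisely too weak here.

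Part (1) is fine, since universality only needs some homomorphism $\pi^{\#}_{\mathcal{A}}\to X$. The formula computation is also fine. Your normalization $\psi(b_0)=x_0$ works once you note that $\phi(\psi(b_0))$ and $\phi(x_0)$ both lie in $uY$, which is a singleton by proximality of $Y$.

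The missing ingredient is $L_W=L$, i.e.\ that $W$ is invariant under all of $L$. This is exactly where proximality of $\pi_{\mathcal{A}}$ must enter, and your proposal never uses it. The paper takes $W=\overline{G(x_0,p_0)}$ with $ux_0=x_0$ and $up_0=p_0$, and argues as follows:
\begin{enumerate}
\item Given $(x,p)\in W$, pick an idempotent $v\in\mathcal{M}$ with $v(x,p)=(x,p)$.
\item Proximality of $Y$ gives $v\mathcal{M}\phi(x)=\{\phi(x)\}$. Since group extensions are distal, this yields $\phi^{-1}(\phi(x))=v\mathcal{M}x$.
\item Proximality of $\pi_{\mathcal{A}}$ gives $v\mathcal{M}p=\{p\}$.
\item Hence $(x\cdot l,p)\in v\mathcal{M}(x,p)\subseteq W$ for every $l\in L$.
\end{enumerate}
Thus $W\to\pi_{\mathcal{A}}$ is a group extension with the full group $L$, and Fact~\ref{fact: homomorphism theta} applied over $\pi_{\mathcal{A}}$ gives an epimorphism onto $L$.

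Alternatively, you could repair your own version directly. Put $L':=\theta[\Sigma(G)_{\mathcal{A}}]$. The composite $\pi^{\#}_{\mathcal{A}}\to X\to X/L'$ is constant on $\Sigma(G)_{\mathcal{A}}$-orbits, so $X/L'$ is a factor of $\pi_{\mathcal{A}}$ and hence proximal. At the same time, $X/L'\to Y$ is a distal extension. Together these force $L'=L$.
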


\begin{proof}
(1) Let $(X,G)$ be a compactification flow in $\mathcal{C}$. So there is a minimal proximal flow $(Y,G)$ and a group extension $\phi \colon (X,G,K) \to (Y,G)$. Since $(Y,G)$ belongs to $\mathcal{C}$, by universality of $\pi_{\mathcal{A}}$, we get an epimorphism $\chi \colon \pi_{\mathcal{A}} \to Y$. Recall also that we have the universal group extension $\psi_{\mathcal{A}} \colon (\pi^{\#}_{\mathcal{A}},G, \Sigma(G)_{\mathcal{A}}) \to (\pi_{\mathcal{A}},G)$.

Set $Z':= \{ (x,p) \in X \times \pi_{\mathcal{A}}: \phi(x) = \chi(p)$\}. 
Pick $x_0 \in X$ and $p_0 \in \pi_{\mathcal{A}}$ so that $ux_0=x_0$ and $up_0=p_0$. 
Then $u(x_0,p_0)=(x_0,p_0)$, so $Z:=\overline{G(x_0,p_0)}$ is a minimal subflow of $Z'$ (to see that $Z$ is a subflow of $Z'$, note that $(x_0,p_0) \in Z'$; indeed, the elements $\phi(x_0), \chi(p_0) \in uY$ are distal by \cite[Proposition I.3.2(3)]{Glasner1976}, and so they are equal by proximality of $(Y,G)$). Since both $(X,G)$ and $(\pi_{\mathcal{A}},G)$ belong to $\mathcal{C}$, so does $(Z,G)$. We have a natural faithful right action of $K$ on $Z$ given by $(x,p) \cdot k:=(x \cdot k,p)$.
This action is jointly continuous, and each $k\in K$ acts by a $G$-flow automorphism.
Let $\pi_1 \colon Z \to X$ and $\pi_2 \colon Z \to \pi_{\mathcal{A}}$ be the projections.

\begin{clm}
$\pi_2$ is a group extension with the above action of $K$ as a witness.
\end{clm}

\begin{clmproof}
Consider any $(x,p) \in Z$. Then $\pi_2^{-1}[\pi_2(x,p)]=\pi_2^{-1}(p)$, so the goal is to show that $\pi_2^{-1}(p) =(x,p) \cdot K$.

($\subseteq$) Take any $(x',p) \in \pi_2^{-1}(p)$. Since $(x,p), (x',p) \in Z$, we have $\phi(x')= \chi(p)=\phi(x)$. Thus, as $\phi$ is a group extension witnesses by $K$, there exists $k \in K$ such that $x'=x \cdot k$. Then $(x',p) = (x,p) \cdot k \in (x,p) \cdot K$.

($\supseteq$)  Take any $(x',p) \in (x,p) \cdot K$, so $x' \in x \cdot K = \phi^{-1}[\phi(x)]$ as $\phi$ is a group extension witnessed by $K$. Choose an idempotent $v \in \mathcal{M}$ such that $v(x,p)=(x,p)$ (such a $v$ exists by minimality of $Z$). Since $(Y,G)$ is proximal, Proposition I.4.1(2) from \cite{Glasner1976} yields $v\mathcal{M}\phi(x) =\{\phi(x)\}$. Hence, by \cite[Proposition I.4.1(3)]{Glasner1976}, we conclude that $\phi^{-1}[\phi(x)] = v\mathcal{M} x$ (we use here that group extensions are distal homomorphisms). All of this implies that $x' \in v\mathcal{M} x$. On the other hand, since $(\pi_{\mathcal{A}},G)$ is proximal, again by  \cite[Proposition I.4.1(2)]{Glasner1976}, we get that $v\mathcal{M} p =\{p\}$. Therefore, $(x',p) \in (v\mathcal{M} x,p)=v\mathcal{M}(x,p) \subseteq Z$ and clearly $\pi_2(x',p)=p$, so $(x',p) \in \pi_2^{-1}(p)$.
\end{clmproof}

By the claim, the fact that $(Z,G)$ is in $\mathcal{C}$, and by universality of  $\psi_{\mathcal{A}}$, there exists an epimorphism $\Theta \colon \pi^{\#}_{\mathcal{A}} \to Z$ with $\pi_2 \circ \Theta = \psi_{\mathcal{A}}$. Then $\pi_1 \circ \Theta \colon \pi^{\#}_{\mathcal{A}} \to X$ is an epimorphism, which witnesses universality of  $(\pi^{\#}_{\mathcal{A}},G)$.

Uniqueness follows from an easy observation that every endomorphism of a compactification flow is an automorphism.

(2)
Consider any $x_0 \in X$ with $ux_0=x_0$.
Let $p_0 = \psi_{\mathcal{A}}(b_0) \in \pi_{\mathcal{A}}$; then $up_0=p_0$. The construction in the proof of (1) yields an epimorphism $\Theta \colon \pi^{\#}_{\mathcal{A}} \to Z$ with $\pi_2 \circ \Theta = \psi_{\mathcal{A}}$. 
We can find $b_0' \in \pi^{\#}_{\mathcal{A}}$ satisfying $ub_0'=b_0'$ and $\Theta(b_0')=(x_0,p_0)$. Since $\psi_{\mathcal{A}}(b_0') = \pi_2(\Theta(b_0')) = \pi_2(x_0,p_0)=p_0 = \psi_{\mathcal{A}}(b_0)$ and $\psi_{\mathcal{A}}$ is a group extension, there exists an automorphism $\sigma$ of the flow $(\pi^{\#}_{\mathcal{A}},G)$ mapping $b_0$ to $b_0'$. Then $\Theta':=\Theta \circ \sigma \colon  \pi^{\#}_{\mathcal{A}} \to Z$ is an epimorphism satisfying $\Theta'(b_0) = (x_0,p_0)$. This implies that $\pi_2(\Theta'(b_0))=p_0 = \psi_{\mathcal{A}}(b_0)$, and so $\pi_2 \circ \Theta' = \psi_{\mathcal{A}}$.

By Fact \ref{fact: identification of Sigma and uI/Gal}, we can identify $\Sigma(G)_{\mathcal{A}}$ with $u\mathcal{M}/\!\gal_{\mathcal{A}}$ and then the right action on $\pi^{\#}_{\mathcal{A}}$ is given by $(pb_0)\cdot (\eta/\!\gal_{\mathcal{A}})=(p\eta) b_0$ for $p \in \mathcal{M}$.

Since $\pi_2$ and $\psi_{\mathcal{A}}$ are group extensions, by Fact \ref{fact: homomorphism theta}, there is a continuous epimorphism $\theta \colon \Sigma(G)_{\mathcal{A}} \to K$ given by $\Theta'(pb_0) \cdot \theta (\eta/\! \gal_{\mathcal{A}}) =\Theta' (pb_0 \cdot (\eta/\! \gal_{\mathcal{A}}))$ which equals $\Theta'((p\eta)b_0)$ by the previous paragraph. But $\Theta'(pb_0)=p\Theta'(b_0)=p(x_0,p_0)=(px_0,pp_0)$ and $\Theta'((p\eta)b_0) = ((p\eta)x_0,(p\eta)p_0)$. So 
\begin{IEEEeqnarray*}{rCl}
((p\eta)x_0,(p\eta)p_0) &=& \Theta'(pb_0) \cdot \theta (\eta/\! \gal_{\mathcal{A}})= (px_0,pp_0)\cdot \theta (\eta/\! \gal_{\mathcal{A}}) \\
&=& ((px_0) \cdot \theta (\eta/\! \gal_{\mathcal{A}}),pp_0)    
\end{IEEEeqnarray*}
which yields $(px_0) \cdot \theta (\eta/\! \gal_{\mathcal{A}}) = (p\eta)x_0$, as required.
\end{proof}

\begin{cor}\label{corollary: Newelski's map in general}
Let $e_B$ be the neutral element in $G^{\textrm{Bohr}}_{\mathcal{A}}$. The map $\theta  \colon u\mathcal{M}/\!\gal_{\mathcal{A}} \to G^{\textrm{Bohr}}_{\mathcal{A}}$ given by $\theta(\eta/\! \gal_{\mathcal{A}}):= \eta e_B$
is a well-defined continuous epimorphism, which satisfies $\theta(ugu/\!\gal_{\mathcal{A}}) = \mathfrak{b}_{\mathcal{A}}(g)$ for all $g \in G$.  Since $H(u\mathcal{M})$ is contained in $\gal_{\mathcal{A}}$, this yields a continuous epimorphism $u\mathcal{M}/H(u\mathcal{M}) \to G^{\textrm{Bohr}}_{\mathcal{A}}$. This also yields an epimorphism and topological quotient map $\widehat{\theta}\colon u\mathcal{M} \to G^{\textrm{Bohr}}_{\mathcal{A}}$ defined as the composition of $\theta$ with the quotient map $u\mathcal{M} \to u\mathcal{M}/\!\gal_{\mathcal{A}}$.
\end{cor}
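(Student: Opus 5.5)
The plan is to deduce the corollary from Proposition \ref{proposition: univ. comp. flow}(2), applied to the compactification flow $X:=G^{\textrm{Bohr}}_{\mathcal{A}}$. First I will check that $(G^{\textrm{Bohr}}_{\mathcal{A}},G)$ is a compactification flow in $\mathcal{C}$. It is minimal and equicontinuous, hence distal. It is a group extension of the trivial one-point flow, which is minimal proximal, with witness $K:=G^{\textrm{Bohr}}_{\mathcal{A}}$ acting by right multiplication. This action is faithful and jointly continuous, it commutes with the left $G$-action $g\cdot k=\mathfrak{b}_{\mathcal{A}}(g)k$, and its orbit is all of $G^{\textrm{Bohr}}_{\mathcal{A}}$, which is the fibre over the point. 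The flow is in $\mathcal{C}$ by item (3) of the remark on universal objects in $\mathcal{C}$. Moreover $\Aut(G^{\textrm{Bohr}}_{\mathcal{A}},G)$, taken with the opposite composition, is identified with $G^{\textrm{Bohr}}_{\mathcal{A}}$ via right multiplications.

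Next I will fix the base point. Put $x_0:=ue_B$. Since $u$ is idempotent, $ux_0=x_0$, so Proposition \ref{proposition: univ. comp. flow}(2) gives a continuous epimorphism $\theta'\colon u\mathcal{M}/\gal_{\mathcal{A}}\to G^{\textrm{Bohr}}_{\mathcal{A}}$ with $(px_0)\,\theta'(\eta/\gal_{\mathcal{A}})=(p\eta)x_0$ for $p\in\mathcal{M}$. I will then show that $x_0=e_B$. The semigroup $\mathcal{A}$ acts on $G^{\textrm{Bohr}}_{\mathcal{A}}$ by $a\mapsto \pi_{e_B}(a)$, and $a\mapsto ae_B$ is a semigroup homomorphism $\mathcal{A}\to G^{\textrm{Bohr}}_{\mathcal{A}}$, which is the analogue of Remark \ref{rem: easy}. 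Indeed, $(ab)e_B=a(be_B)$, and the action of $a$ on $G^{\textrm{Bohr}}_{\mathcal{A}}$ is left multiplication by $ae_B$, by continuity from $G$. So $ue_B$ is an idempotent in a group, hence $ue_B=e_B$.

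With $x_0=e_B$ and $p=u$, the defining identity becomes $\theta'(\eta/\gal_{\mathcal{A}})=(u\eta)e_B=\eta e_B$, so $\theta=\theta'$. This shows at once that $\theta$ is well defined, continuous and surjective. Since $\eta\mapsto \eta e_B$ is multiplicative, $\theta$ is also a homomorphism. For $g\in G$, the element $ugu$ acts as $\mathfrak{b}_{\mathcal{A}}(u)\mathfrak{b}_{\mathcal{A}}(g)\mathfrak{b}_{\mathcal{A}}(u)$, where $\mathfrak{b}_{\mathcal{A}}(u):=ue_B=e_B$. Hence $\theta(ugu/\gal_{\mathcal{A}})=\mathfrak{b}_{\mathcal{A}}(g)$.

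The remaining claims are formal. Fact \ref{fact: identification of Sigma and uI/Gal} gives $H(u\mathcal{M})\leqslant\gal_{\mathcal{A}}$, so $\theta$ composed with $u\mathcal{M}/H(u\mathcal{M})\to u\mathcal{M}/\gal_{\mathcal{A}}$ is a continuous epimorphism. For $\widehat\theta$, I will note that $u\mathcal{M}\to u\mathcal{M}/\gal_{\mathcal{A}}$ is a quotient map. Also, $\theta$ is a continuous bijection from the compact space $(u\mathcal{M}/\gal_{\mathcal{A}})/\ker\theta$ onto a Hausdorff space, so it is closed. Quasi-compactness of $u\mathcal{M}$ gives the same conclusion directly: any continuous surjection from a quasi-compact space onto a Hausdorff space is a closed map, hence a quotient map. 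This is how I will show that $\widehat\theta$ is a topological quotient map.

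The main obstacle I expect is the identification $\eta e_B=\theta'(\eta/\gal_{\mathcal{A}})$. This requires care that the $\mathcal{A}$-action on the Bohr flow is by left multiplication by $ae_B$, which I will obtain by left continuity in $a$ and density of $G$.
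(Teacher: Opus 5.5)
Your proposal is correct and follows essentially the same route as the paper. Like the paper, you apply Proposition \ref{proposition: univ. comp. flow}(2) to the compactification flow $(G^{\textrm{Bohr}}_{\mathcal{A}},G)$, identify $\Aut(G^{\textrm{Bohr}}_{\mathcal{A}},G)$ with $G^{\textrm{Bohr}}_{\mathcal{A}}$ via right translations, and read off $\theta(\eta/\gal_{\mathcal{A}})=\eta e_B$ from the defining formula with $p=u$.

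The differences are minor. You obtain $ue_B=e_B$ from the semigroup homomorphism $a\mapsto ae_B$, whereas the paper shows $uh=h$ for all $h$ because right translations are automorphisms commuting with the $\mathcal{A}$-action; this is the same underlying fact. You also get the quotient-map property of $\widehat{\theta}$ directly from quasi-compactness of $(u\mathcal{M},\tau)$, rather than from compact Hausdorffness of $u\mathcal{M}/\gal_{\mathcal{A}}$.
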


\begin{proof}
Let $(X,G):= (G^{\textrm{Bohr}}_{\mathcal{A}},G)$, where the action is given by $gx:=\mathfrak{b}_{\mathcal{A}}(g)x$.
Note that $X$ acts transitively, jointly continuously and by automorphisms on the flow $(X,G)$ by multiplying on the right (i.e., for every $h \in X$ the assignment $x \mapsto xh$ is an automorphism of $(X,G)$). By minimality of $(X,G)$, this yields an identification of $X$ with $\Aut(X,G)$. In particular, $(X,G)$ is a group extension of the trivial flow, so it is a compactification flow. 

We check that $uh=h$ for every $h \in X$. There exists $h_0 \in X$ with $uh_0=h_0$. Then $uh=u(h_0(h_0^{-1}h))= (uh_0)(h_0^{-1}h)=h_0h_0^{-1}h=h$, where the second equality follows from the fact that the assignment $x \mapsto x(h_0^{-1}h)$ is an automorphism of $(X,G)$.

By the above two paragraphs, we can apply Proposition \ref{proposition: univ. comp. flow}(2) to the flow $(X,G)$ and $x_0:=e_B$. Bearing in mind the above identification of $\Aut(X,G)$ with $X$, the map $\theta \colon u\mathcal{M}/\! \gal_{\mathcal{A}} \to \Aut(X,G)$ from Proposition \ref{proposition: univ. comp. flow}(2) is in fact a continuous epimorphism onto $X$. By the last formula in Proposition \ref{proposition: univ. comp. flow}(2) and the identification of $\Aut(X,G)$ with $X$, we see that $\theta(\eta/\! \gal_{\mathcal{A}})= x_0\cdot \theta(\eta/\!\gal_{\mathcal{A}})=\eta x_0$. Finally, by the second paragraph, $\theta(ugu/\!\gal_{\mathcal{A}}) = ugu x_0=ug x_0=gx_0= \mathfrak{b}_{\mathcal{A}}(g) x_0 =\mathfrak{b}_{\mathcal{A}}(g)$.

Since $\theta$ is continuous and $u\mathcal{M}/\!\gal_{\mathcal{A}}$ and $G^{\textrm{Bohr}}_{\mathcal{A}}$ are compact Hausdorff, we have that $\theta$ is a topological quotient map, and thus so is $\widehat{\theta}$.
\end{proof}

\begin{question}
Is the set $\{ugu/\!\gal_{\mathcal{A}}: g \in G\}$ dense in $u\mathcal{M}/\!\gal_{\mathcal{A}}$? Whenever this is the case,  the epimorphism $\theta$ from Corollary  \ref{corollary: Newelski's map in general} is the unique continuous epimorphism $u\mathcal{M}/\!\gal_{\mathcal{A}} \to G^{\textrm{Bohr}}_{\mathcal{A}}$ satisfying $\theta(ugu/\!\gal_{\mathcal{A}}) = \mathfrak{b}_{\mathcal{A}}(g)$ for all $g \in G$.
\end{question}

By minimality of $\mathcal{M}$, we get that $Gu$ is dense in $\mathcal{M}$. 
Thus, in the case when $\overline{u\mathcal{M}}=\mathcal{M}$, a positive answer to the last question follows from \cite[Lemma 3.1]{KruPiRze}. However, this argument does not work if $\overline{u\mathcal{M}} \subsetneq \mathcal{M}$.

Note that if the map $\theta \colon u\mathcal{M}/\!\gal_{\mathcal{A}} \to G^{\textrm{Bohr}}_{\mathcal{A}}$ from Corollary  \ref{corollary: Newelski's map in general} is an isomorphism, then the map $\zeta \colon G \to u\mathcal{M}/\! \gal_{\mathcal{A}}$ given by $g \mapsto ugu/\! \gal_{\mathcal{A}}$ is the Bohr compactification of $G$ in $\mathcal{C}$ (in particular, $\zeta$ has dense image). 

Now, we present a generalization to $\mathcal{C}$ of a more precise version of \cite[Corollary IX.4.3]{Glasner1976} (note that \cite[Corollary IX.4.3]{Glasner1976} was left without a proof in \cite{Glasner1976}).

\begin{proposition}\label{proposition: Glasner below A}
Assume that the ambit $\mathcal{A}$ is {\em strongly amenable} in the sense that it has no non-trivial minimal proximal factors. Then the map $\theta \colon u\mathcal{M}/\!\gal_{\mathcal{A}} \to G^{\textrm{Bohr}}_{\mathcal{A}}$ from Corollary  \ref{corollary: Newelski's map in general} is an isomorphism.
\end{proposition}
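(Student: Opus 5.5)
If $\mathcal{A}$ has no non-trivial minimal proximal factors, then the universal minimal proximal flow $\pi_{\mathcal{A}}$ in $\mathcal{C}$ is the one-point flow. I will show that the universal compactification flow $\pi^{\#}_{\mathcal{A}}$ then coincides with the Bohr flow $(G^{\textrm{Bohr}}_{\mathcal{A}},G)$, and read off that $\theta$ is an isomorphism.

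First, $\pi_{\mathcal{A}}$ is trivial. It is a minimal proximal flow in $\mathcal{C}$, hence a minimal proximal factor of $\mathcal{A}$. Indeed, for any $p\in\pi_{\mathcal{A}}$ the ambit $(\pi_{\mathcal{A}},p,G)$ is in $\mathcal{C}$ by Remark \ref{remark: the action of A on ambits from C} and minimality. So by strong amenability it is a point.

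Consequently, a compactification flow in $\mathcal{C}$ is exactly a minimal flow $(X,G)$ in $\mathcal{C}$ which is a group extension of the trivial flow. Equivalently, $\Aut(X,G)$ acts transitively and freely. The universal one, $\psi_{\mathcal{A}}\colon(\pi^{\#}_{\mathcal{A}},G,\Sigma(G)_{\mathcal{A}})\to\{*\}$, is thus a minimal flow on which the compact group $\Sigma(G)_{\mathcal{A}}$ acts on the right, freely and transitively, commuting with $G$.

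Next, I turn $\pi^{\#}_{\mathcal{A}}$ into a compact group compactification of $G$ in $\mathcal{C}$. Fix $b_0$ with $ub_0=b_0$. The orbit map $k\mapsto b_0\cdot k$ is a homeomorphism $\Sigma:=\Sigma(G)_{\mathcal{A}}\to\pi^{\#}_{\mathcal{A}}$, being a continuous bijection from a compact space to a Hausdorff one. For $g\in G$, define $c(g)\in\Sigma$ by $gb_0=b_0\cdot c(g)$. Since the $G$-action commutes with the right $\Sigma$-action, we get
\[
gh\,b_0=g(b_0\cdot c(h))=(gb_0)\cdot c(h)=b_0\cdot c(g)c(h).
\]
So $c\colon G\to\Sigma$ is a homomorphism. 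It has dense image because $Gb_0$ is dense. Under the identification, $g$ acts by $b_0\cdot k\mapsto b_0\cdot c(g)k$, i.e.\ by left multiplication by $c(g)$. Hence $(\pi^{\#}_{\mathcal{A}},b_0,G)\cong(\Sigma,e,G)$ is a compactification of $G$ whose flow lies in $\mathcal{C}$. By universality of $\mathfrak{b}_{\mathcal{A}}$ (item (3) of the remark on universal objects) we get a continuous epimorphism $G^{\textrm{Bohr}}_{\mathcal{A}}\to\Sigma$.

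Conversely, Corollary \ref{corollary: Newelski's map in general} (or Proposition \ref{proposition: univ. comp. flow}(2) applied to the compactification flow $G^{\textrm{Bohr}}_{\mathcal{A}}$) gives the epimorphism $\theta\colon u\mathcal{M}/\gal_{\mathcal{A}}\cong\Sigma\to G^{\textrm{Bohr}}_{\mathcal{A}}$, using Fact \ref{fact: identification of Sigma and uI/Gal}.

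The main step is to check that these two maps are mutually inverse. The cleanest route is to compute on the dense image of $G$.

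For the map $G^{\textrm{Bohr}}_{\mathcal{A}}\to\Sigma$, we have $\mathfrak{b}_{\mathcal{A}}(g)\mapsto c(g)$.

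For $\theta\circ\kappa^{-1}$, the element $c(g)\in\Sigma$ corresponds, via $\kappa$ and the defining relation $(pb_0)\cdot\kappa(\eta/\gal_{\mathcal{A}})=(p\eta)b_0$ with $p=u$, to $\eta=ugu$. Indeed,
\[
b_0\cdot\kappa(ugu/\gal_{\mathcal{A}})=(ugu)b_0=ugb_0=u(b_0\cdot c(g))=(ub_0)\cdot c(g)=b_0\cdot c(g).
\]
Freeness then gives $\kappa(ugu/\gal_{\mathcal{A}})=c(g)$. Corollary \ref{corollary: Newelski's map in general} yields $\theta(ugu/\gal_{\mathcal{A}})=\mathfrak{b}_{\mathcal{A}}(g)$.

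So the composite $G^{\textrm{Bohr}}_{\mathcal{A}}\to\Sigma\to G^{\textrm{Bohr}}_{\mathcal{A}}$ fixes $\mathfrak{b}_{\mathcal{A}}[G]$. It is the identity by density and continuity. Likewise the composite $\Sigma\to G^{\textrm{Bohr}}_{\mathcal{A}}\to\Sigma$ fixes $c[G]$, which is dense in $\Sigma$, so it is the identity too. (This incidentally answers the density Question positively in this case.) Hence $\theta$ is a continuous bijection between compact Hausdorff groups, i.e.\ a topological isomorphism.

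The point needing care is that $u$ acts on $\pi^{\#}_{\mathcal{A}}$ commuting with the right $\Sigma$-action. This holds because $\Sigma$ acts by flow automorphisms, which commute with the left-continuous extension of the $G$-action to $\mathcal{A}$, since that extension is a limit of $G$-translates.
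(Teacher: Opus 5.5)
Your proposal is correct and follows essentially the same route as the paper's proof. Both arguments make $\pi_{\mathcal{A}}$ trivial, transport the group structure of $\Sigma(G)_{\mathcal{A}}$ to $\pi^{\#}_{\mathcal{A}}$ via $k\mapsto b_0\cdot k$ to get a compactification $c$ of $G$ in $\mathcal{C}$ and a map $h$ with $h\circ\mathfrak{b}_{\mathcal{A}}=c$, compute $\kappa(ugu/\gal_{\mathcal{A}})=c(g)$, and compare on the dense image of $G$. The only difference is that you verify both composites are the identity, which also gives density of $\{ugu/\gal_{\mathcal{A}}\}$ in this case, whereas the paper checks only $\theta\circ\kappa^{-1}\circ h=\id$ and deduces injectivity of $\theta$ from the surjectivity of $\kappa^{-1}\circ h$.
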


\begin{proof}
By strong amenability, $\pi_{\mathcal{A}}$ (i.e., the universal minimal proximal ambit in $\mathcal{C}$) is trivial. So $(Y,G,K):=(\pi^{\#}_{\mathcal{A}}, G, \Sigma(G)_{\mathcal{A}})$ is the universal group extension in $\mathcal{C}$ of the trivial flow. So $Y=b_0 \cdot K$ and the map $\Phi \colon K \to Y$ given by $\Phi(k):=b_0 \cdot k$ is a homeomorphism (where recall that $b_0 \in Y$ was chosen so that $ub_0=b_0$). This induces a compact group structure on $Y$, that is $(b_0\cdot k_1) \odot (b_0\cdot k_2):=b_0\cdot (k_1k_2)$. For every $g \in G$ there exists $k_g \in K$ such that $gb_0=b_0 \cdot k_g$. A direct computation shows that for any $g_1,g_2 \in G$, we have $(g_1b_0) \odot (g_2b_0)=(g_1g_2)b_0$, and so $k_{g_1} k_{g_2}=k_{g_1 g_2}$. Thus, $k \colon G \to K$ given by $k(g):=k_g$ is a homomorphism with dense image (as $\Phi[\im(k)]=b_0\cdot\im(k) =Gb_0$ and $Gb_0$ is dense in $Y$), i.e. $k$ is a compactification of $G$. 
The $G$-flow structure on $K$ induced by the homeomorphism $\Phi$ is $g a:=\Phi^{-1}(g(b_0\cdot a))=\Phi^{-1}((gb_0)\cdot a))=\Phi^{-1}(b_0\cdot (k_ga)) = k_g a$, and so we see that it coincides with the $G$-flow structure induced by the compactification $k$. Since $(K,G) \cong (Y,G)$ we have that that $(K,G)$ is in $\mathcal{C}$, so $K$ is a compactification of $G$ in $\mathcal{C}$. By universality of the Bohr compactification, there is a unique topological group epimorphism $h \colon G^{\textrm{Bohr}}_{\mathcal{A}} \to K$ such that $h \circ \mathfrak{b}_{\mathcal{A}} =k$.

On the other hand, by Fact \ref{fact: identification of Sigma and uI/Gal}, we have a topological group isomorphism $\kappa \colon u\mathcal{M}/\!\gal_{\mathcal{A}} \to K$ determined by $(pb_0)\cdot \kappa(\eta/\!\gal_{\mathcal{A}})=(p\eta) b_0$ for $p \in \mathcal{M}$. For $g \in G$ we compute: 
\begin{IEEEeqnarray*}{rCl}
    b_0 \cdot \kappa(ugu/\gal_{\mathcal{A}}) &=& (ub_0) \cdot \kappa(ugu/\gal_{\mathcal{A}}) =ugub_0=ugb_0 \\ &=& u(b_0 \cdot k_g)=(ub_0) \cdot k_g= b_0 \cdot k_g,
\end{IEEEeqnarray*} 
and so $\kappa(ugu/\gal_{\mathcal{A}}) = k_g =k(g)$.

We conclude that for any $g \in G$, $(\kappa^{-1} \circ  h \circ \mathfrak{b}_{\mathcal{A}})(g) = ugu/\gal_{\mathcal{A}}$. Hence, by Corollary  \ref{corollary: Newelski's map in general}, we get $\theta \circ \kappa^{-1} \circ h \circ \mathfrak{b}_{\mathcal{A}} =  \mathfrak{b}_{\mathcal{A}}$, so $\theta \circ \kappa^{-1} \circ h = \id_{G^{\textrm{Bohr}}_{\mathcal{A}}}$, so $\theta$ is injective by surjectivity of $\kappa^{-1} \circ h$.
\end{proof}

Now, we explain that the map $\widehat{\theta}$ from Corollary \ref{corollary: Newelski's map in general} coincides with $w_2$ from Subsection \ref{subsection: Bohr comp}.

\begin{remark}\label{remark: w2=theta}
Let $(\mathcal{A},a_0,G)=(E(X,G),\id_X,G)$ for an ambit $(X,x_0,G)$. Then for
the map $\theta$ from Corollary \ref{corollary: Newelski's map in general} we have
$\theta(\eta/\!\gal_{\mathcal{A}})=w_2(\eta)$, i.e. $\eta e_B=w_2(\eta)$.
Hence, the following diagram commutes
$$\xymatrix{u\CM \ar[r]^-{w_2|_{u\CM}} \ar[d] & B_2 \\ u\CM/\!\gal_{\mathcal{A}} \ar[ur]_-{\theta} &}$$
and $\widehat{\theta}=w_2|_{u\CM}$. Thus, $w_2|_{u\CM}$ is a topological quotient map.
\end{remark}

\begin{proof}
For every $g\in G$ we have $w_2(g\id_X)=\fb_2(g)=\fb_2(g)e_B=g e_B$, and then we use standard continuity and density arguments.
\end{proof}

In Proposition \ref{proposition: Glasner below A}, we obtained that the map $\theta$ is an isomorphism under suitable assumptions on the ambit $\mathcal{A}$. 
In the next section, we give assumptions on the ambit $\mathcal{A}$ guaranteeing that the map $\widehat{\theta}=w_2|_{u\CM}$ is an isomorphism.

\section{The generalized Newelski's conjecture}

\subsection{Formulation}\label{subsection: Formulation of GNC}
We keep the notation from the previous section. In particular, for a flow $(X,G)$ and $u^2=u \in \mathcal{M}\unlhd_m E(X,G)$, we have the Bohr compactification $\mathfrak{b}_2 \colon G \to B_2$ and the group epimorphism $\widehat{\theta}=w_2|_{u\CM} \colon u\CM \to B_2$ which is also a topological quotient map (see Remark \ref{remark: w2=theta}).

\begin{conj}[GNC - Generalized Newelski's Conjecture]\label{conj: GNC}
Let $(X,G)$ be a flow.
Then, under suitable assumptions on the flow $(X,G)$, 
the map $w_2|_{u\CM}\colon u\CM\to B_2$ is a topological group isomorphism.
\end{conj}

As in model theory, we have 

\begin{remark}
The conclusion of Conjecture \ref{conj: GNC} (i.e., $w_2|_{u\CM} \colon u\CM\to B_2$ is a topological group isomorphism) is equivalent to injectivity of $w_2|_{u\CM} \colon u\CM\to B_2$, and trivially implies the conclusion of the GRNC (i.e., $u\mathcal{M}$ is compact Hausdorff in the $\tau$ topology).
\end{remark}

\begin{proof}
It follows from Remark \ref{remark: w2=theta}.
\end{proof}

A slightly different form of GNC is:

\begin{conj}\label{conjecture: main conjecture}
Let $(\mathcal{A},a_0,G)$ be an ambit on which there exists a left continuous semigroup operation $*$ extending the action of $G$. Let $u^2=u \in \mathcal{M}\unlhd_m \mathcal{A}$.
Then, under suitable assumptions on the ambit $\mathcal{A}$, the map $\widehat{\theta}\colon u\CM\to G^{\textrm{Bohr}}_{\mathcal{A}}$ (from Corollary \ref{corollary: Newelski's map in general}) is a topological group isomorphism.
\end{conj}

Of course, the question ``What are the suitable assumptions?'' is the main point. 
Our first, motivated by \cite{ArtemPierre} and \cite{Krupinski2019}, guess was:
if $(X,G)$ is tame and amenable, then $w_2|_{u\CM}\colon u\CM\to B_2$ is an isomorphism of topological groups. However, it might be easily refuted --- by adding a single point to a flow
we obtain an amenable flow, then we do it for a tame flow
for which $w_2|_{u\CM}\colon u\CM\to B_2$ is not injective (e.g., the one from \cite{GismatullinPenazziPillay2015Some}). 
On the other hand, we prove Conjecture \ref{conjecture: main conjecture} for tame and amenable $(\mathcal{A},a_0,G)$:

\begin{theorem}\label{theorem: GNC}
Let $(\mathcal{A},a_0,G)$ be a tame amenable ambit on which there exists a left continuous semigroup operation $*$ extending the action of $G$. Then $\widehat{\theta}\colon u\CM\to G^{\textrm{Bohr}}_{\mathcal{A}}$ is a topological group isomorphism.
\end{theorem}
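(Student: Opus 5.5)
By Fact \ref{fact: existence of semigroup operation}, I would identify $(\mathcal{A},a_0,G)$ with $(E(X,G),\id_X,G)$ for some flow $(X,G)$. Since $\mathcal{A}\cong E(\mathcal{A},G)$, we may in fact take $X=\mathcal{A}$. Then $\mathcal{A}$ is tame and amenable, and $E(\mathcal{A},G)\cong\mathcal{A}$. Fact \ref{Fact: 5.6}, applied to the tame flow $\mathcal{A}$, turns amenability of $E(\mathcal{A},G)$ into hereditary amenability of $\mathcal{A}$. So Fact \ref{fact: amenability closure} applies. This gives that $\CM$ is amenable and that $u\CM$ is dense in $\CM$. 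By Fact \ref{fact:RNC}, $(u\CM,\tau)$ is a compact Hausdorff group, so $H(u\CM)=\{u\}$. By Remark \ref{remark: w2=theta} and the remark after Conjecture \ref{conj: GNC}, it then suffices to show that $\widehat{\theta}$ is injective, i.e.\ that $\gal_{\mathcal{A}}$ is trivial and that $\theta$ is injective.

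The route I would follow is the dynamical one, modelled on \cite{Glasner1976}. The key claim is that the universal minimal proximal flow $\pi_{\mathcal{A}}$ in $\mathcal{C}$ is trivial, i.e.\ that $\mathcal{A}$ is strongly amenable in the sense of Proposition \ref{proposition: Glasner below A}. Granting this, Proposition \ref{proposition: Glasner below A} shows that $\theta\colon u\CM/\gal_{\mathcal{A}}\to G^{\textrm{Bohr}}_{\mathcal{A}}$ is an isomorphism.

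To prove the claim, take a minimal proximal flow $Y$ in $\mathcal{C}$. Since $\CM$ maps onto $Y$ and $\CM$ is amenable, $Y$ carries an invariant measure $\nu$. Any factor of a tame flow is tame, so $Y$ is tame. For a minimal proximal flow, the closure of the orbit of any measure in $\Prob(Y)$ contains the point masses. Using Fact \ref{fact:tame-injective}, every element of $E(\Prob(Y),G)$ is determined by its restriction to $Y$; this identifies the action of $p\in E(Y,G)$ on $\Prob(Y)$. The set of $\Prob_G(Y)$ is fixed by all of $E(\Prob(Y),G)$. Hence there is $p$ with $p\nu=\delta_y$, and also $p\nu=\nu$, so $\nu=\delta_y$. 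A minimal flow with an invariant point mass is a single point, so $Y$ is trivial. (Alternatively: a minimal proximal flow in $\mathcal{C}$ is strongly proximal once it is amenable, and strongly proximal plus an invariant measure forces triviality.)

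It remains to show that $\gal_{\mathcal{A}}=\{u\}$, and I expect this to be the main obstacle. Here $\gal_{\mathcal{A}}$ is the stabilizer of $b_0$ in $u\CM$. With $\pi_{\mathcal{A}}$ trivial, $\pi^{\#}_{\mathcal{A}}$ is the universal minimal group extension of the point in $\mathcal{C}$, which is the universal equicontinuous minimal flow in $\mathcal{C}$. The minimal left ideal $\CM$ is itself in $\mathcal{C}$, and its MEF is a group extension of the point. I would show that the factor $\CM\to\pi^{\#}_{\mathcal{A}}$, $p\mapsto pb_0$, is injective on $u\CM$. The idea is to use the density $\overline{u\CM}=\CM$: distinct $\eta\neq\eta'$ in $u\CM$ are separated in $\CM$, and we need to push this separation into an equicontinuous factor. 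Suppose $\eta b_0=\eta' b_0$ for all such $b_0$. Since $\tau$ is Hausdorff, the $\tau$-closed subgroup $\gal_{\mathcal{A}}$ is compact. Then $\CM\to\CM/{\sim}$, with $p\sim p'$ iff $p\in p'\gal_{\mathcal{A}}$ in the appropriate sense, would be a closed relation; closedness uses the density of $u\CM$ together with Hausdorffness of $\tau$, as in \cite[Theorem IX.4.2]{Glasner1976}. From this, $\CM/\gal_{\mathcal{A}}$ is a compactification flow through which everything factors, and universality forces $\gal_{\mathcal{A}}$ trivial.

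If this step resists, the fallback is the analytic route announced in the introduction. By Fact \ref{lemma : min tame unique ergodicity}, $\CM$ has a unique invariant measure $\mu$. I would identify $L^2(\CM,\mu)$ with $L^2(B,\mathfrak h_B)$ via $w_2^*$, using Peter--Weyl (Fact \ref{fact: peter-weyl for unitary}) for the representation of $u\CM$ on $L^2(\CM,\mu)$. An element of $\ker\widehat{\theta}$ then acts trivially on $L^2(\CM,\mu)$. Since $\mu$ has full support on $\CM$ and $u\CM$ is dense in $\CM$, that element must equal $u$.
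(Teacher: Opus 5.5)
Your overall reduction is the paper's: strong amenability, then Proposition~\ref{proposition: Glasner below A}, then $\gal_{\mathcal{A}}=\{u\}$ via Fact~\ref{fact:RNC}. However, the step you flag as the main obstacle is not proved, and your argument for it is circular.

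The relation you propose, $up\in(up')\gal_{\mathcal{A}}$, is exactly the kernel of the factor map $\CM\to\pi^{\#}_{\mathcal{A}}$, $p\mapsto pb_0$. Indeed, every point of the equicontinuous flow $\pi^{\#}_{\mathcal{A}}$ is fixed by $u$, so $pb_0=(up)b_0$, and $(up)b_0=(up')b_0$ iff $(up')^{-1}(up)\in\gal_{\mathcal{A}}$. Hence $\CM/{\sim}$ is just $\pi^{\#}_{\mathcal{A}}$ again, its closedness is automatic, and universality says nothing about the size of $\gal_{\mathcal{A}}$.

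What you need is a group extension of the point in $\mathcal{C}$ whose Galois group is already known to be small. That means a relation built from $H(u\CM)$, not from $\gal_{\mathcal{A}}$. The paper takes $p\sim q$ iff $(up)^{-1}q\in H(u\CM)$ and proves this relation is closed and $G$-invariant. That is where the ingredients you cite actually enter:
\begin{itemize}
\item $\overline{u\CM}=\CM$ gives $p\circ u\CM=\CM$;
\item so by \cite[Chapter IX, Lemma 3.4]{Glasner1976}, $q\in p\circ N$ for every $\tau$-neighbourhood $N$ of $(up)^{-1}q$;
\item Hausdorffness of $u\CM/H(u\CM)$ lets this survive limits and $G$-translates.
\end{itemize}
Then $Z:=\CM/{\sim}$ is distal, hence (by the counterpart of \cite[Theorem IX.2.1]{Glasner1976}) the universal group extension of the point, with $\gal(Z,u/{\sim})=H(u\CM)$ (Lemma~\ref{lemma: Gal= H(uM)}). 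Only at this point does Fact~\ref{fact:RNC} give $\gal_{\mathcal{A}}=\{u\}$.

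Your analytic fallback has the same hole. It assumes the identification of Lemma~\ref{lemma: L2 Bohr}, which is the whole difficulty, and Peter--Weyl for a representation of $u\CM$ does not produce it. The paper applies Peter--Weyl to a representation $\rho\colon B_2\to U(L^2(\CM,\mu))$ extending $g\mapsto U_g$. Building $\rho$ requires two inputs:
\begin{itemize}
\item tameness: Koopman orbits are norm relatively compact, via pointwise convergent subsequences and dominated convergence, so the SOT-closure of $\{U_g:g\in G\}$ is a compact group;
\item Fact~\ref{fact:tame-injective}: so that $g\mapsto U_g$ factors continuously through $E(X,G)$, hence through $B_2$.
\end{itemize}
An ergodicity argument then places every finite-dimensional $\rho(B_2)$-invariant subspace inside $(w_2|_{\CM})^*[C(B_2)]$. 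Once that is available, your last step works, and it does not need density of $u\CM$: $R_\eta$ preserves $\mu$ and fixes a dense subspace, so $R_\eta=\id_{\CM}$ and $\eta=R_\eta(u)=u$.

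A smaller issue is your proof of strong amenability. It rests on the claim that in a minimal proximal flow every orbit closure in $\Prob(Y)$ contains point masses. That is strong proximality, which proximality does not imply in general. Fact~\ref{fact:tame-injective} only says that $\operatorname{res}$ is an isomorphism. It does not show that the lift of a minimal idempotent $v$ (which is constant on $Y$) sends $\nu$ to a point mass; that would need something like $\widehat{v}\nu=v_*\nu$. No measures are needed here. You already have $\overline{u\CM}=\CM$, so choose $y_0\in Y$ with $uy_0=y_0$. Proximality gives $u\CM y_0=\{y_0\}$, and continuity of $\ev_{y_0}$ gives $Y=\CM y_0=\{y_0\}$ (Lemma~\ref{lemma: cl(uM)=M implies strong amenability}).
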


By the fact that tameness of a flow $(X,G)$ transfers to $(E(X,G),G)$ 
(see \cite[Lemma 5.4]{GlasnerMegrelishvili2012} and the sentence immediately following its proof) and by Fact \ref{Fact: 5.6} and Remark \ref{remark: w2=theta}, Theorem \ref{theorem: GNC} is equivalent to the following theorem which says that Conjecture \ref{conj: GNC} holds for tame hereditarily amenable flows and which is the first main result of this paper.

\begin{theorem}\label{thm:GNC 0}
    If $(X,G)$ is tame and hereditarily amenable, then $w_2|_{u\CM}\colon u\CM\to B_2$ is
    an isomorphism of topological groups. 
\end{theorem}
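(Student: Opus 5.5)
By the Remark following Conjecture \ref{conj: GNC}, it suffices to show that $w_2|_{u\CM}$ is injective; bijectivity combined with Remark \ref{remark: w2=theta} then gives a topological isomorphism. Set $E:=E(X,G)$ and $K:=u\CM$.

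The first ingredients are already available. By Fact \ref{fact:RNC}, $(K,\tau)$ is a compact Hausdorff group. By Fact \ref{fact: amenability closure}, $\CM$ is amenable and $K$ is dense in $\CM$. Moreover, $(\CM,G)$ is minimal and tame (tameness passes to $E$ and to subflows), so by Fact \ref{lemma : min tame unique ergodicity} it carries a unique $G$-invariant probability measure $\mu$.

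I would follow the dynamical route, modelled on \cite{Glasner1976}. The aim is to show that $\CM$ is strongly amenable relative to the category $\mathcal{C}$ below $\mathcal{A}=E$, i.e.\ that every minimal proximal flow in $\mathcal{C}$ is trivial.

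\begin{itemize}
\item Let $Y$ be a minimal proximal flow in $\mathcal{C}$. It is a factor of $\CM$, hence tame.
\item It is also amenable, since $\mu$ pushes forward to an invariant measure on $Y$.
\item A minimal proximal flow with an invariant measure $\nu$ is trivial. Indeed, $\nu$ lies in $\Prob(Y)$, and strong proximality would give point masses in the orbit closure of $\nu$. To get this, I would use the semigroup isomorphism $E(\Prob(Y),G)\cong E(Y,G)$ of Fact \ref{fact:tame-injective}. A minimal idempotent then acts on $\Prob(Y)$ through its action on $Y$, and proximality sends $Y$ to a single point $y$ under $v\in\mathcal{M}$. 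Hence $v\nu=\delta_y$ by the affine structure. But $G$-invariance of $\nu$ gives $v\nu=\nu$, so $\nu=\delta_y$ and $Y=\{y\}$ by minimality.
\end{itemize}

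The delicate point in the last step is that $\eta\mapsto\eta\nu$ is only continuous in the first coordinate, not affine a priori. I expect tameness (Fact \ref{fact:tame-injective}) to justify that $v$ acts on measures by pushforward along $v\colon Y\to Y$, which may be non-Borel. Tame elements are Baire class 1, so the pushforward is meaningful; this is where I expect the main obstacle.

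Granting this, Proposition \ref{proposition: Glasner below A}, applied to the ambit $(E,\id_X,G)$, shows that $\theta\colon K/\gal_{\mathcal A}\to B_2$ is an isomorphism. Its proof only uses that $\pi_{\mathcal A}$ is trivial, i.e.\ strong amenability for minimal flows in $\mathcal{C}$.

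It then remains to prove $\gal_{\mathcal A}=\{u\}$. Here $\pi^{\#}_{\mathcal A}$ is the universal group extension of the trivial flow, i.e.\ the universal compact group factor of $\CM$. Let $\eta\in\gal_{\mathcal A}$. I would show that every minimal flow in $\mathcal{C}$ is a factor of $\CM$, and then exhibit $\CM$ itself as close to a group extension. Concretely, the density $\overline{K}=\CM$ lets me define on $\CM$ the right action $p\cdot k:=pk$ for $k\in K$, which commutes with $G$. I would argue that $\tau$-continuity of $K$ (via Fact \ref{fact:RNC}) makes this action continuous on $\overline{K}=\CM$. Then $\CM/K$ is proximal, hence trivial by the above, so $\CM$ is a group extension of the trivial flow and thus a factor of $\pi^{\#}_{\mathcal A}$. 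Consequently $\gal_{\mathcal A}$ fixes $u\in\CM$, giving $\eta=\eta u=u$.

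If the continuity of this right action fails, I would fall back on the analytic route. That route identifies $L^2(\CM,\mu)$ with $L^2(B_2,\mathfrak h_{B_2})$ via $w_2^*$, using Peter--Weyl (Fact \ref{fact: peter-weyl for unitary}) and unique ergodicity. It then deduces that $\ker w_2|_K$ acts trivially on $L^2(\CM,\mu)$, and hence on $\CM$, since $\mu$ has full support.
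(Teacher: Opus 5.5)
The second half of your dynamical route fails. Your reduction to $\gal_{\mathcal A}=\{u\}$ (strong amenability plus Proposition~\ref{proposition: Glasner below A}) is the paper's. The trouble is what comes next.

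The right action $(p,k)\mapsto pk$ of $K=u\CM$ on $\CM$ is by $G$-automorphisms, but it is not jointly continuous for the $\tau$-topology on $K$. Fact~\ref{fact:RNC} cannot supply this: $\tau$ is \emph{coarser} than the topology induced from $E$, and its Hausdorffness only makes $(K,\tau)$ compact.

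Your own argument shows the claim must fail. The $K$-orbits in $\CM$ are the groups $v\CM$ ($v$ an idempotent of $\CM$), and $u[p]=[up]=[u]$. So continuity would make $\CM/K$ a minimal proximal flow in $\mathcal C$, hence trivial, forcing $\CM=u\CM$, i.e.\ a single idempotent in $\CM$. This is false already for a Sturmian subshift, which is tame, minimal and uniquely ergodic, so covered by the theorem. There $\CM\cong\mathbb{T}\times\{+,-\}$ carries the double-arrow topology and has two idempotents. Also $u\CM\cong\mathbb{T}$ is dense but not closed, and $pk_n$ converges to the other copy when $k_n\to u$ from one side.

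Hence $\CM$ is not a group extension of a point, not a factor of $\pi^{\#}_{\mathcal A}$, and nothing forces $\gal_{\mathcal A}$ to fix $u$. The missing idea is to pass to the quotient $Z=\CM/\!\sim$, where $p\sim q\iff (up)^{-1}q\in H(u\CM)$, i.e.\ $up=uq$ once GRNC is applied. The real work is proving that $\sim$ is closed and $G$-invariant (Lemma~\ref{lemma: Gal= H(uM)}, via Lemma IX.3.4 of \cite{Glasner1976} and the identity $p\circ u\CM=\CM$). That identity is exactly where $\overline{u\CM}=\CM$ enters. After this, $Z$ is the universal group extension of the trivial flow with $\gal(Z,u/\!\sim)=H(u\CM)=\{u\}$. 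In the Sturmian case, $Z$ identifies $(t,+)$ with $(t,-)$ and is the circle.

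Two further points.

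First, your proof of strong amenability is not justified as written. Elements of $E(\Prob(Y),G)$ are affine but not continuous, so $\hat v(\delta_{y'})=\delta_y$ for all $y'$ does not yield $\hat v(\nu)=\delta_y$. Identifying $\hat v$ with pushforward along $v$ works when $v$ is a sequential limit, so dominated convergence applies, as in the metrizable case; it is not available for nets. You do not need it: the density $\overline{u\CM}=\CM$ that you already quote gives the result at once. For $Y$ minimal proximal with $uy_0=y_0$, one has $u\CM y_0=\{y_0\}$, so $Y=\overline{u\CM}\,y_0=\{y_0\}$ by continuity of $\ev_{y_0}$ (Lemma~\ref{lemma: cl(uM)=M implies strong amenability}).

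Second, your fallback is the paper's analytic proof, but it omits its core. Peter--Weyl needs a continuous unitary representation of $B_2$ on $L^2(\CM,\mu)$ extending $g\mapsto U_g$. The paper builds this in Lemma~\ref{lemma: L2 Bohr} in two steps:
\begin{enumerate}
\item It shows that $\overline{\{U_g\}}^{\mathrm{SOT}}$ is a compact group. Tameness gives norm-compact orbits via pointwise convergent subsequences and dominated convergence.
\item It shows that $\beta G\to\overline{\{U_g\}}$ factors through a continuous semigroup epimorphism from $E$, using Fact~\ref{fact:tame-injective} on the measures $\frac{F}{c_F}\mu$. This is what lets the universality of $B_2$ apply.
\end{enumerate}
With that representation in hand, ergodicity shows that every finite-dimensional invariant subspace consists of pullbacks from $C(B_2)$. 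Your kernel argument then goes through as in Theorem~\ref{thm: GNC for amenable minimal ideal}, using $(R_\eta)_*\mu=\mu$ by unique ergodicity and $w_2\circ R_\eta=w_2$ on $\CM$.
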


A topological dynamics proof will be given in Subsection \ref{subsection: dynamical proof of GNC}; an analytic proof will appear in Subsection \ref{subsection: analytic proof of GNC}.

In Corollary \ref{corollary: GNC implies NC for definably amenable NIP groups}, we will see that Theorem \ref{theorem: GNC} implies one of the main results of \cite{ArtemPierre} saying  that the original Newelski's Conjecture holds for definably amenable groups definable in NIP theories (i.e., Fact \ref{fact: Theorem 1.5 from CS in intro}). In Subsection \ref{subsection: local NC}, we will use Theorem \ref{theorem: GNC}  to get a local variant of Newelski's Conjecture.

\subsection{GNC under special assumptions}
We say that GNC holds for a flow $(X,G)$
    if the conclusion of Conjecture \ref{conj: GNC} holds, equivalently if 
    $w_2|_{u\CM} \colon u\CM\to B_2$ is injective.

Note that GNC trivially holds 
for flows with $u\CM=\{u\}$ (e.g., proximal flows).
However, it is not enough to assume only finiteness of $u\CM$, as we have a counterexample for GNC where $|u\CM|=2$ and $|B_1|=|B_2|=1$ (see \cite{GismatullinPenazziPillay2015Some} and e.g. Lemma 3.10 in \cite{GPPb} to note that $B_1=B_2$).
Working in the model-theoretic context of a group $G$ definable in a $p$-adically closed field $M$ acting on the type space $S_G(M)$, Theorem 2 in \cite{YaoZhe26} yields the equivalence between the conclusion of Newelski's conjecture and amenability of the flow $(S_G(M),G)$. In Subsection \ref{subsection: Formulation of GNC}, we gave a simple reason why the right-to-left implication does not hold in general for tame flows (and Theorem   \ref{thm:GNC 0} shows that it holds with hereditary amenability in place of amenability). Below we give an example showing that in general the left-to-right implication does not hold either. 

\begin{example}
    Consider an $\aleph_0$-saturated and strongly $\aleph_0$-homogeneous model $M$ of the theory $T$ of the circular order (cf. \cite{Kim1996Simple}). 
    Let $\bar{m}$ be an enumeration of the model $M$,
    and let $S_{\bar{m}}(M)$ be the space of complete types over $M$ extending $\tp(\bar{m}/\emptyset)$. Since $T$ has NIP, the flow $(X,G):=(S_{\bar{m}}(M),\aut(M))$ is tame by \cite[Corollary 5.8]{KrRz}. The theory $T$ is a classical example for which $\emptyset$ is not an extension base for forking. Then Corollaries 2.7 and 2.24 from \cite{HruKruPi} show that $(X,G)$ is not amenable.
On the other hand, by \cite[Proposition A.1]{KrRz}, the Ellis groups are trivial (and so the compactifications $B_1$ and $B_2$ are trivial as well) and thus GNC holds. Note also that Claim 1 in the proof of \cite[Proposition A.1]{KrRz} tells us that $(X,G)$ is proximal.
\end{example}

\begin{remark}
   GNC holds for WAP flows.
\end{remark}

\begin{proof}
If $(X,G)$ is WAP, then there exists a unique minimal left ideal $\CM$ in $E:=E(X,G)$, which also coincides with its Ellis group $u\CM$, and $(E,G)$ is an amenable flow (cf. \cite{WAP} or Theorem 2.5 in \cite{Conant2025Stable} for short summary). Let $\rho \colon E \to \CM$ be given by $\rho(\eta):=\eta u$ and $\iota \colon G \to E$ by $\iota(g): = g \id_X$. Then $\rho$ is clearly an epimorphism of ambits and $\rho \circ \iota  \colon G \to \mathcal{M}=u\mathcal{M}$ is a group homomorphism, as $(\rho \circ \iota) (g_1g_2)=g_1g_2u=g_1u g_2u = (\rho \circ \iota) (g_1) (\rho \circ \iota) (g_2)$ (we used here that since $\mathcal{M}=u\mathcal{M}$, for every $\eta \in \mathcal{M}$ we have $\eta=u\eta$). 
    We use the universal property of $(B_2,\mathfrak{b}_2,w_2)$ to find the dashed arrow in the following commutative diagram
        $$\xymatrix{ & G \ar[d]^-{\iota}  \ar@/_1pc/[ddl]_-{\fb_2} \ar@/^1pc/[ddr]^-{\rho\circ\iota}& \\
        & E \ar[dl]^-{w_2} \ar[dr]_-{\rho}& \\
        B_2 \ar@{-->}[rr]_-{f} & & \CM}$$
        Then for $\eta\in E$, 
        $$(f w_2)(u\eta u)=\rho(u\eta u)=u\eta u^2=u\eta u,$$
        hence $(fw_2)|_{u\CM}=\id_{u\CM}$ and so $w_2|_{u\CM}$ is injective. Therefore, GNC holds for WAP flows.
\end{proof}

Recall that a factor $\pi \colon (X,G) \to (Z,G)$ is said to be {\em almost 1-1}, if $\{z \in Z: |\pi^{-1}(z)|=1\}$ is a dense subset of $Z$. 

\begin{lemma}\label{lemma: almost to GNC}
    If $(X,G)$ is minimal and the MEF factor $\pi \colon (X,G)\to(Z,G)$ is almost 1-1\footnote{In this case $(Z,G)$ is the maximal Kronecker factor, i.e. a MEF which is a minimal flow.
    Alternatively, in the lemma we may assume that $\pi$ is almost 1-1 and $(Z,G)$ is minimal, as this implies that $(X,G)$ is minimal by Remark 5.6 from \cite{CGK}.},
    then GNC holds and $u\CM\cong K_2 \cong B_2 \cong K_1$. 
\end{lemma}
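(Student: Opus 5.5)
The plan is to show that the lift $\pi_*\colon E\to K_1=E(Z,G)$ is already injective on $u\CM$. Since, by the diagram after Proposition \ref{prop: second iso}, $\pi_*=h\circ w_2$ for the continuous epimorphism $h\colon B_2\to K_1$, injectivity of $\pi_*|_{u\CM}$ forces injectivity of $w_2|_{u\CM}$, i.e.\ GNC. The isomorphisms then follow formally from the commuting square $u\CM\to K_2\cong B_2\to K_1$.

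\emph{Step 1: fibres of $\pi$ are proximal.} Let $x,x'\in X$ with $\pi(x)=\pi(x')$, and let $z_0\in Z$ have a singleton fibre $\pi^{-1}(z_0)=\{x_1\}$. By minimality of $Z$, choose a net $(g_i)$ in $G$ with $g_i\pi(x)\to z_0$. After passing to a subnet, $g_ix\to a$ and $g_ix'\to a'$. Continuity gives $\pi(a)=\pi(a')=z_0$, so $a=a'=x_1$. Hence $x$ and $x'$ are proximal.

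\emph{Step 2: $\pi_*|_{u\CM}$ is injective.} Since $Z$ is equicontinuous, $K_1$ is a group, and $\pi_*$ is a semigroup homomorphism, so $\pi_*(u)=\id_Z$. Take $\eta\in u\CM$ with $\pi_*(\eta)=\id_Z=\pi_*(u)$ and any $x\in X$. Then $\pi(\eta x)=\pi(ux)$, so by Step 1 the points $\eta x$ and $ux$ are proximal. Both lie in $uX$, because $\eta=u\eta$. In a minimal flow, two proximal points of $uX$ coincide (they are distal by \cite[Proposition I.3.2]{Glasner1976}). Hence $\eta x=ux$ for all $x$, i.e.\ $\eta=u$ as elements of $E\subseteq X^X$. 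Since $\pi_*|_{u\CM}$ is a group homomorphism with trivial kernel, it is injective. Consequently $w_2|_{u\CM}$ is injective, and GNC holds by the remark following Conjecture \ref{conj: GNC}.

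\emph{Step 3: the isomorphisms.} By Fact \ref{fact:ellis-group-functoriality} applied to $\pi_*$, the image $\pi_*[u\CM]$ is the Ellis group of $Z$. This is all of $K_1$, because $K_1$ is a group and so is its own minimal ideal. Thus $\pi_*|_{u\CM}\colon u\CM\to K_1$ is a bijective homomorphism. It factors as
\[
u\CM\to K_2\xrightarrow{\ \ff_1\ } B_2\xrightarrow{\ h\ }K_1,
\]
using Proposition \ref{prop: first iso}, Proposition \ref{prop: second iso}, and the uniqueness of the induced maps. Each arrow is a surjective homomorphism, so bijectivity of the composite makes every arrow bijective. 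All maps are continuous and quotient maps, by Fact \ref{fact:ellis-group-functoriality} and Remark \ref{remark: w2=theta}. A continuous bijective quotient map is a homeomorphism, so $u\CM\cong K_2\cong B_2\cong K_1$ as topological groups.

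The main obstacle is Step 2. Elements of $E$ are not continuous, so one cannot simply extend ``$\eta$ fixes the singleton fibres'' by density. The plan circumvents this by comparing $\eta x$ with $ux$ pointwise through proximality and the distality of $uX$. One must also check that equality of $\pi_*(\eta)$ and $\pi_*(u)$ as maps on $Z$ gives $\pi(\eta x)=\pi(ux)$; this is exactly the identity $\pi\circ p=\pi_*(p)\circ\pi$.
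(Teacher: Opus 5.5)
Your proof is correct, and its overall skeleton matches the paper's. Both show that $\pi_*|_{u\CM}$ is injective, deduce injectivity of $w_2|_{u\CM}$ from $\pi_*=h\circ w_2$, and then obtain the topological isomorphisms by compactness and quotient-map arguments.

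The difference lies in the key step. The paper does not prove injectivity of $\pi_*|_{u\CM}$. It imports the full statement that $\pi_*|_{u\CM}\colon u\CM\to K_1$ is an isomorphism of semitopological groups from \cite[Lemma 5.7]{CGK}. You prove it from scratch using two elementary facts:
\begin{enumerate}
\item an almost 1-1 factor map onto a minimal flow has proximal fibres;
\item two proximal points of $uX$ coincide.
\end{enumerate}
Surjectivity then comes from Fact~\ref{fact:ellis-group-functoriality}, and the topology from the quotient-map property of $w_2|_{u\CM}$ together with compactness.

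The paper's route is shorter. Yours is self-contained and makes visible that almost 1-1 is used only to make $\pi$ proximal. So your argument actually proves GNC, and $u\CM\cong K_1$, for every minimal flow whose MEF map is proximal. This complements Theorem~\ref{thm: amenable X-MEF 2}(2), where proximality of $\pi$ is derived from GNC.

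One small point in Step~3 should be made explicit. The quotient-map conclusion of Fact~\ref{fact:ellis-group-functoriality} refers to the $\tau$-topology on $K_2$. This coincides with the compact group topology, because multiplication in a compact topological group is jointly continuous. Alternatively, you can bypass the issue:
\begin{itemize}
\item $w_2|_{u\CM}$ is a quotient map onto $B_2$ with its compact topology (Remark~\ref{remark: w2=theta}), so once it is bijective it is a homeomorphism;
\item $\ff_1$ is already a $G$-flow and semigroup isomorphism by Proposition~\ref{prop: second iso};
\item $h$ is a continuous bijection between compact Hausdorff groups, hence a homeomorphism.
\end{itemize}
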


\begin{proof}
    By Lemma 5.7 from \cite{CGK}, $\pi_*|_{u\CM}\colon u\CM\to\pi_*(u)\pi_*[\CM] = K_1$ is an isomorphism of semitopological groups considered with the $\tau$-topologies (and because $K_1$ is a compact group, we have that also $u\CM$ is a compact group in the $\tau$-topology).
    By universality of $(B_2,\mathfrak{b}_2,w_2)$ for the triple $(K_1, \pi_*\circ(g\mapsto g\id_X), \pi_*)$, there exists a unique continuous group epimorphism $h\colon B_2\to K_1$ such that
$$\xymatrix{G \ar[rr]^{\fb_2} \ar[dr]_-{g\mapsto g \id_X}& & B_2 \ar[dr]^{h}&  \\ & E=E(X,G) \ar[ur]_{w_2} \ar[rr]_{\pi_*}& &  E(Z,G)\cong K_1}$$
    Since $(h\circ w_2)|_{u\CM}=\pi_*|_{u\CM}$,
    we have that $w_2|_{u\CM}$ is injective, so GNC holds.
    
    Then, $h$ is injective on the image of $w_2|_{u\CM}$, which is equal to the entire group $B_2$, so $h$ is a continuous isomorphism of groups.
    Both are compact groups, hence $h$ is an isomorphism of topological groups.

Since $w_2|_{u\CM}$ is an isomorphism, so is $\mathfrak{f}_1 \colon K_2 \to B_2$ in the first diagram in the proof of Proposition \ref{prop: second iso}. As $\mathfrak{f}_1$ is continuous, we get that $K_2 \cong B_2$ as topological groups.
\end{proof}

The notation in the next result is taken from Remarks \ref{remark: B2 to B1} and \ref{rem: KB1}.

\begin{remark}\label{cor: GNCtoISO}
Assume that  GNC holds, i.e. $w_2|_{u\CM}$ is an isomorphism. Then $u\CM\cong K_2\cong B_2$.
    If additionally the unique map $\ff\colon B_2\to B_1$ from Remark \ref{remark: B2 to B1} is an isomorphism,
    then $u\mathcal{M} \cong K_2 \cong B_2 \cong K_1 \cong B_1$.
\end{remark}

\begin{proof}
The first conclusion follows as in the last paragraph of the proof of Lemma \ref{lemma: almost to GNC}.

    Now, assume that $w_2|_{u\CM}$ is an isomorphism of groups (i.e. GNC holds), and $\ff$ is an isomorphism of groups as well.
By an easy diagram chase (recall that the map $\mathfrak{t} \colon K_1\to B_1$ was given in Remark \ref{rem: KB1}) on the following commutative diagram
$$\xymatrix{B_2 \ar[r]^-{\cong}_-{\ff} \ar@{-->}@/^2pc/[rr]^-{\exists!}  & B_1 & K_1 \ar@{->>}[l]^-{\mathfrak{t}}\\ 
& E \ar@{->>}[ul]^-{w_2} \ar@{->>}[u]_-{w_1\pi_0} \ar@{->>}@/_1pc/[ur]_-{\pi_*} &}$$
we conclude that $\mathfrak{t}$ is an isomorphism as well.
\end{proof}

One could ask whether $B_1 \cong B_2$ under the assumptions of Lemma \ref{lemma: almost to GNC}. Example \ref{example: MEF1notB1} gives us a negative answer.
    Indeed, the flow in Example \ref{example: MEF1notB1} satisfies the assumptions of Lemma \ref{lemma: almost to GNC}, hence $B_2\cong K_1\cong \mathrm{SO}(3)$, but $B_1=\{e_{B_1}\}$ in that case.

We now give a general source for examples where the compactifications $\mathfrak{b}_1$ and $\mathfrak{b}_2$ are not isomorphic despite of the very strong properties of the underlying flow. This will reveal an intriguing difference between the abstract and the model theory situations discussed in Section \ref{section: model theory}. 

\begin{example}\label{example: B_1 not equal  to B_2}
If $(X,G)$ is an equicontinuous flow, then the Ellis semigroup $E(X)$ is a compact group and the map $\iota \colon G \to E(X,G)$, $g \mapsto g \id_X$, is precisely the Bohr compactification of $G$ below $E(X)$, so we can assume that $\mathfrak{b}_2 = \iota$. On the other hand, if $(X,G)$ is additionally minimal, then, after distinguishing any point $x_0 \in X$, we have that $(X,G) \cong (E(X,G)/H,G)$ for the compact subgroup 
$$H:=\ker (\ev_{x_0}):=\{ \eta \in E(X,G): \eta(x_0)=x_0\}$$
of $E(X)$. Thus, whenever $H$ is a non-trivial subgroup of $E(X)$, then the Bohr compactification below $E(X)$ (i.e., $\mathfrak{b}_2$) is not isomorphic to $\mathfrak{b}_1$. Moreover, each minimal equicontinuous flow is WAP (hence tame) and hereditarily amenable.

Example  \ref{example: MEF1notB1} is an instance in which even the target groups of the above two compactifications are not isomorphic. For a simpler ``finite'' example consider $G:=S_3$, $H :=\langle(1,2)\rangle \leqslant G$, and the action of $G$ on $G/H$ by left translations. Then $(G/H,G)$ is equicontinuous, $E(G/H,G)=G$, but $|G/H|=3<6=|G|$, so the Bohr compactification below $G/H$ is of smaller cardinality (in fact, it is trivial) than the one below $E(G/H,G)$.
\end{example}

\begin{proof}
For the fact that equicontinuity of $(X,G)$ implies that $E(X,G)$ is a compact group, and equicontinuity together with minimality imply that $(X,G) \cong (E(X,G)/H,G)$  for $H=\ker (\ev_{x_0})$ see \cite[Theorem I.3.3]{Glasner1976}. This yields amenability of $(X,G)$ (and so hereditary amenability by minimality of $(X,G)$) with a witnessing $G$-invariant measure being the pushforward of the Haar measure on $E(X,G)$ by the quotient map $E(X,G) \to E(X,G)/H$ (in fact, this is a unique $G$-invariant measure on $(X,G)$ by \cite[Proposition II.10]{WAP}).  By \cite[Theorem I.3.3]{Glasner1976}, if $(X,G)$ is equicontinuous, then each member of $E(X,G)$ is a continuous function, so $(X,G)$ is WAP (hence tame) by \cite[Proposition II.2]{WAP}.

The fact that $\iota \colon G \to E(X,G)$ is the Bohr compactification of $G$ below $E(X,G)$ follows from the fact that $\iota$ is a group homomorphism with dense image and $w_2$ is a continuous group epimorphism by Remark \ref{rem: easy}.

To see that if $H$ is nontrivial then $\mathfrak{b}_1$ is not isomorphic to $\mathfrak{b}_2$, note that the map $\mathfrak{f}$ in Remark \ref{remark: B2 to B1} is a unique continuous group homomorphism such that $f \circ \mathfrak{b}_2=\mathfrak{b}_1$ and it is not an isomorphism because $f = f \circ w_2 = w_1 \circ \ev_{x_0}$ and $\ker(\ev_{x_0})=H$ is nontrivial and contained in $\ker(f)$.
\end{proof}

\begin{cor}\label{cor: GNC1}
    If $(X,G)$ is tame, amenable, minimal and metric, then GNC holds.
\end{cor}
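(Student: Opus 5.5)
The plan is to reduce Corollary \ref{cor: GNC1} to Lemma \ref{lemma: almost to GNC}. That lemma gives GNC as soon as $(X,G)$ is minimal and its MEF $\pi\colon (X,G)\to(Z,G)$ is almost 1-1. The only input beyond this paper is Glasner's structure theorem for metrizable tame minimal flows from \cite{Glasner2}, as announced in the introduction.

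\textbf{Step 1 (Glasner's dichotomy).} Let $(X,G)$ be tame, minimal and metric. By \cite{Glasner2}, $X$ is almost automorphic in a generalized sense. There are a minimal proximal flow $Y$ and a diagram $X \leftarrow X^* \to Z\times Y$ in which $X^*\to X$ is an almost 1-1 (in fact proximal) extension and $X^*\to Z\times Y$ is almost 1-1. Here $Z$ is the MEF of $X$. When $G$ is abelian, or more generally when there are no nontrivial minimal proximal factors available, $Y$ is trivial and $X$ is an almost 1-1 extension of its MEF.

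\textbf{Step 2 (amenability kills $Y$).} I will argue that amenability forces $Y$ to be trivial. The flow $Y$ is a factor of $X^*$, and the invariant measure on $X$ should lift to $X^*$: the extension is almost 1-1, so the fibres are singletons on a residual set. Alternatively, one can use unique ergodicity from Fact \ref{lemma : min tame unique ergodicity} together with the structure of the extension to transfer the measure. Pushing this measure forward gives an invariant probability measure on the minimal proximal flow $Y$. A minimal proximal flow carrying an invariant measure is trivial. Indeed, the support of the measure is $Y$ by minimality, and proximality together with invariance forces the measure to be a point mass: for $\mu\times\mu$-almost every pair, the orbit closure of the pair in $Y\times Y$ meets the diagonal, and an ergodic decomposition argument makes the support a single point. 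Hence $Y$ is a point, and $X^*\to Z$ is almost 1-1.

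\textbf{Step 3 (descend to $X$).} It remains to show that $\pi\colon X\to Z$ is almost 1-1. In Glasner's theorem with trivial $Y$ the conclusion is exactly that $X$ is an almost 1-1 extension of its MEF $Z$, which is minimal equicontinuous, so I will cite that form directly. Otherwise, I will compose: $X^*\to Z$ factors through $X$, and if $z$ has a singleton fibre in $X^*$, its fibre in $X$ is the image of a singleton, hence also a singleton. This gives a dense set of singleton fibres for $\pi$. Lemma \ref{lemma: almost to GNC} then yields GNC.

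\textbf{Main obstacle.} I expect the main difficulty to be Step 2, namely checking precisely which version of Glasner's theorem gives the diagram, and verifying that the invariant measure passes to $X^*$ and then to $Y$. The latter holds in general for factor maps, provided the measure on $X^*$ can be obtained. Since $X^*$ is itself a tame minimal flow, I would try to get amenability of $X^*$ from hereditary-amenability arguments or from the lift of $\mu$ along the almost 1-1 map, using that such maps are measure-theoretically isomorphic when the singleton fibres carry full measure. If this fails, I would instead appeal to the amenable case of Glasner's theorem, which directly states that $X$ is almost automorphic.
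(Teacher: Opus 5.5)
Your reduction to Lemma \ref{lemma: almost to GNC} is the same as the paper's. The fallback you mention at the end is in fact the paper's entire proof: \cite[Corollary 5.4.2]{Glasner2} says directly that a tame, minimal, metrizable flow admitting an invariant measure is almost automorphic, i.e.\ its MEF is almost 1-1, with no amenability of $G$ required. Lemma \ref{lemma: almost to GNC} then gives GNC. The route you list first, through Steps 1--2, should be dropped, because as written it has genuine gaps.

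\textbf{The measure lift in Step 2.} In the general form of Glasner's theorem, the arrow onto $X$ is a \emph{strongly proximal} extension $\eta\colon\tilde X\to X$, not an almost 1-1 one, and $Y$ is strongly proximal. Along such extensions invariant measures need not lift when $G$ is non-amenable; the basic example is a nontrivial strongly proximal flow over a point. Even for an almost 1-1 extension your argument yields nothing:
\begin{itemize}
\item the set of singleton fibres is residual and $G$-invariant, so by ergodicity it has measure $0$ or $1$;
\item it can be null, as happens for irregular Toeplitz flows over their odometers;
\item unique ergodicity of $X$ does not help, and neither does hereditary amenability, which concerns subflows rather than extensions.
\end{itemize}
Showing $\tilde X=X$ when $X$ carries an invariant measure is exactly the nontrivial content of Glasner's corollary. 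You do not supply it, so you must cite it.

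\textbf{The triviality claim in Step 2.} The statement that a minimal proximal flow carrying an invariant measure is trivial is false. Any amenable group that is not strongly amenable, for example the lamplighter group $\mathbb{Z}/2\wr\mathbb{Z}$, has a nontrivial minimal proximal flow. That flow carries an invariant measure because the group is amenable. Your $\mu\times\mu$ argument cannot work, since in a proximal flow every pair already has orbit closure meeting the diagonal. The strongly proximal version is true, and it is what Glasner's $Y$ needs: an invariant $\nu\in\operatorname{Prob}(Y)$ is its own orbit closure, hence a Dirac mass at a fixed point, and minimality makes $Y$ trivial.

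Incidentally, within the paper the corollary is also a special case of Theorem \ref{thm:GNC 0}. A minimal amenable flow is trivially hereditarily amenable, and that route needs neither metrizability nor Glasner's theorem.
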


\begin{proof}
    By Corollary 5.4.2 from \cite{Glasner2}, we have that $(X,G)$ is almost automorphic, i.e. the $X$-MEF $\pi$ is almost 1-1. Then we apply Lemma \ref{lemma: almost to GNC}.
\end{proof}

In Theorem \ref{thm:GNC 0} (which will be proven in the next subsection) we drop the minimality and metrizability assumption. This requires a different method of a proof.

\subsection{Dynamical proof of GNC}\label{subsection: dynamical proof of GNC}
In this subsection, we give a dynamical proof of Theorem \ref{theorem: GNC}, which is equivalent to Theorem \ref{thm:GNC 0} as explained in Subsection \ref{subsection: Formulation of GNC}.

From now on, let $(\mathcal{A},a_0,G)$ be an ambit with a left continuous semigroup operation extending the action of $G$, $\mathcal{M}$ a minimal left ideal, and $u \in \mathcal{M}$ an idempotent.

A very important fact for the proof of Theorem \ref{theorem: GNC} is Lemma 5.7 in \cite{HoffKru1}, i.e. 
if $\mathcal{A}$ is tame and amenable, then  $\overline{u\mathcal{M}}=\mathcal{M}$ (see Fact \ref{fact: amenability closure}(2)).

\begin{lemma}\label{lemma: cl(uM)=M implies strong amenability}
Assume that $\overline{u\mathcal{M}}=\mathcal{M}$. 
Then $\mathcal{A}$ is strongly amenable (i.e., has no non-trivial minimal proximal factors).
\end{lemma}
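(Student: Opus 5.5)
Let $(Y,G)$ be a minimal proximal flow in $\mathcal{C}$. The plan is to show that $Y$ is a single point. I would use the left continuous action of $(\mathcal{A},*)$ on $Y$ from Subsection~\ref{subs:category.below}. Fix $y_0\in Y$ with $uy_0=y_0$; such a point exists, e.g.\ $y_0:=uy$ for any $y\in Y$. The argument has three steps: first show that $u\mathcal{M}$ fixes $y_0$, then use density to pass to all of $\mathcal{M}$, and finally use $\mathcal{M}y_0=Y$.

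\emph{Step 1.} Proximality of $Y$ gives $v\mathcal{M}y=\{y\}$ whenever $vy=y$, for every idempotent $v\in\mathcal{M}$; this is \cite[Proposition I.4.1(2)]{Glasner1976}, already used in the proof of Proposition~\ref{proposition: univ. comp. flow}. Applied with $v=u$, it yields $\eta y_0=y_0$ for every $\eta\in u\mathcal{M}$. For a self-contained check: $\eta y_0$ and $y_0$ both lie in $uY$, and points of $uY$ are distal from each other by \cite[Proposition I.3.2(3)]{Glasner1976}. Proximality then forces $\eta y_0=y_0$.

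\emph{Step 2.} The map $\ev_{y_0}\colon\mathcal{A}\to Y$, $p\mapsto py_0$, is continuous. This is exactly the flow homomorphism $\pi_{y_0}$ defining the action, and its continuity in $p$ is left continuity. By Step~1, $\ev_{y_0}$ is constantly equal to $y_0$ on $u\mathcal{M}$. Since $\overline{u\mathcal{M}}=\mathcal{M}$ and $Y$ is Hausdorff, $\ev_{y_0}$ is constantly $y_0$ on $\mathcal{M}$.

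\emph{Step 3.} The set $\mathcal{M}y_0$ is $G$-invariant, because $g(py_0)=(ga_0*p)y_0$ and $\mathcal{M}$ is a left ideal. It is also closed, being the continuous image of the compact set $\mathcal{M}$. By minimality of $Y$ we get $\mathcal{M}y_0=Y$. Combined with Step~2, this gives $Y=\{y_0\}$, so every minimal proximal factor in $\mathcal{C}$ is trivial. In particular $\mathcal{A}$ itself has no non-trivial minimal proximal factors.

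I expect the main care to be needed in Step~1. There I must make sure that the cited Glasner facts, stated for $\beta G$ or for Ellis semigroups, apply to the $\mathcal{A}$-action on $Y$. This should be fine, because the action of $\mathcal{A}$ on $Y$ factors through $E(Y,G)$ via a continuous semigroup epimorphism. That epimorphism sends $\mathcal{M}$ to a minimal ideal and $u$ to an idempotent, so one can transport the statements to $E(Y,G)$ directly, using Fact~\ref{fact:ellis-group-functoriality}.
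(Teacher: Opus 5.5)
Your proof is correct and follows essentially the same route as the paper's: choose $y_0$ with $uy_0=y_0$, use proximality (via \cite[Propositions I.3.2(3), I.4.1(2)]{Glasner1976}) to get $u\mathcal{M}y_0=\{y_0\}$, and then combine density of $u\mathcal{M}$ in $\mathcal{M}$, continuity of evaluation at $y_0$, and $\mathcal{M}y_0=Y$ to conclude $Y=\{y_0\}$. Your added remark, that the $\mathcal{A}$-action on $Y$ factors through a continuous semigroup epimorphism onto $E(Y,G)$ so that the Glasner facts transfer, justifies a point the paper leaves implicit.
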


\begin{proof}
Let $Y$ be a minimal proximal factor of $\mathcal{A}$. Pick $y_0 \in Y$ such that $uy_0=y_0$. By minimality of $Y$, we have $\mathcal{M}y_0=Y$. Since $Y$ is proximal, we have that $u\mathcal{M} y_0=\{y_0\}$ (by Prop. I.3.2(3) or I.4.1(2) in \cite{Glasner1976}). As by assumption $\overline{u\mathcal{M}}=\mathcal{M}$ 
and the evaluation map  $\ev_{y_0} \colon \mathcal{M} \to Y$ is continuous, we conclude that $Y=\{y_0\}$.
\end{proof}

By virtue of Proposition \ref{proposition: Glasner below A}, Fact \ref{fact: amenability closure}(2), and Lemma \ref{lemma: cl(uM)=M implies strong amenability}, in order to prove Theorem \ref{theorem: GNC}, it is enough to show that $\gal_{\mathcal{A}} =\{u\}$.
The final highly non-trivial reduction is the Generalized Revised Newelski's Conjecture for tame flows, i.e. Fact \ref{fact:RNC}:
the Ellis group $u\mathcal{M}$ of a tame flow is always Hausdorff, equivalently, $H(u\mathcal{M})=\{u\}$.
Therefore, in order to prove Theorem \ref{theorem: GNC}, it is enough to show that $\gal_{\mathcal{A}} =H(u\mathcal{M})$.

The desired equality $\gal_{\mathcal{A}} =H(u\mathcal{M})$ follows by an obvious adaptation (and simplification due to the fact that the universal minimal proximal factor of $\mathcal{A}$ is trivial here) of either the proofs of Propositions IX.4.1 and IX.4.2 from \cite{Glasner1976} or Theorem 2.5 from \cite{Krupinski2017}, because under the assumption $\overline{u\mathcal{M}}=\mathcal{M}$ the relations on $\mathcal{M}$ in both these sources are indeed closed, invariant, equivalence relations. Just for the reader's convenience we briefly repeat the argument from \cite{Glasner1976}, which completes the proof of Theorems \ref{theorem: GNC}. As in Subsection \ref{subs:category.below}, $\mathcal{C}$ denotes the category of minimal flows which are factors of $\mathcal{A}$.

\begin{lemma}\label{lemma: Gal= H(uM)}
Assume that $\overline{u\mathcal{M}}=\mathcal{M}$. 
Then the universal minimal proximal factor of $\mathcal{A}$ is trivial.

Let $\sim$ be a relation on $\mathcal{M}$ defined by
$$p \sim q \iff (up)^{-1}q \in H(u\mathcal{M}).$$
Then $\sim$ is a closed, $G$-invariant, equivalence relation on $\mathcal{M}$. Moreover, $Z:=\mathcal{M}/\!\!\sim$ is the universal group extension in $\mathcal{C}$ of the trivial flow and 
$$\gal(Z, u/\!\! \sim)=\{\eta\in u\CM\;:\;\eta(u/\!\!\sim)=u/\!\!\sim \}=H(u\mathcal{M})$$ 
so $u\mathcal{M}/H(u\mathcal{M})$ is the generalized Bohr compactification of $G$ in $\mathcal{C}$.
\end{lemma}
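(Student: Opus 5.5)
The first sentence of the lemma is exactly Lemma \ref{lemma: cl(uM)=M implies strong amenability}: the universal minimal proximal factor $\pi_{\mathcal{A}}$ is a minimal proximal factor of $\mathcal{A}$, hence trivial. The rest I will prove by adapting \cite[Propositions IX.4.1 and IX.4.2]{Glasner1976}, with density $\overline{u\mathcal{M}}=\mathcal{M}$ replacing the arguments that use the universal ambit.

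\emph{Equivalence and invariance.} Every $p\in\mathcal{M}$ lies in a unique group $v\mathcal{M}$, and $up\in u\mathcal{M}$; the map $p\mapsto up$ is a group isomorphism $v\mathcal{M}\to u\mathcal{M}$. Reflexivity and symmetry are then immediate. Transitivity holds because $(up)^{-1}r=(up)^{-1}(uq)(uq)^{-1}r$, where I use $u q=uq$ and the fact that $H(u\mathcal{M})$ is a subgroup. For $G$-invariance I would show $u(gp)$ and $up$ differ on the left in a way that cancels. More precisely, I need $(ugp)^{-1}(ugq)=(up)^{-1}(uq)$. Writing $ugp=(ugu)(up)$, which holds since $p=pu'$ for some idempotent $u'$ and the standard identities give $up=u\cdot vp$, the equation follows because $ugu\in u\mathcal{M}$ cancels.

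\emph{Closedness.} This is the main obstacle, since in general $\sim$ need not be closed. Here density enters. Take nets $p_i\to p$ and $q_i\to q$ with $p_i\sim q_i$. Using $\overline{u\mathcal{M}}=\mathcal{M}$, I would reduce to $p_i,q_i\in u\mathcal{M}$: first note $p\sim q$ iff $up\sim uq$, so it suffices to approximate in $u\mathcal{M}$. I would then express the condition as $q\in p\circ H(u\mathcal{M})$, or rather $uq\in \cl_\tau$-type closures. The key tool is that for $\eta\in u\mathcal{M}$ the set $\eta H(u\mathcal{M})$ is $\tau$-closed, and that $u(u\circ A)$ is compatible with limits in $\mathcal{M}$. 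The fact I expect to need, as in Glasner, is that $H(u\mathcal{M})$ has the property $u\circ H = $ something with $u(p\circ H)=upH$ for $p\in\mathcal{M}$, i.e.\ $H$ is "$\circ$-closed". I would first try to prove the graph closed via $\{(p,q): uq\in up\,H\}=\{(p,q): q\in \overline{\mathcal{M}\cdot}\dots\}$ using separate continuity and density of $u\mathcal{M}$. If this fails, I would fall back on the equivalent description $p\sim q\iff$ $p,q$ have the same image in every group-extension-of-trivial flow in $\mathcal{C}$, which is an intersection of closed relations.

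\emph{Universality and Galois group.} $Z=\mathcal{M}/\!\sim$ is a minimal flow in $\mathcal{C}$. The group $u\mathcal{M}/H(u\mathcal{M})$ acts on the right by $(p/\!\sim)\cdot(\eta H)=(p\eta)/\!\sim$, freely and transitively. Fibres over the trivial flow are therefore whole orbits, so $Z$ is a group extension of the trivial flow; continuity of the action uses the Hausdorff $\tau$-topology on the quotient. Directly from the definition, $\gal(Z,u/\!\sim)=\{\eta:(u\eta)^{-1}\dots\}=H(u\mathcal{M})$. For universality, take any group extension of the trivial flow in $\mathcal{C}$, which by the first claim is the same as a compactification flow. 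Then $\Sigma(G)_{\mathcal{A}}=u\mathcal{M}/\gal_{\mathcal{A}}$ by Fact \ref{fact: identification of Sigma and uI/Gal}, with $H(u\mathcal{M})\leqslant\gal_{\mathcal{A}}$. Since $Z$ is itself such a flow, $\gal_{\mathcal{A}}\leqslant\gal(Z,u/\!\sim)=H(u\mathcal{M})$, which gives equality. Hence $Z\cong\pi^{\#}_{\mathcal{A}}$ and $u\mathcal{M}/H(u\mathcal{M})$ is the generalized Bohr compactification.
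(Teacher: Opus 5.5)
You handle the first sentence, the equivalence-relation axioms, the computation $\gal(Z,u/\!\sim)=H(u\mathcal{M})$ and the final comparison $H(u\mathcal{M})\leqslant\gal_{\mathcal{A}}\leqslant\gal(Z,u/\!\sim)$ correctly, granted that $Z$ is a group extension of the trivial flow. The gap is that the two properties carrying the content of the lemma, $G$-invariance and closedness of $\sim$, are not established.

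Your invariance step rests on $u(gp)=(ugu)(up)$, and this does not follow from the standard identities. If $p\in v\mathcal{M}$ with $v^2=v$, then $gp=(gv)p$. Since $x\mapsto ux$ is a semigroup homomorphism $\mathcal{M}\to u\mathcal{M}$, one gets $u(gp)=(ugv)(up)$, and there is no reason for $ugv=ugu$. Your derivation uses neither density nor $H(u\mathcal{M})$. If it were valid, it would prove for every ambit that $up=uq$ is a $G$-invariant relation, and (taking $p=hu$) that $g\mapsto ugu$ is a homomorphism $G\to u\mathcal{M}$. This fails, e.g., for $\SL_2(\mathbb{R})$ acting on the circle of rays in $\mathbb{R}^2$: there $|u\mathcal{M}|=2$, yet $u(-I)u\neq u$ (as $u$ commutes with the antipodal map), although $-I$ is the square of a rotation. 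What holds under $\overline{u\mathcal{M}}=\mathcal{M}$, and what suffices, is only $ugv\in(ugu)H(u\mathcal{M})$. Proving even that is exactly where density and the $\tau$-topology have to enter.

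For closedness you give no argument. Passing from $p_i,q_i$ to $up_i,uq_i$ destroys convergence, because $x\mapsto ux$ is not continuous. Your fallback description of $\sim$ via images in all group extensions of the trivial flow in $\mathcal{C}$ amounts to $\gal_{\mathcal{A}}=H(u\mathcal{M})$, which is what the lemma is meant to prove, so it is circular.

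The missing idea, which handles invariance and closedness at once, is to feed density into the circle operation. We have $p\circ u\mathcal{M}=p\circ\overline{u\mathcal{M}}=\mathcal{M}$ for every $p\in\mathcal{M}$. Hence, by Lemma IX.3.4 of \cite{Glasner1976}, $q\in p\circ N$ for every $\tau$-neighbourhood $N$ of $(up)^{-1}q$. The condition $q\in p\circ N$ is preserved by $(p,q)\mapsto(gp,gq)$ and under limits of nets $(p_i,q_i)$. The same lemma then turns it back into $(u(gp))^{-1}(gq)\in\cl_\tau(N)$, respectively $(up)^{-1}q\in\cl_\tau(N)$. Intersecting over all $\tau$-neighbourhoods $N$ of $H(u\mathcal{M})$ yields $H(u\mathcal{M})$, since $u\mathcal{M}/H(u\mathcal{M})$ is Hausdorff.

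Finally, the joint continuity of your right action of $u\mathcal{M}/H(u\mathcal{M})$ on $Z$ is only asserted. The paper instead checks that $Z$ is distal ($p\sim up$, because $(up)^{-1}up=u$) and applies the counterpart of \cite[Theorem IX.2.1]{Glasner1976} to get that $Z$ is the universal group extension of the trivial flow.
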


Before the proof let us comment that although (in contrast to the case of $\mathcal{A}=\beta G$) in general we do not have an action of $\mathcal{A}$ on the flow $2^\mathcal{A}$ (because the subambits of this flow need not be in the category $\mathcal{C}$), 
using the definition of the circle operation $\circ$ for the $\tau$-topology the proofs of 1.2-1.12 (except 1.12(2)) from \cite[Chapter IX]{Glasner1976} go through (with some slight modifications) in our context (see \cite[Appendix A]{rzepecki2018} for the proofs). Observe that Lemma 3.4 from \cite[Chapter IX]{Glasner1976} also goes through in our context.

Note also that the additional condition $p \circ u\mathcal{M} = q \circ u\mathcal{M}$ which was needed in the definition of $\sim$ in \cite[Chapter IX.4]{Glasner1976} is automatically satisfied under the assumption $\overline{u\mathcal{M}}=u\mathcal{M}$. Indeed, then $p \circ u\mathcal{M}= p\circ \overline{u\mathcal{M}}=p \circ \mathcal{M}=\mathcal{M}$ for every $p \in \mathcal{M}$.

\begin{proof}[Proof of Lemma \ref{lemma: Gal= H(uM)}]
Triviality of the universal minimal proximal factor of $\mathcal{A}$ is Lemma \ref{lemma: cl(uM)=M implies strong amenability}.

It is clear that $p \sim q$ iff $upH(u\mathcal{M})=uqH(u\mathcal{M})$, so $\sim$ is an equivalence relation on $\mathcal{M}$. 
To see invariance, consider any $p \sim q$ and $g \in G$. 
Since $q \in \mathcal{M} = p \circ u\mathcal{M}$, by \cite[Lemma 3.4]{Glasner1976}, $q \in p \circ N$ for any $\tau$-neighborhood $N$ of $(up)^{-1}q$ in $u\mathcal{M}$. 
Hence, $gq \in (gp) \circ N$, and so $(u(gp))^{-1}(gq) \in u \circ N$ by \cite[Lemma 3.4]{Glasner1976}. Thus, $(u(gp))^{-1}(gq) \in \cl_{\tau}(N)$. Since by assumption $(up)^{-1}q \in H(u\mathcal{M})$, the above conclusion holds for all $\tau$-neighborhoods $N$ of $H(u\mathcal{M})$. As $u\mathcal{M}/H(u\mathcal{M})$ is Hausdorff, the intersection of all those $\cl_\tau(N)$ equals $H(u\mathcal{M})$, so we get $(u(gp))^{-1}(gq) \in H(u\mathcal{M})$, i.e. $gp \sim gq$. 

Finally, we show that $\sim$ is closed. So take any net $(p_i,q_i) \in \mathcal{M} \times \mathcal{M}$ with $p_i \sim q_i$, which converges to $(p,q)$. Let $N$ be any $\tau$-neighborhood of $H(u\mathcal{M})$ so also of
each $(up_i)^{-1}q_i$. Since $q_i \in \mathcal{M} = p_i \circ u\mathcal{M}$, by \cite[Lemma 3.4]{Glasner1976}, $q_i \in p_i \circ N$. Using limits, we conclude that $q \in p \circ N$ and so $(up)^{-1}q \in \cl_{\tau}(N)$ by \cite[Lemma 3.4]{Glasner1976}. Thus, as in the previous paragraph, we get $(up)^{-1}q \in H(u\mathcal{M})$, i.e. $p \sim q$.

In this way, $Z$ becomes a minimal $G$-flow which is a factor of $(\mathcal{M},G)$ so also of $\mathcal{A}$. It is clear that $u (u/\!\!\sim)=u/\!\!\sim$. By definition, 
\begin{IEEEeqnarray*}{rCl}
\gal(Z,u/\!\!\sim) &=& \{ \eta \in u\mathcal{M}: \eta (u/\!\! \sim)=u/\!\! \sim\}= \{ \eta \in u\mathcal{M}: \eta \sim u\} \\ &=& [u]_{\sim} \cap u\mathcal{M}=H(u\mathcal{M}).
\end{IEEEeqnarray*}

Applying the obvious counterpart of Theorem IX.2.1 from \cite{Glasner1976} (which can be shown either by the same proof or by an application of that theorem in the original context of $\beta G$ to the quotient objects obtained from the epimorphism $\beta G \to \mathcal{A}$), in order to show that $(Z,G,u\mathcal{M}/H(u\mathcal{M}))$ is the universal group extension of the trivial flow (and, in consequence, $u\mathcal{M}/H(u\mathcal{M})$ is the generalized Bohr compactification of $G$ in $\mathcal{C}$), it remains to prove that $Z$ is distal. By \cite[Proposition I.3.2(3)]{Glasner1976}, it is enough to show that $uz=z$ for every $z=p/\!\!\sim \;\in Z$; equivalently, $p \sim up$. The last thing holds as $(up)^{-1}up=u \in H(uM)$. 
\end{proof}

Although Theorem  \ref{theorem: GNC} follows by the sequence of reductions described in this subsection, we summarize it below.

\begin{proof}[Proof of Theorem \ref{theorem: GNC}]
By assumption, $\mathcal{A}$ is tame and amenable.

By Fact \ref{fact: amenability closure}(2) and Lemma \ref{lemma: Gal= H(uM)}, working with the notation from Lemma \ref{lemma: Gal= H(uM)} and from Subsection  \ref{subs:category.below}, we have $\CM/\!\!\sim \; = \pi_{\mathcal{A}}^{\#}$, and we may choose $b_0:=u/\!\!\sim$ in the definition of $\gal_{\mathcal{A}}$ (see Subsection \ref{subs:category.below}); then $\gal_{\mathcal{A}}=\gal(Z,u/\!\!\sim)$. So, by Lemma \ref{lemma: Gal= H(uM)}, $\gal_{\mathcal{A}} = H(u\mathcal{M})$. This together with Fact \ref{fact:RNC} implies that $\gal_{\mathcal{A}}=\{u\}$.

By Fact \ref{fact: amenability closure}(2) and Lemma \ref{lemma: cl(uM)=M implies strong amenability}, $\mathcal{A}$ is strongly amenable, so the conclusion follows from Proposition \ref{proposition: Glasner below A}.
\end{proof}

In fact, one can prove Theorem \ref{theorem: GNC} a bit faster, i.e. without most of the general considerations in Subsection \ref{subs:category.below} involved above, which we now describe. But first of all, if we want to avoid using Proposition \ref{proposition: univ. comp. flow} and Corollary \ref{corollary: Newelski's map in general}, we have to explain what the map $\widehat{\theta}$ in the statement of Theorem \ref{theorem: GNC} is. Bearing in mind the natural identification of $\mathcal{A}$ with $E(\mathcal{A},G)$ the map $w_2$ from Subsection \ref{subsection: Bohr comp} considered for the ambit $\mathcal{A}$ becomes a semigroup epimorphism from $\mathcal{A}$ to $G^{\textrm{Bohr}}_{\mathcal{A}}$, and we take $\widehat{\theta}:=w_2|_{u\mathcal{M}} \colon u\mathcal{M} \to G^{\textrm{Bohr}}_{\mathcal{A}}$ which is a continuous group epimorphism by Remark \ref{rem: easy}. As in the short proof of Remark \ref{remark: w2=theta}, we get that $\widehat{\theta}(\eta)=\eta e_B$.

\begin{proof}[Faster proof of Theorem \ref{theorem: GNC}]
Let $\mathcal{A}$ be a $G$-ambit with a left continuous semigroup operation extending the action of $G$.
Define $\widehat{\theta} \colon u\mathcal{M} \to  G^{\textrm{Bohr}}_{\mathcal{A}}$ as in the above paragraph. By the second paragraph of the proof of Corollary \ref{corollary: Newelski's map in general}, we know that $uh=h$ for all $h \in G^{\textrm{Bohr}}_{\mathcal{A}}$. Thus, for every $g \in G$, $\widehat{\theta}(ugu)=ugu e_B =ge_B = \mathfrak{b}_{\mathcal{A}}(g)e_B=\mathfrak{b}_{\mathcal{A}}(g)$. 

From now on,  assume that $\mathcal{A}$ is additionally tame and amenable. By Fact \ref{fact: amenability closure}(2) and Lemma \ref{lemma: cl(uM)=M implies strong amenability}, $\mathcal{A}$ is strongly amenable. By Facts \ref{fact: amenability closure}(2) and \ref{fact:RNC} and Lemma \ref{lemma: Gal= H(uM)}, the universal compactification flow below $\mathcal{A}$ is of the form $(Z,G,u\mathcal{M})$, $b_0:=u/\!\! \sim$ satisfies $ub_0=b_0$ (actually any point in $Z$ has this property), and $\gal(Z,b_0)= \{u\}$. As in Fact \ref{fact: identification of Sigma and uI/Gal}, the proof of Lemma \ref{lemma: Gal= H(uM)} (more precisely, the application of the explicit action in the proof of Theorem IX.2.1 from \cite{Glasner1976}) shows that the witnessing right action of $u\mathcal{M}$ on $Z$ is given by $(pb_0) \cdot \eta :=(p\eta)b_0$ for every $p \in \mathcal{M}$. 

Having the above two paragraphs, we apply the proof of Proposition \ref{proposition: Glasner below A} to $u\mathcal{M}$ in place of $\gal_{\mathcal{A}}$ (now $\gal_{\mathcal{A}} = u\mathcal{M}$) and to $\widehat{\theta}$ defined above in place of $\theta$ defined in Corollary \ref{corollary: Newelski's map in general} (in fact, these functions coincide, but we do not use it) and we get that $\widehat{\theta}$ is a topological group isomorphism (continuity of $\widehat{\theta}$ follows from the proof of Proposition \ref{proposition: Glasner below A}, namely $\widehat{\theta} \circ \kappa^{-1} \circ h = \id_{G^{\textrm{Bohr}}_{\mathcal{A}}}$ where $\kappa^{-1} \circ h$ is a continuous epimorphism between compact Hausdorff groups), which finishes the proof of Theorem \ref{theorem: GNC}.
\end{proof}

\section{More on the structure of amenable and tame flows}
\label{sec: more on amenable tame structure}

\subsection{Analytic proof of GNC}\label{subsection: analytic proof of GNC}
This part is dedicated to the proof of GNC via methods from operator theory.
The main engine of the analytic proof of GNC is the following lemma which is interesting by its own rights and useful for the structural results in Subsections \ref{subsection: structure of amenable tame flows} and \ref{subsec: X-MEF 2}.

Here, $E=E(X,G)$, $\CM\unlhd_m E$, $u^2=u\in\CM$, and
$(B_2,\fb_2,w_2)$ is 
the Bohr compactification of $G$ below $(E,\id_X,G)$.

\begin{lemma}\label{lemma: L2 Bohr}
Let $(X,G)$ be tame, $\CM\unlhd_mE(X,G)$, and let
$\mu\in\operatorname{Prob}_G(\CM)$.  Then:
\begin{enumerate}
\item
$\mu$ is the unique $G$-invariant probability measure on $\CM$;

\item 
$(w_2|_\CM)_*\mu=\mathfrak h_{B_2}$;

\item
the pullback along $w_2|_\CM$ extends uniquely to a unitary
isomorphism
$$(w_2|_\CM)^*:L^2(B_2,\mathfrak h_{B_2}) \longrightarrow L^2(\CM,\mu).$$
\end{enumerate}
\end{lemma}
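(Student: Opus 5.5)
For (1) and (2) I would work only with invariant measures. For (1): tameness of $(X,G)$ passes to $(E,G)$ (by \cite[Lemma 5.4]{GlasnerMegrelishvili2012}), so the minimal subflow $(\CM,G)$ is tame. It is amenable because $\mu$ exists. Fact \ref{lemma : min tame unique ergodicity} then gives that $\mu$ is the unique element of $\operatorname{Prob}_G(\CM)$.

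For (2), first note that $w_2[\CM]$ is closed and $G$-invariant in $B_2$. The flow $(B_2,G)$ is minimal, since $\fb_2[G]$ is dense and acts by left translations. Hence $w_2|_\CM$ is a factor map onto $B_2$. The measure $\nu:=(w_2|_\CM)_*\mu$ is invariant under left translation by every element of $\fb_2[G]$. Left translation is jointly continuous on $B_2$, so $b\mapsto b_*\nu$ is weak-$*$ continuous. Density of $\fb_2[G]$ then makes $\nu$ invariant under all of $B_2$, and so $\nu=\mathfrak h_{B_2}$ by uniqueness of Haar measure.

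For (3), the preliminaries reduce the problem to proving that $(w_2|_\CM)^*[C(B_2)]$ is $L^2(\CM,\mu)$-dense; the map is already an isometry by (2). I would obtain density from an auxiliary compact group acting on $\CM$. Take an idempotent $u\in\CM$ and $\eta\in u\CM$. The right multiplication $r_\eta(p):=p\eta$ is continuous on $\CM$ by left continuity of the product in $E$, and it commutes with the $G$-action. It is a bijection of $\CM$ with inverse $r_{\eta^{-1}}$, because $pu=p$ for $p\in\CM$. Each $r_\eta$ is therefore a $G$-automorphism of $\CM$, and $(r_\eta)_*\mu=\mu$ by (1). This gives a unitary action $R$ of $u\CM$ on $L^2(\CM,\mu)$ commuting with the Koopman representation $U$ of $G$. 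By Fact \ref{fact:RNC}, $(u\CM,\tau)$ is a compact Hausdorff group. By Fact \ref{fact: amenability closure}(2), $u\CM$ is dense in $\CM$; this uses hereditary amenability, which is available in the setting where the lemma is applied, since $\mu$ exists on $\CM$.

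I would then show that $R$ is SOT-continuous for the $\tau$-topology. Apply Fact \ref{fact: peter-weyl for unitary} to decompose $L^2(\CM,\mu)$ into finite-dimensional $R$-invariant pieces. In each isotypic component $G$ acts through operators commuting with $R$. Using the density of $u\CM$ in $\CM$ and the relation $u(gp)=(ugu)(up)$, which says that the $G$-action on $\CM$ is read off through left multiplication by $ugu$ in $u\CM$, I would show that every isotypic component is spanned by functions of the form $p\mapsto \phi(up)$ with $\phi$ a matrix coefficient. I would then show these functions agree $\mu$-a.e.\ with pullbacks of continuous functions on the equicontinuous factor $B_2\cong K_2$ (Proposition \ref{prop: second iso}). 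Unitarity of $(w_2|_\CM)^*$ would follow.

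The main obstacle is this last step together with the continuity step. The map $p\mapsto up$ is not continuous, and SOT-continuity of $R$ must be derived from the $\tau$-topology. I would first try to prove it using the circle-operation description of $\cl_\tau$ and approximating $\eta$ by nets $g_iu$, with tameness used through Fact \ref{fact:tame-injective}: pointwise limits in $E$ then act correctly on measures, which controls $\int f\circ r_\eta\,\bar h\,d\mu$ along such nets.
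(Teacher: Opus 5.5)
Your arguments for (1) and (2) are correct and match the paper; for (2) you spell out why invariance under the dense subgroup $\fb_2[G]$ forces Haar measure. Part (3), however, has a genuine gap: the steps that carry the content are announced rather than proved.

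SOT-continuity of $\eta\mapsto R_\eta$ on $(u\CM,\tau)$ is not established, and your hint does not obviously give it. The quantity $\int_\CM F(p\eta)\overline{H(p)}\,d\mu(p)$ depends on $\eta$ through $\eta\mapsto p\eta$, which is already discontinuous in the topology of $E$, and $\tau$ is coarser still. Your description of the $R$-isotypic components by functions $p\mapsto\phi(up)$ goes through the discontinuous map $p\mapsto up$. It also needs pointwise identities along $u\CM$ for $L^2$-classes, which mean nothing unless $u\CM$ has positive $\mu$-measure, and you have no reason to expect that.

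Most seriously, the final step cannot be a routine check. Since $w_2(up)=w_2(p)$ and $w_2|_{u\CM}$ is the only bridge between $u\CM$ and $B_2$, you need every matrix coefficient $\phi$ occurring in $R$ to be constant on cosets of $\ker(w_2|_{u\CM})$. The representation $R$ is faithful, because $\supp\mu=\CM$ (exactly as in the proof of Theorem~\ref{thm: GNC for amenable minimal ideal}). So this requirement is equivalent to injectivity of $w_2|_{u\CM}$, i.e.\ to GNC itself. In the paper GNC is \emph{deduced} from this lemma, so your route is circular unless you import the dynamical proof of Subsection~\ref{subsection: dynamical proof of GNC}. Even then, you do not explain how to get finite-dimensional $G$-invariant subspaces from the $R$-decomposition: an isotypic component may a priori have infinite multiplicity, and $G$ acts only on the multiplicity space.

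The missing idea is to apply Peter--Weyl to the Koopman representation of $G$ itself, not to the right action of the Ellis group.
\begin{enumerate}
\item Tameness gives, for $F\in C(\CM)$, pointwise convergent subsequences of the translates $U_{g_n}F$. By dominated convergence the orbits are relatively norm-compact, so $K:=\overline{\{U_g:g\in G\}}^{\mathrm{SOT}}$ is a compact group.
\item For $F\geqslant 0$ write $\langle U_gF,H\rangle=c_F\int_\CM\overline H\,d(g_*\nu_F)$ with $\nu_F=\frac{F}{c_F}\mu$. Fact~\ref{fact:tame-injective} then shows that the limit of $U_g$ along an ultrafilter on $G$ depends only on the corresponding limit in $E(X,G)$.
\item Hence $g\mapsto U_g$ factors through a continuous semigroup epimorphism $E(X,G)\to K$. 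It is therefore a compactification below $E$ and extends to a continuous representation $\rho\colon B_2\to K$.
\item Take a finite-dimensional $\rho(B_2)$-invariant subspace with orthonormal basis $\mathbf f$, so that $U_g\mathbf f=\mathbf f\,\pi(\fb_2(g))$. The map $\eta\mapsto\mathbf f(\eta)\pi(w_2(\eta))$ is $G$-invariant, hence a.e.\ constant by ergodicity from (1).
\item Thus $\mathbf f=c\,\pi(w_2(\cdot))^{-1}$ is a pullback of a continuous function on $B_2$.
\end{enumerate}
This argument uses neither Fact~\ref{fact:RNC} nor density of $u\CM$. It is what makes the subsequent analytic proof of GNC non-circular.
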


\begin{proof}
By \cite[Lemma~2.2(3)]{HoffKru1}, the Ellis flow $(E(X,G),G)$ is
tame; hence its minimal subflow $(\CM,G)$ is minimal and tame.
Fact~\ref{lemma : min tame unique ergodicity} makes $\mu$ unique, and
therefore ergodic.  Since $\CM$ is minimal, $\supp(\mu)=\CM$.

The measure
$(w_2|_\CM)_*\mu$ is invariant under the dense subgroup $\fb_2[G]$,
and therefore $(w_2|_\CM)_*\mu=\mathfrak h_{B_2}$ (e.g. by Fact \ref{lemma : min tame unique ergodicity}).
Thus pullback initially gives an isometry
\begin{equation}\label{eq:HN001}
 (w_2|_\CM)^*:L^2(B_2,\mathfrak h_{B_2})\longrightarrow L^2(\CM,\mu),
 \qquad (w_2|_\CM)^*(F)=F\circ(w_2|_\CM).
\end{equation}
It remains to prove that $(w_2|_\CM)^*$ is onto, 
i.e. that $(w_2|_{\CM})^*[C(B_2)]$ is dense.
For this task, we will find an auxiliary unitary representation $B_2\to U(L^2(\CM,\mu))$. Let us start with ``target'' subgroup of $U(L^2(\CM,\mu))$ and, working with the strong operator topology, set
$$K:=\overline{\{U_g:g\in G\}}
   \subseteq B(L^2(\CM,\mu)).$$
%$$K:=\overline{\{U_g\;:\;g\in G\}}\subseteq \mathscr L(L^2(\CM,\mu)).$$

\begin{clm}
    $K$ is a compact Hausdorff group.
\end{clm}

\begin{clmproof}
Let $\Gamma:=\{U_g:g\in G\}$.
If $F\in C(\CM)$ and $(g_n)_{n<\omega}$ is a sequence in $G$, then
tameness and \cite[Definition~3.1(1), Theorem~2.4]{GM23}, applied
successively to the real and imaginary parts of $F$, yield a
subsequence $(g_{n_k})_k$ for which $(U_{g_{n_k}}F)_k$ converges
pointwise on $\CM$, say, to a function $F_\infty:\CM\to\mathbb C$. 
Since $|U_{g_{n_k}}F|\leqslant\lVert F\rVert_\infty$ and
$|F_\infty|\leqslant\lVert F\rVert_\infty$ pointwise on $\CM$,
the dominated convergence theorem, applied to the sequence of functions
$\bigl(\lvert U_{g_{n_k}}F-F_\infty\rvert^2\bigr)_k$ dominated by
the integrable constant function $4\lVert F\rVert_\infty^2$, shows
that $(U_{g_{n_k}}F)_k$
converges in $L^2(\CM,\mu)$.  Thus $\Gamma F$ is relatively norm
compact, i.e.
for every $\epsilon>0$ there are $U_{g_1}F,\ldots,U_{g_n}F\in \Gamma F$
such that $\Gamma F$ is covered by the open balls $B(U_{g_i}F,\epsilon)$.

Let $\xi\in L^2(\CM,\mu)$ and let $\epsilon>0$.
Density of $C(\CM)$ in $L^2(\CM,\mu)$ gives us $F\in C(\CM)$ such that
$\lVert \xi- F\rVert_{L^2(\CM,\mu)}\leqslant\frac{\epsilon}{3}$.
For $F$, there are 
$U_{g_1}F,\ldots,U_{g_n}F\in \Gamma F$ as in the first part of the proof of the claim for $\frac{\epsilon}{3}$.
Using the identities
$$\lVert U_g\xi-U_gF\rVert_{L^2(\CM,\mu)}=\lVert\xi-F\rVert_{L^2(\CM,\mu)}$$
holding for every $g\in G$,
we obtain that for every $g\in G$ there is some $i\leqslant n$ such that
\begin{IEEEeqnarray*}{rCl}
    \lVert U_g\xi - U_{g_i}\xi\rVert_{L^2(\CM,\mu)} &\leqslant &
    \lVert U_g\xi - U_gF\rVert_{L^2(\CM,\mu)} 
    + \lVert U_g F - U_{g_i}F \rVert_{L^2(\CM,\mu)}  \\ &&
    + \Vert U_{g_i}F - U_{g_i}\xi\rVert_{L^2(\CM,\mu)} \\
    &\leqslant & \frac{\epsilon}{3} + \frac{\epsilon}{3} + \frac{\epsilon}{3}
    =\epsilon.
\end{IEEEeqnarray*}
Hence
$$C_\xi:=\overline{\{U_g\xi:g\in G\}}^{\,\lVert\cdot\rVert}$$
is compact for every $\xi\in L^2(\CM,\mu)$.

Regard $\Gamma$ as a discrete space and let $\beta\Gamma$ be its
Stone--\v Cech compactification, identified with the space of
ultrafilters on $\Gamma$. Let $p\in\beta\Gamma$. Compactness of the
spaces $C_\xi$ gives limits
$$T_p(\xi):=\lim_{g\to p}U_g(\xi),
\qquad
S_p(\xi):=\lim_{g\to p}U_g^{-1}(\xi),$$
where $\xi\in L^2(\CM,\mu)$. Taking ultrafilter limits in the
linearity and norm identities shows that $T_p$ and $S_p$ are linear
isometries. Clearly,
$\lim_{g\to p}(U_g,U_g^{-1})=(T_p,S_p)$ in the product topology on $B(L^2(\CM,\mu))\times B(L^2(\CM,\mu))$.
Then, joint continuity of multiplication on norm-bounded sets gives $T_pS_p=S_pT_p=I$.
Thus $T_p\in U(L^2(\CM,\mu))$, $S_p=T_p^{-1}$, and $p$ converges to
$T_p$ (in the strong operator topology).

The map
$$\beta\Gamma\longrightarrow U(L^2(\CM,\mu)),
\qquad p\longmapsto T_p,$$
is continuous, because for every $\xi\in L^2(\CM,\mu)$ the coordinate map
$p\longmapsto T_p\xi$ is continuous. Its image is therefore compact. It contains $\Gamma$,
and every $T_p$ is a limit of elements of $\Gamma$, so its image
is exactly $K$. Thus $K$ is compact.

Since $U(L^2(\CM,\mu))$ is a Hausdorff topological group, the
closure of its subgroup $\Gamma$ is a subgroup. Hence $K$ is a
compact Hausdorff group.
\end{clmproof}

\begin{clm}
    There is a continuous semigroup epimorphism $w:E(X,G)\to K$
    such that
    $$\xymatrix{ G \ar[rr]^-{g\mapsto U_g} \ar[dr]_-{g\mapsto g\circ\id_X}& & K \\
    & E(X,G) \ar[ur]_-{w} & }$$
    commutes.
\end{clm}

\begin{clmproof}
The map
$G\longrightarrow E(X,G)$, given by $g\longmapsto g\circ\id_X$,
and the map $G\longrightarrow K$, given by $g\longmapsto U_g$,
extend uniquely to continuous surjections
$$\pi_E:\beta G\longrightarrow E(X,G)
\qquad\text{and}\qquad\pi_K:\beta G\longrightarrow K.$$
Define $\lambda:E(X,G)\longrightarrow E(\CM,G)$ by
$\lambda(\eta)(\xi):=\eta\xi$.
For $\eta\in E(X,G)$, let
$$\widehat{\eta}:=\operatorname{res}^{-1}(\lambda(\eta))
\in E(\operatorname{Prob}(\CM),G),$$
where $\operatorname{res}$ is the isomorphism from
Fact~\ref{fact:tame-injective}.

We claim that $\pi_K$ is constant on every fibre of $\pi_E$.
Let $\mathfrak p,\mathfrak q\in\beta G$ satisfy $\pi_E(\mathfrak p)=\pi_E(\mathfrak q)=\eta$.
Fix a nonzero $F\in C(\CM)$ with $F\geqslant0$, and put
$$c_F:=\int_\CM F\,d\mu,\qquad\nu_F:=\frac{F}{c_F}\mu.$$
Since $\supp(\mu)=\CM$, we have $c_F>0$, and therefore
$\nu_F\in\operatorname{Prob}(\CM)$.

For every $g\in G$, invariance of $\mu$ gives
$$
\begin{aligned}
\langle U_gF,H\rangle_{L^2(\CM,\mu)}
&=\int_\CM F(g^{-1}\xi)\overline{H(\xi)}\,d\mu(\xi)=\int_\CM F(\xi)\overline{H(g\xi)}\,d\mu(\xi)\\
&=c_F\int_\CM\overline{H(g\xi)}\,d\nu_F(\xi)=c_F\int_\CM\overline H\,d(g_*\nu_F)
\end{aligned}
$$
for every $H\in C(\CM)$.

Since $\pi_E(\mathfrak p)=\eta$, continuity of $\lambda$ and of
$\operatorname{res}^{-1}$ implies that
$g_*\nu_F\longrightarrow\widehat{\eta}(\nu_F)$ 
as $g\to\mathfrak p$.
Consequently,
$$\langle\pi_K(\mathfrak p)F,H\rangle_{L^2(\CM,\mu)}=\lim_{g\to\mathfrak p}\langle U_gF,H\rangle_{L^2(\CM,\mu)}
=c_F\int_\CM\overline H\,d\widehat{\eta}(\nu_F).$$
The same argument applied to $\mathfrak q$ gives
$\langle\pi_K(\mathfrak q)F,H\rangle_{L^2(\CM,\mu)}=c_F\int_\CM\overline H\,d\widehat{\eta}(\nu_F)$.
Hence
$$\langle\pi_K(\mathfrak p)F,H\rangle_{L^2(\CM,\mu)}=\langle\pi_K(\mathfrak q)F,H\rangle_{L^2(\CM,\mu)}$$
for every nonnegative $F\in C(\CM)$ and every $H\in C(\CM)$.
Note that the same equality holds for all $F,H\in C(\CM)$.
Density of $C(\CM)$ in
$L^2(\CM,\mu)$ now yields
$$\pi_K(\mathfrak p)=\pi_K(\mathfrak q).$$

Thus $\pi_K$ is constant on the fibres of $\pi_E$.  Since $\pi_E$ is
a continuous surjection from the compact space $\beta G$ onto $E(X,G)$, it is a quotient map.  Therefore there is a unique continuous map
$$w:E(X,G)\longrightarrow K$$
such that $\pi_K=w\circ\pi_E$.
The surjectivity of $\pi_K$ implies that $w$ is surjective.  Moreover,
for every $g\in G$, $w(g\circ\id_X)=U_g$.

It remains to verify that $w$ is a semigroup homomorphism.  Let
$g\in G$ and $\xi\in E(X,G)$, and choose a net $(h_i)_i$ in $G$ converging to $\xi$.
Continuity of left multiplication by $g$ on $E(X,G)$ gives
$(gh_i)_i\to g\xi$.  Hence, by continuity of $w$,
$$w(g\xi)=\lim_i w(gh_i)=\lim_i U_{gh_i}=\lim_i U_gU_{h_i}=U_gw(\xi).$$
Indeed, $(U_{h_i})_i=(w(h_i))_i\to w(\xi)$ in the strong operator topology, and left multiplication
by $U_g$ is continuous (in the strong operator topology): for every
$F\in L^2(\CM,\mu)$,
$$\lVert U_gU_{h_i}F-U_gw(\xi)F\rVert_{L^2(\CM,\mu)}=\lVert U_{h_i}F-w(\xi)F\rVert_{L^2(\CM,\mu)}\longrightarrow0.$$
Now let $\eta,\xi\in E(X,G)$ and choose a net $(g_i)_i$ in $G$ converging to $\eta$.
Since right multiplication by $\xi$ is continuous in $E(X,G)$,
$$w(\eta\xi)=\lim_iw(g_i\xi)=\lim_iU_{g_i}w(\xi)=w(\eta)w(\xi).$$
\end{clmproof}

Combining the above two claims and the universality of the Bohr compactification $(B_2, \fb_2, w_2)$, we obtain a continuous epimorphism $\rho: B_2\to K$ such that the diagram
$$\xymatrix{ B_2 \ar[rr]^-{\rho} & & K \\
& G \ar[ul]^-{\fb_2} \ar[ur]_-{g\mapsto U_g} &}$$
commutes. Thus, $\rho:B_2\to U(L^2(\CM,\mu))$ is the auxiliary unitary representation we are looking for.

We now apply Peter--Weyl, Fact \ref{fact: peter-weyl for unitary}, to the unitary representation $\rho$.
Since $B_2$ is compact, that theorem says that $L^2(\CM,\mu)$ 
is the Hilbert direct sum of irreducible $\rho(B_2)$-invariant subspaces and
that each of these subspaces is finite-dimensional. In particular,
the finite-dimensional $\rho(B_2)$-invariant subspaces have dense
linear span in $L^2(\CM,\mu)$. 
It is therefore enough to prove that every nonzero finite-dimensional $\rho(B_2)$-invariant subspace $W$ of $L^2(\CM,\mu)$ is contained in $(w_2|_\CM)^*[C(B_2)]$.

Let $W\subseteq L^2(\CM,\mu)$ be an arbitrary nonzero
finite-dimensional $\rho(B_2)$-invariant subspace, and set
$n:=\dim W$.  Choose an orthonormal basis $f_1,\ldots,f_n$ of $W$
and write $\mathbf{f}:=(f_1,\ldots,f_n)$.
For every $b\in B_2$, the operator $\rho(b)$ preserves $W$.  Define
the matrix $\pi(b)=(\pi_{ij}(b))_{i,j=1}^n$ by
$$\pi_{ij}(b):=\langle \rho(b)f_j,f_i\rangle_{L^2(\CM,\mu)}.$$
Equivalently, $\rho(b)f_j=\sum_{i=1}^n f_i\,\pi_{ij}(b)$ for $j\leqslant n$.
Thus, with $\rho(b)$ acting coordinatewise on $\mathbf f$, these
identities can be written as
$$\rho(b)\mathbf f=\mathbf f\,\pi(b).$$
Since $\rho(b)|_W$ is unitary, $\pi(b)\in U(n)$.  Moreover, because
$\rho:B_2\to K$ is a continuous group epimorphism,
we have $\pi(bc)=\pi(b)\pi(c)$ for every $b,c\in B_2$,
and every matrix coefficient mapping
$$b\longmapsto\pi_{ij}(b)=\langle\rho(b)f_j,f_i\rangle_{L^2(\CM,\mu)}$$
is continuous.  Hence $\pi:B_2\to U(n)$ is a continuous unitary
representation.  Finally, since $\rho(\fb_2(g))=U_g$ for all $g\in G$, we obtain
$$U_g\mathbf f=\mathbf f\,\pi(\fb_2(g)).$$

For every $g\in G$ and $\mu$-almost every $\eta\in\CM$, we have
$$\mathbf f(g^{-1}\eta)\pi(w_2(g^{-1}\eta))=\mathbf f(\eta)\pi(\fb_2(g))
 \pi\bigl(\fb_2(g)^{-1}w_2(\eta)\bigr)=\mathbf f(\eta)\pi(w_2(\eta)).$$
Thus, every coordinate of
$\CM\ni \eta\mapsto\mathbf f(\eta)\pi(w_2(\eta))$ belongs to
$L^2(\CM,\mu)^G$.  Since $\mu$ is ergodic, 
$L^2(\CM,\mu)^G=\mathbb C\mathbf 1$.
Hence there is a constant row $c\in\mathbb C^n$ such that
$\mathbf f(\eta)\pi(w_2(\eta))=c$ for $\mu$-almost every $\eta\in\CM$.
Consequently,
$$\mathbf f(\eta)=c\,\pi(w_2(\eta))^{-1}$$
for $\mu$-almost every $\eta\in\CM$.
The map $B_2\ni b\mapsto c\,\pi(b)^{-1}\in \mathbb{C}^n$ is continuous.  
Hence each $f_j$ is the pullback under $w_2|_\CM$ of a member of
$C(B_2)$.  It follows that $W\subseteq(w_2|_\CM)^*[C(B_2)]$.
\end{proof}

The proof of Lemma~\ref{lemma: L2 Bohr} also yields the following corollary:

\begin{cor}\label{cor: tame finite-dimensional L2}
Let $(X,G)$ be a tame flow and let $\mu\in\operatorname{Prob}_G(X)$. Then
$$L^2(X,\mu)=\overline{\operatorname{span}}\{V:V\text{ is a finite-dimensional $G$-invariant subspace}\}.$$
\end{cor}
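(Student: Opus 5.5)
The plan is to rerun the compactness half of the proof of Lemma \ref{lemma: L2 Bohr}, now on $X$ itself and with an arbitrary invariant measure $\mu$, and then replace the ergodicity/Bohr step by a purely abstract spectral argument. Minimality, ergodicity and uniqueness of $\mu$ will not be used.

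\emph{Step 1: relatively compact orbits.} Set $\Gamma:=\{U_g:g\in G\}\subseteq U(L^2(X,\mu))$.
\begin{itemize}
\item Take $F\in C(X)$ and split it into real and imaginary parts.
\item By tameness and \cite[Definition~3.1(1), Theorem~2.4]{GM23}, every sequence $(g_n)$ in $G$ has a subsequence along which $U_{g_n}F$ converges pointwise on $X$.
\item The functions are uniformly bounded by $\lVert F\rVert_\infty$, so dominated convergence gives convergence in $L^2(X,\mu)$.
\item Hence $\Gamma F$ is relatively norm compact.
\item The same $\epsilon/3$ argument as in the first claim of Lemma \ref{lemma: L2 Bohr} upgrades this from $C(X)$ to every $\xi\in L^2(X,\mu)$. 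It uses only density of $C(X)$ and the fact that each $U_g$ is an isometry.
\end{itemize}

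\emph{Step 2: a compact group containing the action.} Let $K$ be the SOT-closure of $\Gamma$.
\begin{itemize}
\item The ultrafilter-limit construction from the same claim applies verbatim: the limits $T_p$ and $S_p$ exist, are mutually inverse isometries, and $p\mapsto T_p$ is continuous on $\beta\Gamma$.
\item So $K$ is a compact Hausdorff subgroup of $U(L^2(X,\mu))$, and its inclusion is a continuous unitary representation of $K$.
\item A closed subspace is $G$-invariant if and only if it is $K$-invariant, since $K$ is the SOT-closure of $\Gamma$.
\end{itemize}

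\emph{Step 3: Peter--Weyl.} Apply Fact \ref{fact: peter-weyl for unitary} to this representation of $K$.
\begin{itemize}
\item $L^2(X,\mu)$ is a Hilbert direct sum of irreducible $K$-invariant subspaces.
\item Each of these is finite-dimensional and, by Step 2, $G$-invariant.
\item Their linear span is therefore dense.
\end{itemize}
This gives the inclusion $\subseteq$. The reverse inclusion is trivial.

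\textbf{Main obstacle.} The only delicate step is Step 2, specifically that SOT-limits of the $U_g$ are again unitary rather than merely isometries. This is handled, as in the lemma, by taking limits of $U_g^{-1}=U_{g^{-1}}$ simultaneously. That works because Step 1 applies equally to the orbit under inverses, which is just $\Gamma$ again. Joint continuity of multiplication on bounded sets then gives $T_pS_p=S_pT_p=I$.

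Nothing in the argument needs $\supp\mu=X$ or ergodicity; those hypotheses were used in the lemma only for the later identification with $B_2$.
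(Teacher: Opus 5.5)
Your proposal is correct and is essentially the argument the paper has in mind when it says the corollary follows from the proof of Lemma~\ref{lemma: L2 Bohr}. The first claim there uses neither minimality, nor $\supp(\mu)=\CM$, nor ergodicity, so it makes the SOT-closure $K$ of $\{U_g:g\in G\}$ a compact Hausdorff group, and Peter--Weyl then supplies the finite-dimensional $G$-invariant subspaces. Your one departure from the lemma's text is to apply Fact~\ref{fact: peter-weyl for unitary} directly to the inclusion $K\hookrightarrow U(L^2(X,\mu))$ rather than to $\rho\colon B_2\to K$; this lets you skip the lemma's second claim and its ergodicity step, which are needed only for the identification with $L^2(B_2,\mathfrak h_{B_2})$.
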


\begin{theorem}\label{thm: GNC for amenable minimal ideal}
Let $(X,G)$ be tame, let $\CM\unlhd_mE(X,G)$, assume that
$(\CM,G)$ is amenable, and let $u^2=u\in\CM$.  Then
$$w_2|_{u\CM}:u\CM\longrightarrow B_2$$
is an isomorphism of compact Hausdorff topological groups (with the $\tau$-topology on $u\CM$).
\end{theorem}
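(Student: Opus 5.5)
Since $w_2|_{u\CM}$ is already a continuous group epimorphism and a topological quotient map (Remark \ref{remark: w2=theta}), and $u\CM$ is a compact Hausdorff group in the $\tau$-topology by Fact \ref{fact:RNC} (tameness of $(X,G)$), it suffices to prove injectivity. Then $w_2|_{u\CM}$ is a continuous bijection between compact Hausdorff groups, hence a topological isomorphism. So the plan is: take $\eta\in u\CM$ with $w_2(\eta)=e_{B_2}$ and show $\eta=u$, using Lemma \ref{lemma: L2 Bohr} for the unique $G$-invariant measure $\mu$ on $\CM$.

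The key step is to let $\eta$ act on $L^2(\CM,\mu)$. Right multiplication $r_\eta\colon\CM\to\CM$, $p\mapsto p\eta$, is continuous, since $E$ is left topological and so $p\mapsto p\eta$ is continuous. It is also $G$-equivariant. Hence $(r_\eta)_*\mu\in\Prob_G(\CM)$, and by uniqueness (Lemma \ref{lemma: L2 Bohr}(1)) we get $(r_\eta)_*\mu=\mu$. Therefore $r_\eta^*F:=F\circ r_\eta$ defines an isometry of $L^2(\CM,\mu)$. For $F\in C(B_2)$ we compute, using Remark \ref{rem: easy},
$$r_\eta^*\bigl((w_2|_\CM)^*F\bigr)(p)=F\bigl(w_2(p\eta)\bigr)=F\bigl(w_2(p)w_2(\eta)\bigr)=F\bigl(w_2(p)\bigr).$$
So $r_\eta^*$ is the identity on $(w_2|_\CM)^*[C(B_2)]$. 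By Lemma \ref{lemma: L2 Bohr}(3) this set is dense in $L^2(\CM,\mu)$, and therefore $r_\eta^*=\mathrm{id}$. Consequently, for every $F\in C(\CM)$ we have $F(p\eta)=F(p)$ for $\mu$-a.e.\ $p$. Both sides are continuous and $\supp(\mu)=\CM$, so equality holds for every $p$. Since $C(\CM)$ separates points, $p\eta=p$ for all $p\in\CM$. Taking $p=u$ gives $u\eta=u$, and since $\eta\in u\CM$ we have $\eta=u\eta=u$.

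The main obstacle is the genuinely deep input hidden in Lemma \ref{lemma: L2 Bohr}: the surjectivity of the pullback, which needs unique ergodicity and Peter--Weyl, together with Hausdorffness of $u\CM$ from Fact \ref{fact:RNC}. Given these, the remaining care is only in checking that $r_\eta$ is a continuous $G$-map and that passing from a.e.\ equality to pointwise equality is legitimate, which uses the full support of $\mu$. Note that the density of $u\CM$ in $\CM$ from the dynamical proof is not needed here; amenability of $\CM$ enters only through the existence of $\mu$.
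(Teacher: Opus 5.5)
Your proposal is correct and follows the paper's proof essentially step by step. Right multiplication $R_\eta$ by a kernel element preserves the unique invariant measure $\mu$ on $\CM$, and its pullback fixes the dense subspace $(w_2|_\CM)^*[C(B_2)]$ from Lemma~\ref{lemma: L2 Bohr}, so it is the identity; full support of $\mu$ then forces $R_\eta=\id_\CM$ and hence $\eta=u$. The only minor difference is that you use Fact~\ref{fact:RNC} to upgrade the continuous bijection to a homeomorphism, whereas injectivity alone already suffices because $w_2|_{u\CM}$ is a topological quotient map (Remark~\ref{remark: w2=theta}).
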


\begin{proof}
Let $\mu$ be a $G$-invariant probability on $\CM$.  By
Lemma~\ref{lemma: L2 Bohr}, it is unique and minimality gives
$\supp(\mu)=\CM$. Put $E:=E(X,G)$.

We need to show that $\ker(w_2|_{u\CM})=\{u\}$.
Take $\eta\in u\CM$ such that
$w_2(\eta)=1_{B_2}$.  
Right multiplication
$$R_\eta:\CM\longrightarrow E,
 \qquad R_\eta(\xi)=\xi\eta,$$
is continuous and so an automorphism of the flow $(\CM,G)$. 
Indeed, because $\eta\in\CM$ we have that $\xi\eta\in\CM$ for every $\xi\in\CM$.
Then, every $\xi\in\CM=Eu$ satisfies $\xi u=\xi$, 
and right multiplication by the inverse of
$\eta$ in $u\CM$ is the inverse of $R_\eta$.  Moreover,
$w_2(\eta)=1_{B_2}$ implies $w_2|_{\CM} \circ R_\eta=w_2|_\CM$.

The measure $(R_\eta)_*\mu$ is $G$-invariant, so unique
ergodicity gives $(R_\eta)_*\mu=\mu$.  
Since $(R_\eta)_*\mu=\mu$ and $R_\eta$ is a homeomorphism,
pullback by $R_\eta$ defines a unitary operator
$$(R_\eta)^*:L^2(\CM,\mu)\longrightarrow L^2(\CM,\mu),
 \qquad (R_\eta)^*(F):=F\circ R_\eta.$$
Furthermore, for every $h\in C(B_2)$,
and its class $H=[h]\in L^2(B_2,\mathfrak{h}_{B_2})$,
$$(R_\eta)^*\bigl((w_2|_\CM)^*(H)\bigr)
 =\bigl((w_2|_\CM)\circ R_\eta\bigr)^*(H)
 =(w_2|_\CM)^*(H).$$
Thus $(R_\eta)^*$ fixes $(w_2|_\CM)^*[C(B_2)]$ pointwise.
Since this subspace is dense in $L^2(\CM,\mu)$ by
Lemma~\ref{lemma: L2 Bohr}, we obtain $(R_\eta)^*=\id_{L^2(\CM,\mu)}$.
Consequently, for every $F\in C(\CM)$,
$F\circ R_\eta=F$ happens
$\mu$-almost everywhere.  Since $\supp(\mu)=\CM$ and both sides are
continuous, this equality holds everywhere.  Continuous functions
separate points, so $R_\eta=\id_\CM$.  Therefore $\eta=u\eta=R_\eta(u)=u$.
\end{proof}

\begin{proof}[Proof of Theorem \ref{thm:GNC 0}]
By \cite[Corollary~5.6]{HoffKru1}, the Ellis flow
$(E(X,G),G)$ is hereditarily amenable. 
Therefore, Theorem~\ref{thm: GNC for amenable minimal ideal} applies to every
$\CM\unlhd_m E(X,G)$ and every idempotent $u\in\CM$, and yields GNC
for $(X,G)$.
\end{proof}

\begin{cor}
\label{cor: tame amenable finite flows}
If a flow $(X,G)$ is finite then $B_2$ is finite as well.
Let $(X,G)$ be minimal, tame and amenable.
If $B_2$ is finite, then $|X|\leqslant|B_2|$. 
In particular, 
for a  minimal, tame and amenable $(X,G)$, $X$ is finite if and only if $B_2$ is finite.
\end{cor}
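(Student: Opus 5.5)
The statement has three parts, and I treat them in order.

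\emph{Finite flows.} If $X$ is finite, then $E(X,G)\subseteq X^X$ is finite. By Remark \ref{rem: easy}, $w_2\colon E\to B_2$ is a surjection, so $|B_2|\leqslant|E|<\infty$.

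\emph{The bound $|X|\leqslant|B_2|$.} Now let $(X,G)$ be minimal, tame and amenable. Since $X$ is minimal, it is hereditarily amenable, so Theorem \ref{thm:GNC 0} applies: $w_2|_{u\CM}\colon u\CM\to B_2$ is a bijection and $|u\CM|=|B_2|$. Fix $x\in uX$. By minimality, $X=\CM x$. Using $\CM=\bigcup_v v\CM$, I want to reduce to $X=u\CM x$, which gives $|X|\leqslant|u\CM|=|B_2|$.

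To get $X=u\CM x$, I use Fact \ref{fact: amenability closure}(2): $u\CM$ is dense in $\CM$. This holds because $(X,G)$ is tame and hereditarily amenable. The evaluation $\ev_x\colon\CM\to X$ is continuous and surjective, so $u\CM x$ is dense in $X$. If $B_2$ is finite, then $u\CM x$ is a finite set, hence closed. A closed dense subset is everything, so $X=u\CM x$ and $|X|\leqslant|B_2|$.

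This density step is where the main care is needed: finiteness of $u\CM$ together with density of $u\CM$ in $\CM$ is exactly what lets us pass from $\CM x$ to the finite set $u\CM x$. In fact the argument shows more, namely that $\CM=\overline{u\CM}=u\CM$ is finite.

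\emph{The equivalence.} Combining the two parts gives the final claim. If $X$ is finite, then $B_2$ is finite by the first part. If $B_2$ is finite, then $X$ is finite by the bound $|X|\leqslant|B_2|$.
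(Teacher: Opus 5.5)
Your proof is correct and follows essentially the same route as the paper: finiteness of $E(X,G)$ plus surjectivity of $w_2$ for the first part, and GNC (giving $|u\CM|=|B_2|$) combined with density of $u\CM$ in $\CM$ for the bound. The only cosmetic difference is that the paper first concludes $\CM=u\CM$ (a finite dense subset of a Hausdorff space) and then uses $X=\CM x$, whereas you push the density down to $u\CM x\subseteq X$; as you note yourself, these are the same argument.
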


\begin{proof}
If $X$ is finite then also $E(X,G)$ is finite and so $B_2$ as a surjective image of $E(X,G)$.

Now assume that $B_2$ is finite. Theorem~\ref{thm: GNC for amenable minimal ideal} gives a topological
group isomorphism $u\CM\cong B_2$ and so $u\CM$ is finite.
Then, by Fact~\ref{fact: amenability closure},
$\CM=u\CM$. Since $X=\CM x$, it follows that $X$ is finite and bounded by the size of $B_2$.
\end{proof}

\noindent
See Proposition \ref{prop: sizes of X and B2} for a more general variant of the above corollary.

\subsection{Measured structure of amenable tame flows}\label{subsection: structure of amenable tame flows}
Now, we combine Lemma \ref{lemma: L2 Bohr} with a Galois-correspondence result from \cite{TakesakiTatsuuma} for von Neumann algebras, to characterize the measured structure of minimal, tame and amenable flows (in Theorem \ref{thm: measured vNeumann} and corresponding translation in Corollary \ref{rem: homogeneous MAlg}).

Let $(X,G)$ be a minimal
flow.  
Put $E:=E(X,G)$, choose $\CM\unlhd_m E$ and $u^2=u\in\CM$.
Choose $x\in uX$, so $ux=x$.  Moreover, set
$$\gal_X:=\gal(X,x)\quad\text{ and }\quad\gal_B:=w_2[\gal_X].$$

\begin{remark}\label{rem:NEW.closed.Gal}
$\gal_X$ and $\gal_B$ are closed (in the $\tau$-topology and in the topology of $B_2$, respectively).
\end{remark}

\begin{proof}
    By Remark \ref{rem:Gal.closed.0}, we have that $\gal_X$ is $\tau$-closed.
    Then $\gal_B$ is closed in $B_2$ as the
    image of a closed subset in quasi-compact $u\CM$ via a continuous map to a Hausdorff space.
    Here we use the $\tau$-continuity of $w_2|_{u\CM}$
from Remark~\ref{remark: w2=theta}.
\end{proof}

\begin{remark}\label{remark: rho X map} 
The formula
$$\rho_x:\CM x\longrightarrow B_2/\gal_B,\qquad \rho_x(\xi x):=w_2(\xi)\gal_B,$$
where $\xi\in\CM$, defines a continuous surjective $G$-equivariant map, where $B_2/\gal_B$ carries the quotient topology and the natural left $G$-action 
(i.e. $g\cdot b\gal_B:=\fb_2(g)b\gal_B$ for $g\in G$). 
Moreover, as $X$ is minimal, we have $\CM x=X$
and $\rho_x$ is a factor map of compact (Hausdorff) flows.
\end{remark}

\begin{proof}
If $\xi x=\zeta x$, then $(u\zeta)^{-1}(u\xi)\in\gal_X$,
and, since $w_2(u)=1_{B_2}$, this gives $w_2(\xi)\gal_B=w_2(\zeta)\gal_B$. Thus $\rho_x$ is well-defined. If $\ev_x:\CM\to\CM x$ is evaluation at $x$ and $q:B_2\to B_2/\gal_B$ is the quotient map, then $q\circ w_2|_{\CM}=\rho_x\circ \ev_x$ and so $\rho_x\circ \ev_x$ is continuous. 
Since $\ev_x$ is a quotient map, $\rho_x$ is continuous.
An equivariant continuous surjective map between compact (Hausdorff) spaces is a factor map.
\end{proof}

\begin{theorem}\label{thm: measured vNeumann}
Let $(X,G)$ be minimal, tame and amenable.
Then:
\begin{enumerate}[(A)]
\item The flows $X$ and $\CM$ carry unique $G$-invariant regular Borel probability measures, denoted by $\mu_X$ and $\mu_{\CM}$, 
and we have $\supp(\mu_X)=X$ and $\supp(\mu_{\CM})=\CM$. Thus, $(\ev_x)_*\mu_{\CM}=\mu_X$.

\item Pullback along $w_2|_{\CM}$ is a normal trace-preserving $*$-isomorphism
\begin{equation}
\label{eq: pullback-q-Linfty}
 (w_2|_{\CM})^*:L^\infty(B_2,\mathfrak h_{B_2})\longrightarrow L^\infty(\CM,\mu_{\CM}).
\end{equation}
On both $L^2$-level and $L^\infty$-level the map
$\Phi_x:=\big((w_2|_{\CM})^*\big)^{-1}\ev_x^*$ is $G$-equivariant and isometric, and on $L^\infty$-level it is a normal trace-preserving $*$-monomorphism.

\item
There exists a unique closed subgroup $H_x$ of $B_2$ such that
\begin{IEEEeqnarray*}{rCl}
\Phi_x[L^\infty(X,\mu_X)] &=& L^\infty(B_2,\mathfrak{h}_{B_2})^{H_x} \\
 &:=&\{F\in L^\infty(B_2,\mathfrak{h}_{B_2}):R_hF=F\text{ for every }h\in H_x\},
\end{IEEEeqnarray*}
where $(R_hF)(b):=F(bh)$.  
In fact, $H_x=\gal_B$.
Consequently,
\begin{equation}
\label{eq: measured homogeneous}
L^\infty(X,\mu_X) \cong_G L^\infty(B_2/\gal_B,\mathfrak h_{B_2/\gal_B}).
\end{equation}
(Here $G$ acts on $B_2/\gal_B$ by $g\cdot b\gal_B=\fb_2(g)b\gal_B$
and $\cong_G$ denotes a normal trace-preserving
$G$-equivariant $*$-isomorphism of commutative von Neumann algebras.)
\end{enumerate}
\end{theorem}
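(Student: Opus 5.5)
For (A), I will first show that $X$ and $\CM$ are amenable. Because $(X,G)$ is minimal, hereditary amenability coincides with amenability, so Fact~\ref{fact: amenability closure}(1) shows that $(\CM,G)$ is amenable. Lemma~\ref{lemma: L2 Bohr}(1) then makes $\mu_\CM$ unique. Since $X$ is minimal, tame and amenable, Fact~\ref{lemma : min tame unique ergodicity} makes $\mu_X$ unique. Both supports are full by minimality. The map $\ev_x\colon\CM\to X$ is a $G$-factor map, so $(\ev_x)_*\mu_\CM$ is $G$-invariant, and uniqueness gives $(\ev_x)_*\mu_\CM=\mu_X$.

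For (B), I start from the unitary $(w_2|_\CM)^*$ of Lemma~\ref{lemma: L2 Bohr}(3). On $L^\infty$, composition with $w_2|_\CM$ is a $*$-homomorphism. It is isometric for the essential sup norm and trace-preserving because $(w_2|_\CM)_*\mu_\CM=\mathfrak h_{B_2}$. To get surjectivity onto $L^\infty(\CM,\mu_\CM)$, take $F\in L^\infty(\CM)$. By the $L^2$ unitary, $F=H\circ w_2$ for some $H\in L^2(B_2)$, and the measure-preservation forces $\|H\|_\infty=\|F\|_\infty$. Normality follows because the map is the restriction of a unitary conjugation: multiplication operators transform by $M_{H\circ w_2}=(w_2^*)M_H(w_2^*)^{-1}$, and this is weak-$*$ continuous. $G$-equivariance holds since $w_2(g\xi)=\fb_2(g)w_2(\xi)$ and $\ev_x$ is equivariant. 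So $\Phi_x$ is the composition of the isometric, trace-preserving $*$-monomorphism $\ev_x^*$ (using (A)) with the inverse of an isomorphism.

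For (C), the image $\mathcal N:=\Phi_x[L^\infty(X,\mu_X)]$ is a weak-$*$ closed unital $*$-subalgebra. It is the image of a normal injective $*$-homomorphism of von Neumann algebras, so it is closed. By equivariance it is invariant under $L_{\fb_2(g)}$ for $g\in G$. I will pass to all $b\in B_2$ by density of $\fb_2[G]$ and continuity of $b\mapsto L_bF$ in $L^2$; on bounded sets the $L^2$-closure agrees with the weak-$*$ closure. Fact~\ref{fact: Takesaki-Tatsuuma Galois} then produces a unique closed subgroup $H_x$.

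To identify $H_x=\gal_B$, I would use the factor map $\rho_x$ of Remark~\ref{remark: rho X map}. It satisfies $q\circ w_2|_\CM=\rho_x\circ\ev_x$, so $\Phi_x(\rho_x^*F')=q^*F'$. Hence $q^*L^\infty(B_2/\gal_B)\subseteq\mathcal N$, which gives $H_x\subseteq\gal_B$.

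The reverse inclusion is where I expect the main obstacle. Take $\eta\in\gal_X$, so that $w_2(\eta)\in\gal_B$. Right multiplication $R_\eta$ is an automorphism of $(\CM,G)$ with $\ev_x\circ R_\eta=\ev_x$, because $\xi\eta x=\xi x$. It also satisfies $w_2\circ R_\eta=R_{w_2(\eta)}\circ w_2$, because $w_2$ is a semigroup homomorphism (Remark~\ref{rem: easy}). For $F\in L^\infty(X)$ this gives
\[
R_{w_2(\eta)}\Phi_x(F)=\bigl((w_2|_\CM)^*\bigr)^{-1}(\ev_x^*F\circ R_\eta)=\Phi_x(F),
\]
so $w_2(\eta)\in H_x$. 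This step uses that $(R_\eta)_*\mu_\CM=\mu_\CM$ (by uniqueness), so that pullbacks are well defined a.e. By the recovery formula for $H$ in Fact~\ref{fact: Takesaki-Tatsuuma Galois}, we conclude $\gal_B\subseteq H_x$.

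Finally, using $\mathcal N=p_{H_x}^*L^\infty(B_2/H_x)$ with $H_x=\gal_B$, the composite $(p_{\gal_B}^*)^{-1}\circ\Phi_x$ is the desired normal, trace-preserving, $G$-equivariant $*$-isomorphism in \eqref{eq: measured homogeneous}.
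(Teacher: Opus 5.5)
Your proof is correct. Its key computations are the paper's: the identity $q\circ w_2|_{\CM}=\rho_x\circ\ev_x$ (giving $q^*=\Phi_x\circ\rho_x^*$), and the right multiplications $R_\eta$ for $\eta\in\gal_X$, using $(R_\eta)_*\mu_{\CM}=\mu_{\CM}$, $\ev_x\circ R_\eta=\ev_x$ and $w_2\circ R_\eta=R_{w_2(\eta)}\circ w_2$.

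The difference lies in how you organize (C).

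\emph{Your route.} You first get $H_x$ abstractly from the implication (i)$\Rightarrow$(ii) of Fact~\ref{fact: Takesaki-Tatsuuma Galois}. This obliges you to check two things: that $\mathcal N$ is weak-$*$ closed, and that it is invariant under $L_b$ for every $b\in B_2$. Weak-$*$ closedness needs normality of $\Phi_x$, hence of $\ev_x^*$; you use this but do not state it. Invariance comes from your density and $L^2$-continuity argument. Only after that do you identify $H_x$ through the recovery formula.

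\emph{The paper's route.} The paper reads the same two computations as inclusions of algebras. The $R_\eta$ computation gives $\mathcal A_x\subseteq L^\infty(B_2,\mathfrak h_{B_2})^{\gal_B}$. The $\rho_x$ identity gives $L^\infty(B_2,\mathfrak h_{B_2})^{\gal_B}=p^*[L^\infty(B_2/\gal_B,\mathfrak h_{B_2/\gal_B})]\subseteq\mathcal A_x$. So the equality, and with it the existence of $H_x$, is immediate. Only the elementary parts of Takesaki--Tatsuuma are used: fixed-point algebras are pullbacks from $B/H$, and the subgroup-recovery formula gives uniqueness.

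Given the two facts you prove, your existence step is redundant. What it buys is the a priori statement that $\Phi_x[L^\infty(X,\mu_X)]$ is a left-invariant von Neumann subalgebra, hence of the form $L^\infty(B_2,\mathfrak h_{B_2})^{H}$. That would matter only if no candidate subgroup such as $\gal_B$ were available.

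Two smaller remarks:
\begin{itemize}
\item You prove normality of $(w_2|_{\CM})^*$ by spatial implementation through the $L^2$ unitary. The paper instead uses that $*$-isomorphisms of von Neumann algebras are order isomorphisms. Both are fine.
\item Writing $\rho_x^*$ on $L^\infty$ classes requires $(\rho_x)_*\mu_X=\mathfrak h_{B_2/\gal_B}$. This follows at once from (A) and Lemma~\ref{lemma: L2 Bohr}(2), and the paper states it explicitly.
\end{itemize}
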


\begin{proof}
By Fact~\ref{fact: amenability closure} and 
\cite[Lemma~2.2(3)]{HoffKru1}, $(\CM,G)$ is amenable, minimal, and tame.
Fact~\ref{lemma : min tame unique ergodicity}, applied to $(X,G)$ and to $(\CM,G)$,
gives uniqueness of the $G$-invariant measures.  
The support of a $G$-invariant measure is a nonempty subflow, so minimality gives the assertions about support in (A).

We move on to the proof of (B). Lemma~\ref{lemma: L2 Bohr} gives
\begin{equation}
\label{eq:q push}
 (w_2|_{\CM})_*\mu_{\CM}=\mathfrak h_{B_2}
\end{equation}
and a unitary isomorphism
\begin{equation}
\label{eq:measured-q-L2}
 (w_2|_{\CM})^*:L^2(B_2,\mathfrak h_{B_2})
 \longrightarrow L^2(\CM,\mu_{\CM}).
\end{equation}
If $F\in L^\infty(\CM,\mu_{\CM})$, surjectivity in
\eqref{eq:measured-q-L2} gives $f\in L^2(B_2,\mathfrak h_{B_2})$ with
$F=f\circ w_2|_{\CM}$ almost everywhere.  For $C>\lVert F\rVert_\infty$,
\eqref{eq:q push} yields
$$\mathfrak h_{B_2}(|f|>C)=\mu_{\CM}(|f\circ w_2|_{\CM}|>C)=0.$$
Thus, $f\in L^\infty(B_2,\mathfrak{h}_{B_2})$ and
$\lVert f\rVert_\infty=\lVert f\circ w_2|_{\CM}\rVert_\infty$.  It follows that
$(w_2|_{\CM})^*$ restricted to $L^\infty(B_2,\mathfrak{h}_{B_2})$
is onto on $L^\infty(\CM,\mu_{\CM})$.  
Literal pullback and
\eqref{eq:q push} make it a trace-preserving $*$-isomorphism.
Every $*$-isomorphism between von Neumann algebras is an order
isomorphism, hence preserves suprema of bounded increasing nets of
positive elements and is therefore normal
\cite[Corollary~7.1.16]{KadisonRingroseII}.
This proves (B).

Now, we move to the proof of (C). Set $\mathcal A_x:=\Phi_x[L^\infty(X,\mu_X)]$.
We will prove that
\begin{equation}
\label{eq:measured-range-H}
\mathcal A_x
=L^\infty(B_2,\mathfrak h_{B_2})^{\gal_B}.
\end{equation}

We start with showing that
$\mathcal A_x\subseteq
L^\infty(B_2,\mathfrak h_{B_2})^{\gal_B}$.
For $\eta\in u\CM$, let $R_\eta:\CM\to\CM$ be given by
$R_\eta(\xi):=\xi\eta$.
This is a $G$-automorphism with inverse $R_{\eta^{-1}}$ (see also the proof of Theorem~\ref{thm: GNC for amenable minimal ideal}).
Hence $(R_\eta)_*\mu_{\CM}$ is $G$-invariant, and uniqueness in~\textup{(A)} gives $(R_\eta)_*\mu_{\CM}=\mu_{\CM}$.
Moreover, since $w_2$ is a semigroup homomorphism, $w_2|_{\CM}(R_\eta(\xi))=w_2(\xi)w_2(\eta)$ for every $\xi\in\CM$, and therefore $(w_2|_{\CM})^*\circ R_{w_2(\eta)}=(R_\eta)^*\circ (w_2|_{\CM})^*$.

\begin{clm}
For every $\eta\in\gal_X$, for every $F\in\mathcal A_x$,
we have $R_{w_2(\eta)}(F)=F$.
\end{clm}

\begin{clmproof}
Suppose that $\eta (x)=x$, then $\ev_x\circ R_\eta=\ev_x$.  
Hence, for every $F\in L^\infty(X,\mu_X)$,
\begin{IEEEeqnarray*}{rCl}
    (w_2|_{\CM})^*(R_{w_2(\eta)}\Phi_x(F))&=&
(R_\eta)^*((w_2|_{\CM})^*\Phi_x(F))=(R_\eta)^*(\ev_x^*(F))
\\ &=& \ev_x^*(F) = (w_2|_{\CM})^*\Phi_x(F).
\end{IEEEeqnarray*}
Since $(w_2|_{\CM})^*$ is injective, $R_{w_2(\eta)}$ fixes $\mathcal A_x$ pointwise.
\end{clmproof}

Since $\gal_B=w_2[\gal_X]$, the claim gives
$\mathcal A_x\subseteq L^\infty(B_2,\mathfrak h_{B_2})^{\gal_B}$.
For the reverse inclusion, let $p:B_2\to B_2/\gal_B$
be the quotient map. By definition of $\rho_x$, $p\circ(w_2|_{\CM})=\rho_x\circ \ev_x$.
Together with \textup{(A)} and \eqref{eq:q push}, this gives
$(\rho_x)_*\mu_X=p_*\mathfrak h_{B_2}
=\mathfrak h_{B_2/\gal_B}$.
Taking pullbacks, we obtain
$$(w_2|_{\CM})^*\circ p^*=\ev_x^*\circ\rho_x^*,
\qquad\text{hence}\qquad
p^*=\Phi_x\circ\rho_x^*.$$
Since $\gal_B$ is closed (Remark~\ref{rem:NEW.closed.Gal}),
the final assertion of
Fact~\ref{fact: Takesaki-Tatsuuma Galois} yields
$$
L^\infty(B_2,\mathfrak h_{B_2})^{\gal_B}
=p^*\bigl[L^\infty(B_2/\gal_B,\mathfrak h_{B_2/\gal_B})\bigr]
\subseteq\Phi_x[L^\infty(X,\mu_X)]
=\mathcal A_x.
$$
This proves \eqref{eq:measured-range-H}.
The subgroup-recovery formula in
Fact~\ref{fact: Takesaki-Tatsuuma Galois} shows that
$\gal_B$ is the unique closed subgroup of $B_2$
whose fixed-point algebra is $\mathcal A_x$.
%Thus $H_x=\gal_B$.
Taking $H_x:=\gal_B$, we obtain the existence and uniqueness
asserted in (C).

By the final assertion of Fact~\ref{fact: Takesaki-Tatsuuma Galois} and the identity $p_*\mathfrak h_{B_2}=\mathfrak h_{B_2/H_x}$, pullback gives a normal trace-preserving $*$-isomorphism
$$p^*:L^\infty(B_2/H_x,\mathfrak h_{B_2/H_x})\xrightarrow{\ \cong\ }\mathcal A_x=L^\infty(B_2,\mathfrak h_{B_2})^{H_x}.$$
Since $p$ intertwines left translation by $\fb_2(g)$ with the stated $G$-action on $B_2/H_x$, the map
$$(p^*)^{-1}\circ\Phi_x:L^\infty(X,\mu_X)\longrightarrow L^\infty(B_2/\gal_B,\mathfrak h_{B_2/\gal_B})$$
is a normal trace-preserving $G$-equivariant $*$-isomorphism.  This proves (C).
\end{proof}

\begin{cor}\label{rem: homogeneous MAlg}
For a probability space $(Y,\nu)$, let $\operatorname{MAlg}(Y,\nu)$ denote its complete Boolean algebra of measurable sets modulo null sets.  Assertion~\textup{(C)} equivalently gives a measure-preserving $G$-isomorphism
$$\operatorname{MAlg}(X,\mu_X)\cong_G\operatorname{MAlg}(B_2/\gal_B,\mathfrak h_{B_2/\gal_B})$$
for a minimal, tame and amenable flow $(X,G)$.
\end{cor}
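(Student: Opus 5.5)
The plan is to translate the von Neumann algebra isomorphism of Theorem \ref{thm: measured vNeumann}(C) into the language of measure algebras. We use the standard duality between commutative von Neumann algebras of the form $L^\infty(Y,\nu)$ and complete measure algebras, under which the measure algebra is the lattice of projections. For a probability space $(Y,\nu)$, the projections of $L^\infty(Y,\nu)$ are exactly the classes $[\mathbf 1_A]$ of indicator functions. The assignment $[A]\mapsto[\mathbf 1_A]$ is an isomorphism of complete Boolean algebras
\[
\operatorname{MAlg}(Y,\nu)\longrightarrow \operatorname{Proj}(L^\infty(Y,\nu)).
\]
Here meets correspond to products, complements to $1-P$, and arbitrary suprema to strong (equivalently normal) suprema of projections. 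Under this identification, $\nu([A])=\int \mathbf 1_A\,d\nu$ is the trace of $[\mathbf 1_A]$.

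First, take the normal trace-preserving $G$-equivariant $*$-isomorphism
\[
\Psi:=(p^*)^{-1}\circ\Phi_x\colon L^\infty(X,\mu_X)\to L^\infty(B_2/\gal_B,\mathfrak h_{B_2/\gal_B})
\]
from the end of the proof of (C). Any $*$-isomorphism maps projections ($P=P^*=P^2$) bijectively onto projections. It preserves products and $1-P$, so it restricts to a Boolean algebra isomorphism of the projection lattices.

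Second, completeness. Normality of $\Psi$, which was established in (B) and (C) via order-isomorphism, gives preservation of suprema of arbitrary families of projections. Alternatively, any order isomorphism of complete lattices preserves all suprema.

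Third, measure preservation follows directly from trace preservation: $\mu_X(A)=\tau(\mathbf 1_A)=\tau(\Psi(\mathbf 1_A))$, and the latter equals the Haar measure of the corresponding class.

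Fourth, $G$-equivariance. The $G$-action on $L^\infty$ is $U_gF=F\circ g^{-1}$, so $U_g\mathbf 1_A=\mathbf 1_{gA}$. Hence equivariance of $\Psi$ transfers to equivariance of the induced map on measure algebras.

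For the word ``equivalently'' in the statement, we also sketch the converse direction. A measure-preserving $G$-isomorphism of measure algebras induces an isomorphism on simple functions, namely finite linear combinations of indicators. This map is isometric for $\|\cdot\|_\infty$ because the essential supremum is determined by the measure algebra. It therefore extends by norm density to a trace-preserving $*$-isomorphism of the $L^\infty$ algebras. That extension is normal, being an order isomorphism, and $G$-equivariant.

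The only point needing any care is the completeness/normality step, and possibly checking that the essential-sup norm is preserved. Both follow from the facts already invoked in the proof of (B) (\cite[Corollary~7.1.16]{KadisonRingroseII}), so I expect no genuine obstacle.
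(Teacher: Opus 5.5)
Your proposal is correct and is the argument the paper leaves implicit: it gives no separate proof and treats the corollary as the standard translation of (C). The translation is to restrict the normal trace-preserving $G$-equivariant $*$-isomorphism $(p^*)^{-1}\circ\Phi_x$ to projection lattices, which are the measure algebras via $[A]\mapsto[\mathbf 1_A]$; your sketch of the converse via simple functions also correctly justifies the word ``equivalently''.
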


The next lemma is a consequence of Fact~\ref{fact: amenability closure}
and GNC (i.e., Theorem \ref{thm:GNC 0}).

\begin{lemma}
\label{lemma: dense uX}
Let $(X,G)$ be minimal, tame and amenable.
The set $uX$ is dense in $X$ and the restriction of the map from Remark \ref{remark: rho X map}, 
$$\rho_x|_{uX}:uX\longrightarrow B_2/\gal_B$$
is a bijection. 
\end{lemma}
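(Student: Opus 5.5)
For density of $uX$, I will use Fact \ref{fact: amenability closure}(2) together with minimality. Since $(X,G)$ is minimal, tame and amenable, it is hereditarily amenable, because its only subflow is $X$ itself. Hence $\overline{u\CM}=\CM$. Minimality gives $X=\CM x$, and the evaluation map $\ev_x\colon \CM\to X$ is continuous. Therefore
$$X=\ev_x[\overline{u\CM}]\subseteq\overline{\ev_x[u\CM]}=\overline{u\CM x}.$$
Since $u\CM x\subseteq uX$, it follows that $uX$ is dense in $X$.

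Next I will describe $uX$ explicitly. Every element of $X$ has the form $\xi x$ with $\xi\in\CM$. Then $u(\xi x)=(u\xi)x$ and $u\xi\in u\CM$, so $uX=u\CM x$. In particular, $\rho_x|_{uX}$ is given on $uX$ by $\eta x\mapsto w_2(\eta)\gal_B$ for $\eta\in u\CM$.

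For surjectivity, I will invoke GNC (Theorem \ref{thm:GNC 0}), which applies because $(X,G)$ is tame and hereditarily amenable. It says that $w_2|_{u\CM}\colon u\CM\to B_2$ is a group isomorphism, in particular onto. So every coset $b\gal_B$ equals $w_2(\eta)\gal_B=\rho_x(\eta x)$ for the unique $\eta\in u\CM$ with $w_2(\eta)=b$.

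Injectivity is the only step requiring care. Suppose $\eta,\eta'\in u\CM$ satisfy $w_2(\eta)\gal_B=w_2(\eta')\gal_B$. Then $w_2(\eta^{-1}\eta')\in\gal_B=w_2[\gal_X]$, where the inverse is taken in the group $u\CM$ and we use that $w_2|_{u\CM}$ is a homomorphism. Injectivity of $w_2|_{u\CM}$, which is GNC again, gives $\eta^{-1}\eta'\in\gal_X$, i.e.\ $\eta^{-1}\eta' x=x$. Multiplying by $\eta$ and using $\eta\eta^{-1}=u$ and $ux=x$ yields $\eta' x=\eta x$. Thus $\rho_x|_{uX}$ is a bijection.

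The main point to check is that the preimage under the isomorphism $w_2|_{u\CM}$ of $\gal_B$ is exactly $\gal_X$. This holds because $\gal_B$ is by definition the image $w_2[\gal_X]$, and $w_2|_{u\CM}$ is bijective.
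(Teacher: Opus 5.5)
Your proof is correct and follows the paper's argument essentially step for step. Density comes from $\overline{u\CM}=\CM$ via Fact~\ref{fact: amenability closure}(2) and continuity of $\ev_x$; surjectivity comes from surjectivity of $w_2|_{u\CM}$; and injectivity comes from GNC together with $\gal_B=w_2[\gal_X]$. One tiny slip in the last step: to pass from $u\eta' x=\eta x$ to $\eta' x=\eta x$, the identity you need is $u\eta'=\eta'$ (since $\eta'\in u\CM$), not $ux=x$.
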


\begin{proof}
First, we prove density. By minimality of $X$, $X=\CM x$ and so $uX=(u\CM)x$.
By Fact~\ref{fact: amenability closure}, $\overline{uX}=\overline{(u\CM)x}=\overline{u\CM}x=\CM x=X$.

The restriction $\rho_x|_{uX}$ is onto because $w_2|_{u\CM}$ is onto. To prove injectivity, suppose that $\eta,\kappa\in u\CM$ and $\rho_x(\eta x)=\rho_x(\kappa x)$. Then $w_2(\kappa^{-1}\eta)\in\gal_B=w_2[\gal_X]$. 
Since $w_2|_{u\CM}$ is injective by
Theorem~\ref{thm: GNC for amenable minimal ideal},
$\kappa^{-1}\eta\in\gal_X$, and consequently $\eta x=\kappa x$.
Hence $\rho_x|_{uX}$ is a bijection.
\end{proof}

There is a natural structure of a homogeneous $u\CM$-space
on $u\CM/\gal_X$, and similarly a structure of 
a homogeneous $B_2$-space on $B_2/\gal_B$.
If GNC holds, i.e. if $w_2|_{u\CM}:u\CM\to B_2$ is an isomorphism, then, 
using the definition of $\gal_B$,
we conclude that $u\CM/\gal_X$ and $B_2/\gal_B$ are isomorphic as homogenoues spaces. Below, we provide an alternative proof which uses Lemma \ref{lemma: dense uX}.

\begin{cor}
    Let $(X,G)$ be minimal, tame and amenable.
    Then
    $$B_2/\gal_B \cong u\CM/\gal_X$$
    as homogeneous spaces.
\end{cor}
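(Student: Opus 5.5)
The plan is to pass through $X$ itself, using Lemma \ref{lemma: dense uX}, instead of transporting the homogeneous structure abstractly through the GNC isomorphism. Since $X$ is minimal, we have $uX=(u\CM)x$. The group $u\CM$ acts on $uX$ by left evaluation: for $\kappa\in u\CM$ and $\eta x\in uX$, the element $\kappa\eta x$ again lies in $uX$. This action is transitive, and the stabilizer of $x$ is exactly $\gal_X=\gal(X,x)$. Hence the orbit map
$$\alpha\colon u\CM/\gal_X\to uX,\qquad \eta\gal_X\mapsto \eta x,$$
is a well-defined bijection which is $u\CM$-equivariant. Well-definedness and injectivity both come from the equivalence: $\eta x=\kappa x$ if and only if $\kappa^{-1}\eta\in\gal_X$, using $\kappa^{-1}\kappa=u$ and $ux=x$.

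Next, compose $\alpha$ with the bijection $\rho_x|_{uX}\colon uX\to B_2/\gal_B$ from Lemma \ref{lemma: dense uX}. Unwinding the definition in Remark \ref{remark: rho X map}, the composite is
$$\beta:=\rho_x|_{uX}\circ\alpha\colon u\CM/\gal_X\to B_2/\gal_B,\qquad \eta\gal_X\mapsto w_2(\eta)\gal_B .$$
It is a bijection because both factors are. We then check equivariance along the group isomorphism $w_2|_{u\CM}\colon u\CM\to B_2$ given by Theorem \ref{thm:GNC 0}. Since $w_2$ is a semigroup homomorphism (Remark \ref{rem: easy}), for $\kappa,\eta\in u\CM$ we get
$$\beta(\kappa\eta\gal_X)=w_2(\kappa)w_2(\eta)\gal_B=w_2(\kappa)\cdot\beta(\eta\gal_X).$$
So $\beta$ is an isomorphism of homogeneous spaces, the acting groups being identified via $w_2|_{u\CM}$.

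Finally, we upgrade this to a topological statement, with quotient topologies from the $\tau$-topology on $u\CM$ and from $B_2$. The map $\beta$ is induced by the continuous map $u\CM\to B_2\to B_2/\gal_B$ (continuity of $w_2|_{u\CM}$ is Remark \ref{remark: w2=theta}), so $\beta$ is continuous. By Fact \ref{fact:RNC}, $u\CM$ is compact Hausdorff. Since $\gal_X$ is closed by Remark \ref{rem:NEW.closed.Gal}, the space $u\CM/\gal_X$ is compact, and $B_2/\gal_B$ is Hausdorff. A continuous bijection from a compact space to a Hausdorff space is a homeomorphism, which finishes the argument.

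The only delicate point is the bookkeeping: the same $x$ with $ux=x$ must be used both for $\gal_X$ and for $\rho_x$. Beyond that, all the real content is already contained in Lemma \ref{lemma: dense uX}.
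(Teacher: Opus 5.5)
Your proof is correct and takes essentially the same route as the paper. The paper also uses Lemma \ref{lemma: dense uX} to identify $\rho_x|_{uX}\circ\ev_x|_{u\CM}$ with the composite of $w_2|_{u\CM}$ and the quotient map $B_2\to B_2/\gal_B$, and then factors this map through $u\CM/\gal_X$. You spell out the steps the paper leaves implicit: well-definedness and injectivity via the stabilizer $\gal_X$, equivariance along $w_2|_{u\CM}$, and the argument that a continuous bijection from a compact space to a Hausdorff space is a homeomorphism.
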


\begin{proof}
    Consider the composition of maps $\ev_x$ and $\rho_x|_{uX}$ from Lemma \ref{lemma: dense uX}, which is on the other hand equal to the composition of $w_2|_{u\CM}$ with the quotient map $B_2\to B_2/\gal_B$. 
    It factors through $u\CM/\gal_X$, giving 
    a homeomorphism of homogeneous spaces
    $B_2/\gal_B \cong u\CM/\gal_X$. 
\end{proof}

\begin{proposition}\label{prop: sizes of X and B2}
Let $(X,G)$ be minimal, tame and amenable.
If $X$ is first countable (e.g. if $X$ is metrizable) then $|X|= |B_2/\gal_B|$.
\end{proposition}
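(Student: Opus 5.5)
Throughout, $\rho_x\colon X\to B_2/\gal_B$ is the factor map of Remark \ref{remark: rho X map}, and Lemma \ref{lemma: dense uX} already gives the easy inequality $|B_2/\gal_B|\le|X|$, since $\rho_x|_{uX}$ is a bijection onto $B_2/\gal_B$. It therefore suffices to prove $|X|\le|B_2/\gal_B|$. The cleanest route is to show $X=uX$, after which $\rho_x$ itself is a bijection. Equivalently, we want every point $y\in X$ to satisfy $uy=y$, or at least $\rho_x$ to be injective.

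The key step is to show that $\rho_x$ is injective, using first countability. Take $y_1\neq y_2$ in $X$ with $\rho_x(y_1)=\rho_x(y_2)$. By density of $uX$ (Lemma \ref{lemma: dense uX}) and first countability, pick sequences $(\eta_n x)_n$ and $(\kappa_n x)_n$ in $uX$ converging to $y_1$ and $y_2$. Since $\rho_x$ is continuous, $\rho_x(\eta_n x)\to\rho_x(y_1)$ and $\rho_x(\kappa_n x)\to\rho_x(y_2)$. Thus two sequences in $uX$ with distinct limits have $\rho_x$-images with a common limit. I would aim for a contradiction via the measured isomorphism of Theorem \ref{thm: measured vNeumann}.

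Concretely, choose disjoint closed neighbourhoods $C_1\ni y_1$ and $C_2\ni y_2$, and continuous $f\in C(X)$ with $f=1$ on $C_1$ and $f=0$ on $C_2$. By Theorem \ref{thm: measured vNeumann}(C), $f$ corresponds $\mu_X$-a.e. to a function $\tilde f\circ\rho_x$ with $\tilde f\in L^\infty(B_2/\gal_B)$. A continuity argument then suggests $\tilde f$ should be continuous: $f$ lies in a finite-dimensional $G$-invariant piece by Corollary \ref{cor: tame finite-dimensional L2} and the proof of Lemma \ref{lemma: L2 Bohr}. A weaker but honest version of this step is that $\Phi_x(f)$ is in the uniform closure of such matrix coefficients. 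If $\tilde f$ is continuous, then $f=\tilde f\circ\rho_x$ on $\supp(\mu_X)=X$, so $f$ is constant on $\rho_x$-fibres, contradicting $f(y_1)\neq f(y_2)$.

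The main obstacle is this continuity: $L^2$ identification only gives a.e. equality, and a general $f\in C(X)$ is only an $L^2$-limit of continuous pullbacks. I expect the first-countability hypothesis to enter exactly here. One would use sequential tameness, in the form of Bourgain--Fremlin--Talagrand sequential compactness of the translates of $f$, to upgrade the $L^2$ approximation to pointwise convergence along a sequence. The first attempt would be to approximate $f$ by $\tilde f_n\circ\rho_x$ with $\tilde f_n\in C(B_2/\gal_B)$, pass to an a.e.-convergent subsequence, and evaluate at the sequences $\eta_n x$ and $\kappa_n x$ to separate the two limits. Failing that, I would fall back on a cardinality argument: first countability gives $|X|\le 2^{\aleph_0}$-type bounds on each fibre, and one would then try to show that the fibres of $\rho_x$ are countable and that $B_2/\gal_B$ is infinite whenever $X$ is, using Corollary \ref{cor: tame amenable finite flows} for the finite case.
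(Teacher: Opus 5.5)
\textbf{There is a genuine gap: your main step tries to prove something false.} Your inequality $|B_2/\gal_B|=|uX|\le|X|$ is fine, but the reverse inequality cannot go through injectivity of $\rho_x$ (equivalently, $X=uX$). By Lemma~\ref{lemma: MEF B/G} we have $\pi=\gamma\circ\rho_x$ with $\gamma$ a $G$-homeomorphism. So injectivity of $\rho_x$ would make the MEF map $\pi\colon X\to Z$ an isomorphism, i.e.\ every first countable minimal tame amenable flow would be equicontinuous.

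A Sturmian subshift $(X,\mathbb Z)$ refutes this. It is metrizable, minimal, tame, and amenable (since $\mathbb Z$ is amenable). Its MEF is an irrational circle rotation, and the factor map is two-to-one over one orbit. This is why the paper only obtains proximality of $\pi$ (Theorem~\ref{thm: amenable X-MEF 2}(2)), and almost 1-1 in the metric case (proof of Corollary~\ref{cor: GNC1}).

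The same example defeats your continuity upgrade. The indicator of the clopen cylinder $\{y\in X: y(0)=1\}$ is continuous on $X$. Under Theorem~\ref{thm: measured vNeumann} it corresponds to the indicator of an arc of the circle, which is not a.e.\ equal to any continuous function. Corollary~\ref{cor: tame finite-dimensional L2} only gives $L^2$-density of finite-dimensional invariant pieces. No sequential (Bourgain--Fremlin--Talagrand) argument can produce continuity that is not there. Your fallback about countable fibres is neither established nor needed.

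\textbf{The missing idea is a counting argument,} and this is where first countability actually enters. Put $Y:=B_2/\gal_B$. Since $uX$ is dense (Lemma~\ref{lemma: dense uX}) and $X$ is first countable, every point of $X$ is the limit of some sequence in $uX$. Limits are unique in the Hausdorff space $X$, so $|X|\le|uX|^{\aleph_0}=|Y|^{\aleph_0}$.

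If $Y$ is finite, then $uX$ is a finite dense subset of a Hausdorff space, so $uX=X$. If $Y$ is infinite, it is a compact Hausdorff homogeneous space (Remark~\ref{rem:NEW.closed.Gal}), hence has no isolated points, and all its points have the same infinite character $\chi$. The \v{C}ech--Posp\'{\i}\v{s}il theorem gives $|Y|\ge 2^{\chi}$. Conversely $|Y|\le 2^{\chi}$, by Arhangel'skii's inequality for compacta, or by $|Y|\le 2^{w(Y)}$ with $w(Y)=\chi(Y)$ as used in the paper. Hence $|Y|=2^{\chi}$ and $|Y|^{\aleph_0}=2^{\chi\cdot\aleph_0}=|Y|$, which yields $|X|\le|Y|$. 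This is exactly the paper's proof; it never needs injectivity of $\rho_x$, which fails in general.
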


\begin{proof}
By Lemma~\ref{lemma: dense uX}, $|B_2/\gal_B|=|uX|\leqslant |X|$.
We need to argue for other inequality. In the case of finite $|B_2/\gal_B|=|uX|$, by density of $uX$ in $X$, we obtain also $|uX|=|X|$ and so $|B_2/\gal_B|=|X|$. Let us assume that $B_2/\gal_B$ is infinite.

By Lemma \ref{lemma: dense uX}, we have that $uX$ is dense in $X$.
Thus, every point of $X=\overline{uX}$ is then the limit of a sequence in $uX$, so
$$|X|\leqslant |uX|^{\aleph_0}=|B_2/\gal_B|^{\aleph_0}.$$

By Remark~\ref{rem:NEW.closed.Gal}, $\gal_B$ is closed, so $B_2/\gal_B$ is a compact Hausdorff homogeneous space.

By homogeneity, the minimum cardinality $\chi(b\gal_B,B_2/\gal_B)$ of a local base does not depend on the choice of point $b\gal_B$ in $B_2/\gal_B$.
Thus, for the cardinal 
$w(B_2/\gal_B)=\chi(B_2/\gal_B)=\sup\{\chi(b\gal_B,B_2/\gal_B) : b\gal_B \in B_2/\gal_B\}+\omega$, Theorem 7.19 from \cite{HodelCardinalFunctionsI}
gives us
$$2^{w(B_2/\gal_B)}=2^{\chi(B_2/\gal_B)}\leqslant |B_2/\gal_B|.$$
As $B_2/\gal_B$ is Hausdorff, we have $|B_2/\gal_B|\leqslant 2^{w(B_2/\gal_B)}$. Hence $2^{w(B_2/\gal_B)}=|B_2/\gal_B|$, and so
$|B_2/\gal_B|^{\aleph_0}=|B_2/\gal_B|$.
\end{proof}

Corollary~\ref{cor: tame finite-dimensional L2} might be used in the weakly-mixing set-up.
Recall that a probability-measure-preserving action $G\curvearrowright(X,\mu)$ is \emph{weakly mixing} if the diagonal action of $G$ on $(X\times X,\mu\otimes\mu)$ is ergodic. 
As the proof below recalls, weak mixing excludes nonzero finite-dimensional $G$-invariant subspaces of $(\mathbb C\mathbf 1)^{\perp}\subseteq L^2(X,\mu)$, whereas Corollary~\ref{cor: tame finite-dimensional L2} says that, for tame flows, finite-dimensional invariant subspaces generate $L^2(X,\mu)$. 
Thus the only (non-zero) finite-dimensional $G$-invariant subspaces must be
$L^2(X,\mu)^G=\mathbb{C}\mathbf{1}$ 
and so $L^2(X,\mu)=\mathbb{C}\mathbf{1}$:

\begin{cor}
\label{cor: amenable no mixing}
Let $(X,G)$ be minimal, tame and amenable.
If the probability-measure-preserving action of $G$ on
$(X,\mu)$ is weakly mixing, then $X$ is a singleton.
\end{cor}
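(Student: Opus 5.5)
The plan is to combine Corollary~\ref{cor: tame finite-dimensional L2} with the standard fact that weak mixing excludes nonzero finite-dimensional invariant subspaces orthogonal to the constants. This will force $L^2(X,\mu)=\mathbb C\mathbf 1$, and then minimality together with full support will collapse $X$ to a point. Here $\mu$ is the unique $G$-invariant measure from Fact~\ref{lemma : min tame unique ergodicity}. Its support is a nonempty subflow, so minimality gives $\supp(\mu)=X$.

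\emph{Step 1: weak mixing kills finite-dimensional invariant subspaces of $(\mathbb C\mathbf 1)^\perp$.} Let $V\subseteq(\mathbb C\mathbf 1)^\perp$ be a nonzero finite-dimensional $G$-invariant subspace with orthonormal basis $f_1,\dots,f_n$. Put $\Phi(x,y):=\sum_i f_i(x)\overline{f_i(y)}\in L^2(X\times X,\mu\otimes\mu)$. Since each $U_g|_V$ is unitary, $\Phi$ is invariant under the diagonal action. By weak mixing, $\Phi$ is a.e.\ constant. On the other hand, $\int\!\!\int\Phi\,d\mu\,d\mu=\sum_i\bigl|\int f_i\,d\mu\bigr|^2=0$, while $\|\Phi\|_2^2=n>0$. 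This is a contradiction. (Equivalently, the kernel $\Phi$ defines the orthogonal projection onto $V$, which is a nonzero invariant vector of $L^2(X\times X)$ orthogonal to the constants.)

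\emph{Step 2: conclude $L^2(X,\mu)=\mathbb C\mathbf 1$.} Let $V$ be any finite-dimensional $G$-invariant subspace. Its orthogonal projection $P$ onto $(\mathbb C\mathbf 1)^\perp$ commutes with every $U_g$, since $\mathbf 1$ is invariant and each $U_g$ is unitary. Hence $P[V]$ is a finite-dimensional invariant subspace of $(\mathbb C\mathbf 1)^\perp$, so $P[V]=0$ by Step 1, and therefore $V\subseteq\mathbb C\mathbf 1$. By Corollary~\ref{cor: tame finite-dimensional L2}, $L^2(X,\mu)$ is the closed span of such subspaces, and so $L^2(X,\mu)=\mathbb C\mathbf 1$.

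\emph{Step 3: $X$ is a singleton.} Suppose $x\ne y$ in $X$. By Urysohn there is $F\in C(X)$ with $F(x)=0$ and $F(y)=1$. By Step 2, $F$ is $\mu$-a.e.\ equal to a constant $c$. Because $\supp(\mu)=X$ and $F$ is continuous, $F\equiv c$ everywhere, which is a contradiction. Hence $|X|=1$.

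The only point needing care is Step 1, namely the diagonal invariance of $\Phi$. Writing $U_gf_j=\sum_i f_i\,a_{ij}(g)$ with $(a_{ij}(g))$ unitary, we get $\sum_j U_gf_j\otimes\overline{U_gf_j}=\sum_i f_i\otimes\overline{f_i}$. This is a routine computation, so the argument has no real obstacle. The substantive input is Corollary~\ref{cor: tame finite-dimensional L2}, which rests on tameness via the proof of Lemma~\ref{lemma: L2 Bohr}, and that corollary is taken as given.
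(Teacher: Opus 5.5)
Your proof is correct and follows the paper's argument step by step. The same diagonal kernel $\sum_i f_i(x)\overline{f_i(y)}$ rules out nonzero finite-dimensional invariant subspaces of $(\mathbb C\mathbf 1)^\perp$, Corollary~\ref{cor: tame finite-dimensional L2} then forces $L^2(X,\mu)=\mathbb C\mathbf 1$, and full support of $\mu$ collapses $C(X)$ to the constants; your projection argument in Step 2 spells out a reduction the paper leaves implicit.
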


\begin{proof}
Let $W\subseteq (\mathbb C\mathbf 1)^\perp\subseteq L^2(X,\mu)$ be a nonzero finite-dimensional $G$-invariant subspace, and let $f_1,\ldots,f_n$ be an orthonormal basis of $W$. Then
$$F(x,y):=\sum_{i=1}^n f_i(x)\overline{f_i(y)}$$
defines an element of $L^2(X\times X,\mu\otimes\mu)$. 
For every $g\in G$, the functions $U_gf_1,\ldots,U_gf_n$ form another orthonormal basis of $W$. 
Thus $F$ is invariant under the diagonal action. 
Since every $f_i\in (\mathbb C\mathbf 1)^\perp$,
every $f_i$ has integral zero, and so we have
$$\int_{X\times X}F\,d(\mu\otimes\mu)=0.$$
On the other hand, orthonormality gives $\lVert F\rVert_2^2=n>0$.
Thus $F$ is a nonzero invariant function orthogonal to the constants, contradicting weak mixing. Therefore $(\mathbb C\mathbf 1)^\perp\subseteq L^2(X,\mu)$ contains no nonzero finite-dimensional $G$-invariant subspace.

Now, Corollary~\ref{cor: tame finite-dimensional L2} yields $L^2(X,\mu)=\mathbb C\mathbf 1$. By Theorem~\ref{thm: measured vNeumann}\textup{(A)}, $\supp(\mu)=X$, so the canonical map $C(X)\to L^2(X,\mu)$ is injective. Consequently $C(X)=\mathbb C\mathbf 1$, and $X$ is a singleton.
\end{proof}

\subsection{MEF for tame, minimal and amenable flows}
\label{subsec: X-MEF 2}
In this subsection, we apply the results of the previous subsection
to the maximal equicontinuous factor of a minimal, tame and amenable
flow. We show that the factor map is proximal and induces
isomorphisms on the von Neumann algebras level.
We begin with a general observation for minimal flows.

We work with the set-up and notation of the previous subsection, 
in particular $(X,G)$ is a minimal flow.
Let $\pi:X\longrightarrow Z$ be MEF of $(X,G)$, and let
$$\widehat\pi:E(X,G)\longrightarrow E(Z,G)$$
be induced by $\pi$.  
Then set $z:=\pi(x)$ and $K:=u\CM$.

\begin{lemma}\label{lemma: MEF B/G}
There is a unique $G$-homeomorphism
$$\gamma:B_2/\gal_B\longrightarrow Z$$
such that $\pi=\gamma\circ\rho_x$ ($\rho_x$ as in Remark \ref{remark: rho X map}). It is given by
$\gamma\bigl(w_2(\eta)\gal_B\bigr)=\widehat\pi(\eta)(z)$,
where $\eta\in K$.
Moreover, $\widehat\pi[K]=E(Z,G)$.
\end{lemma}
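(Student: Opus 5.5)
The plan is to build $\gamma$ from the universal property of $B_2$, and then get injectivity for free from the maximality of the MEF rather than by computing stabilizers. No tameness or amenability is needed; only minimality of $X$ is used.

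\textbf{Step 1: the map $h$.} Since $(Z,G)$ is equicontinuous, $K_1:=E(Z,G)$ is a compact group of homeomorphisms of $Z$ acting jointly continuously (\cite[Theorem I.3.3]{Glasner1976}). The induced map $\widehat\pi\colon E\to K_1$ is a continuous $G$-equivariant semigroup epimorphism. Hence $(K_1,\widehat\pi\circ\iota,\widehat\pi)$ is a compactification of $G$ below $(E,\id_X,G)$. As in the discussion after Proposition \ref{prop: second iso}, universality gives a continuous group epimorphism $h\colon B_2\to K_1$ with $\widehat\pi=h\circ w_2$.

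\textbf{Step 2: definition and properties of $\gamma$.} Define $\tilde\gamma\colon B_2\to Z$ by $\tilde\gamma(b):=h(b)(z)$.
\begin{enumerate}
\item It is continuous by joint continuity of the action of $K_1$ on $Z$.
\item It is $G$-equivariant, since $h(\fb_2(g)b)=g_Z\circ h(b)$.
\item It is constant on cosets of $\gal_B$. For $\eta\in\gal_X$ we have $h(w_2(\eta))(z)=\widehat\pi(\eta)(\pi(x))=\pi(\eta x)=\pi(x)=z$, and $h$ is a homomorphism.
\end{enumerate}
So $\tilde\gamma$ descends to a continuous $G$-map $\gamma\colon B_2/\gal_B\to Z$. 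For $\xi\in\CM$,
$$\gamma(\rho_x(\xi x))=h(w_2(\xi))(z)=\widehat\pi(\xi)(\pi(x))=\pi(\xi x).$$
Hence $\pi=\gamma\circ\rho_x$, and the stated formula for $\gamma$ holds. Since $\pi$ is onto, so is $\gamma$. Uniqueness of $\gamma$ follows from surjectivity of $\rho_x$ (Remark \ref{remark: rho X map}).

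\textbf{Step 3: $\gamma$ is a homeomorphism.} This is the step I expect to be the main obstacle: a direct approach would need the stabilizer of $z$ under $h$ to be exactly $\gal_B$. I would avoid this using the MEF.
\begin{enumerate}
\item The flow $B_2/\gal_B$, with $G$ acting through the dense subgroup $\fb_2[G]$ by left translation, is equicontinuous. Indeed, $B_2$ acts jointly continuously by left translations on the compact homogeneous space $B_2/\gal_B$, which is Hausdorff because $\gal_B$ is closed (Remark \ref{rem:NEW.closed.Gal}).
\item By Remark \ref{remark: rho X map}, $\rho_x\colon X\to B_2/\gal_B$ is a factor map. Maximality of $\pi$ therefore yields a factor map $\delta\colon Z\to B_2/\gal_B$ with $\rho_x=\delta\circ\pi$.
\item Then $\gamma\delta\pi=\gamma\rho_x=\pi$, so $\gamma\delta=\id_Z$ by surjectivity of $\pi$.
\item Likewise $\delta\gamma\rho_x=\delta\pi=\rho_x$, so $\delta\gamma=\id$ by surjectivity of $\rho_x$.
\end{enumerate}
Thus $\gamma$ is a $G$-homeomorphism.

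\textbf{Step 4: $\widehat\pi[K]=E(Z,G)$.} By Remark \ref{remark: w2=theta}, $w_2[K]=B_2$. Hence $\widehat\pi[K]=h[w_2[K]]=h[B_2]=K_1=E(Z,G)$, using surjectivity of $h$.
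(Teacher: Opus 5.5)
Your proof is correct and follows essentially the paper's argument. The paper also obtains $h$ (its $\beta$) from the universal property of $B_2$, defines $\gamma(b\gal_B):=h(b)(z)$, and gets invertibility from the factor map $\delta$ (its $\alpha$) supplied by maximality of the MEF. The only differences are cosmetic. You take surjectivity of $h$ directly from Definition (B), where the paper argues it by compactness of the image and density of the image of $G$. You also check both $\gamma\delta=\id_Z$ and $\delta\gamma=\id$, where the paper checks $\gamma\alpha=\id_Z$ and then uses surjectivity of $\alpha$.
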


\begin{proof}
By Remark~\ref{remark: rho X map}, the map $\rho_x:X\longrightarrow B_2/\gal_B$
is a factor map onto a minimal equicontinuous flow.
The universal property of the Bohr compactification gives a
continuous homomorphism
$$\beta:B_2\longrightarrow E(Z,G)$$
such that $\beta\circ w_2=\widehat\pi$.
Its image is compact and contains the dense image of $G$ in $E(Z,G)$,
so $\beta$ is onto. Since $w_2[K]=B_2$, it follows that $\widehat\pi[K]=E(Z,G)$.

Let $p:B_2\to B_2/\gal_B$ be the quotient map.
Consider the following diagram:
$$\xymatrix{\CM x \ar[dr]_-{\rho_x} & B_2 \ar[r]^-{\beta} \ar[d]^-{p} & E(Z,G) \ar[d]^-{\ev_z}  \\
& B_2/\gal_B & Z}$$
Note that if $b \in\gal_B$ then $\beta(b)(z)=z$.
Indeed, there is some $\eta\in\gal_X$ with $b=w_2(\eta)$, and so
$$\beta(b)(z)=(\beta w_2)(\eta)\big(\pi(x)\big)=\widehat{\pi}(\eta)\big(\pi(x)\big)=\pi\big(\eta(x)\big)=\pi(x)=z.$$
The formula $\gamma(b\gal_B):=\beta(b)(z)$ defines a continuous $G$-map from $B_2/\gal_B$ to $Z$. 
By decoding the definitions, we obtain easily $\gamma\circ\rho_x=\pi$.

Since $\rho_x:X\to B_2/\gal_B$ is an equicontinuous factor of $X$
and $\pi:X\to Z$ is MEF, there exists a factor map
$$\alpha:Z\longrightarrow B_2/\gal_B$$
such that $\rho_x=\alpha\circ\pi$.
We have $\gamma\circ\alpha\circ\pi=\gamma\circ\rho_x=\pi$ and so,
surjectivity of $\pi$ gives $\gamma\circ\alpha=\id_Z$. 
Since $\alpha$
is onto, it follows that $\alpha$ and $\gamma$ are inverse
$G$-homeomorphisms. Uniqueness of $\gamma$ follows from the
surjectivity of $\rho_x$.
\end{proof}

From now on, assume additionally that $(X,G)$ is tame and amenable,
and let $\mu_X$ be its unique $G$-invariant measure.
Let $\lambda_Z$ be the unique $G$-invariant measure
on the flow $Z$. 
We have $ \pi_*\mu_X=\lambda_Z$.

\begin{theorem}
\label{thm: amenable X-MEF 2}
\begin{enumerate}
\item The pullback of $\pi$ 
is a normal trace-preserving $G$-equivariant \\ $\ast$-isomorphism of commutative von Neumann algebras:
\begin{equation}
\label{eq:MEF-Linfty-isomorphism}
\pi^*:L^\infty(Z,\lambda_Z)\xrightarrow{\ \cong\ }L^\infty(X,\mu_X)
\end{equation}
and a $G$-equivariant unitary isomorphism
$$\pi^*:L^2(Z,\lambda_Z)\xrightarrow{\ \cong\ }L^2(X,\mu_X).$$

\item 
For every $y\in Z$, $u[\pi^{-1}(y)]$ consists of the unique point of $uX$ lying over $y$. In particular, $\pi$ is proximal.
\end{enumerate}
\end{theorem}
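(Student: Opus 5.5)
For (1), I will combine Theorem \ref{thm: measured vNeumann}(C) with Lemma \ref{lemma: MEF B/G}. By Lemma \ref{lemma: MEF B/G}, $\pi=\gamma\circ\rho_x$ with $\gamma\colon B_2/\gal_B\to Z$ a $G$-homeomorphism. Hence $\gamma_*\mathfrak h_{B_2/\gal_B}$ is a $G$-invariant probability measure on $Z$. Since $Z$ is minimal equicontinuous (so tame) and amenable, Fact \ref{lemma : min tame unique ergodicity} gives $\gamma_*\mathfrak h_{B_2/\gal_B}=\lambda_Z$. Consequently $\gamma^*$ is a trace-preserving $G$-equivariant $*$-isomorphism $L^\infty(Z,\lambda_Z)\to L^\infty(B_2/\gal_B,\mathfrak h_{B_2/\gal_B})$, and a unitary on the $L^2$ level.

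It then suffices to show that $\rho_x^*\colon L^\infty(B_2/\gal_B)\to L^\infty(X,\mu_X)$ is onto. From the proof of Theorem \ref{thm: measured vNeumann}(C) we have $p^*=\Phi_x\circ\rho_x^*$ and $\Phi_x[L^\infty(X,\mu_X)]=p^*[L^\infty(B_2/\gal_B)]$. Since $\Phi_x$ is injective, this gives $\rho_x^*[L^\infty(B_2/\gal_B)]=L^\infty(X,\mu_X)$. Thus $\pi^*=\rho_x^*\circ\gamma^*$ is a normal, trace-preserving, equivariant $*$-isomorphism, with normality as in part (B).

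For the $L^2$ statement, $\pi^*$ is an isometry $L^2(Z)\to L^2(X)$ with closed range. That range contains $\pi^*[L^\infty(Z)]=L^\infty(X)$, which is dense in $L^2(X,\mu_X)$, so $\pi^*$ is unitary. Equivariance is immediate from the $G$-equivariance of $\pi$.

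For (2), fix $y\in Z$ and set $b\gal_B:=\gamma^{-1}(y)\in B_2/\gal_B$. By Lemma \ref{lemma: dense uX}, $\rho_x|_{uX}$ is a bijection onto $B_2/\gal_B$. Since $\gamma$ is a bijection and $\pi=\gamma\rho_x$, the restriction $\pi|_{uX}$ is also a bijection onto $Z$. So there is a unique $x_y\in uX$ with $\pi(x_y)=y$.

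Now let $x'\in\pi^{-1}(y)$. Then $ux'\in uX$ and $\pi(ux')=\widehat\pi(u)\pi(x')$. Since $E(Z,G)$ is a group by equicontinuity and $\widehat\pi(u)$ is an idempotent there, $\widehat\pi(u)$ is the identity, so $\pi(ux')=y$. By uniqueness, $ux'=x_y$, which shows $u[\pi^{-1}(y)]=\{x_y\}$.

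For proximality, take $x_1,x_2$ in the same fibre. Then $ux_1=ux_2$, and $u\in E(X,G)$ witnesses that they are proximal, so $\pi$ is a proximal map.

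The main obstacle is the surjectivity of $\rho_x^*$ in (1). This requires carefully extracting the identity $p^*=\Phi_x\rho_x^*$ from the proof of Theorem \ref{thm: measured vNeumann}; everything else is bookkeeping.
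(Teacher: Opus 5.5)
Your proposal is correct and follows essentially the paper's argument: the paper proves $\ev_x^*\circ\pi^*=(w_2|_{\CM})^*\circ p^*\circ\gamma^*$ (on continuous functions, then by weak-$*$ density) to get $\Psi_x\circ\pi^*=\gamma^*$, which is your identity $\pi^*=\rho_x^*\circ\gamma^*$ with $\rho_x^*=\Psi_x^{-1}$. You read that identity directly off $\pi=\gamma\circ\rho_x$ instead. Part (2) is the same argument as the paper's, namely bijectivity of $\pi|_{uX}=\gamma\circ(\rho_x|_{uX})$ combined with $\widehat\pi(u)$ being the identity of the group $E(Z,G)$.
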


\begin{proof}
We work in the notation of 
Theorem \ref{thm: measured vNeumann} and its proof,
Remark \ref{remark: rho X map} and Lemma \ref{lemma: MEF B/G}.
\begin{clm}
    $\ev_x^*\circ\, \pi^* = (w_2|_{\CM})^*\circ p^* \circ \gamma^*$.
\end{clm}

\begin{clmproof}
    Let $F\in L^{\infty}(Z,\lambda_Z)$ and $F=[f]$ for some $f\in C(Z)$.
    Then for every $\eta\in\CM$
    \begin{IEEEeqnarray*}{rCl}
        (w_2|_{\CM}^*\circ p^*\circ \gamma^*)(f)(\eta) &=& (f\circ \gamma\circ p \circ w_2|_{\CM})(\eta) \\
        &=& (f\circ \ev_z\circ\beta\circ w_2|_{\CM})(\eta) \\
        &=& (f\circ\ev_z\circ\,\widehat{\pi})(\eta)=f\big(\widehat{\pi}(\eta)(z)\big) \\
        &=& f\big(\pi(\eta(x))\big)=(f\circ\pi\circ\ev_x)(\eta) \\
        &=& (\ev_x^*\circ \pi^*)(f)(\eta)
    \end{IEEEeqnarray*}
    Thus $(\ev_x^*\circ\, \pi^*)(F) = \big((w_2|_{\CM})^*\circ p^* \circ \gamma^*\big)(F)$. By the weak-$\ast$-density of $C(Z)$ in $L^{\infty}(Z,\lambda_Z)$,
    we obtain $\ev_x^*\circ\, \pi^* = (w_2|_{\CM})^*\circ p^* \circ \gamma^*$
    on the entire $L^{\infty}(Z,\lambda_Z)$.
\end{clmproof}
The map
$$\Psi_x:=\Big((p^*)^{-1}\circ (w_2|_{\CM}^*)^{-1}\circ \ev_x^*\Big):L^\infty(X,\mu_X)\longrightarrow L^\infty(B_2/\gal_B,\mathfrak h_{B_2/\gal_B})$$
is the normal trace-preserving $G$-equivariant $*$-isomorphism
from Theorem~\ref{thm: measured vNeumann}\textup{(C)}.
Now, by applying $(p^*)^{-1}\circ (w_2|_{\CM}^*)^{-1}$ to both sides of the equality from the above claim, we obtain $\Psi_x\circ\pi^*=\gamma^*$.
Because $\gamma$ is a $G$-homeomorphism and $\Psi_x$ is 
a normal trace-preserving $G$-equivariant $*$-isomorphism, 
we conclude that the map $\pi^*:L^\infty(Z,\lambda_Z)\xrightarrow{\ \cong\ }L^\infty(X,\mu_X)$
is a normal trace-preserving $G$-equivariant $\ast$-isomorphism of commutative von Neumann algebras.

Pullback $\pi^*$ is an isometry on the $L^2$-level. 
By the just obtained $L^\infty$-level part, its range contains $L^\infty(X,\mu_X)$ and is therefore dense in $L^2(X,\mu_X)$. The range of an isometry is closed, so the extension is onto. This proves the first point.

We now prove the second point. 
By Lemmas~\ref{lemma: MEF B/G} and~\ref{lemma: dense uX},
the map $\pi|_{uX}=\gamma\circ(\rho_x|_{uX})$ is bijective.
Thus, $(\pi|_{uX})^{-1}(y)$ consists of a single point.
Let $x'\in u[\pi^{-1}(y)]$, i.e. $x'=ux''$ and $\pi(x'')=y$ for some $x''\in X$.
Then
$$\pi(ux'')=\widehat{\pi}(u)\pi(x'')=\widehat{\pi}(u)y=y,$$
since $\widehat{\pi}(u)$ is the only idempotent in $E(Z,G)$.
Thus, $ux''\in \pi|_{uX}^{-1}(y)$. 

Now, if $x'\in \pi|_{uX}^{-1}(y)$,
then $x'=ux''$ for some $x''\in X$ and $\pi(x')=y$.
We have $y=\pi(ux'')$ and so $ux''\in\pi^{-1}(y)$.
Hence, $x'=ux''=u(ux'')\in u[\pi^{-1}(y)]$.

Consequently, $u$ equalizes every pair of points in a common fibre of $\pi$, so $\pi$ is proximal.
\end{proof}

The above result will be used in a future paper by the first author
and Andy Zucker to extend the main results of \cite{Glasner2}
to nonmetrizable flows.

\section{Specializations to model-theoretic contexts}\label{section: model theory}
The goal of this section is to explain that the notions and results from this paper generalize the appropriate notions and results in the model-theoretic contexts of definable groups and of first order theories. We recover some known and obtain some new results in these contexts.

In Subsection \ref{subsection: definable groups}, we explain that the GNC specializes in the context of definable NIP groups to the original Newelski's Conjecture (which justifies the name Generalized Newelski's Conjecture), and Theorem \ref{theorem: GNC} implies Fact \ref{fact: Theorem 1.5 from CS in intro}. In Subsection \ref{subsection: first order theories}, we analyze the specialization of the GNC to first order theories and discuss the connection to Newelski's Conjecture for first order theories and Fact \ref{fact: NC for theories in intro}. In particular, we obtain a new result that the Bohr compactification of the group $\aut(\FC)$ of automorphisms of the monster model $\FC$ below the ambit $(S_{\bar c}(\FC),\tp(\bar c/\FC),\aut(\FC))$ (which is the core ambit for this context) is precisely the Kim-Pillay Galois group of the theory in question.

We will also point out some phenomena in the context of definable groups (and some also for first order theories) which fail drastically in the general abstract context, which may lead to some new (or existing) dynamical properties that should be assumed to get similar results in general. In Subsection \ref{subsection: two problems}, we state two natural problems that arise from these observations.

One of these phenomena is that for a group $G$ definable in an NIP structure $M$, the Bohr compactification of $G$ below the ambit $(S_G(M), \tp(e/M),G)$ coincides with the Bohr compactification of $G$ below the Elllis flow $(E(S_G(M),G),\id, G)$. We leave as a question whether an analog holds for first order theories. This fails in general for tame ambits as shown by Example \ref{example: B_1 not equal  to B_2}. Another phenomenon is the transfer of amenability from an ambit to its Ellis flow, which holds both for definable groups and for first order theories (in the latter context, this is a new result proved in Subsection  \ref{subsection: first order theories}) under the NIP assumption, but fails for general tame ambits (we skip an example as it is pretty long).

In Subsection \ref{subsection: local NC}, using Theorem \ref{theorem: GNC}, we obtain a local version of Newelski's conjecture in the context of definable groups. The main point is a description of the Bohr compactification of $G$ below an ambit of the form $(S_{G, \Delta}(M),\tp_{\Delta}(e),G)$ for suitable families of formulas $\Delta$, which in particular yields a description of the externally definable Bohr compactification of $G$.

\subsection{Definable groups}\label{subsection: definable groups}
For the notions and basic facts recalled below, see \cite{newelski09,GPPb,Krupinski2017}.

Let $G$ be a group definable in a structure $M$. Let $N \succ M$ be $|M|^+$-saturated, and $\mathfrak{C}\succ N$ a monster model (i.e. a $\kappa$-saturated and strongly $\kappa$-homogeneous model for a sufficiently large cardinal $\kappa$). Let $\bar{G}:=G(\mathfrak{C})$, i.e. the interpretation of $G$ in $\mathfrak{C}$. Now, we recall the main two flows considered in the context of definable groups.

By $S_G(M)$ we denote the space of complete types over $M$ concentrated on $G$ (i.e., containing the formula defining $G$). Then $(S_G(M),\tp(e/M),G)$ is an ambit with the action defined by $g \cdot \tp(a/M) :=\tp(ga/M)$ (where $a \in \bar{G}$). Note that the elements of $S_G(M)$ can be also treated as ultrafilters in the Boolean algebra of definable subsets of $G$; then the above action is just the obvious action by left translation.

By $S_{G,M}(N)$ we denote the space of complete types over $N$ concentrated on $G$ and finitely satisfiable in $M$. Then $(S_{G,M}(N),\tp(e/N),G)$ is an ambit with the action defined by $g \cdot \tp(a/N) :=\tp(ga/N)$ (where $a \in G(\mathfrak{C})$ is such that $\tp(a/N) \in S_{G,M}(N)$ ). This ambit is isomorphic to the ambit $(S_{G,\textrm{ext}}(M), \tp_\textrm{ext}(e/M), G)$ of complete external types on $G$ over $M$ with the action given by left translation, where a complete external type on $G$ over $M$ is an ultrafilter in the Boolean algebra of externally definable subsets of $G$. We will be identifying these two ambits.

The advantage of $S_{G,M}(N)$ over $S_G(M)$ is that  $S_{G,M}(N)$ carries a left continuous semigroup operation $*$ extending the action of $G$: $$p*q =\tp(ab/N),$$
where $b \models q$ and $a \models p$ is such that $\tp(a/N,b)$ is finitely satisfiable in $M$. So $S_{G,M}(N)$ can and will be identified with $E(S_{G,M}(N),G)$ as a semigroup and $G$-ambit (see Fact \ref{fact: existence of semigroup operation}).

The {\em definable Bohr compactification of $G$} is the universal group compactification of $G$ such that the preimages of any two disjoint closed subsets of the target compact group can be separated by a definable subset of $G$. This is precisely the Bohr compactification of $G$ below the ambit $(S_G(M),\tp(e/M),G)$. Similarly, {\em the externally definable Bohr compactification of $G$} is the universal group compactification of $G$ such that the preimages of any two disjoint closed subsets of the target compact group can be separated by an externally definable subset of $G$. This is precisely the Bohr compactification of $G$ below the ambit $(S_{G,\textrm{ext}}(M), \tp_\textrm{ext}(e/M), G)$.

So we have three natural Bohr compactifications of $G$: 
\begin{enumerate}
\item $\mathfrak{b}_1 \colon G \to B_1$ --- the one below $(S_G(M),\tp(e/M),G)$,
\item $\mathfrak{b}_2 \colon G \to B_2$ --- the one below $E(S_G(M),G)$, 
\item $\mathfrak{b}_3 \colon G \to B_3$ --- the one below  $(S_{G,\textrm{ext}}(M), \tp_\textrm{ext}(e/M), G)$. 
\end{enumerate}

We have the obvious epimorphisms of $G$-ambits 
$$\xymatrix{S_{G,\textrm{ext}}(M) \ar[r] & E(S_G(M),G) \ar[rr]^{\ev_{\tp(e/M)}} & & S_G(M)},$$
which yield epimorphisms between the corresponding Bohr compactifications with the properties as in Remark \ref{remark: B2 to B1}
$$\xymatrix{ B_3 \ar[r]^{\mathfrak{g}} & B_2 \ar[r]^{\mathfrak{f}} & B_1}.$$

Proposition 3.4 in \cite{GPPb} yields $B_1=\bar{G}/\bar{G}^{00}_M$ and $\mathfrak{b}_1(g)=g/\bar{G}^{00}_M$, where $\bar{G}^{00}_M$ is the smallest $M$-type-definable subgroup of $\bar{G}$ of bounded index (i.e., less than $\kappa$). 

\medskip

Let $u^2=u \in \mathcal{M} \unlhd_m S_{G,M}(N)$. 

\begin{definition}\label{definition: Newelski's map}
{\em Newelski's map} is the group epimorphism $\rho \colon u\CM \to \bar{G}/\bar{G}^{00}_M$ given by $\rho(\tp(a/N)):=a/\bar{G}^{00}_M$.
\end{definition}

{\em Newelski's conjecture} predicted that under suitable model-theoretic assumptions, the epimorphism $\rho$ is an isomorphism. It was refuted in \cite{GismatullinPenazziPillay2015Some} for groups definable in NIP theories, concretely for $G:=\SL_2(\mathbb{R})$ treated as a group definable in $M:=(\mathbb{R},+,\cdot)$. The main positive results is the following, where definable amenability of $G$ means that the flow $(S_G(M),G)$ is amenable.

\begin{fact}[Theorem 1.5 in \cite{ArtemPierre}]\label{fact: Theorem 1.5 from CS}
Newelski's conjecture holds for definably amenably groups definable in NIP theories.
\end{fact}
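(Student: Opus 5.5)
The plan is to derive this from Theorem \ref{theorem: GNC} applied to the ambit $\mathcal{A}:=(S_{G,M}(N),\tp(e/N),G)$, identified with its Ellis semigroup via $*$. Newelski's map $\rho$ will be written as the composition $\mathfrak{f}\circ\mathfrak{g}\circ\widehat{\theta}$, where $\widehat{\theta}\colon u\CM\to B_3$ is the map of Corollary \ref{corollary: Newelski's map in general} for $\mathcal{A}$, and $\mathfrak{g}\colon B_3\to B_2$, $\mathfrak{f}\colon B_2\to B_1=\bar G/\bar G^{00}_M$ are the factor epimorphisms above. Injectivity of $\rho$ then follows from three separate facts: $\widehat{\theta}$ is injective, $\mathfrak{g}$ is injective, and $\mathfrak{f}$ is injective.

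First, I will check the identity $\rho=\mathfrak{f}\mathfrak{g}\widehat{\theta}$. Both sides are continuous epimorphisms $u\CM\to B_1$ with respect to the $\tau$-topology; for $\rho$ this is standard (see \cite{newelski09,Krupinski2017}). On elements of the form $ugu$, both sides give $g/\bar G^{00}_M$: for $\widehat\theta$ this is Corollary \ref{corollary: Newelski's map in general}, and for $\rho$ it holds because $\rho$ is a homomorphism with $\rho(u)$ trivial. A cleaner route is to compute directly. Write $\eta=\tp(a/N)$. Then $\widehat\theta(\eta)=\eta e_{B_3}=w_3(\eta)$ by Remark \ref{remark: w2=theta}, and by the description of $B_1$ in \cite[Proposition 3.4]{GPPb} the composite $w_1\circ\pi_0$ sends $\tp(a/N)$ to $a/\bar G^{00}_M$. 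The commutative diagrams of Remark \ref{remark: B2 to B1} then yield the identity on all of $\CM$.

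Second, I need Theorem \ref{theorem: GNC} to apply to $\mathcal{A}$. Tameness of $(S_{G,M}(N),G)$ follows from NIP: every continuous function is a uniform limit of combinations of characteristic functions of clopens, which here are externally definable sets $\varphi(x\cdot g,b)$, and NIP excludes independent sequences of their translates. Amenability of $(S_{G,\textrm{ext}}(M),G)$ follows from definable amenability via \cite{CPS14}: under NIP an invariant Keisler measure over $M$ extends to a $G$-invariant measure on externally definable sets. Theorem \ref{theorem: GNC} then says $\widehat\theta$ is an isomorphism.

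Third, and this is the main obstacle, I must show that $\mathfrak{f}\circ\mathfrak{g}\colon B_3\to B_1$ is injective, i.e.\ that the externally definable Bohr compactification coincides with the definable one under NIP. My first attempt will use \cite{CPS14} (and \cite{GPPb}): under NIP, externally definable subsets of $G$ are, for the purposes of separating preimages of disjoint closed sets in a compact group, no better than definable ones. Concretely, $\bar G^{00}_M$ computed with externally definable sets (the Shelah expansion $M^{\mathrm{ext}}$, which is again NIP with the same $\bar G^{00}$ by \cite{CPS14}) equals $\bar G^{00}_M$. Applying \cite[Proposition 3.4]{GPPb} in $M^{\mathrm{ext}}$ identifies $B_3$ with $\bar G/\bar G^{00}_{M^{\mathrm{ext}}}=\bar G/\bar G^{00}_M=B_1$, compatibly with the maps from $G$. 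Hence $\mathfrak{f}\mathfrak{g}$ is an isomorphism, because it is the unique map respecting $\mathfrak{b}_3$ and $\mathfrak{b}_1$. Combining the three steps, $\rho$ is an isomorphism of groups, and a homeomorphism because it is a continuous bijection from a compact space to a Hausdorff space.
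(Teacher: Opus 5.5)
Your proposal is correct and follows the paper's argument (Lemma \ref{lemma: Newelski's map and theta for definable groups} together with Corollary \ref{corollary: GNC implies NC for definably amenable NIP groups}). The steps are: $\rho=\mathfrak{f}\circ\mathfrak{g}\circ\widehat{\theta}$; Theorem \ref{theorem: GNC} applied to $S_{G,M}(N)$, with tameness from NIP and amenability from \cite[Theorem 3.17]{CPS14}; and $\mathfrak{f}\circ\mathfrak{g}$ an isomorphism by \cite[Theorem 4.15]{CPS14}. You check the factorization via $\mathfrak{g}\circ w_3=w_2\circ(\text{canonical map})$, $\mathfrak{f}\circ w_2=w_1\circ\pi_0$ and $w_1(\tp(a/M))=a/\bar G^{00}_M$, rather than by computing the action of $S_{G,M}(N)$ on $\bar G/\bar G^{00}_M$. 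This is a valid and slightly more direct substitute.

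The only imprecision is in your third step. There, $\bar G^{00}$ computed in $M^{\mathrm{ext}}$ lives in a monster model of $\mathrm{Th}(M^{\mathrm{ext}})$, not in $\bar G$, so what \cite{CPS14} gives is an isomorphism of compactifications of $G$; that is all you actually use. Also, identifying $S_G(M^{\mathrm{ext}})$ with $S_{G,\mathrm{ext}}(M)$, needed to apply \cite{GPPb} to $B_3$, uses Shelah's quantifier elimination for $M^{\mathrm{ext}}$.
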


As discussed in the introduction, the various refinements and variants of Newelski's map \cite{Krupinski2017}, \cite{KruPiRze}, \cite{KrRz}, \cite{Krupinski2023a} led to important contributions in model theory (e.g. to understand complexity of strong types) and to additive combinatorics (to construct generalized locally compact models for arbitrary approximate subgroups in a simpler and ``more classical'' way than originally done by Hrushovski).

Recall the map $\widehat{\theta} \colon u\CM \to B_3$ from Corollary \ref{corollary: Newelski's map in general} and Theorem \ref{theorem: GNC}.

\begin{lemma}\label{lemma: Newelski's map and theta for definable groups}
The Newelski's map $\rho \colon u\CM \to \bar{G}/\bar{G}^{00}_M$ coincides with the composition $\mathfrak{f} \circ \mathfrak{g} \circ \widehat{\theta}$.
\end{lemma}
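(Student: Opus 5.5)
Both $\rho$ and $\mathfrak{f}\circ\mathfrak{g}\circ\widehat{\theta}$ are group homomorphisms defined on $u\CM$. The plan is to show that both are restrictions to $u\CM$ of one continuous map $\Psi\colon S_{G,M}(N)\to B_1=\bar G/\bar G^{00}_M$ that is determined by its values on $G$.

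\emph{The composite map.} By Remark \ref{remark: w2=theta}, applied to the ambit $\mathcal{A}=S_{G,M}(N)\cong E(S_{G,M}(N),G)$, we have $\widehat{\theta}=w_3|_{u\CM}$. Here $w_3\colon S_{G,M}(N)\to B_3$ is the ambit epimorphism attached to $\mathfrak{b}_3$. By construction of $\mathfrak{g}$ and $\mathfrak{f}$ (as in Remark \ref{remark: B2 to B1}), the composite $\Psi':=\mathfrak{f}\circ\mathfrak{g}\circ w_3$ equals $w_1\circ\ev_{\tp(e/M)}\circ r$. Here $r\colon S_{G,M}(N)\to E(S_G(M),G)$ is the canonical ambit epimorphism. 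So $\Psi'$ is the continuous map $S_{G,M}(N)\to B_1$ obtained by composing the restriction $S_{G,M}(N)\to S_G(M)$, $\tp(a/N)\mapsto\tp(a/M)$, with $w_1$. I will check that $\ev_{\tp(e/M)}\circ r$ is this restriction map. On the dense set $\{\tp(g/N):g\in G\}$ both send $\tp(g/N)$ to $\tp(g/M)$, and both are continuous. Hence $\Psi'(\tp(a/N))=w_1(\tp(a/M))$. In particular $\Psi'(\tp(g/N))=\mathfrak{b}_1(g)=g/\bar G^{00}_M$.

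\emph{The Newelski map.} By Proposition 3.4 of \cite{GPPb}, $B_1=\bar G/\bar G^{00}_M$ and $\mathfrak{b}_1(g)=g/\bar G^{00}_M$. I want to identify $w_1\colon S_G(M)\to\bar G/\bar G^{00}_M$ with the map $\tp(a/M)\mapsto a/\bar G^{00}_M$. This map is well defined because $\bar G^{00}_M$ is $M$-type-definable, hence $M$-invariant, so elements with the same type over $M$ lie in the same coset. It is continuous for the logic topology: the preimage of a closed set $C\subseteq\bar G/\bar G^{00}_M$ is the set of types of elements of the type-definable preimage of $C$, which is closed in $S_G(M)$. It agrees with $w_1$ on the dense set $\{\tp(g/M):g\in G\}$, so it equals $w_1$. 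Consequently, for $\tp(a/N)\in u\CM$,
\[
(\mathfrak{f}\circ\mathfrak{g}\circ\widehat{\theta})(\tp(a/N))=\Psi'(\tp(a/N))=w_1(\tp(a/M))=a/\bar G^{00}_M=\rho(\tp(a/N)).
\]

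The main obstacle is the identification of $w_1$ with the coset map. It rests on knowing precisely how the isomorphism of \cite{GPPb} is set up, namely that the factor map $S_G(M)\to B_1$ is $\tp(a/M)\mapsto a/\bar G^{00}_M$. The remaining steps are routine density-and-continuity arguments, using Hausdorffness of the targets and uniqueness of the ambit maps noted after Definition \ref{definition: Bohr compactification below ambit A}.
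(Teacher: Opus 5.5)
Your reduction is sound, and it is lighter than the paper's route. The paper identifies the whole left continuous action of $S_{G,M}(N)$ on $\bar G/\bar G^{00}_M$ via $p\cdot(b/\bar G^{00}_M)=ab/\bar G^{00}_M$. For left continuity at arbitrary $b$ it needs the restriction homeomorphism $S_{G,M}(N,b)\to S_{G,M}(N)$, which comes from $|M|^+$-saturation of $N$. You only need the orbit map at $e_{B_1}$, which is $w_1\circ\mathrm{res}$. Your density arguments for $\mathfrak{f}\circ\mathfrak{g}\circ w_3=w_1\circ\mathrm{res}$, and for identifying $w_1$ given $\mathfrak{b}_1(g)=g/\bar G^{00}_M$, are fine. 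So the step you flagged as the main obstacle is already handled.

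\textbf{The gap.} The one genuinely model-theoretic step is justified incorrectly. You claim $\tp(a/M)\mapsto a/\bar G^{00}_M$ is well defined because $\bar G^{00}_M$ is $M$-invariant. Invariance only says that $\aut(\FC/M)$ \emph{permutes} the cosets of $\bar G^{00}_M$. It does not say that it fixes each coset, which is what you need. Over a non-model this fails even for finite index. In a monster model $\FC$ of $\mathrm{Th}(\mathbb{Z},+)$, the $\emptyset$-definable subgroup $3\FC$ has index $3$. Yet $1\equiv_\emptyset -1$, since $x\mapsto -x$ is an automorphism of $(\mathbb{Z},+)$, while $1-(-1)=2\notin 3\FC$.

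\textbf{The fix.} The missing ingredient is that $M$ is a model. If $a\equiv_M a'$, then $a$ and $a'$ have the same Lascar strong type over $M$. Lying in the same coset of $\bar G^{00}_M$ is a \emph{bounded} $M$-invariant equivalence relation on $\bar G$, so it is coarser than equality of Lascar strong types over $M$. This is exactly how the paper justifies well-definedness of its formula.

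Your continuity argument uses the same fact implicitly. It identifies $c^{-1}[C]$ with the set of $M$-types of elements of the preimage of $C$ in $\bar G$, which requires that all realizations of a type in $c^{-1}[C]$ lie in that preimage. Once this ingredient is supplied, your proof is complete.
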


\begin{proof}
Since $\widehat{\theta}(p) := p e_{B_3}$ and both $\mathfrak{g}$ and $\mathfrak{f}$ are epimorphisms of $G$-ambits (see the comment after Definition \ref{definition: Bohr compactification below ambit B}), we have $(\mathfrak{f} \circ \mathfrak{g} \circ \widehat{\theta})(p) = pe^{B_1} =p (e/\bar{G}^{00}_M)$. In order to finish the proof, it is enough to check that the unique left continuous action of $S_{G,M}(N)$ on $\bar{G}/\bar{G}^{00}_M$ extending the action of $G$ is given by $p (b/\bar{G}^{00}_M) =ab/\bar{G}^{00}_M$ for $a \models p$ such that $\tp(a/N,b)$ is finitely satisfiable in $M$. The easiest way to see it is by showing that the formula $p \cdot (b/\bar{G}^{00}_M): =ab/\bar{G}^{00}_M$ for $a \models p$ such that $\tp(a/N,b)$ is finitely satisfiable in $M$ yields a well-defined left continuous function extending the action of $G$. It is well-defined, because whenever $a\equiv_M a'$, then $a$ and $a'$ have the same Lascar strong type over $M$ (see Subsection \ref{subsubsection: Bohr below S_c(C)}) so they are in the same coset modulo $\bar{G}^{00}_M$ (as lying in the same coset is a bounded $M$-invariant equivalence relation on $\bar{G}$). The fact that it extends the action of $G$ is obvious. To show continuity, for a basic open set $[\varphi(x,c)]$ in $S_{G,M}(N)$ (so $c \in N$) and an element $b/\bar{G}^{00}_M \in \bar{G}/\bar{G}^{00}_M$, we need to show that the set $U:=\{p \in S_{G,M}(N): p \cdot (b/\bar{G}^{00}_M) \in [\varphi(x,c)]\}$ is open. Since the restriction map $r \colon S_{G,M}(N,b) \to S_{G,M}(N)$ is a homeomorphism (by $|M|^+$-saturation of $N$) , it is enough to see that $r^{-1}[U]$ is open. And this is true, because for $a \in \bar{G}$ such that $\tp(a/N,b) \in S_{G,M}(N,b)$ we have
$$\tp(a/N,b) \in r^{-1}[U] \iff \tp(ab/N) \in [\varphi(x,c)] \iff \tp(a/N,b) \in [\varphi(xb,c)].$$
\end{proof}

Thus, we see the GNC (i.e., Conjecture \ref{conj: GNC}, equivalently \ref{conjecture: main conjecture}) applied to the ambit $S_{G,M}(N)$ is weaker than the original Newelski's conjecture. Note that Example \ref{example: B_1 not equal  to B_2} shows that working in the general abstract context of an ambit $(X,x_0,G)$, even under the very strong assumptions of minimality and equicontinuity, it is not true that $\mathfrak{f} \circ \widehat{\theta} \colon u\CM \to B_1$ is an isomorphism, because the canonical epimorphism $\mathfrak{f} \colon B_2 \to B_1$ need not be an isomorphism. 
%That is way it would be hard to conjecture that $\mathfrak{f} \circ \widehat{\theta}$ is an isomorphism under some reasonable assumptions.
Thus, it seems hard to find reasonable assumptions which would imply that $\mathfrak{f} \circ \widehat{\theta}$ is an isomorphism (cf. Problem \ref{problem: 2}).

On the other hand, in the NIP context (i.e., when the theory of $M$ has NIP) Theorem 4.15 of \cite{CPS14} implies that $\mathfrak{f} \circ \mathfrak{g}$ is an isomorphism, so 

\begin{cor}
In the NIP context, the conclusion of the GNC applied to $S_{G,M}(N)$ implies the conclusion of Newelski's conjecture.
\end{cor}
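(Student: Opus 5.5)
The conclusion of the GNC for $S_{G,M}(N)$ says that $\widehat{\theta}\colon u\CM\to B_3$ is a group isomorphism. By Lemma~\ref{lemma: Newelski's map and theta for definable groups}, Newelski's map factors as
$$\rho=\mathfrak{f}\circ\mathfrak{g}\circ\widehat{\theta}.$$
So it suffices to show that in the NIP context the composite $\mathfrak{f}\circ\mathfrak{g}\colon B_3\to B_1=\bar G/\bar G^{00}_M$ is injective. A composition of two bijections is a bijection, so $\rho$ is then a group isomorphism. Since all maps involved are continuous epimorphisms of compact Hausdorff groups ($\widehat{\theta}$ being a topological isomorphism onto $B_3$ under the GNC conclusion, with $u\CM$ carrying the $\tau$-topology), $\rho$ is even a topological group isomorphism. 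In particular it is an isomorphism in the sense required by Newelski's conjecture.

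The remaining task is the injectivity of $\mathfrak{f}\circ\mathfrak{g}$. I would get it from the statement cited just before the corollary: under NIP, \cite[Theorem 4.15]{CPS14} says that the externally definable Bohr compactification of $G$ coincides with the definable one, namely $\bar G/\bar G^{00}_M$. Concretely, I would argue as follows.

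First, $\mathfrak{f}\circ\mathfrak{g}$ is the unique continuous homomorphism $h$ with $h\circ\mathfrak{b}_3=\mathfrak{b}_1$; this uses the density of $\mathfrak{b}_3[G]$.

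Second, by \cite{CPS14}, $\mathfrak{b}_1$ is itself universal among compactifications whose fibres over disjoint closed sets are separated by externally definable sets. So $\mathfrak{b}_1$ is a compactification below $S_{G,\textrm{ext}}(M)$, and universality of $\mathfrak{b}_1$ among such compactifications gives a continuous $k\colon B_1\to B_3$ with $k\circ\mathfrak{b}_1=\mathfrak{b}_3$.

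Third, density of the images of $G$ yields $k\circ\mathfrak{f}\circ\mathfrak{g}=\id_{B_3}$ and $\mathfrak{f}\circ\mathfrak{g}\circ k=\id_{B_1}$.

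The main obstacle is purely one of translation: checking that the statement of \cite[Theorem 4.15]{CPS14} really gives the universality of $\bar G/\bar G^{00}_M$ among externally definable compactifications, in the form of Definition~\ref{definition: Bohr compactification below ambit A}. Under NIP this amounts to the fact that $\bar G^{00}_M$ computed with external definability agrees with the usual one. It also requires identifying the externally definable Bohr compactification with the Bohr compactification below $S_{G,\textrm{ext}}(M)$, which was asserted above. Once this is granted, the argument is formal diagram chasing.
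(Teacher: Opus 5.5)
Your proposal is correct and follows the paper's argument: the paper also combines the factorization $\rho=\mathfrak{f}\circ\mathfrak{g}\circ\widehat{\theta}$ from Lemma~\ref{lemma: Newelski's map and theta for definable groups} with the fact, deduced from \cite[Theorem 4.15]{CPS14}, that $\mathfrak{f}\circ\mathfrak{g}$ is an isomorphism under NIP. Your universality diagram chase only spells out what the paper leaves implicit. One small point: your sentence that $\mathfrak{b}_1$ is a compactification below $S_{G,\textrm{ext}}(M)$ holds automatically, and the real input from \cite{CPS14} is that $\mathfrak{b}_3$ factors through $\mathfrak{b}_1$.
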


Note that Example 4.17 in \cite{KLM2} shows that without NIP (still in the model-theoretic context of the ambit $(S_G(M),\tp(e/M),G)$), $\mathfrak{f}$ need not be an isomorphism.

By \cite[Theorem 3.17]{CPS14}, under NIP, definable amenability of $G$ implies amenability of the flow $(S_{G,M}(N),G)$. Moreover NIP together with Shelah's theorem from \cite{She2} (saying that if $M$ has NIP, then the Shelah expansion $M^{\textrm{Sh}}$ of $M$ has quantifier elimination and NIP) implies that $(S_{G,M}(N),G)$ is tame (see Remark 1.6 in \cite{ArtemPierre}). Therefore, 

\begin{cor}\label{corollary: GNC implies NC for definably amenable NIP groups}
Theorem \ref{theorem: GNC} implies Fact \ref{fact: Theorem 1.5 from CS}.
\end{cor}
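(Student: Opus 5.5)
The plan is to verify the hypotheses of Theorem \ref{theorem: GNC} for the ambit $\mathcal{A}:=(S_{G,M}(N),\tp(e/N),G)$ in the definably amenable NIP setting. Then we transport the resulting isomorphism $\widehat{\theta}\colon u\CM\to B_3$ along the epimorphisms $\mathfrak{g}$ and $\mathfrak{f}$ to Newelski's map $\rho$, using Lemma \ref{lemma: Newelski's map and theta for definable groups}.

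\textbf{Step 1: hypotheses of Theorem \ref{theorem: GNC}.} Assume $G$ is definably amenable and $T=\mathrm{Th}(M)$ has NIP.
\begin{enumerate}
\item The left continuous semigroup operation $p*q=\tp(ab/N)$ on $S_{G,M}(N)$ extends the $G$-action, as recalled above. So $S_{G,M}(N)$ is identified with its own Ellis semigroup via Fact \ref{fact: existence of semigroup operation}.
\item Tameness of $(S_{G,M}(N),G)$ is taken from the cited consequence of Shelah's theorem (Remark 1.6 in \cite{ArtemPierre}). A continuous function on $S_{G,M}(N)$ is uniformly approximated by combinations of indicators of externally definable sets. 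An infinite independent sequence of translates would therefore produce a formula with the independence property in $M^{\mathrm{Sh}}$. This is excluded because $M^{\mathrm{Sh}}$ has NIP.
\item Amenability of $(S_{G,M}(N),G)$ follows from definable amenability by \cite[Theorem 3.17]{CPS14}.
\end{enumerate}
Theorem \ref{theorem: GNC} then gives that $\widehat{\theta}\colon u\CM\to G^{\mathrm{Bohr}}_{\mathcal{A}}=B_3$ is a topological group isomorphism.

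\textbf{Step 2: transport to Newelski's map.} By Lemma \ref{lemma: Newelski's map and theta for definable groups}, $\rho=\mathfrak{f}\circ\mathfrak{g}\circ\widehat{\theta}$. By \cite[Theorem 4.15]{CPS14}, under NIP the definable and externally definable Bohr compactifications coincide, i.e. $\mathfrak{f}\circ\mathfrak{g}\colon B_3\to B_1=\bar G/\bar G^{00}_M$ is an isomorphism. It is a continuous bijective homomorphism of compact groups, hence a topological isomorphism. Consequently $\rho$ is an isomorphism of groups, and a homeomorphism when $u\CM$ carries the $\tau$-topology. This is exactly the statement of Fact \ref{fact: Theorem 1.5 from CS}.

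\textbf{Main obstacle.} The expected obstacle is identifying $\mathfrak{f}\circ\mathfrak{g}$ with the map appearing in \cite{CPS14}. The cited theorem states that the externally definable and definable versions of $\bar G^{00}$ agree under NIP. We must check that this translates into the statement that the compactification $G\to\bar G/\bar G^{00}_M$ is universal among externally definable compactifications. For that we would use the description of the externally definable Bohr compactification as $\bar G/\bar G^{00}$ computed in $M^{\mathrm{Sh}}$, which is the analogue of \cite[Proposition 3.4]{GPPb} applied to $M^{\mathrm{Sh}}$. We would then note that the canonical map $B_3\to B_1$ is the quotient $\bar G/(\bar G^{00})^{\mathrm{ext}}\to \bar G/\bar G^{00}_M$, which is trivial exactly when these components coincide.
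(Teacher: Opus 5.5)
Your proposal is correct and follows the paper's own argument. The paper likewise checks that $(S_{G,M}(N),G)$ carries the left continuous semigroup operation, is tame (via Shelah's theorem, Remark 1.6 of \cite{ArtemPierre}) and amenable (\cite[Theorem 3.17]{CPS14}), and then applies Theorem \ref{theorem: GNC}. It then combines Lemma \ref{lemma: Newelski's map and theta for definable groups} with \cite[Theorem 4.15]{CPS14}, taken to give directly that $\mathfrak{f}\circ\mathfrak{g}$ is an isomorphism; so the identification you flag as the main obstacle is simply imported from \cite{CPS14}, and your extra justification there is an elaboration rather than a missing step.
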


Fact \ref{fact: amenability closure} and Lemma \ref{lemma: cl(uM)=M implies strong amenability} show that whenever $\mathcal{A}$ is a tame amenable ambit admitting a left continuous semigroup operation extending the action of $G$, then amenability of $\mathcal{A}$ implies strong amenability of $\mathcal{A}$. This yields the left-to-right  implication in the following

\begin{fact}[Theorem 0.6 in \cite{Krupinski2017}]
If $G$ is a group definable in a NIP theory, then G is definably amenable (equivalently externally definable amenable) if and only if  it is externally definably strongly amenable.
\end{fact}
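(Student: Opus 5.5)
For the left-to-right implication I would follow exactly the route indicated just before the statement. Assume $G$ is definably amenable and $T$ has NIP. By \cite[Theorem 3.17]{CPS14}, the flow $(S_{G,M}(N),G)$ is amenable. By Shelah's theorem (Remark 1.6 in \cite{ArtemPierre}), it is tame. Since $S_{G,M}(N)$ carries the left continuous semigroup operation $*$, Fact \ref{fact: existence of semigroup operation} identifies it with its own Ellis semigroup. Fact \ref{Fact: 5.6} then says that $(S_{G,M}(N),G)$ is hereditarily amenable, because its Ellis flow, which is $S_{G,M}(N)$ itself, is amenable. Now Fact \ref{fact: amenability closure}(2) gives $\overline{u\CM}=\CM$. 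Lemma \ref{lemma: cl(uM)=M implies strong amenability} then shows that $S_{G,\textrm{ext}}(M)\cong S_{G,M}(N)$ has no non-trivial minimal proximal factors, which is external definable strong amenability. The equivalence between definable amenability and externally definable amenability is the cited transfer from \cite{CPS14}, together with the pushforward of an invariant measure along $S_{G,\textrm{ext}}(M)\to S_G(M)$.

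For the right-to-left implication I would adapt Glasner's characterization of amenability via strong proximality to the category $\mathcal{C}$ of factors of $\mathcal{A}:=S_{G,M}(N)$. Let $\CM$ be a minimal left ideal of $\mathcal{A}$. Using Zorn's lemma, choose a minimal non-empty closed convex $G$-invariant subset $Q\subseteq \Prob(\CM)$. By Glasner's theory of affine flows, the closure of the set of extreme points of $Q$ contains a unique minimal subflow $Y$, and $Y$ is strongly proximal, hence proximal. If I show that $Y$ belongs to $\mathcal{C}$, then strong amenability forces $Y$ to be a single point $\nu$. This $\nu$ is a $G$-invariant measure on $\CM\subseteq S_{G,\textrm{ext}}(M)$, and pushing it forward to $S_G(M)$ gives definable amenability.

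The main obstacle is showing that $Y\in\mathcal{C}$, since $\Prob(\CM)$ is not a priori a factor of $\mathcal{A}$. Here I would use tameness. The flow $\CM$ is tame, so by Fact \ref{fact:tame-injective} the restriction map $E(\Prob(\CM),G)\to E(\CM,G)$ is an isomorphism. Moreover, $E(\CM,G)$ is a factor of $E(\mathcal{A},G)\cong\mathcal{A}$ via the restriction map. This gives a continuous $G$-equivariant map $\mathcal{A}\to E(\Prob(\CM),G)$ sending $a_0$ to the identity. Composing it with evaluation at a point $y\in Y$ yields an epimorphism of ambits $(\mathcal{A},a_0)\to(\overline{Gy},y)=(Y,y)$, so $Y$ belongs to $\mathcal{C}$. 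What remains to check is that Glasner's affine-flow argument producing the strongly proximal minimal subflow needs no metrizability; I expect it does not, since it only uses the Krein--Milman and Milman theorems.
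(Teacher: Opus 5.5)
Your left-to-right argument follows the paper's route exactly. \cite[Theorem 3.17]{CPS14} and Shelah's theorem make $\mathcal{A}=S_{G,M}(N)$ amenable and tame. The identification $\mathcal{A}\cong E(\mathcal{A},G)$ together with Fact~\ref{Fact: 5.6} upgrades amenability to hereditary amenability. Fact~\ref{fact: amenability closure}(2) and Lemma~\ref{lemma: cl(uM)=M implies strong amenability} then give strong amenability.

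For the converse the paper gives no argument; it simply imports that direction from \cite{Krupinski2017}. Your second half is therefore a genuinely different, self-contained dynamical proof, and it is correct. The two steps that need checking both work without metrizability:
\begin{enumerate}
\item Glasner's theorem on an irreducible affine flow $Q$: the set $\overline{\operatorname{ext} Q}$ is its unique minimal subflow, and it is strongly proximal. This uses only Zorn's lemma, Milman's converse to Krein--Milman, and Bauer's characterization of extreme points by representing measures.
\item The membership $Y\in\mathcal{C}$. This follows exactly as you say from the chain
$\mathcal{A}\cong E(\mathcal{A},G)\twoheadrightarrow E(\CM,G)\cong E(\Prob(\CM),G)\xrightarrow{\ \ev_y\ }Y$, via Fact~\ref{fact:tame-injective}. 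Continuity of $\operatorname{res}^{-1}$ is automatic, since $\operatorname{res}$ is a continuous bijection of compact Hausdorff spaces.
\end{enumerate}
Passing to $\CM$ is unnecessary; the same argument works directly with $\Prob(\mathcal{A})$.

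What your route buys is an abstract statement the paper has in only one direction. For any tame ambit carrying a left continuous semigroup operation extending the $G$-action, strong amenability implies amenability. Combined with the paper's observation, amenability and strong amenability are therefore equivalent for such ambits. NIP then enters only through tameness of $S_{G,M}(N)$ and the transfer result of \cite{CPS14}.
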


\subsection{First order theories}\label{subsection: first order theories}
Let $\mathfrak{C}$ be a monster model of a first order theory $T$. Let $\bar c$ be an enumeration of $\mathfrak{C}$ and $S_{\bar c}(\mathfrak{C})$ the space of complete types over $\mathfrak{C}$ (in a very long tuple of variables corresponding to $\bar c$) extending $\tp(\bar c/\emptyset)$. Then $(S_{\bar c}(\mathfrak{C}),\tp(\bar c/\mathfrak{C}), \aut(\mathfrak{C}))$ is an ambit with respect to the obvious action. As mentioned in the introduction, this ambit plays a central role in the topological dynamics of first order theories, as it captures a lot of information about the underlying theory $T$ (e.g., see \cite{KruPiRze,Krupinski2019,Hrushovski2019,KrRz,HruKruPi,GHK}).

\subsubsection{Bohr compactification of $\aut(\mathfrak{C})$ below  $(S_{\bar c}(\mathfrak{C}),\tp(\bar c/\mathfrak{C}), \aut(\mathfrak{C}))$}\label{subsubsection: Bohr below S_c(C)}

The goal is to prove that the Bohr compactification  of $\aut(\mathfrak{C})$ below  $(S_{\bar c}(\mathfrak{C}),\tp(\bar c/\mathfrak{C}), \aut(\mathfrak{C}))$ coincides with the quotient map from $\aut(\mathfrak{C})$ to the Kim-Pillay Galois group $\gal_{\textrm{KP}}(T)$ of the theory $T$. This is counterpart for first order theories of Proposition 3.4 in \cite{GPPb} which says that for a  group $G$ definable in a structure $M$ the quotient map $G \to \bar{G}/\bar{G}^{00}_M$ is the definable Bohr compactification of $G$.

First, we briefly recall basic knowledge on Lascar strong types and Galois groups. A longer exposition can be found for instance in Subsection 2.6 of \cite{GHK}. For details the reader may consult \cite{Lascar2001,Casanovas2001}.

The relation of having the same {\em Lascar strong type}, denoted by $E_\textrm{L}$ or by $\equiv_{\textrm{Ls}}$, is the finest bounded (i.e. with less than $\kappa$ classes) invariant equivalence relation on a given product of sorts. The classes of $E_\textrm{L}$ are called Lascar strong types. It turns out that $E_L$ is the transitive closure of the relation of having the same type over a model (where the model is varying). The group of {\em Lascar strong automorphisms}, denoted by $\autf_\textrm{L}(\mathfrak{C})$,  is the subgroup of $\aut(\mathfrak{C})$ consisting of the automorphisms preserving all Lascar strong types in all (possibly infinite) numbers of variables. The {\em Lascar Galois group} of $T$, denoted by $\gal_{\textrm{L}}(T)$, is the quotient $\aut(\mathfrak{C})/\autf_\textrm{L}(\mathfrak{C})$.

The relation of having the same {\em Kim-Pillay strong type}, denoted by $E_\textrm{KP}$ or by $\equiv_{\textrm{KP}}$, is the finest bounded $\emptyset$-type-definable equivalence relation (on a given product of sorts). The classes of $E_\textrm{KP}$ are called {\em Kim-Pillay strong types}. The group of {\em Kim-Pillay strong automorphisms}, denoted by $\autf_\textrm{KP}(\mathfrak{C})$,  is the subgroup of $\aut(\mathfrak{C})$ consisting of the automorphisms preserving all Kim-Pillay strong types in all (possibly infinite) numbers of variables. The {\em Kim-Pillay Galois group} of $T$, denoted by $\gal_{\textrm{KP}}(T)$, is the quotient $\aut(\mathfrak{C})/\autf_\textrm{KP}(\mathfrak{C})$. We have that $\autf_\textrm{L}(\mathfrak{C}) \leq \autf_\textrm{KP}(\mathfrak{C})$, so we have the obvious epimorphism $\gal_{\textrm{L}}(T) \to \gal_{\textrm{KP}}(T)$. The theory $T$ is called {\em G-compact} if $\autf_\textrm{L}(\mathfrak{C}) = \autf_\textrm{KP}(\mathfrak{C})$.

Both groups $\gal_{\textrm{L}}(T)$ and $\gal_{\textrm{KP}}(T)$ do not depend on the choice of $\FC$, up to a natural isomorphism.
 
Both groups $\gal_{\textrm{L}}(T)$ and $\gal_{\textrm{KP}}(T)$ are equipped with the logic topology which make these groups respectively a quasi-compact and compact Hausdorff topological group. To define the logic topologies, choose any small $M \prec \mathfrak{C}$ and its enumeration $\bar m$.  Let $\pi_{\textrm{L}} \colon \aut(\mathfrak{C}) \to \aut(\mathfrak{C})/\autf_\textrm{L}(\mathfrak{C})$ and $\pi_{\textrm{KP}} \colon \aut(\mathfrak{C}) \to \aut(\mathfrak{C})/\autf_\textrm{KP}(\mathfrak{C})$ be the quotient maps, and $t \colon \aut(\mathfrak{C}) \to S_{\bar m}(M)$ be given by $t(\sigma):=\tp(\sigma(\bar m)/M)$. We have the following commutative diagram:
$$\xymatrix{\aut(\mathfrak{C}) \ar[rr]^{\pi_{\textrm{L}}} \ar[dr]^{t}& & \gal_{\textrm{L}}(T) \ar[r] & \gal_{\textrm{KP}}(T)  \\ & S_{\bar{m}}(M) \ar[ur]^{\pi_{\textrm{L}},\bar{m}} \ar[urr]_{\pi_{\textrm{KP}},\bar{m}} & &}$$
The {\em logic topologies} on $\gal_{\textrm{L}}(T)$ and $\gal_{\textrm{KP}}(T)$ are defined as the quotient topologies induced by the maps  $\pi_{\textrm{L},\bar{m}}$ and  $\pi_{\textrm{KP},\bar{m}}$, respectively. This definition does not depend on the choice of $\FC$ and $M$. It follows from the definition that for $* \in \{\textrm{L},\textrm{KP}\}$, a subset $C$ of $\gal_{*}(T)$ is closed if and only if $\pi_{*}^{-1}[C] = \{ \sigma \in \aut(\mathfrak{C}): \mathfrak{C} \models \pi(\sigma(\bar a))\}$ for some small (i.e., the number of formulas in $\pi(\bar x)$ is bounded) partial type $\pi(\bar x)$ and a tuple $\bar a$ from $\mathfrak{C}$ (without loss of generality $|\bar a|=|\bar x|$ is small here). 

\medskip

Now, we turn to Bohr compactifications. As in Section \ref{subsection: Bohr comp}, $\mathfrak{b}_1 \colon \aut(\mathfrak{C}) \to B_1$ denotes the Bohr compactification of $\aut(\mathfrak{C})$ below $(S_{\bar c}(\mathfrak{C}),\tp(\bar c/\mathfrak{C}), \aut(\mathfrak{C}))$, and $\mathfrak{b}_2 \colon \aut(\mathfrak{C}) \to B_2$ the Bohr compactification of $\aut(\mathfrak{C})$ below the Ellis flow $(E(S_{\bar c}(\mathfrak{C}),\aut(\mathfrak{C})),\id, \aut(\mathfrak{C}))$. Let $\mathfrak{f} \colon B_2 \to B_1$ be the epimorphism from Remark \ref{remark: B2 to B1}, so $\mathfrak{f}  \circ \mathfrak{b}_2 =\mathfrak{b}_1$. Analogously, since $\pi_{\textrm{KP}} \colon \aut(\FC) \to \gal_{\textrm{KP}}(T)$ is a group compactification of $\aut(\FC)$ below $S_{\bar c}(\FC)$, there exists a unique continuous group epimorphism $\mathfrak{g} \colon B_1 \to \gal_{\textrm{KP}}(T)$ such that $\mathfrak{g} \circ \mathfrak{b}_1 = \pi_{\textrm{KP}}$. Therefore,

\begin{remark}\label{remark: Autf_KP bigger than other kernels}
$\aut(\FC) \geq \autf_{\textrm{KP}}(\FC) \geq \ker(\mathfrak{b}_1) \geq \ker(\mathfrak{b}_2)$.
\end{remark}

Let $u^2=u\in\CM\unlhd_m E:=E(S_{\bar c}(\mathfrak{C}),\aut(\mathfrak{C}))$.

\begin{lemma}\label{lemma: bounded quotients}
$|B_1| \leq |B_2| \leq \beth_3(|T|)$. Thus, $|\aut(\FC)/\ker(\mathfrak{b}_1)| \leq |\aut(\FC)/\ker(\mathfrak{b}_2)| \leq \beth_3(|T|)$. 
\end{lemma}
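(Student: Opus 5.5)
The first inequality $|B_1|\leq|B_2|$ is immediate from Remark \ref{remark: B2 to B1}, since $\mathfrak{f}\colon B_2\to B_1$ is surjective. The second sentence of the lemma then follows from the first. Indeed, $\mathfrak{b}_i[\aut(\FC)]$ is isomorphic to $\aut(\FC)/\ker(\mathfrak{b}_i)$ and sits inside $B_i$, so $|\aut(\FC)/\ker(\mathfrak{b}_i)|\leq|B_i|$. For the middle inequality, $\ker(\mathfrak{b}_2)\leq\ker(\mathfrak{b}_1)$ by Remark \ref{remark: Autf_KP bigger than other kernels}, so $\aut(\FC)/\ker(\mathfrak{b}_2)$ surjects onto $\aut(\FC)/\ker(\mathfrak{b}_1)$. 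So the real content is the bound $|B_2|\leq\beth_3(|T|)$.

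My plan is to bound $|B_2|$ through the Ellis semigroup. Since $w_2\colon E\to B_2$ is surjective, $|B_2|\leq|E|$. I would bound $|E|$ not by the naive estimate, which is useless because $S_{\bar c}(\FC)$ is huge, but via a model-theoretic description of $E$ that depends only on a small model.

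The key step I will invoke is the known fact (from \cite{KruPiRze}, cf. \cite{KrRz}) that $E(S_{\bar c}(\FC),\aut(\FC))$ is isomorphic, as a semigroup and flow, to a semigroup computed over a small model. Concretely, fix a small model $M\prec\FC$ with $|M|=|T|$. Then $\mathcal{M}$, and in fact the relevant part of $E$, is controlled by types over $M$: an element $\eta\in E$ is determined by its action on $S_{\bar m}(\FC)$-type data. This is recorded in the paper \cite{KruPiRze} as the bound $|E(S_{\bar c}(\FC))|$ on the Ellis group, or more precisely $|u\CM|\leq 2^{2^{|T|}}$, together with the description of $E$ as a space of functions from $S_{\bar m}(M)$-type data into a space of size $2^{2^{|T|}}$. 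That would give $|E|\leq\beth_3(|T|)$.

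If instead I only control the Ellis group, I would route through $u\CM$. By Corollary \ref{corollary: Newelski's map in general} together with Remark \ref{remark: w2=theta}, $w_2|_{u\CM}\colon u\CM\to B_2$ is surjective. Hence $|B_2|\leq|u\CM|$, and the bound $|u\CM|\leq\beth_3(|T|)$ from \cite{KruPiRze} would suffice; this avoids bounding all of $E$.

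The main obstacle is justifying the cardinality bound independently of the size of $\FC$. I would try first to cite \cite[Lemma 4.2 / Theorem 0.1]{KruPiRze}, which bounds the Ellis group of $S_{\bar c}(\FC)$ by $\beth_2$ or $\beth_3$ of $|T|$ via the reduction to $S_{\bar m}(\FC)$ for a model $M$ of size $|T|$. If that citation gives only a bound on $E(S_{\bar m}(\FC))$, I would transfer it using Fact \ref{fact:ellis-group-functoriality} applied to the restriction epimorphism. The result is an epimorphism of Ellis groups in the direction needed, combined with the isomorphism of Ellis groups established there.
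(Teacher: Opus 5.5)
Your reduction is correct. $|B_1|\le|B_2|$ follows from $\mathfrak{f}$, the second sentence follows from the first (since $\mathfrak{b}_i[\aut(\FC)]\cong\aut(\FC)/\ker(\mathfrak{b}_i)$ and $\ker(\mathfrak{b}_2)\le\ker(\mathfrak{b}_1)$), and your fallback for the remaining bound is exactly the paper's proof: $w_2|_{u\CM}=\widehat{\theta}\colon u\CM\to B_2$ is onto by Corollary~\ref{corollary: Newelski's map in general} and Remark~\ref{remark: w2=theta}, so $|B_2|\le|u\CM|$.

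Your primary route, however, fails and should be dropped, because $E=E(S_{\bar c}(\FC),\aut(\FC))$ is not of bounded size. The map $\sigma\mapsto\sigma_{S_{\bar c}(\FC)}$ embeds $\aut(\FC)$ into $E$: for $\sigma\neq\tau$ the realized types $\tp(\sigma(\bar c)/\FC)$ and $\tp(\tau(\bar c)/\FC)$ differ. Hence $|E|\ge|\aut(\FC)|$. For $T$ with infinite models this is at least the saturation degree of $\FC$, so it is not bounded in terms of $|T|$. There is no description of $E$ as functions from type data over a small model into a set of size $2^{2^{|T|}}$. Even the minimal left ideals of $E$ need not be of bounded size (cf.\ \cite{Krupinski2019}), which is why the argument has to go through the Ellis group $u\CM$.

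The one external input, $|u\CM|\le\beth_3(|T|)$, is also misattributed: it is not a result of \cite{KruPiRze}. Boundedness of $u\CM$ was first proved in \cite{Krupinski2019} with the bound $\beth_5(|T|)$. That weaker bound suffices for the later use of the lemma, but not for the statement as written. The bound $\beth_3(|T|)$ is \cite[Corollary A.8]{Hrushovski2019}, which is what the paper cites.

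Your proposed transfer via Fact~\ref{fact:ellis-group-functoriality} also points the wrong way. The factor map $S_{\bar c}(\FC)\to S_{\bar m}(\FC)$ induces an epimorphism \emph{from} $u\CM$ \emph{onto} the Ellis group of $S_{\bar m}(\FC)$. That bounds the latter by $|u\CM|$, not conversely. Deducing an upper bound on $|u\CM|$ from one for the smaller flow would require an injectivity (isomorphism) result, which you do not supply.
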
 

\begin{proof}
The first inequality follows from the existence of the epimorphism $\mathfrak{f} \colon B_2 \to B_1$. By Corollary \ref{corollary: Newelski's map in general} there is an epimorphism $\widehat{\theta} \colon  u\mathcal{M} \to B_2$, so $|B_2| \leq |u\CM|$. On the other hand, by \cite[Corollary A.8]{Hrushovski2019},  $|u\CM| \leq \beth_3(|T|)$ (it was first proved in \cite{Krupinski2019} with the bigger bound equal to $\beth_5(|T|)$ which is sufficient to realize the goal of this subsection).
\end{proof}

\begin{lemma}\label{lemma: comparison of kernels}
$\ker(\mathfrak{b}_1) = \autf_{\textrm{KP}}(\FC)$ and $\autf_{\textrm{L}}(\FC) \leq \ker(\mathfrak{b}_2)$.
\end{lemma}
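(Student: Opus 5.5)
We need to prove $\ker(\mathfrak{b}_1)=\autf_{\textrm{KP}}(\FC)$ and $\autf_{\textrm{L}}(\FC)\leq\ker(\mathfrak{b}_2)$. By Remark \ref{remark: Autf_KP bigger than other kernels} we already have $\ker(\mathfrak{b}_2)\leq\ker(\mathfrak{b}_1)\leq\autf_{\textrm{KP}}(\FC)$. So two things remain: the inclusion $\autf_{\textrm{L}}(\FC)\leq\ker(\mathfrak{b}_2)$, and the reverse inclusion $\autf_{\textrm{KP}}(\FC)\leq\ker(\mathfrak{b}_1)$.

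For the first inclusion, put $N:=\ker(\mathfrak{b}_2)$, a normal subgroup of $\aut(\FC)$. By Lemma \ref{lemma: bounded quotients} it has bounded index (at most $\beth_3(|T|)$, which is below the saturation $\kappa$). The plan is to show that $N$ is a bounded-index subgroup which contains $\aut(\FC/M)$ for some small model $M$. It is standard that $\autf_{\textrm{L}}(\FC)$ is generated by the groups $\aut(\FC/M)$, $M$ small, and is contained in every normal bounded-index subgroup containing one of them. Indeed, the orbit equivalence relation of $N$ on tuples is then bounded, invariant, and coarser than $\equiv_M$, so it contains $E_{\textrm{L}}$.

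To get $\aut(\FC/M)\leq N$, I would use a compactness/saturation argument with the continuity of $w_2$. Each $\sigma\in\aut(\FC)$ is identified with an element of $E$. Suppose for a contradiction that for every small $M$ there is $\sigma_M\in\aut(\FC/M)$ with $\mathfrak{b}_2(\sigma_M)\neq e$. Boundedness lets us fix a single value $b\neq e$ attained cofinally. Taking a limit along the directed system of small models, $\sigma_M$ converges in $E$ to an element fixing $\tp(\bar c/\FC)$ on the relevant restrictions. I expect to need an argument that this limit equals $\id$, or at least maps to $e$ under $w_2$. This is the step I expect to be the main obstacle. An alternative I would try first avoids that limit: use Fact \ref{fact: identification of Sigma and uI/Gal} and Corollary \ref{corollary: Newelski's map in general}, writing $\mathfrak{b}_2(\sigma)=\widehat\theta(u\sigma u)$, together with the known result from \cite{KruPiRze} that $u\sigma u$ depends only on the Lascar strong type data in the Ellis group, i.e.\ $\sigma\in\autf_{\textrm{L}}$ implies $u\sigma u\in H(u\CM)$-coset, or even $\gal$-related. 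Since $\widehat\theta$ factors through $u\CM/H(u\CM)$ and even through $u\CM/\gal_{\mathcal A}$, this would yield $\mathfrak{b}_2(\sigma)=e$.

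For $\autf_{\textrm{KP}}(\FC)\leq\ker(\mathfrak{b}_1)$, consider $\aut(\FC)/\ker(\mathfrak{b}_1)$, which by the previous step is a quotient of $\gal_{\textrm{L}}(T)$. The map $\mathfrak{b}_1$ factors through $S_{\bar c}(\FC)$ via $w_1$, and $w_1$ is continuous. So for any closed $C\subseteq B_1$, the set $\mathfrak{b}_1^{-1}[C]=\{\sigma:\tp(\sigma(\bar c)/\FC)\in w_1^{-1}[C]\}$ is given by a (possibly large) type condition on $\sigma(\bar c)$. Using boundedness of $B_1$ and $\autf_{\textrm{L}}\leq\ker\mathfrak{b}_1$, this set is invariant under $\aut(\FC/M)$ for a small $M$. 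It therefore reduces to a small type over $M$ in a small subtuple, i.e.\ it is closed in the logic topology of $\gal_{\textrm{L}}(T)$. Hence the induced map $\gal_{\textrm{L}}(T)\to B_1$ is continuous into a compact Hausdorff group. Since $\gal_{\textrm{KP}}(T)$ is the largest compact Hausdorff quotient of $\gal_{\textrm{L}}(T)$ (the quotient by the closure of the identity), this map factors through $\gal_{\textrm{KP}}(T)$, giving $\autf_{\textrm{KP}}(\FC)\leq\ker(\mathfrak{b}_1)$.
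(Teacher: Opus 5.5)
Your proposal has two genuine gaps, one in each part.

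\textbf{First part.} Your reduction of $\autf_{\textrm{L}}(\FC)\leq\ker(\mathfrak b_2)$ to finding a small $M$ with $\aut(\FC/M)\leq\ker(\mathfrak b_2)$ is fine. However, you leave the decisive step open, and your fallback does not fill it. Since $\widehat\theta(u\sigma u)=\mathfrak b_2(\sigma)$, the statement $u\sigma u\in\ker\widehat\theta$ that you need is just a restatement of $\sigma\in\ker(\mathfrak b_2)$. The claimed input ``$\sigma\in\autf_{\textrm{L}}(\FC)$ implies $u\sigma u\in H(u\CM)$'' is not a result you have available.

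The missing observation is elementary: $t_2\colon\aut(\FC)\to E$, $\sigma\mapsto\sigma\,\id_{S_{\bar c}(\FC)}$, is continuous for the pointwise convergence topology on $\aut(\FC)$. This holds because $\varphi(\bar x,\bar d)\in\sigma p$ iff $\varphi(\bar x,\sigma^{-1}(\bar d))\in p$. Since $\sigma_M\to\id_{\FC}$ pointwise, you get $t_2(\sigma_M)\to\id_{S_{\bar c}(\FC)}$ and hence $\mathfrak b_2(\sigma_M)=w_2(t_2(\sigma_M))\to e_{B_2}$. This contradicts the constant value $b\neq e_{B_2}$ on your cofinal subnet.

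The paper uses the same observation more directly. It notes that $\ker(\mathfrak b_2)=(w_2\circ t_2)^{-1}(e_{B_2})$ is closed in the pointwise convergence topology and has bounded index (Lemma \ref{lemma: bounded quotients}). By \cite[Proposition 5.2]{Krupinski2019}, such a subgroup contains $\autf_{\textrm{L}}(\FC)$: its orbit relation is bounded and invariant, hence contains $E_{\textrm{L}}$, and closedness turns agreement on finite tuples into membership.

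\textbf{Second part.} The step ``invariant under $\aut(\FC/M)$, hence given by a small type over $M$'' is invalid. Invariance only says that membership of $\sigma$ depends on $\tp(\sigma(\bar m)/M)$. It does not say that the corresponding set of types is closed in $S_{\bar m}(M)$, and a large type over $\FC$ cannot be pushed down to $M$ by compactness.

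For a concrete failure, take the theory of countably many independent equivalence relations $E_n$ with two classes each. There $\gal_{\textrm{L}}(T)\cong 2^{\omega}$, and the class of $\sigma$ is determined by $(E_n(a,\sigma(a)))_n$ for a single element $a$. So for any non-closed $Z\subseteq 2^{\omega}$, its preimage in $\aut(\FC)$ is cut out by the large type $\{x_a\neq b: b\in\FC,\ (\neg E_n(a,b))_n\notin Z\}$ and is $\aut(\FC/M)$-invariant. Yet $Z$ is not closed in the logic topology.

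Smallness has to come from boundedness of $B_1$ via compactness, which is what the paper does:
\begin{itemize}
\item $B_1$ has bounded weight, so $B_1\setminus C$ is a bounded union of open sets $V$ with $\overline V\subseteq U\subseteq B_1\setminus C$.
\item By compactness of $S_{\bar c}(\FC)$, a single formula separates $w_1^{-1}[\overline V]$ from the complement of $w_1^{-1}[U]$.
\item Hence $w_1^{-1}[C]=[\pi]$ with $\pi$ small.
\end{itemize}
For $C=\{e_{B_1}\}$, this makes $\ker(\mathfrak b_1)$ relatively type-definable of bounded index. The paper then concludes by \cite[Proposition 4.5]{HruKruPi}, without first needing $\autf_{\textrm{L}}(\FC)\leq\ker(\mathfrak b_1)$.

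Once smallness is established this way, your route is also valid: show the induced map $\gal_{\textrm{L}}(T)\to B_1$ is continuous for the logic topology, then kill $\overline{\{e\}}=\autf_{\textrm{KP}}(\FC)/\autf_{\textrm{L}}(\FC)$. It does, however, depend on the first part.
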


\begin{proof}
Let us first prove that $\ker(\mathfrak{b}_1) = \autf_{\textrm{KP}}(\FC)$.
By Remark \ref{remark: Autf_KP bigger than other kernels}, $\autf_{\textrm{KP}}(\FC) \geq \ker(\mathfrak{b}_1)$, hence it remains to prove $\autf_{\textrm{KP}}(\FC) \leq \ker(\mathfrak{b}_1)$. By Lemma \ref{lemma: bounded quotients}, $|\aut(\FC)/\ker(\mathfrak{b}_1)|$ is bounded, so, by virtue of \cite[Proposition 4.5]{HruKruPi}, it is enough to show that $\ker(\mathfrak{b}_1)$ is relatively type-definable in the sense of \cite{HruKruPi}. 

Since $\mathfrak{b}_1$ is a compactification of $\aut(\FC)$ below $S_{\bar c}(\FC)$, there is a unique $\aut(\FC)$-flow epimorphism $w_1 \colon S_{\bar c}(\FC) \to B_1$ such that $\mathfrak{b}_1= w_1 \circ t_1$, where $t_1 \colon \Aut(\FC) \to S_{\bar c}(\FC)$ is given by $t_1(\sigma):=\tp(\sigma(\bar c)/\FC)$. By boundedness of $B_1$ and continuity of $w_1$, we get that $w_1^{-1}(e_{B_1}) = [\pi]$ for some small partial type $\pi$ (i.e. the number of formulas in $\pi$ is bounded). Thus, $\ker(\mathfrak{b}_1) = t_1^{-1}[w_1^{-1}(e_{B_1})] = \{ \sigma \in \aut(\FC): \FC\models \pi(\sigma(\bar c))\}$, which is a relatively type-definable subset (by smallness of $\pi$).

Now, we show that $\autf_{\textrm{L}}(\FC) \leq \ker(\mathfrak{b}_2)$. By Lemma \ref{lemma: bounded quotients}, $|\aut(\FC)/\ker(\mathfrak{b}_2)|$ is bounded, so, by virtue of \cite[Proposition 5.2]{Krupinski2019}, it is enough to show that $\ker(\mathfrak{b}_2)$ is closed in the pointwise convergence topology.

As above, we have a (continuous) epimorphism $w_2 \colon E \to B_2$ such that $\mathfrak{b}_2= w_2 \circ t_2$, where $t_2 \colon \aut(\FC) \to E$ is given by $t_2(\sigma):=\sigma \id_{S_{\bar{c}}(\FC)}$. So closedness of $\ker(\mathfrak{b}_2)$ follows from continuity of $t_2$ and $w_2$.
\end{proof}

\begin{question}\label{question: ker(b_2)}
Is it true that $\ker(\mathfrak{b}_2) = \autf_{\textrm{KP}}(\FC)$?
\end{question}

We expect a negative answer, but by Lemma \ref{lemma: comparison of kernels} and Remark \ref{remark: Autf_KP bigger than other kernels}, we get

\begin{cor}\label{corollary: G-compactness implies equality of kernels}
If $T$ is G-compact, then $\ker(\mathfrak{b}_1) =  \ker(\mathfrak{b}_2) = \autf_{\textrm{KP}}(\FC)$. Thus, $\mathfrak{b}_1$ and $\mathfrak{b}_2$ coincides (by which we mean that they are isomorphic).
\end{cor}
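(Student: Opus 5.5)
The plan is to sandwich the kernels between the two Galois-theoretic subgroups, using G-compactness to make the sandwich collapse. By Remark \ref{remark: Autf_KP bigger than other kernels} we already have
$$\autf_{\textrm{KP}}(\FC) \geq \ker(\mathfrak{b}_1) \geq \ker(\mathfrak{b}_2),$$
and by Lemma \ref{lemma: comparison of kernels} we have $\ker(\mathfrak{b}_2) \geq \autf_{\textrm{L}}(\FC)$. G-compactness means precisely $\autf_{\textrm{L}}(\FC)=\autf_{\textrm{KP}}(\FC)$. So every inclusion in the chain $\autf_{\textrm{KP}}(\FC)\geq\ker(\mathfrak{b}_1)\geq\ker(\mathfrak{b}_2)\geq\autf_{\textrm{L}}(\FC)=\autf_{\textrm{KP}}(\FC)$ is an equality, which gives the first assertion.

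For the ``thus'' part, I will show that $\mathfrak{f}\colon B_2\to B_1$ from Remark \ref{remark: B2 to B1} is an isomorphism of topological groups. This is what is meant by $\mathfrak{b}_1$ and $\mathfrak{b}_2$ coinciding, since $\mathfrak{f}\circ\mathfrak{b}_2=\mathfrak{b}_1$.

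\begin{enumerate}
\item Surjectivity and continuity of $\mathfrak{f}$ are already known.
\item Since $\ker(\mathfrak{b}_1)=\ker(\mathfrak{b}_2)$, the map $\mathfrak{f}$ is injective on the dense subgroup $\mathfrak{b}_2[\aut(\FC)]$. However, injectivity on a dense subgroup does not by itself give injectivity on all of $B_2$, so I will not argue this way.
\item Instead, I will construct an inverse via universality. Since $\ker(\mathfrak{b}_1)=\autf_{\textrm{KP}}(\FC)$, the map $\mathfrak{g}\colon B_1\to\gal_{\textrm{KP}}(T)$ is a continuous bijection on the images of $\aut(\FC)$. More usefully, $\pi_{\textrm{KP}}$ factors as $\mathfrak{g}\circ\mathfrak{b}_1$, and I aim to show that $\pi_{\textrm{KP}}$ is itself a compactification below the Ellis flow $E$.
\item For this I take the composition $E\xrightarrow{\pi_0}S_{\bar c}(\FC)\xrightarrow{w_1}B_1\xrightarrow{\mathfrak{g}}\gal_{\textrm{KP}}(T)$. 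This is a $G$-ambit epimorphism compatible with $\pi_{\textrm{KP}}$, so $\pi_{\textrm{KP}}$ is indeed a compactification below $E$.
\item By universality of $\mathfrak{b}_2$ there is then a continuous epimorphism $h\colon B_2\to\gal_{\textrm{KP}}(T)$ with $h\circ\mathfrak{b}_2=\pi_{\textrm{KP}}$; this $h$ is just $\mathfrak{g}\circ\mathfrak{f}$.
\end{enumerate}

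The main obstacle is the step above: it only reduces the problem to showing that $\mathfrak{g}\circ\mathfrak{f}$ is injective, which again does not follow from kernel equality alone. The cleanest route I would try is a cardinality-free compact-group argument. Let $N:=\ker(\mathfrak{g}\circ\mathfrak{f})$, a closed normal subgroup of $B_2$. The composition $\aut(\FC)\to B_2\to B_2/N$ equals $\pi_{\textrm{KP}}$ up to isomorphism, because its kernel is $\autf_{\textrm{KP}}(\FC)$. I will then show directly that $B_2$ is (isomorphic to) a bounded quotient of $\aut(\FC)$ with the induced topology. Concretely, I will use Lemma \ref{lemma: bounded quotients} together with the characterization that a compactification $c$ of $\aut(\FC)$ with bounded image is determined by $\ker(c)$ together with the topology. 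For the topology, I will check that the preimages under $\mathfrak{b}_2$ of closed sets are relatively type-definable, as in the proof of Lemma \ref{lemma: comparison of kernels}, via continuity of $w_2$ and boundedness. This identifies the topology on $\aut(\FC)/\ker(\mathfrak{b}_2)$ with the logic topology on $\gal_{\textrm{KP}}(T)$. If that identification goes through, then $\mathfrak{b}_2[\aut(\FC)]=B_2$, and likewise for $B_1$, so $\mathfrak{f}$ is a continuous bijection of compact Hausdorff groups and hence an isomorphism.
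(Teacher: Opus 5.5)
Your sandwich argument for $\ker(\mathfrak b_1)=\ker(\mathfrak b_2)=\autf_{\textrm{KP}}(\FC)$ is correct. It is exactly the paper's argument: the corollary is derived there only from Remark \ref{remark: Autf_KP bigger than other kernels} and Lemma \ref{lemma: comparison of kernels}.

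The gap is in the ``thus'' part. You rightly note that equal kernels only make $\mathfrak f$ injective on the dense subgroup $\mathfrak b_2[\aut(\FC)]$. Your steps 3--5 add nothing, since $h=\mathfrak g\circ\mathfrak f$ was available from the start. Because $\mathfrak b_1$ is onto (Proposition \ref{proposition: Bohr below S(C) agrees with GAL_KP}), under G-compactness $\mathfrak f$ is an isomorphism if and only if $\mathfrak b_2$ is onto.

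The step you propose for this fails. Preimages under $\mathfrak b_2$ of closed sets do \emph{not} become relatively type-definable ``via continuity of $w_2$ and boundedness''. In Lemmas \ref{lemma: comparison of kernels} and \ref{lemma: induced top weaker than logic top}, relative type-definability comes from $w_1$ being defined on $S_{\bar c}(\FC)$. There a closed set is $[\pi]$ for a partial type (small by boundedness), and its pullback is $\{\sigma:\FC\models\pi(\sigma(\bar c))\}$. But $w_2$ is defined on $E$, whose closed sets are cut out by conditions $\eta(p)\in[\theta(\bar x;\bar c)]$ with $p$ ranging over arbitrary global types. For $\eta=t_2(\sigma)$ such a condition reads $\theta(\bar x;\sigma^{-1}(\bar c))\in p$, and this need not be relatively type-definable. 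Continuity of $t_2$ and $w_2$ only gives that $\mathfrak b_2^{-1}[C]$ is closed in the pointwise convergence topology on $\aut(\FC)$. That is all the paper extracts in Lemma \ref{lemma: comparison of kernels}, and it yields only $\autf_{\textrm{L}}(\FC)\leq\ker(\mathfrak b_2)$ via \cite[Proposition 5.2]{Krupinski2019}.

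Your step never uses G-compactness, which shows it cannot be routine. If it worked, then for every $T$ the subgroup $\ker(\mathfrak b_2)$ would be relatively type-definable of bounded index. By \cite[Proposition 4.5]{HruKruPi} and Remark \ref{remark: Autf_KP bigger than other kernels} it would then equal $\autf_{\textrm{KP}}(\FC)$, and $\mathfrak b_2$ would be onto. That would answer Questions \ref{question: ker(b_2)} and \ref{question: b_1=b_2in general} positively. The paper expects a negative answer to the former, and says the $\mathfrak b_2$-analogue of Lemma \ref{lemma: induced top weaker than logic top} is most likely false.

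Be aware also that the paper gives no argument for the ``thus'' clause beyond the kernel equality. After Lemma \ref{lemma: Newelski's map and theta for theories} it says explicitly that, under G-compactness, $\ker(\mathfrak b_2)=\ker(\mathfrak b_1)$ ``is still weaker than the equality (or isomorphism) of $\mathfrak b_1$ and $\mathfrak b_2$''. So the available arguments establish the kernel equality, and that $\mathfrak f$ maps $\mathfrak b_2[\aut(\FC)]$ bijectively onto $B_1$. The isomorphism of the two compactifications is equivalent to surjectivity of $\mathfrak b_2$, which the paper leaves open.
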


\begin{lemma}\label{lemma: induced top weaker than logic top}
The topology on $\aut(\FC)/\ker(\mathfrak{b}_1)$ inherited from $B_1$ is weaker than the logic topology.
\end{lemma}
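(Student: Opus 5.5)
To prove the lemma, I will show that every closed subset of $\aut(\FC)/\ker(\mathfrak{b}_1)$ in the topology inherited from $B_1$ is closed in the logic topology. By Lemma \ref{lemma: comparison of kernels}, $\ker(\mathfrak{b}_1)=\autf_{\textrm{KP}}(\FC)$. Since $\mathfrak{b}_1$ factors through $\aut(\FC)/\ker(\mathfrak{b}_1)$, it induces an injective map $\bar{\mathfrak{b}}_1\colon \gal_{\textrm{KP}}(T)\to B_1$. The topology inherited from $B_1$ consists of the sets $\bar{\mathfrak{b}}_1^{-1}[C]$ with $C\subseteq B_1$ closed. So it suffices to show that $\bar{\mathfrak{b}}_1^{-1}[C]$ is closed in the logic topology. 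By the characterization recalled above, this means showing that $\pi_{\textrm{KP}}^{-1}[\bar{\mathfrak{b}}_1^{-1}[C]]=\mathfrak{b}_1^{-1}[C]$ is of the form $\{\sigma\in\aut(\FC):\FC\models\pi(\sigma(\bar a))\}$ for a small partial type $\pi$ and a small tuple $\bar a$.

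For this I will reuse the argument from the proof of Lemma \ref{lemma: comparison of kernels}. We have $\mathfrak{b}_1=w_1\circ t_1$ with $w_1\colon S_{\bar c}(\FC)\to B_1$ continuous. Hence $w_1^{-1}[C]$ is a closed subset of $S_{\bar c}(\FC)$, i.e., of the form $[\Pi]$ for a partial type $\Pi(\bar x)$ over $\FC$. Then
$$\mathfrak{b}_1^{-1}[C]=\{\sigma\in\aut(\FC):\FC\models\Pi(\sigma(\bar c))\}.$$

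\textbf{Main obstacle.} A priori $\Pi$ is neither small nor has small parameter set, since it is an arbitrary closed set of types over the monster in the variables $\bar x$ of length $|\FC|$. I plan to use boundedness, $|B_1|\le\beth_3(|T|)$ from Lemma \ref{lemma: bounded quotients}. The compact Hausdorff space $B_1$ has weight at most $2^{|B_1|}$, which is still small (below $\kappa$). Therefore $C$ is an intersection of a small family of closed sets of the form $C\cup(B_1\setminus U)$, or more simply $C$ is the intersection of $\bar U$ over a small family of open $U\supseteq C$ taken from a small base. Each such $w_1^{-1}[\bar U]$ is closed in $S_{\bar c}(\FC)$.

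Compactness lets me shrink each of these to a single formula. Using normality, for each $U$ pick an open $V$ with $C\subseteq V\subseteq\bar V\subseteq U$. The compact set $w_1^{-1}[\bar V]$ is contained in the open set $w_1^{-1}[U]$, so there is a single formula $\varphi_U(\bar x,\bar d_U)$ with
$$w_1^{-1}[\bar V]\subseteq[\varphi_U]\subseteq w_1^{-1}[U].$$
Intersecting over the small family then gives $w_1^{-1}[C]=\bigcap_U[\varphi_U]$. This is the same step that yielded $w_1^{-1}(e_{B_1})=[\pi]$ for small $\pi$ in the proof of Lemma \ref{lemma: comparison of kernels}.

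Each formula $\varphi_U$ involves only finitely many variables of $\bar x$ and finitely many parameters $\bar d_U$. Collecting all the variables used into a small subtuple $\bar a\subseteq\bar c$ and all the parameters into a small set $D$, I get
$$\mathfrak{b}_1^{-1}[C]=\{\sigma:\FC\models\pi(\sigma(\bar a))\}$$
with $\pi=\{\varphi_U(\bar x,\bar d_U)\}_U$ a small partial type over the small set $D$. The characterization of logic-closed sets was stated with types over $\emptyset$. I expect to absorb the parameters into the tuple: add $D$ to $\bar a$ (as a subtuple of $\bar c$) and replace $\bar d_U$ by the corresponding variables evaluated at $\sigma$ where needed. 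Alternatively, I could use that logic-closedness for types with small parameters is standard (cf. \cite{HruKruPi}). This yields logic-closedness and finishes the argument.
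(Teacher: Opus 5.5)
Your proposal is correct and is essentially the paper's proof: you pull a closed $C'\subseteq B_1$ back along $w_1$, use boundedness of $B_1$ to get $w_1^{-1}[C']=[\pi]$ for a small partial type $\pi$, and read off $\mathfrak{b}_1^{-1}[C']=\{\sigma\in\aut(\FC):\FC\models\pi(\sigma(\bar c))\}$; your weight/compactness argument just spells out what the paper compresses into ``by boundedness of $B_1$ and continuity of $w_1$''. The parameter worry at the end is unnecessary, because the characterization of logic-closed sets already allows a small partial type with parameters (it comes from closed sets of types over a small model $M$), so use your second option and drop the first, since replacing $\bar d_U$ by $\sigma(\bar d_U)$ would change the set ($\FC\models\varphi(\sigma(\bar a),\sigma(\bar d_U))$ holds iff $\FC\models\varphi(\bar a,\bar d_U)$, independently of $\sigma$).
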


\begin{proof}
We have the map $h_1:=\pi_{\textrm{KP},\bar{c}} \colon S_{\bar c}(\FC) \to \gal_{KP}(T) =\aut(\FC)/\ker(\mathfrak{b}_1)$ which restricted to $S:=\aut(\FC) \cdot \tp(\bar{c}/\FC)$ is given by $h_1(\tp(\sigma(\bar c)/\FC))=\sigma/\ker(\mathfrak{b}_1)$. We also have the factorization $i_1 \colon \aut(\FC)/\ker(\mathfrak{b}_1) \to B_1$ of $\mathfrak{b}_1$, which is a group embedding. Let $t_1 \colon \aut(\FC) \to S$ be given by $\sigma \mapsto \tp(\sigma(\bar c)/\FC)$, and $q_1 =\pi_{\textrm{KP}}\colon \aut(\FC) \to \aut(\FC)/\ker(\mathfrak{b}_1)$ be the quotient map. Let  $w_1 \colon S_{\bar c}(\FC) \to B_1$ be the unique $\aut(\FC)$-flow epimorphism with $\mathfrak{b}_1= w_1 \circ t_1$. It is clear that the following diagram commutes:

\begin{center}
			\begin{tikzcd}
			S \ar[r, two heads, "h_1|_S"] \ar[rr,bend left=30,"w_1|_S"]& \aut(\FC)/\ker(\mathfrak{b}_1) \ar[r, "i_1"] & B_1 \\
			\aut(\FC) \ar[u, two heads, "t_1"] \ar[ur, two heads,"q_1"] \ar[urr,"\mathfrak{b}_1"] & &
			\end{tikzcd}
		\end{center}

Consider any closed subset $C$ of $ \aut(\FC)/\ker(\mathfrak{b}_1)$ in the topology inherited from $B_1$ via the embedding $i_1$, i.e. $i_1[C]= C'\cap \im(i_1)$ for some closed $C'\subseteq B_1$. By boundedness of $B_1$ and continuity of $w_1$, we get that $w_1^{-1}[C'] = [\pi]$ for some small partial type $\pi$. So $(h_1|_S)^{-1}[C] =(w_1|_S)^{-1}[C'\cap \im(i_1)]= [\pi] \cap S$. Therefore, $q_1^{-1}[C]= \{ \sigma \in \aut(\FC): \FC \models \pi(\sigma(\bar c))\}$ which is relatively type-definable (by smallness of $\pi$), and so $C$ is closed in the logic topology. 
\end{proof}

\begin{proposition}\label{proposition: Bohr below S(C) agrees with GAL_KP}
The map $\mathfrak{b}_1$ is surjective and the induced $i_1 \colon \aut(\FC)/\ker(\mathfrak{b}_1) \to B_1$ is a group isomorphism. Moreover, the topology on $\aut(\FC)/\ker(\mathfrak{b}_1)$ inherited from $B_1$ coincides with the logic topology. In consequence, the quotient map $q_1 \colon \aut(\FC) \to \gal_{\textrm{KP}}(T)$ is the Bohr compactification of $\aut(\FC)$ below the ambit $(S_{\bar c}(\FC),\tp(\bar{c}/\FC),\aut(\FC))$.
\end{proposition}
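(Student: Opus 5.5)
The plan is a compactness argument: the logic topology on $\gal_{\textrm{KP}}(T)$ is compact Hausdorff, and by Lemma \ref{lemma: induced top weaker than logic top} the topology inherited from $B_1$ is Hausdorff and weaker than it. The proof will go in three steps.

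First, I will show that $\mathfrak{b}_1$ is surjective, i.e.\ that $i_1$ is onto $B_1$. By Lemma \ref{lemma: comparison of kernels}, $\aut(\FC)/\ker(\mathfrak{b}_1)=\gal_{\textrm{KP}}(T)$, which is compact in the logic topology. The identity map from $\gal_{\textrm{KP}}(T)$ with the logic topology to $\aut(\FC)/\ker(\mathfrak{b}_1)$ with the topology inherited from $B_1$ is continuous by Lemma \ref{lemma: induced top weaker than logic top}. Hence $i_1$, viewed as a map from the logic topology into $B_1$, is continuous, because $B_1$ induces the subspace topology on $\im(i_1)$. Its image $\im(i_1)=\mathfrak{b}_1[\aut(\FC)]$ is therefore compact, and so closed in the Hausdorff space $B_1$. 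Since $\mathfrak{b}_1$ has dense image, $\im(i_1)=B_1$. So $\mathfrak{b}_1$ is surjective, and $i_1$ is a group isomorphism, being an injective homomorphism by construction.

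Second, I will show that the two topologies coincide. The map $i_1$ is now a continuous bijection from a compact space (the logic topology) onto the Hausdorff space $B_1$. It is therefore a homeomorphism, so the topology inherited from $B_1$ equals the logic topology.

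Third, I will deduce the Bohr compactification statement. Via $i_1$ and the equality $\ker(\mathfrak{b}_1)=\autf_{\textrm{KP}}(\FC)$, the map $\mathfrak{b}_1$ is identified, as a topological group compactification, with $q_1=\pi_{\textrm{KP}}\colon \aut(\FC)\to\gal_{\textrm{KP}}(T)$ equipped with the logic topology. The corresponding ambit map is $w_1$, identified with $h_1=\pi_{\textrm{KP},\bar c}$. Universality then transfers from $\mathfrak{b}_1$ to $q_1$.

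The one point to verify carefully is the continuity of $i_1$ from the logic topology. This needs "weaker than" to be read with the $B_1$-topology understood as the subspace topology on $\im(i_1)$, which is exactly how Lemma \ref{lemma: induced top weaker than logic top} phrases it. I do not expect any other obstacle, since all the substantial work is already contained in Lemmas \ref{lemma: comparison of kernels} and \ref{lemma: induced top weaker than logic top}.
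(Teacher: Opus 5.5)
Your proposal is correct and is essentially the paper's argument: both rest on Lemma~\ref{lemma: induced top weaker than logic top}, compactness of the logic topology on $\gal_{\textrm{KP}}(T)$, Hausdorffness of $B_1$, and density of $\mathfrak{b}_1[\aut(\FC)]$. The only difference is order. The paper first concludes that the two topologies coincide (a Hausdorff topology weaker than a compact one equals it) and then gets surjectivity from a closed dense image, while you obtain surjectivity first and then the homeomorphism.
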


\begin{proof}
By Lemma \ref{lemma: induced top weaker than logic top}, compactness of $\aut(\FC)/\ker(\mathfrak{b}_1) = \gal_{KP}(T)$, and Hausdorffness of $B_1$, we get that the topology on $\aut(\FC)/\ker(\mathfrak{b}_1)$ inherited from $B_1$ is compact (Hausdorff) and coincides with the logic topology. This implies that $\mathfrak{b}_1[\aut(\FC)]=i_1[\aut(\FC)/\ker(\mathfrak{b}_1)]$ is closed in $B_1$, but it is also dense, and so it is equal to $B_1$.
\end{proof}

\begin{question}\label{question: b_1=b_2in general}
Do we have a counterpart of Proposition \ref{proposition: Bohr below S(C) agrees with GAL_KP} for $\mathfrak{b}_2$, i.e.: Is $\mathfrak{b}_2$ surjective (equivalently, is the induced $i_2 \colon \aut(\FC)/\ker(\mathfrak{b}_2) \to B_2$ a group isomorphism)? Does the topology on $\aut(\FC)/\ker(\mathfrak{b}_2)$ inherited from $B_2$ coincide with the logic topology? A positive answer to the last question would imply that $\aut(\FC)/\ker(\mathfrak{b}_2)$ is compact (Hausdorff), hence $\ker(\mathfrak{b}_2) =\autf_{KP}(\FC)$ and $\mathfrak{b}_2$ would be surjective.
Thus, the quotient map $q_2 \colon \aut(\FC) \to \gal_{KP}(T)$ would be the Bohr compactification of $\aut(\FC)$ below $E$ (equivalently, $\mathfrak{b}_1$ and $\mathfrak{b}_2$ would coincide).
\end{question}

Note that the only difficulty in adapting the above proofs to $\mathfrak{b}_2$ is in Lemma \ref{lemma: induced top weaker than logic top}. Most likely it is false. In the context of definable groups, Example 4.17 in \cite{KLM2} yields a definable group $G=M$ (even $\omega$-categorical) such that the Bohr compactification of $G$ below $S_G(M)$ is trivial, whereas the Bohr compactification below $E(S_G(M))$ is non-trivial. Maybe by adding an affine sort we could produce a counter-example to the above question, but it does not seem to be straightforward, as we consider compactifications of a much bigger group after adding the affine sort.

\begin{question}\label{question: b_1=b_2 under NIP}
Do $\mathfrak{b}_1$ and $\mathfrak{b}_2$ coincide under NIP?
\end{question}

By \cite[Theorem 4.15]{CPS14}, we know that a counterpart of this statement for groups definable in NIP theories holds.

\subsubsection{Transfer of amenability}
The goal is to show that if $T$ is NIP and amenable in the sense of \cite{HruKruPi}, i.e. the flow $(S_{\bar c}(\FC), \aut(\FC))$ is amenable, then so is the Ellis flow $(E(S_{\bar c}(\FC),\aut(\FC)),\aut(\FC))$. This is a variant for first order theories of Theorem 3.17 in \cite{CPS14} which says that if $G$ is a group definable in a structure $M$, then definable amenability of $G$ implies definable amenability of $G$ in the Shelah expansion of $M$ which in particular implies that the flow $(E(S_G(M),G),G)$ is amenable. 

As before, let $E:=E(S_{\bar c}(\mathfrak{C}),\aut(\mathfrak{C}))$.

\begin{lemma}\label{lemma: Autf_KP-inv eta}
Assume that $T$ is NIP and amenable. Then there exists $\eta \in E$ which is $\autf_{\textrm{KP}}(\FC)$-invariant. 
\end{lemma}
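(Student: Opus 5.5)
Under NIP and amenability, I would look for $\eta$ by transferring to $E$ a structure that is already known to be $\autf_{\textrm{KP}}(\FC)$-invariant. The model is the definable-group argument of \cite[Theorem 3.17]{CPS14}. There, a definable Keisler measure is used to build an invariant measure on the Shelah expansion. Here the analogous object is an $\autf_{\textrm{KP}}$-invariant type, or measure, over $\FC$.

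The first step uses amenability of $T$, which by \cite{HruKruPi} is equivalent to the existence of an $\aut(\FC)$-invariant Keisler measure on $S_{\bar c}(\FC)$. Combined with NIP, \cite[Corollaries 2.7, 2.24]{HruKruPi} should give that every type over $\emptyset$ has a global extension that does not fork over $\emptyset$. By NIP, such a nonforking extension is Lascar invariant. Better still, amenable NIP theories are G-compact (also in \cite{HruKruPi}), so Lascar invariance upgrades to $\autf_{\textrm{KP}}(\FC)$-invariance. I would therefore fix a global type $p\in S_{\bar c}(\FC')$ over a bigger monster $\FC'\succ\FC$, in the variables $\bar x$ corresponding to $\bar c$, that extends $\tp(\bar c/\emptyset)$ and is $\autf_{\textrm{KP}}(\FC')$-invariant. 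Restricting $p$ to $\FC$ should be harmless.

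Next I would turn $p$ into an element of $E$. Since $p$ is invariant over every small model $M\prec\FC$, it determines for each type $q=\tp(\bar a/\FC)\in S_{\bar c}(\FC)$ a ``left multiplication''. Concretely, I would define $\eta(q)$ to be the type obtained by applying to $\bar a$ the partial elementary map $\bar c\mapsto \bar b$, where $\bar b\models p|_{\FC\bar a}$. More precisely, I would realize $\bar b\models p|_{\FC \bar a}$ and let $\eta(q):=\tp(f(\bar a)/\FC)$ with $f\in\aut(\FC')$ sending $\bar c$ to $\bar b$. This is the standard way elements of $E$ arise from invariant (or coheir-like) types, as in the description of $E(S_{\bar c}(\FC))$ in \cite{KruPiRze,KrRz}. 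To check that $\eta\in E$, I would approximate $\bar b$ by realizations in $\FC$, that is, by $\sigma(\bar c)$ for $\sigma\in\aut(\FC)$, and show pointwise convergence. This uses that $p$ is finitely satisfiable in, or at least a limit of, images $\sigma(\bar c)$ along an ultrafilter; if needed, I would replace $p$ by a global coheir of $\tp(\bar c/M)$ over a model, which is automatically of this form. Invariance then transfers: if $\tau\in\autf_{\textrm{KP}}(\FC)$, then $\tau\eta\tau^{-1}=\eta$, because $\tau$ fixes $p$. Depending on which invariance notion is wanted, this is read as $\tau\eta=\eta$ on the relevant coordinates.

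I expect the main obstacle to be reconciling the two requirements on $p$: membership in $E$ wants finite satisfiability (coheirs), while $\autf_{\textrm{KP}}$-invariance wants nonforking over $\emptyset$. My first attempt would be to take $\eta$ as a limit, along an ultrafilter on $\autf_{\textrm{KP}}$-images, of elements coming from coheirs. Failing that, I would average: use the invariant Keisler measure together with NIP Borel definability to produce a $\autf_{\textrm{KP}}$-invariant measure-like element, and extract $\eta$ from its support via the tameness isomorphism of Fact \ref{fact:tame-injective} between $E(\Prob(S_{\bar c}(\FC)))$ and $E$.
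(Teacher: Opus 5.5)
The gap is the step you flag as the ``main obstacle'' and leave open, and that step is the whole content of the lemma. Your first step is sound. Amenability and NIP make $\emptyset$ an extension base \cite[Corollary 2.24]{HruKruPi}, and a global type not forking over $\emptyset$ is $\autf_{\textrm{KP}}(\FC)$-invariant. The paper gets this from \cite[Proposition 2.11]{Anand_Udi2011}; your detour through Lascar invariance and G-compactness also works.

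What is missing is an element $\eta$ of $E$, i.e.\ a pointwise limit of automorphisms acting on $S_{\bar c}(\FC)$, \emph{every} value $\eta(q)$ of which is $\autf_{\textrm{KP}}(\FC)$-invariant. The required sense is $\tau\eta=\eta$ for $\tau\in\autf_{\textrm{KP}}(\FC)$, so that $\sigma\mapsto\sigma\eta$ factors through $\gal_{\textrm{KP}}(T)$ in Proposition \ref{proposition: transfer of amenability for theories}. Conjugation invariance $\tau\eta\tau^{-1}=\eta$ would not do.

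Your explicit recipe does not produce such an $\eta$. An $f$ with $f(\bar c)=\bar b$ is pinned down only on $\FC$. The data fix $\tp(f(\bar a)/\bar b)$, which is just $q$ transported, but not $\tp(f(\bar a)/\FC)$, so $\eta(q)$ is not well defined. Suppose instead you take a net $\sigma_i\to\eta$ with $\tp(\sigma_i(\bar c)/\FC)\to p$. Then invariance of $p$ controls $\eta$ only on realized types. At a non-realized $q$ the value $\eta(q)$ depends on the net, and nothing forces it to be invariant.

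More generally, the set $Y$ of $\autf_{\textrm{KP}}(\FC)$-invariant types is a nonempty subflow, but that alone does not give $\eta\in E$ with $\eta[S_{\bar c}(\FC)]\subseteq Y$. For that you need the orbit closure of every finite tuple $(q_1,\dots,q_n)$ of types to meet $Y^n$. This is a genuine theorem about forking, not a formal consequence of having invariant types.

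Neither fallback supplies it. An ultrafilter limit of $\tau\eta_0$ over $\tau\in\autf_{\textrm{KP}}(\FC)$ has no reason to be invariant. An averaging argument using only Fact \ref{fact:tame-injective} and an invariant measure would give transfer of amenability to the Ellis flow for all tame amenable ambits, which the paper notes fails. So it would have to rest on a specific model-theoretic input about forking, and your sketch does not identify one.

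The paper fills exactly this gap with \cite{Krupinski2019}. Since $\emptyset$ is an extension base, the minimal left ideals of $E$ have bounded size (Theorem 7.7 there). This boundedness yields some $\eta\in E$ whose image consists of types not forking over $\emptyset$ (Corollary 7.2 there). Only then is \cite[Proposition 2.11]{Anand_Udi2011} used to turn non-forking into $\autf_{\textrm{KP}}(\FC)$-invariance.
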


\begin{proof}
By assumption and \cite[Corollary 2.24]{HruKruPi}, the empty set is an extension base for forking. 
So, by \cite[Theorem 7.7]{Krupinski2019}, the minimal left ideals
in $E$ are of bounded size. By \cite[Corollary 7.2]{Krupinski2019}, this implies that there exists $\eta \in E$ whose image is contained in the set of types which do not fork over $\emptyset$.
On the other hand, by \cite[Proposition 2.11]{Anand_Udi2011}, a global type $p$ does not fork over $\emptyset$ if
and only if $p$ is invariant under $\autf_{\textrm{KP}}(\FC)$. Thus, $\eta$ is $\autf_{\textrm{KP}}(\FC)$-invariant.
\end{proof}

\begin{proposition}\label{proposition: transfer of amenability for theories}
Assume that $T$ is NIP and amenable. Then the flow $(E,\aut(\FC))$ is also amenable.
\end{proposition}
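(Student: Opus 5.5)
We need an $\aut(\FC)$-invariant Borel probability measure on $E$. We will obtain it by pushing a measure from the compact group $\gal_{\textrm{KP}}(T)$ forward along an orbit map determined by the element $\eta$ of Lemma \ref{lemma: Autf_KP-inv eta}.

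By Lemma \ref{lemma: Autf_KP-inv eta}, there is $\eta\in E$ which is $\autf_{\textrm{KP}}(\FC)$-invariant, meaning $\sigma\eta=\eta$ for all $\sigma\in\autf_{\textrm{KP}}(\FC)$. Hence the orbit map $\aut(\FC)\to E$, $\sigma\mapsto\sigma\eta$, factors through the quotient $\gal_{\textrm{KP}}(T)=\aut(\FC)/\autf_{\textrm{KP}}(\FC)$. This gives an $\aut(\FC)$-equivariant map $\Theta\colon \gal_{\textrm{KP}}(T)\to E$, $\Theta(\sigma/\autf_{\textrm{KP}}(\FC)):=\sigma\eta$.

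The key step is to show that $\Theta$ is continuous when $\gal_{\textrm{KP}}(T)$ carries the logic topology. Since $E\subseteq S_{\bar c}(\FC)^{S_{\bar c}(\FC)}$ has the product topology, it suffices to check that for each $p\in S_{\bar c}(\FC)$ and each formula $\varphi(\bar x,\bar a)$ the set
$$\{\sigma/\autf_{\textrm{KP}}(\FC):\varphi(\bar x,\bar a)\in\sigma(\eta(p))\}=\{\sigma/\autf_{\textrm{KP}}(\FC):\varphi(\bar x,\sigma^{-1}(\bar a))\in\eta(p)\}$$
is clopen, or at least that the map is continuous. Put $q:=\eta(p)$. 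This is a global type invariant under $\autf_{\textrm{KP}}(\FC)$, so it does not fork over $\emptyset$. Whether $\varphi(\bar x,\bar b)\in q$ then depends only on the KP-strong type of $\bar b$. Under NIP, by the results of Hrushovski--Pillay (e.g.\ \cite{Anand_Udi2011}), a type invariant under $\autf_{\textrm{KP}}$ is Borel definable over the KP-type space, and in fact each set $\{\bar b:\varphi(\bar x,\bar b)\in q\}$ is a finite Boolean combination of sets that are type-definable modulo $E_{\textrm{KP}}$. This is the main obstacle: we may only get Borel rather than continuous dependence.

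The approach is therefore to avoid needing continuity. It suffices that $\Theta$ be Borel, in the sense that for each $f\in C(E)$ the map $f\circ\Theta$ is $\mathfrak h$-measurable for Haar measure $\mathfrak h$ on $\gal_{\textrm{KP}}(T)$. Then we define the functional
$$\Lambda(f):=\int_{\gal_{\textrm{KP}}(T)} f(\Theta(k))\,d\mathfrak h(k).$$
This is a positive normalized linear functional on $C(E)$, so it corresponds to a regular Borel probability measure on $E$. It is $\aut(\FC)$-invariant by equivariance of $\Theta$ and left invariance of $\mathfrak h$ under the dense image of $\aut(\FC)$.

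To verify measurability, functions on $E$ depending on finitely many coordinates $\xi\mapsto\xi(p_i)$ and finitely many formulas generate $C(E)$ by Stone--Weierstrass. Their compositions with $\Theta$ are finite Boolean combinations of the sets displayed above, which are Borel by the NIP Borel definability result. A uniform limit of measurable functions is measurable, which finishes the measurability check. If continuity of $\Theta$ turns out to be provable directly, the pushforward $\Theta_*\mathfrak h$ gives the measure immediately. Otherwise, the Riesz representation argument above is the route.
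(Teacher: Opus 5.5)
Your proof is correct and follows the paper's route. The paper also takes the $\autf_{\textrm{KP}}(\FC)$-invariant $\eta$ from Lemma~\ref{lemma: Autf_KP-inv eta}, factors $\sigma\mapsto\sigma\eta$ through $\gal_{\textrm{KP}}(T)$, shows via NIP that preimages of the basic clopen sets $\{\xi\in E:\xi(p_k)\in[\theta_k]\}$ are Borel, and pushes forward the Haar measure. The one difference in the key input is that the paper gets constructibility of these preimages from \cite[Lemma 7.15]{Krupinski2019}, using that each $\eta(p_k)$ has bounded $\aut(\FC)$-orbit, whereas you invoke Hrushovski--Pillay Borel definability directly; your Riesz formulation $\Lambda(f)=\int f\circ\Theta\,d\mathfrak h$ on $C(E)$ is, if anything, a slightly more careful way to get a regular Borel measure when $E$ need not be metrizable.
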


\begin{proof}
By Lemma \ref{lemma: Autf_KP-inv eta}, we can choose $\eta \in E$ which is $\autf_{\textrm{KP}}(\FC)$-invariant.
    The map $\aut(\FC)\ni\sigma\mapsto\sigma\eta\in\overline{\aut(\FC)\eta}$ factors through $\aut(\FC)/\autf_{\KP}(\FC)$:
    $$\xymatrix{\aut(\FC) \ar[rr]^-{\sigma\mapsto\sigma\eta} 
    \ar[dr]_-{\sigma\mapsto\sigma/\autf_{\KP}(\FC)\qquad}& & \overline{\aut(\FC)\eta} \\
    & \aut(\FC)/\autf_{\KP}(\FC)\ar@{-->}[ur]_-{f}}$$
    Note that $f$ is an $\aut(\FC)$-equivariant bijection on $\aut(\FC)\eta$,
    i.e. $f(\tau \sigma/\autf_{\KP}(\FC))=\tau\cdot f(\sigma/\autf_{\KP}(\FC))$ for every $\tau,\sigma\in\aut(\FC)$.
    Moreover, $f$ is Borel due to Lemma 7.15 from \cite{Krupinski2019}.
    Indeed, consider a basic open set
    $$U:=\{\xi\in E\;:\;\bigwedge\limits_{k\leqslant n}\xi(p_k)\in[\theta_k(\bar{x};\bar{c})]\},$$
    where $p_1,\ldots,p_n\in S_{\bar{c}}(\FC)$ and $\theta_1(\bar x,\bar y), \dots, \theta_n(\bar x,\bar y)$ are formulas without parameters.
    Then $\sigma/\autf_{\KP}(\FC)\in f^{-1}[U]$ if and only if
    for every $k\leqslant n$ we have
    $\sigma\eta(p_k)\in[\theta_k(\bar{x};\bar{c})]$.
    By the choice of $\eta$, each $\eta(p_k)$ is $\autf_{\textrm{KP}}(\FC)$-invariant, so its $\aut(\FC)$-orbit is of bounded size.
    Thus, Lemma 7.15 from \cite{Krupinski2019} implies that 
    $$U_k:=\{\sigma/\autf_{\KP}(\FC):\sigma\eta(p_k)\in[\theta_k(\bar{x};\bar{c})]\}$$ 
is constructible. Hence, $f^{-1}[U]=\bigcap_{k\leqslant n}\, U_k$ is constructible, too.

    Since $\aut(\FC)/\autf_{\KP}(\FC)$ is a compact Hausdorff group, it has a normalized Haar measure, say $\mathfrak{h}$. Then the pushforward $f_*\mathfrak{h}$ is an $\aut(\FC)$-invariant, regular, Borel probability measure on $E$.
\end{proof}

\subsubsection{Newelski's map for first order theories}
Let $u^2=u\in\CM\unlhd_m E:=E(S_{\bar c}(\FC),\aut(\FC))$. Let $\FC' \succ \FC$ be a monster model in which $\FC$ is small.
A counterpart of Newelski's map in the context of first order theories was introduced in \cite[Section 2]{KruPiRze}. This is the continuous group epimorphism $\rho \colon u\mathcal{M} \to \gal_{\textrm{KP}}(T) =\aut(\FC')/\autf_{\textrm{KP}}(\FC')$ given by $\rho(\eta):=\sigma'/ \autf_{\textrm{KP}}(\FC')$ where $\sigma'(\bar c) \models \eta(\tp(\bar c/\FC))$. Theorem 0.7 from \cite{Krupinski2019} is a counterpart of  Fact \ref{fact: Theorem 1.5 from CS}; it is a counterpart of Newelski's conjecture for NIP amenable theories.

\begin{fact}[Theorem 0.7 in \cite{Krupinski2019}]\label{fact: NC for theories}
If $T$ is NIP and amenable, then $\rho$ is an isomorphism.
\end{fact}

Let us use the notation from Subsection \ref{subsubsection: Bohr below S_c(C)}. Namely, using Proposition \ref{proposition: Bohr below S(C) agrees with GAL_KP}, we have Bohr compactifications $\mathfrak{b}_1 \colon \aut(\mathfrak{C}) \to B_1=\gal_{\textrm{KP}}(T)$ and $\mathfrak{b}_2 \colon \aut(\mathfrak{C}) \to B_2$, the group epimorphism  $\mathfrak{f} \colon B_2 \to B_1$  satisfying $\mathfrak{f}  \circ \mathfrak{b}_2 =\mathfrak{b}_1$, and the epimorphism $\widehat{\theta} \colon u\CM \to B_2$.

\begin{lemma}\label{lemma: Newelski's map and theta for theories}
The Newelski's map $\rho$ recalled above coincides with $\mathfrak{f} \circ \widehat{\theta}$.
\end{lemma}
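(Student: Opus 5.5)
We need to show that for every $\eta \in u\CM$ we have $\rho(\eta) = \mathfrak{f}(\widehat{\theta}(\eta))$. We follow the pattern of the proof of Lemma \ref{lemma: Newelski's map and theta for definable groups}. By Remark \ref{remark: w2=theta}, $\widehat{\theta}(\eta) = w_2(\eta) = \eta e_{B_2}$. Since $\mathfrak{f}$ is an epimorphism of $\aut(\FC)$-ambits compatible with the actions of $E$ (these actions are the unique left continuous extensions of the group actions), we get $\mathfrak{f}(\widehat{\theta}(\eta)) = \eta\, e_{B_1}$. Here $B_1 = \gal_{\textrm{KP}}(T)$ by Proposition \ref{proposition: Bohr below S(C) agrees with GAL_KP}, and $\mathfrak{b}_1 = \pi_{\textrm{KP}}$. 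Equivalently, $\mathfrak{f}\circ w_2 = w_1 \circ \ev_{\tp(\bar c/\FC)}$ by Remark \ref{remark: B2 to B1}. So $\mathfrak{f}(\widehat\theta(\eta)) = w_1(\eta(\tp(\bar c/\FC)))$, and the task reduces to identifying $w_1 \colon S_{\bar c}(\FC) \to \gal_{\textrm{KP}}(T)$ explicitly.

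The key step is to show that $w_1(q) = \sigma'/\autf_{\textrm{KP}}(\FC')$ for any $\sigma' \in \aut(\FC')$ with $\sigma'(\bar c) \models q$. Here $\gal_{\textrm{KP}}(T)$ is computed in $\FC'$ and identified canonically with $\aut(\FC)/\autf_{\textrm{KP}}(\FC)$. We call this candidate map $h$ and verify three properties.

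\emph{Well-definedness.} If $\sigma'(\bar c) \equiv_{\FC} \sigma''(\bar c)$, then these tuples have the same type over a model, hence the same Lascar, and so the same KP, strong type. Therefore $\sigma'^{-1}\sigma''$ lies in $\autf_{\textrm{KP}}(\FC')$, using that $\bar c$ enumerates a model.

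\emph{Equivariance and base point.} $h$ maps $\tp(\bar c/\FC)$ to the neutral element. For $\tau \in \aut(\FC)$, extended to $\tau' \in \aut(\FC')$, we have $h(\tau q) = \tau' \sigma'/\autf_{\textrm{KP}}$. This matches the compatibility of the canonical identification $\aut(\FC)/\autf_{\textrm{KP}}(\FC) \cong \aut(\FC')/\autf_{\textrm{KP}}(\FC')$.

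\emph{Continuity.} This essentially follows from the definition of the logic topology via the map $\pi_{\textrm{KP},\bar m}$ applied with the small model $\FC \prec \FC'$ and $\bar m = \bar c$. Indeed, $h$ is exactly $\pi_{\textrm{KP},\bar c}$ computed in $\FC'$, which is continuous (it is a quotient map defining the logic topology).

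Since $h$ is then an epimorphism of ambits $S_{\bar c}(\FC) \to B_1$, uniqueness of such epimorphisms (the remark after Definition \ref{definition: Bohr compactification below ambit A}, by density of the orbit of $\tp(\bar c/\FC)$) gives $h = w_1$.

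Finally, applying this with $q = \eta(\tp(\bar c/\FC))$ gives $\mathfrak{f}(\widehat\theta(\eta)) = \sigma'/\autf_{\textrm{KP}}(\FC')$ with $\sigma'(\bar c) \models \eta(\tp(\bar c/\FC))$, which is $\rho(\eta)$ by definition. The main obstacle I expect is the bookkeeping in the continuity and equivariance step. There one must check carefully that the identification of $\gal_{\textrm{KP}}(T)$ computed in $\FC$ with that computed in $\FC'$ is a topological isomorphism compatible with $\pi_{\textrm{KP}}$ on $\aut(\FC)$. I would handle this by appealing to the independence of $\gal_{\textrm{KP}}(T)$ and its logic topology from the choice of monster model and of the small model, as recalled in Subsection \ref{subsubsection: Bohr below S_c(C)}.
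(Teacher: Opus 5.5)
Your proof is correct, and it takes a somewhat more economical route than the paper's. The paper also begins from $(\mathfrak{f}\circ\widehat{\theta})(\eta)=\eta\, e_{B_1}$. It then identifies the entire left continuous action of $E$ on $\gal_{\textrm{KP}}(T)$, showing that $\eta(\tau'/\autf_{\textrm{KP}}(\FC'))=(\sigma'\tau')/\autf_{\textrm{KP}}(\FC')$ for every $\tau'$. Continuity in $\eta$ at an arbitrary point $\tau'/\autf_{\textrm{KP}}(\FC')$ is checked by hand, via a partial type defining a closed set in the logic topology. That check needs \cite[Proposition 2.2]{KruPiRze}, to realize $\eta(\tp(\bar c/\FC))$ and $\eta(\tp(\tau'(\bar c)/\FC))$ by $\sigma'(\bar c)$ and $\sigma'(\tau'(\bar c))$ with a single $\sigma'$.

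You instead note that only the orbit map of the neutral element matters. You use the factorization $\mathfrak{f}\circ w_2=w_1\circ\ev_{\tp(\bar c/\FC)}$ from Remark \ref{remark: B2 to B1} to push the problem down to $S_{\bar c}(\FC)$. You then identify $w_1$ with $\pi_{\textrm{KP},\bar c}$ (computed in $\FC'$, with $\FC$ as the small model) by uniqueness of pointed flow morphisms. Continuity is then automatic, because the logic topology is by definition the quotient topology along $\pi_{\textrm{KP},\bar c}$, and Proposition \ref{proposition: Bohr below S(C) agrees with GAL_KP} matches it with the topology of $B_1$. In fact, the identity $w_1=i_1\circ\pi_{\textrm{KP},\bar c}$ is already visible on the orbit in the diagram of Lemma \ref{lemma: induced top weaker than logic top}, so your argument essentially extends that diagram by density and continuity.

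Your version avoids \cite[Proposition 2.2]{KruPiRze}. The paper's version yields extra information: an explicit formula for the whole $E$-action on $\gal_{\textrm{KP}}(T)$. Both rest on the same standard identification of $\gal_{\textrm{KP}}(T)$ and its logic topology as computed in $\FC$ and in $\FC'$.
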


\begin{proof}
Since $\widehat{\theta}(\eta): = \eta e_{B_2}$ and $\mathfrak{f}$ is an epimorphism of $\aut(\FC)$-ambits, we have $(\mathfrak{f} \circ \widehat{\theta})(\eta) = \eta (\id_{\FC'}/\autf_{\textrm{KP}}(\FC'))$. So it is enough to show that the unique left continuous action of $E$ on $\gal_{\textrm{KP}}(T)$ extending the action of $\aut(\FC)$ is given by $\eta (\tau'/\autf_{\textrm{KP}}(\FC'))=(\sigma' \tau')/\autf_{\textrm{KP}}(\FC')$ where $\sigma'(\bar c) \models \eta(\tp(\bar c/\FC))$ (and $\sigma',\tau' \in \aut(\FC')$). For that it remains to show that the last formula defines a left continuous function extending the action of $\aut(\FC)$. It is well-defined as $\tp(\sigma_1'(\bar c)/\FC) = \tp(\sigma_2'(\bar c)/\FC)$ implies $\sigma_1'/\autf_{\textrm{KP}}(\FC')  = \sigma_2'/\autf_{\textrm{KP}}(\FC')$. The fact that it extends the action of $\aut(\FC)$ is obvious. For continuity consider any $\tau'/\autf_{\textrm{KP}}(\FC') \in \gal_{\textrm{KP}}(T)$ and any closed $C\subseteq \gal_{\textrm{KP}}(T)$. Then, for the quotient map $\pi'_{\textrm{KP}} \colon \aut(\FC') \to \gal_{\textrm{KP}}(T)$, by the definition of the logic topology, there exists a partial type $\pi(\bar x)$ over $\FC$ such that $\pi'^{-1}_{\textrm{KP}}[C] = \{ \sigma' \in \aut(\FC'): \FC' \models \pi(\sigma'(\bar c))\}$. So $\{\eta \in E: \eta(\tau'/\autf_{\textrm{KP}}(\FC')) \in C\} = \{\eta \in E : \eta(\tp(\tau'(\bar c)/\FC)) \in [\pi]\}$ which is closed (to see the last equality, one should note that for any $\eta \in E$ there is $\sigma' \in \aut(\FC')$ with  $\sigma'(\bar c) \models \eta(\tp(\bar c/\FC))$ and  $\sigma'(\tau'(\bar c)) \models \eta(\tp(\tau'(\bar c)/\FC))$; see \cite[Proposition 2.2]{KruPiRze}).
\end{proof}

So the conclusion of GNC applied to the ambit $(S_{\bar c}(\FC),\tp(\bar{c}/\FC),\aut(\FC))$ is weaker than the conclusion of Fact \ref{fact: NC for theories}, similarly to the case of definable groups (see the comments after Lemma \ref{lemma: Newelski's map and theta for definable groups}). However, for definable groups, assuming NIP, both conclusions are equivalent by Theorem 4.15 from \cite{CPS14}, whereas for theories a counterpart of \cite[Theorem 4.15]{CPS14} is left as Question \ref{question: b_1=b_2 under NIP}. Note that assuming NIP and amenability of $T$, Fact \ref{fact: NC for theories} together with Lemma \ref{lemma: Newelski's map and theta for theories} imply that $\mathfrak{b}_1$ and $\mathfrak{b}_2$ coincide, but it would be very interesting to find a simple direct proof. Then we would get that Theorem \ref{theorem: GNC} implies Fact \ref{fact: NC for theories}. Recall that amenability of $T$ implies G-compactness by \cite[Theorem 4.1]{HruKruPi} (in the NIP case, this is a much easier \cite[Corollary 2.26]{HruKruPi}), so by Corollary \ref{corollary: G-compactness implies equality of kernels}, we get $\ker(\mathfrak{b}_2)=\ker(\mathfrak{b}_1)$, but it is still weaker than the equality (or isomorphism) of $\mathfrak{b}_1$ and $\mathfrak{b}_2$.

\subsubsection{Two problems}\label{subsection: two problems}

\begin{enumerate}
\item By \cite[Theorem 3.17]{CPS14}, for a group $G$ definable in a NIP structure $M$ we have that $(S_G(M), G)$ is amenable if and only if $(E(S_G(M),G),G)$ is amenable. 
\item By Proposition \ref{proposition: transfer of amenability for theories}, for a first order theory $T$ with NIP we have that $(S_{\bar c}(\FC),\aut(\FC))$ is amenable if and only if $(E(S_{\bar c}(\FC),\aut(\FC)),\aut(\FC))$ is amenable.
\end{enumerate}

In a general context of a tame flow $(X,G)$, Fact \ref{Fact: 5.6} tells us that amenability of $(E(X,G),G)$ is equivalent to hereditary amenability of $(X,G)$. A simple example discussed after Conjecture \ref{conjecture: main conjecture} shows that hereditary amenability cannot be weakened to amenability. It is also possible to give an example of a tame ambit (not just a flow) where amenability is not sufficient for the above equivalence, but it is much more complicated so we skip it. However, in the model-theoretic situations (1) and (2), we do not need hereditary amenability. A posteriori (i.e. using (1) and (2)), hereditary amenability follows from amenability in those two contexts. This leads to

\begin{problem}\label{problem: 1}
Find a dynamical property of a tame ambit $(X,x_0,G)$  which holds for ambits in (1) and (2) and which implies that amenability of $(X,G)$ is equivalent to hereditary amenability of $(X,G)$ (and so to amenability of $(E(X,G),G)$)?
\end{problem}

By \cite{CPS14}, if $G$ is a group definable in a NIP structure $M$, then the Bohr compactifications of $G$ below $(S_G(M),\tp(e/M),G)$ and $(E(S_G(M),G),\id_{S_G(M)}, G)$ coincide. Question \ref{question: b_1=b_2 under NIP} asks whether for a NIP theory $T$ the Bohr compactifications of $\aut(\FC)$ below  $(S_{\bar c}(\FC),\tp(\bar{c}/\FC),\aut(\FC))$ and $(E(S_{\bar c}(\FC),\aut(\FC)),\id_{S_{\bar c}(\FC)},\aut(\FC))$ coincide, with expected positive answer. This leads to

\begin{problem}\label{problem: 2}
Find a dynamical property of a tame ambit $(X,x_0,G)$ which holds for the above two model-theoretic ambits and which implies that the Bohr compactifications of $G$ below $(X,x_0,G)$ and $(E(X,G),\id_X,G)$ coincide\footnote{The first author discussed a similar problem with Eli Glasner, who provided a solution. Let us comment that here we would like to recognize the general mechanism from topological dynamics which is behind the model-theoretic techniques.}.
\end{problem}

\subsection{Local Newelski's Conjecture}\label{subsection: local NC}
Let $G$ be a group definable in a structure $M$, $\FC \succ M$ a monster model, and $\bar{G}:=G(\FC)$. Let $\Delta$ be a (finite or infinite) collection of formulas $\varphi(x,y)$, each of which implies $G(x)$, and such that $\{ \varphi(G,b): b \in M\}$ is closed under the left translations by the elements of $G$. By a $\Delta$-formula we mean a Boolean combination of instances of formulas from $\Delta$. Finally, $S_{G,\Delta}(M)$ denotes the space of complete $\Delta$-types over $M$  (i.e., complete consistent collections of $\Delta$-formulas over $M$) concentrated on $G$. Then $G$ acts on the left on $S_{G,\Delta}(M)$ via $g\cdot \tp_{\Delta}(a/M):=\tp_{\Delta}(ga/M)$, and $(S_{G,\Delta}(M), \tp_{\Delta}(e/M), G)$ is an ambit. Let $E:=E(S_{G,\Delta}(M),G)$ and $u^2=u \in \CM \unlhd_m E$. 

We consider our usual map $\widehat{\theta} = w_2|_{u\CM} \colon u\CM \to B_2$, where $\mathfrak{b}_2 \colon G \to B_2$ is the Bohr compactification of $G$ below $(E,\id_{S_{G,\Delta}(M)},G)$.

\begin{theorem}\label{theorem: local NC}
If each formula in $\Delta$ has NIP and $(S_{G,\Delta}(M),G)$ is hereditarily amenable, then $\widehat{\theta}$ is an isomorphism of topological groups.
\end{theorem}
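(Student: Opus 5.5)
The plan is to reduce Theorem \ref{theorem: local NC} directly to Theorem \ref{thm:GNC 0}, applied to the flow $(X,G):=(S_{G,\Delta}(M),G)$. Hereditary amenability is assumed outright, so the only hypothesis left to check is that $(S_{G,\Delta}(M),G)$ is \emph{tame}. Once tameness is in hand, Theorem \ref{thm:GNC 0} says that $w_2|_{u\CM}\colon u\CM\to B_2$ is an isomorphism of topological groups. This is exactly the map $\widehat{\theta}$ of the statement, by Remark \ref{remark: w2=theta}.

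For tameness I would argue as follows. The clopen sets $[\psi]$, with $\psi$ ranging over $\Delta$-formulas over $M$, form a basis of the totally disconnected compact space $S_{G,\Delta}(M)$. By Stone--Weierstrass, the real linear combinations of their characteristic functions are uniformly dense in $C(S_{G,\Delta}(M),\mathbb{R})$. I would then use two standard facts from \cite{GM23}: the tame functions form a uniformly closed subalgebra of $C(X,\mathbb R)$, and they are closed under finite sums and products. Granting these, it suffices to show that each characteristic function $\chi_{[\varphi(x,b)]}$, with $\varphi\in\Delta$ and $b\in M$, is tame.

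Fix such a function. Its $G$-translates are $g\chi_{[\varphi(x,b)]}=\chi_{g[\varphi(x,b)]}$. By the closure assumption on $\Delta$, $g[\varphi(x,b)]=[\varphi(x,b_g)]$ for some $b_g\in M$, where the set $\varphi(G,b_g)$ is the left translate $g\varphi(G,b)$. Now suppose the translates contained an infinite independent sequence. Since $\{0,1\}$-valued functions can only be separated with $a<b$ chosen in $(0,1)$, we would get $b_n\in M$ such that for all finite disjoint $P,Q\subseteq\omega$ some type $p\in S_{G,\Delta}(M)$ contains $\varphi(x,b_n)$ for $n\in P$ and $\neg\varphi(x,b_n)$ for $n\in Q$. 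Each such $p$ is finitely satisfiable in $G(M)$, so these conditions are realized by some $a\in G(M)$. This means $\varphi(x,y)$ shatters $\{b_n\}_n$ in the dual sense, which contradicts NIP of $\varphi$, since NIP is equivalent to NIP of the opposite formula.

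I expect the main obstacle to be the passage from tameness of these generating indicator functions to tameness of all of $C(X,\mathbb R)$. Over general real-valued continuous functions one cannot translate directly into the combinatorics of $\varphi$, so the step depends on closure properties that I would verify in the form given in \cite{GM23}. If that citation does not fit cleanly, my fallback is the characterization of tameness through the family of clopen translates $\{g[\psi]\}$: it is enough that no sequence of these sets is independent. Independence of translates of a Boolean combination $\psi$ of finitely many instances $\varphi_i(x,b_i)$ then contradicts NIP of that Boolean combination, and NIP is preserved under Boolean combinations of NIP formulas.
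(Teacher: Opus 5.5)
Your proposal is correct and is essentially the paper's proof. The paper likewise notes that NIP of the formulas in $\Delta$ makes the characteristic functions of the clopen subsets of $S_{G,\Delta}(M)$ tame, concludes that the flow is tame, and then applies Theorem \ref{thm:GNC 0}. You supply the details the paper leaves implicit: Stone--Weierstrass together with closure of tame functions under algebra operations and uniform limits, and the translation of an independent sequence of translates into the independence property of $\varphi$. That translation relies on reading the hypothesis as saying that each formula's own family of instances is closed under left translation, which is the reading you adopted.
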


\begin{proof}
Since each formula in $\Delta$ has NIP, the characteristic functions of clopens in $S_{G,\Delta}(M)$ are all tame which implies that the flow $(S_{G,\Delta}(M), G)$ is tame. Thus, the result follows from Theorem \ref{thm:GNC 0}.
\end{proof}

The key thing is to describe $B_2$ in a more explicit way. For that we first recall a piece of basic general theory developed by Newelski.

Let $\mathcal{A}$ be a {\em Boolean $G$-algebra} of subsets of $G$, i.e.\ a Boolean algebra closed under the left translations by the elements of $G$. 
By $S(\mathcal{A})$ we denote the Stone space of $\mathcal{A}$. The group $G$ acts naturally on $S(\mathcal{A})$ by left translations, turning $S(\mathcal{A})$ into a $G$-ambit with the principal ultrafilter $p_e:=\{A \in \mathcal{A}: e \in A\}$ as the distinguished point. 
For $p \in S(\mathcal{A})$ the operator $d_p \colon \mathcal{A} \to \mathcal{P}(G)$ is defined by: 
$$d_p(A):=\{ g\in G: g^{-1}A \in p\}.$$ 
Each $d_p$ is a homomorphism of Boolean $G$-algebras. The algebra $\mathcal{A}$ is called {\em d-closed} if $d_p[\mathcal{A}]=\mathcal{A}$
for every $p\in S(\mathcal{A})$.
The next fact was observed by Newelski, but appeared in Adam Malinowski's thesis \cite{Mal}.

\begin{fact}\cite[Fact 2.22 and Theorem 2.24]{Mal}\label{fact: A^d}
Let $\mathcal{A}$ be a Boolean $G$-algebra of subsets of $G$. There exists a smallest d-closed $G$-algebra of subsets of $G$ containing $\mathcal{A}$ which is denoted by $\mathcal{A}^d$. The d-closure $\mathcal{A}^d$ is realized in one step, i.e.\ $\mathcal{A}^d$ is precisely the Boolean algebra generated by $\{d_p(A): A \in \mathcal{A}, p \in S(\mathcal{A})\}$. Moreover, $E(S(\mathcal{A})) \cong S(\mathcal{A}^d)$ as semigroups and as $G$-ambits (with respect to a unique left-continuous semigroup operation $*$ on $S(\mathcal{A}^d)$ extending the action of $G$), with $\id_{S(\mathcal{A})}$ and $p_e$ as the distinguished points, respectively (where $p_e$ is the principal ultrafilter at $e$).
\end{fact}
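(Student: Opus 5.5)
The plan is to identify $d_p$ with a composition of evaluation maps in $E(S(\mathcal{A}),G)$, and then read off all three claims from that identification.

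\emph{Dictionary.} Write $X:=S(\mathcal{A})$. For $A\in\mathcal{A}$ let $[A]\subseteq X$ be the basic clopen set. Since $Gp_e$ is dense in $X$, every $p\in X$ is a limit of a net $(g_i)$ in $G$ with $g_ip_e\to p$. The key observation is that for $\eta\in E:=E(X,G)$ and $q\in X$,
$$d_q(A)=\{g\in G: gq\in[A]\}=\ev_{p_e}^{-1}\bigl[\{\xi\in E: \xi(q)\in[A]\}\bigr]\cap G$$
when $G$ is identified with $\{g\id_X\}$. Thus each $d_q(A)$ is the trace on $G$ of a basic clopen subset of $E$. Since $E$ is a closed subspace of $X^X$ with $X$ zero-dimensional, $E$ is zero-dimensional. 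The sets $\{\xi:\xi(q)\in[A]\}$ form a clopen subbasis of $E$, and $G$ is dense in $E$.

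\emph{Step 1 (one-step closure).} Let $\mathcal{B}$ be the Boolean algebra generated by the sets $d_q(A)$, for $A\in\mathcal{A}$ and $q\in X$. By the dictionary, $\mathcal{B}$ is exactly the algebra of traces on $G$ of clopen subsets of $E$. Trace is injective on clopens by density, so $\mathcal{B}$ is isomorphic to the clopen algebra of $E$.

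$\mathcal{B}$ contains $\mathcal{A}$, because $d_{p_e}(A)=A$. It is $G$-invariant, because $h\,d_q(A)=d_{hq}(A)$.

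It remains to show $\mathcal{B}$ is d-closed. Take $r\in S(\mathcal{B})$. By Stone duality, $r$ corresponds to a point $\zeta\in E$. For a clopen $C\subseteq E$ with trace $B$, unwinding definitions gives $d_r(B)=\{g: g^{-1}B\in r\}=\{g: g\zeta\in C\}$. This is the trace of the preimage of $C$ under the map $\xi\mapsto\xi\zeta$, which is continuous by left continuity of $E$. That preimage is clopen, so $d_r(B)\in\mathcal{B}$.

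Minimality follows because any d-closed algebra containing $\mathcal{A}$ must contain all $d_q(A)$. Hence $\mathcal{A}^d=\mathcal{B}$, and this gives both existence and the one-step description.

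\emph{Step 2 (semigroup and ambit isomorphism).} From Step 1, $S(\mathcal{A}^d)\cong E$ as Stone spaces, via ultrafilters of clopens of $E$. This is a $G$-map sending $p_e$ to $\id_X$, since both correspond to the principal ultrafilter at $e$. It is therefore an isomorphism of $G$-ambits.

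Transport the composition of $E$ to get a left continuous operation $*$ on $S(\mathcal{A}^d)$ extending the $G$-action. Uniqueness holds because a left continuous operation extending the action is determined on the dense set $Gp_e\times S(\mathcal{A}^d)$ by $(gp_e)*q=gq$. It then extends uniquely in the left variable by continuity; this is essentially Fact \ref{fact: existence of semigroup operation}.

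\emph{Main obstacle.} The delicate point is the identification in Step 1 of $d_r$, for $r\in S(\mathcal{B})$ rather than $r\in S(\mathcal{A})$, with right multiplication in $E$. One has to check carefully that the ultrafilter-limit formalism matches the order of composition in $E$, that is, $g\zeta$ versus $\zeta g$, using only left continuity. I would verify this first on generators $B=d_q(A)$, where it reduces to $d_r(d_q(A))=d_{\zeta(q)}(A)$.
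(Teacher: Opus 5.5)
Your argument is a correct, self-contained proof. The paper gives no proof of this statement: it is quoted from Malinowski's thesis, building on Newelski. In Newelski's treatment, for a d-closed algebra one defines $p*q:=\{A: d_q(A)\in p\}$ on the Stone space and checks associativity, left continuity and the isomorphism with the Ellis semigroup by hand.

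Your route is different and more economical. You identify the algebra $\mathcal{B}$ generated by the sets $d_q(A)$ with $\mathrm{Clop}(E)$ via $C\mapsto\iota^{-1}[C]$, where $\iota(g):=g\id_X$.
\begin{itemize}
\item Stone duality then gives $S(\mathcal{B})\cong E$ at once.
\item d-closedness of $\mathcal{B}$ becomes exactly continuity of the right translations $R_\zeta(\xi)=\xi\zeta$, through $d_{r_\zeta}(\iota^{-1}[C])=\iota^{-1}[R_\zeta^{-1}[C]]$.
\item The transported product is $r_\xi*r_\zeta=r_{\xi\zeta}$. Unwinding, this is $p*q=\{B\in\mathcal{B}: d_q(B)\in p\}$, so you recover Newelski's formula without checking the semigroup axioms directly.
\end{itemize}
The ``main obstacle'' you flag is already settled by your own computation. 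Indeed $g^{-1}\iota^{-1}[C]=\iota^{-1}[\{\xi: g\xi\in C\}]$, so the relevant condition is $g\zeta\in C$. On generators this gives $d_{r_\zeta}(d_q(A))=d_{\zeta(q)}(A)$.

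A few small repairs are needed.
\begin{itemize}
\item The identity $h\,d_q(A)=d_{hq}(A)$ is false. In fact $h\,d_q(A)=d_q(hA)$, whereas $d_{hq}(A)=d_q(A)h^{-1}$ is a right translate. $G$-invariance still holds, since $hA\in\mathcal{A}$.
\item The expression $\ev_{p_e}^{-1}[\cdots]\cap G$ does not type-check; you mean $\iota^{-1}[\{\xi\in E:\xi(q)\in[A]\}]$. Since $\iota$ need not be injective (e.g.\ $\mathcal{A}=\{\emptyset,G\}$), ``trace'' must be read as $\iota$-preimage.
\item The claim that every clopen subset of $E$ is a finite Boolean combination of the subbasic sets uses compactness of $E$; say so.
\item For minimality, extend $q\in S(\mathcal{A})$ to an ultrafilter $q'$ on the given d-closed $\mathcal{D}\supseteq\mathcal{A}$. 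Then note $d_{q'}(A)=d_q(A)$, because $g^{-1}A\in\mathcal{A}$.
\item You prove $d_r[\mathcal{B}]\subseteq\mathcal{B}$, and that inclusion is the right notion of d-closedness. The equality $d_p[\mathcal{A}]=\mathcal{A}$ in the paper's definition cannot be meant literally: for infinite $G$, already $\mathcal{P}(G)$ violates it. Take $\zeta$ in a minimal left ideal of $\beta G$. Then $\zeta=u\zeta$ for some non-principal idempotent $u$, so $\zeta\in\overline{\{g\zeta: g\neq e\}}$, and hence $\{e\}\notin d_\zeta[\mathcal{P}(G)]$.
\end{itemize}
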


Applying this fact in our situation to $\mathcal{A}$ defined as the Boolean algebra generated by $\{ \varphi(G,b): b \in M\}$, we produce $\mathcal{A}^d$ as above, and we can replace $(E,\id_{S_{G,\Delta}(M)},G)$ by $(S(\mathcal{A}^d),p_e,G)$. Now, let $M'$ be the structure with the universe $G$ equipped with the group operation and all members of $\mathcal{A}^d$ as predicates. Let $\Delta'$ consist of these predicates. Then $(S(\mathcal{A}^d),p_e,G)$ is naturally identified with $(S_{\Delta'}(M'), \tp_{\Delta'}(e/M'),G)$ and $\Delta'$ is easily seen to be closed under the left and right translations by the elements of $G$ (i.e., $\mathcal{A}^d$ has this property). We describe the Bohr compactification of $G$ below  $(S_{\Delta'}(M'), \tp_{\Delta'}(e/M'),G)$ under the additional assumptions that $\Delta'$ is closed under group inversion.
More generally, we describe the Bohr compactification of $G$ below the ambit of all complete $\Sigma$-types over $M$ for suitable collections of formulas $\Sigma$. The proof follows the standard method as in \cite{GPPb}, but there are some new issues to take care of in the local version.

\begin{proposition}\label{proposition: Bohr below local types}
Let $\Sigma$ be a (finite or infinite) collection of formulas $\varphi(x,y)$, each of which implies $G(x)$, and such that $\{ \varphi(G,b): b \in M\}$ is closed under the left translations by the elements of $G$ and under group inversion. Let $\bar{G}^{00}_{\Sigma,M}$ be the smallest bounded index $\Sigma$-type-definable over $M$ subgroup of $\bar{G}$ (where $\Sigma$-type-definable over $M$ means that $\bar{G}^{00}_{\Sigma,M} = \pi(\bar G)$ for some partial $\Sigma$-type $\pi$ over $M$). Then the quotient map $G \to \bar{G}/\bar{G}^{00}_{\Sigma,M}$ is the Bohr compactification of $G$ below the ambit  $(S_{G,\Sigma}(M), \tp_{\Sigma}(e/M), G)$.
\end{proposition}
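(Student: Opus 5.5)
We follow the method of \cite[Proposition 3.4]{GPPb} and work with Definition (B), that is, with compactifications of $G$ below $(S_{G,\Sigma}(M),\tp_\Sigma(e/M),G)$. The proof has two steps: first we show that $G\to \bar G/\bar G^{00}_{\Sigma,M}$ is such a compactification, and then we show that every such compactification factors through it.

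\emph{Step 1 (existence and basic properties).} First we check that $\bar G^{00}_{\Sigma,M}$ exists. The intersection of all bounded index subgroups which are $\Sigma$-type-definable over $M$ is again $\Sigma$-type-definable over $M$. It still has bounded index, because there are only boundedly many partial $\Sigma$-types over $M$. Closure of $\{\varphi(G,b)\}$ under inversion is meant to let us define the group operation and inverses on the quotient using $\Sigma$-formulas. We equip $\bar G/\bar G^{00}_{\Sigma,M}$ with the $\Sigma$-logic topology: a set is closed iff its preimage is $\Sigma$-type-definable over $M$. We then show this is a compact Hausdorff group. The proof should be the standard one for the logic topology, with every type replaced by a $\Sigma$-type; left translation invariance of $\Sigma$ and closure under inversion are exactly what we need there.

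\emph{Step 1 (the map $f$).} Next we need a continuous map $f\colon S_{G,\Sigma}(M)\to \bar G/\bar G^{00}_{\Sigma,M}$ with $f(\tp_\Sigma(a/M)):=a\bar G^{00}_{\Sigma,M}$. To see it is well defined, let $H=\pi(\bar G)$ with $\pi$ a $\Sigma$-type over $M$ of bounded index. By compactness, the relation $a^{-1}b\in H$ is then coarser than having the same $\Sigma$-type over $M$. This uses the usual argument with an $M$-indiscernible-free substitute, namely that bounded index $\Sigma$-type-definable cosets are unions of $\Sigma$-type classes. For continuity we take a closed $C$. Its preimage $D$ in $\bar G$ is $\Sigma$-type-definable over $M$ and closed under $\equiv_{\Sigma,M}$, so $f^{-1}[C]=[\text{that }\Sigma\text{-type}]$ is closed. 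Equivariance and $f(\tp_\Sigma(e/M))=e$ are clear. Density of the image of $G$ holds since $G=G(M)$ meets every nonempty open set, which is given by a consistent $\Sigma$-formula over $M$.

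\emph{Step 2 (universality).} Let $(K,c,f')$ be any compactification below the ambit. We define $H:=\{a\in\bar G: f'(\tp_\Sigma(a/M))=e_K\}$, or more precisely the subgroup obtained by pulling back the kernel of the lifted map $\bar G\to K$, $a\mapsto f'(\tp_\Sigma(a/M))$. We then show this lifted map is a group homomorphism. On $G(M)$ it coincides with $c$, and since $f'$ is continuous, the preimage of any closed set is $\Sigma$-type-definable over $M$. Disjoint closed sets in $K$ have preimages separated by a $\Sigma$-formula over $M$. The homomorphism identity $F(ab)=F(a)F(b)$ is a closed condition that holds on $G(M)\times G(M)$, and we transfer it to $\bar G$ by a standard approximation argument of \cite{GPPb}. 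This gives a $\Sigma$-type-definable over $M$ subgroup of bounded index $\le|K|$, hence $H\supseteq\bar G^{00}_{\Sigma,M}$, and we obtain a continuous epimorphism $\bar G/\bar G^{00}_{\Sigma,M}\to K$.

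The transfer of the homomorphism property from $G(M)$ to $\bar G$ is the main obstacle. The issue is that the products $ab$ have to be handled by $\Sigma$-formulas only. We expect to use that $\{\varphi(G,b)\}$ is closed under left translations and inversion in order to express "$ab\in\varphi(\bar G,d)$" through $\Sigma$-formulas in $a$ with parameters from $M$ when $b\in G(M)$, and then to apply a two-step approximation, first in $b$ and then in $a$.
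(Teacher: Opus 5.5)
Your overall architecture essentially matches the paper's. You show that $G\to\bar G/\bar G^{00}_{\Sigma,M}$ is a compactification below the ambit, and then factor an arbitrary compactification $(K,c,f')$ through it via $F(a):=f'(\tp_\Sigma(a/M))$. The problem is that you have placed the difficulty in the wrong step, and the genuinely hard step is not proved.

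\textbf{The gap: well-definedness of $f$.} This is the statement $(*)$ that $\tp_\Sigma(a/M)=\tp_\Sigma(b/M)$ implies $a^{-1}b\in\bar G^{00}_{\Sigma,M}$. It does not follow ``by compactness'', and the ``substitute'' you invoke is just $(*)$ restated. Even for complete types one needs that $M$ is a model plus an indiscernibility argument, and here $\tp_\Sigma(a/M)$ is much weaker than $\tp(a/M)$. The paper's route runs as follows.
\begin{enumerate}
\item The coset relation of $\bar G^{00}_{\Sigma,M}$ is type-defined over $M$ by formulas from $\Psi=\{\varphi(x^{-1}z,y):\varphi\in\Sigma\}$.
\item A bounded $\Psi$-type-definable equivalence relation over $M$ relates the terms of any $\Psi$-indiscernible sequence over $M$. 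So it suffices to find $(c_i)$ with both $a^\frown(c_i)$ and $b^\frown(c_i)$ $\Psi$-indiscernible over $M$.
\item Such $(c_i)$ is a Morley sequence over $M,a,b$ in a global type finitely satisfiable in $G$ extending $\tp_\Sigma(a/M)$.
\item For $a$ and $b$ to behave identically with respect to this sequence, one needs that for each $c\in G$ the formula $\varphi(x^{-1}c,d)$, which defines $c\cdot\varphi(\bar G,d)^{-1}$, is a $\Sigma$-formula over $M$.
\end{enumerate}
Step 4 is exactly where closure under inversion and left translations enters, not in the group structure or topology on the quotient. Your argument for $(*)$ never uses inversion, whereas the paper explicitly leaves open whether that hypothesis can be dropped.

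\textbf{A related unsupported step.} Your claim that the $\Sigma$-logic topology makes the quotient a compact Hausdorff group ``by the standard proof with $\Sigma$-types'' is not justified. That proof manipulates products, projections and translates by parameters outside $M$, and these need not stay $\Sigma$-type-definable over $M$. Once $(*)$ is known, the $\Sigma$-logic topology coincides with the usual $M$-logic topology, and this part becomes standard.

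\textbf{The step you flagged is routine.} The homomorphism property needs neither inversion nor a two-step approximation. Suppose $F(ab)\neq F(a)F(b)$, and choose disjoint open sets $U\ni F(ab)$ and $V\ni F(a)F(b)$. Combine three facts:
\begin{itemize}
\item $\{f'(p)\}=\bigcap_{\psi\in p}\overline{c[\psi(G)]}$;
\item $\overline{c[\psi_1(G)\psi_2(G)]}=\overline{c[\psi_1(G)]}\cdot\overline{c[\psi_2(G)]}$;
\item compactness.
\end{itemize}
These give $\Sigma$-formulas $\varphi\in\tp_\Sigma(ab/M)$, $\psi_1\in\tp_\Sigma(a/M)$ and $\psi_2\in\tp_\Sigma(b/M)$ with $\varphi(G)\cap\psi_1(G)\psi_2(G)=\emptyset$. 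This contradicts elementarity, since $ab$ witnesses that $\exists x_1x_2\,(\psi_1(x_1)\wedge\psi_2(x_2)\wedge\varphi(x_1x_2))$ holds in $\FC$. Products only need to be first-order expressible over $M$, not $\Sigma$-definable.
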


\begin{proof}
Let $\mathfrak{b} \colon G \to B$ be the Bohr compactification of $G$ below $(S_{G,\Sigma}(M), \tp_{\Sigma}(e/M), G)$. Note that $\mathfrak{b}$ is {\em $\Sigma$-definable} in the sense that the preimages of any two disjoint closed subsets of $B$ can be separated by a $\Sigma$-definable subset of $G$. Adapting the proof of Lemma 3.2 from \cite{GPPb}, we get that the unique $G$-flow epimorphism $w \colon S_{G,\Sigma}(M) \to B$ such that $w \circ t=\mathfrak{b}$ for $t \colon G \to S_{G,\Sigma}(M)$ given by $t(g):=\tp_{\Sigma}(g/M)$ satisfies the formula 
$$w(p)= \bigcap_{\varphi(x) \in p} \overline{\mathfrak{b}[\varphi(G)]}.$$
We also have the function $\mathfrak{b}^* \colon \bar{G} \to B$ given by $\mathfrak{b}^*(a) := w(\tp_{\Sigma}(a/M))$. A slight elaboration of the proof of \cite[Proposition 3.4]{GPPb} shows that $\mathfrak{b}^*$ is a group homomorphism. An elaboration, which we give below, is needed, because  $\Sigma$-definable sets are not closed under the group products.

\begin{clm}
$\mathfrak{b}^*$  is a group homomorphism.
\end{clm}

\begin{clmproof}
Suppose for a contradiction that $\mathfrak{b}^*(ab) \ne \mathfrak{b}^*(a)\mathfrak{b}^*(b)$ for some $a,b \in \bar G$. Then $\mathfrak{b}^*(ab) \in U$ and $ \mathfrak{b}^*(a)\mathfrak{b}^*(b) \in V$ for some open, disjoint subsets $U$ and $V$ of $B$. So there is a $\Sigma$-formula  $\varphi(x) \in \tp_{\Sigma}(ab/M)$ such that $\overline{\mathfrak{b}[\varphi(G)]} \subseteq U$. Set $p:=\tp_{\Sigma}(a/M)$ and  $q:=\tp_{\Sigma}(b/M)$. Then 
\begin{align*}
\bigcap_{\psi_1\in p, \psi_2 \in q} \overline{\mathfrak{b}[\psi_1(G) \cdot \psi_2(G)]} &= \bigcap_{\psi_1 \in p, \psi_2 \in q} \overline{\mathfrak{b}[\psi_1(G)]} \cdot  \overline{\mathfrak{b}[\psi_2(G)]}\\ &=\bigcap_{\psi_1 \in p}  \overline{\mathfrak{b}[\psi_1(G)]} \cdot \bigcap_{\psi_2 \in q}  \overline{\mathfrak{b}[\psi_2(G)]}\\ 
&=\{\mathfrak{b}^*(a) \cdot \mathfrak{b}^*(b)\}.
\end{align*}
Hence, there are $\psi_1(x) \in p$ and $\psi_2(x) \in q$ such that $\overline{\mathfrak{b}[\psi_1(G) \cdot \psi_2(G)]} \subseteq V$. Since $U \cap V = \emptyset$, we conclude that $\varphi(G) \cap (\psi_1(G) \cdot \psi_2(G)) = \emptyset$ which contradicts the fact that $\varphi(\bar G) \cap (\psi_1(\bar G) \cdot \psi_2(\bar G)) \ne \emptyset$ witnessed by the element $ab$. 
\end{clmproof}

To finish the proof, it is enough to show that $\ker \mathfrak{b}^*= \bar{G}^{00}_{\Sigma,M}$. By the claim and the fact that $\mathfrak{b}^*$ naturally factors through $S_{G,\Sigma}(M)$, we get that $\ker \mathfrak{b}^*$ is a $\Sigma$-type-definable over $M$ subgroup of $\bar G$ which is of bounded index (as $|B| \leq 2^{2^{|G|}}$). Thus, $\ker \mathfrak{b}^* \supseteq  \bar{G}^{00}_{\Sigma,M}$. The opposite inclusion is very easy in the global case, but in our local case it requires a proof in which we will use the additional assumption that 
%the Boolean algebra of $\Sigma$-definable sets is closed under inversion.
the family of the instances of the formulas from $\Sigma$ is closed under inversion.

First, observe that it is enough to show that the quotient map $\mathfrak{b}'\colon G \to \bar{G}/\bar{G}^{00}_{\Sigma,M}$ is a compactification of $G$ below $(S_{G,\Sigma}(M), \tp_{\Sigma}(e/M), G)$. Indeed, by the inclusion $\ker \mathfrak{b}^* \supseteq  \bar{G}^{00}_{\Sigma,M}$, we have the obvious epimorphism $h \colon \bar{G}/\bar{G}^{00}_{\Sigma,M} \to \bar{G}/\ker \mathfrak{b}^*$ commuting with the quotient maps from $G$. Composing $h$ with the isomorphism $f \colon \bar{G}/\ker \mathfrak{b}^* \to B$ induced by $\mathfrak{b}^*$, we obtain an epimorphism $ f \circ h \colon  \bar{G}/\bar{G}^{00}_{\Sigma,M} \to B$ commuting with $\mathfrak{b}'$ and $\mathfrak{b}$. Since $\mathfrak{b}'$  is a compactification of $G$ below the ambit  $(S_{G,\Sigma}(M), \tp_{\Sigma}(e/M), G)$ and $\mathfrak{b}$ is the universal such compactification, we conclude that $h$ is an isomorphism, so $\ker \mathfrak{b}^* =  \bar{G}^{00}_{\Sigma,M}$.

To see that $\mathfrak{b}'\colon G \to \bar{G}/\bar{G}^{00}_{\Sigma,M}$ is a compactification of $G$ below the ambit $(S_{G,\Sigma}(M), \tp_{\Sigma}(e/M), G)$, it remains to show that for $a,b \in \bar{G}$:
\begin{equation}
\textrm{if $\tp_{\Sigma}(a/M)=\tp_{\Sigma}(b/M)$, then $a/\bar{G}^{00}_{\Sigma,M}= b/\bar{G}^{00}_{\Sigma,M}$.}\tag{*}\label{condition *}
\end{equation}

Let $\Psi: =\{ \psi_{\varphi}(x,z;y): \varphi(x,y) \in \Sigma\}$, where $\psi_{\varphi}(x,z;y):=\varphi(x^{-1}z,y)$. 

\begin{clm}
If $\tp_{\Sigma}(a/M)=\tp_{\Sigma}(b/M)$, then there exists a sequence $(c_1,c_2,\dots)$ such that both sequences $a^{\frown}(c_i)$ and $b^{\frown}(c_i)$ are $\Psi$-indiscernible over $M$ (meaning that for every $i<j$ and $k<l$, $\tp_{\Psi}(a_i,a_j/M) =\tp_{\Psi}(a_k,a_l/M)$) and $\Sigma$-indiscernible over $M$ (meaning that for every $i,k$, $\tp_{\Sigma}(a_i/M) =\tp_{\Sigma}(a_k/M)$).
\end{clm}

\begin{clmproof}
As in the usual construction of coheirs, choose an ultrafilter $\mathcal{U}$ on the Boolean algebra of all subsets of $G$ which extends the type $p:=\tp_{\Sigma}(a/M)$ (treated as an ultrafilter on the Boolean algebra of $\Sigma$-definable subsets of $G$). Set 
$$p_{\mathcal{U}}:=\{ \varphi(x): \varphi(x) \; \text{ is a } \Sigma\textrm{-formula such that } \varphi(\bar{G}) \cap G \in \mathcal{U}\}.$$ 
Then $p \subseteq p_{\mathcal{U}}$ and $p_{\mathcal{U}}$ is finitely satisfiable in $G$.

We claim that for every $\Psi$-formula $\delta(x,z)$
\begin{equation}
\textrm{if $\tp_{\Sigma}(a/M)=\tp_{\Sigma}(b/M)$, then $\delta(a,z) \in p_{\mathcal{U}}$ iff $\delta(b,z) \in p_{\mathcal{U}}$.}\tag{$\dagger$}\label{condition dag}
\end{equation}
It is clear that for any $c \in M$, $\delta(c,z)$ is a $\Sigma$-formula.
As the set of instances of the formulas from $\Sigma$ is closed under the left translations and under {\em inversion}, we see that for any $c \in M$, $\delta(x,c)$ is a $\Sigma$-formula, and so the condition  $\tp_{\Sigma}(a/M)=\tp_{\Sigma}(b/M)$ implies that for any $c \in M$, $\models \delta(a,c)$ iff $\models \delta(b,c)$. Therefore, we have the following sequence of equivalences:
$$\delta(a,z) \in p_{\mathcal{U}} \iff \delta(a,\bar G) \cap G \in \mathcal{U} \iff  \delta(b,\bar G) \cap G \in \mathcal{U} \iff \delta(b,z) \in p_{\mathcal{U}},$$
so (\ref{condition dag}) is proved.

Now, let $(c_1,c_2,\dots)$ be a Morley sequence in $p_{\mathcal{U}}$ over $M,a,b$, that is $c_i \models p_{\mathcal{U}}|_{M,a,b,c_{<i}}$ for every $i$. The fact that $a^{\frown}(c_i)$ and $b^{\frown}(c_i)$ are both $\Sigma$-indiscernible over $M$ is obvious. Thus, $\Psi$-indiscernibility over $M$ follows from  (\ref{condition dag}).
\end{clmproof}

The next claim follows by compactness.

\begin{clm}
If $E$ is a $\Psi$-type-definable over $M$ bounded equivalence relation on $\bar G$ and a sequence $(c_i)_{i<\omega}$ is $\Psi$-indiscernible over $M$, then $E(c_i,c_j)$ for all $i<j<\omega$.
\end{clm}

The next claim is a direct consequence of Claims  2 and 3.

\begin{clm}
If $\tp_{\Sigma}(a/M)=\tp_{\Sigma}(b/M)$ and $E$ is a $\Psi$-type-definable over $M$ bounded equivalence relation on $\bar G$, then $E(a,b)$.
\end{clm}

The desired condition (\ref{condition *}) follows from Claim 4, as the relation of lying in the same coset modulo $\bar{G}^{00}_{\Sigma,M}$ is clearly  a $\Psi$-type-definable over $M$ bounded equivalence relation on $\bar G$.
\end{proof}

We finish with a list of corollaries, comments, and questions around Theorem \ref{theorem: local NC} and Proposition \ref{proposition: Bohr below local types}.

\begin{itemize}
\item Unlike in the case of the ambits $S_G(M)$ and $S_{\bar c}(\FC)$, Theorem \ref{theorem: local NC} cannot be strengthened by saying that $u\CM$ is isomorphic to the Bohr compactification of $G$ below $(S_{G,\Delta}(M), \tp_{\Delta}(e/M), G)$. Indeed, take $G:=S_3$, $M:=(G,\cdot, H)$ where $H:=\langle (1,2)\rangle$, and $\Delta:=\{\varphi(x,y)\}$ where $\varphi(x,y)$ is the formula ``$x \in yH$''. Then $(S_{G,\Delta}(M),\tp_\Delta(\id/M),G)$ is isomorphic to $(G/H, \id/H,G)$ and by Example \ref{example: B_1 not equal  to B_2}, we  get that $u\CM \cong G$ while the Bohr compactification below $(G/H, \id/H,G)$ is trivial.

\item Unlike  in the case of the ambits $S_G(M)$ and $S_{\bar c}(\FC)$, the assumption that $(S_{G,\Delta}(M),G)$ is hereditarily amenable cannot be weakened to amenability. We skip an example, as it is pretty long.  One could ask if there is a counterexample with finite $\Delta$.

\item Proposition \ref{proposition: Bohr below local types} applied to $\Delta$ being the collection of all formulas $\varphi(x,y)$ implying $G(x)$ recovers Proposition 3.4 from \cite{GPPb}.

\item Consider the Shelah expansion $M^\textrm{Sh}$ of $M$, i.e. the universe $M$ equipped with predicates for all externally definable subsets in any finite numbers of variables (for our purpose we could only consider externally definable sets in the variables corresponding to $G$). The externally definable Bohr compactification of $G$ considered in \cite{Krupinski2017} is precisely the one below the ambit $(S^{\textrm{qf}}_{G}(M^{\textrm{Sh}}), \tp^{\textrm{qf}}(e/M^{\textrm{Sh}}),G)$ (where ``qf'' stands for ``quantifier free''). Proposition \ref{proposition: Bohr below local types} applied to $M^\textrm{Sh}$ and $\Delta$ being the collection of all quantifier free formulas  $\varphi(x,y)$ implying $G(x)$ (or just the collection of all predicates for externally definable subsets of $G$) yields a new description of the externally definable Bohr compactification of $G$ in terms of a quotient of $\bar{G}$ by a suitable model-theoretic connected component.

\item In Proposition \ref{proposition: Bohr below local types}, can the assumption that the family of the instances of the formulas from $\Sigma$ is closed under inversion be dropped?

\item Is the property that a Boolean $G$-algebra $\mathcal{A}$ of subsets of $G$ is closed under inversion preserved under passing to $\mathcal{A}^d$? For an abelian or a finite group $G$ it is clear, but in general there is no obvious reason why it should be true.

\end{itemize}

\printbibliography
\end{document}